\newcommand*\mathinhead[2]{\texorpdfstring{$#1$}{#2}}
\newcommand*{\dd}{\mathop{}\!\mathrm{d}}
\newcommand*{\ox}{\overline{x}}
\newcommand*{\oy}{\overline{y}}
\newcommand*{\oz}{\overline{z}}
\numberwithin{equation}{section}
\newtheoremstyle{note}
{8pt}
{1pt}
{\itshape}
{}
{\bfseries}
{.}
{.5em}
{}
\theoremstyle{note}
\newtheorem*{thm*}{Theorem}
\newtheorem{thm}{Theorem}[section]
\newtheorem{lem}[thm]{Lemma}
\newtheoremstyle{note2}
{8pt}
{1pt}
{}
{}
{\bfseries}
{.}
{.5em}
{}
\theoremstyle{note2}
\newtheorem{defn}{Definition}[section]
\newtheoremstyle{note3}
{8pt}
{1pt}
{}
{}
{\itshape}
{.}
{.5em}
{}
\theoremstyle{note3}
\newtheorem{rem}{Remark}[section]
\DeclareMathAlphabet{\pazocal}{OMS}{zplm}{m}{n}
\begin{document}
	\title[On fractional parabolic BMO and \mathinhead{\text{Lip}_{\alpha}}{} caloric capacities]{On fractional parabolic BMO and \mathinhead{\text{Lip}_{\alpha}}{} caloric capacities}
	\author{Joan Hernández, Joan Mateu, and Laura Prat}\thanks{Joan Hernández, Joan Mateu and Laura Prat have been supported by PID2020-114167GB-I00 (Ministerio de Ciencia e Innovación, Spain). In addition, Joan Mateu and Laura Prat have been partially supported by 2021SGR-00071 (Departament de Recerca i Universitats, Catalonia).}
	
	\begin{abstract}
		
		In the present paper we characterize the removable sets for solutions of the fractional heat equation satisfying some parabolic BMO or $\text{Lip}_\alpha$ normalization conditions. We do this by introducing associated fractional caloric capacities, that we show to be comparable to a certain parabolic Hausdorff content.
		
		\noindent\textbf{AMS 2020 Mathematics Subject Classification:}  42B20 (primary); 28A12 (secondary).
		
		\noindent \textbf{Keywords:} Fractional heat equation, singular integrals.
	\end{abstract}
	
	\maketitle
	
	\section{Introduction}
	\label{sec1}
	
	In this paper we characterize removable sets for solutions of the fractional heat equation under certain parabolic BMO or Lip$_\alpha$ normalization conditions. Our main motivation stems from the results obained in  \cite{MaPrTo} and \cite{MaPr}. 
	The study conducted by Mateu, Prat and Tolsa in \cite{MaPrTo} explores removable singularities for regular $(1,1/2)-$Lipschitz solutions of the classical heat equation,  associated with the operator $$\Theta:=(-\Delta_x)+\partial_t,\;\mbox{ where }\;(x,t)\in\mathbb{R}^n\times \mathbb{R}.$$ Here $(-\Delta_x)$ is the usual Laplacian, computed with respect to the spatial variables. In \cite{MaPr},  the authors extend the study to the fractional heat equation, defined via the $s$-heat operator $$\Theta^s:=(-\Delta_x)^s+\partial_t,\; s\in(0,1].$$ For $s=1$, we recover the classical heat equation, while for $s<1$,  the operator $(-\Delta_x)^s$, commonly referred to as the $s$-fractional Laplacian or $s$-Laplacian, requires an alternative definition. It is typically introduced through its Fourier transform:
	\begin{equation*}
		\widehat{(-\Delta_x)^{s}}f(\xi,t)=|\xi|^{2s}\widehat{f}(\xi,t),
	\end{equation*}
	or via the singular integral representation
	\begin{align*}
		(-\Delta_x)^{s} f(x,t)& =c_{n,s} \;\text{p.v.}\int_{\mathbb{R}^n}\frac{f(x,t)-f(y,t)}{|x-y|^{n+2s}}\dd y  \\
		&= c_{n,s}' \int_{\mathbb{R}^n}\frac{f(x+y,t)-2f(x,t)+f(x-y,t)}{|y|^{n+2s}}\dd y.
	\end{align*}
	These representations are equivalent and highlight that $(-\Delta_x)^s$ is no longer a local operator and that as $s\to 1$,  one recovers the expression of $(-\Delta_x)$. The reader may consult \cite[\textsection{3}]{DPV} or \cite{St} for details on the properties of $(-\Delta_x)^{s}$.
    
	To study removable sets in this context, we introduce the $s$-\textit{parabolic distance} between two points $\ox:=(x,t),\,\oy:=(y,\tau)$ in $\mathbb{R}^{n+1}$, defined as
	\begin{equation*}
		|\ox-\oy|_{p_s}=\text{dist}_{p_s}(\ox,\oy):=\max\big\{ |x-y|,|t-\tau|^{\frac{1}{2s}} \big\}, \qquad \text{for }\, 0<s\leq 1.
	\end{equation*}
	This leads naturally to the notions of $s$-parabolic cubes and $s$-parabolic balls. We convey that $B(\ox,r)$ will be the $s$-\textit{parabolic ball} centered at $\ox$ with radius $r$, where the spatial coordinates are contained in a Euclidean ball $B_1$ of radius $r$, while the temporal coordinate lies in a real interval $I$ of length $(2r)^{2s}$. On the other hand,  an $s$-\textit{parabolic cube}  $Q$ of side length $\ell$ is a set of the form
	\begin{equation*}
		I_1\times\cdots\times I_n\times I_{n+1},
	\end{equation*}
	where $I_1,\ldots, I_n$ are intervals of length $\ell$, while $I_{n+1}$ is another interval of length $\ell^{2s}$. We write $\ell(Q)=\ell$. 
	\textbf{}\newline
		Let us recall that a function $f$ is said to be $(1,1/2)$-Lipschitz regular if, as precised in \cite{MaPrTo}, it is such that
\begin{equation}
	\label{eq1.1}
	\|\nabla_x f\|_{L^{\infty}(\mathbb{R}^{n+1})}<\infty, \hspace{1cm} \|\partial_t^{1/2} f\|_{\ast, p_1}<\infty.
\end{equation}
Here, the norm $\|\cdot\|_{\ast, p_1}$ stands for the usual BMO norm of $\mathbb{R}^{n+1}$ but computed with respect to $1$-\textit{parabolic} cubes. As shown by Hofmann and Lewis \cite[Lemma 1]{Ho}, \cite[Theorem 7.4]{HoL}, such functions satisfy
\begin{equation*}
	\|f\|_{\text{Lip}_{1/2},t}:=\sup_{\substack{x\in \mathbb{R}^n\\ t,u\in \mathbb{R}, t\neq u}} \frac{|f(x,t)-f(x,u)|}{|t-u|^{1/2}}\lesssim \|\nabla_x f\|_{L^\infty(\mathbb{R}^{n+1})}+\|\partial_t^{1/2}f\|_{\ast,p_1}.
\end{equation*}
Thus a $(1,1/2)-$Lipschitz function is Lipschitz in the spatial variables and 1/2-Lipschitz in time. This explains the term  $(1,1/2)-$Lipschitz caloric capacity introduced in \cite{MaPrTo}, defined for a compact set $E\subset\mathbb{R}^{n+1}$ as
$$\Gamma_{\Theta}(E)=\sup\{|\langle \Theta f,1\rangle|\textbf{}\},$$
the supremum taken over all $(1,1/2)-$Lipschitz regular functions $f$ satisfying the heat equation on $\mathbb{R}^{n+1}\setminus E$ and with the norms in \eqref{eq1.1} smaller or equal than one.
A key result in \cite{MaPrTo} establishes the equivalence between the removability of the compact set $E$ for $(1,1/2)-$Lipschitz  solutions to the heat equation and  the fact that $\Gamma_{\Theta}(E)$ vanishes. 

In this paper, we aim at characterizing different variants of the previous Lipschitz caloric capacity, replacing the previous estimates with parabolic BMO or a $\text{Lip}_{\alpha}$ conditions for $\nabla_x f$ and $\partial_t^{1/2}f$. More generally, we analyze removable sets for solutions of the $s$-fractional heat equation with $s$-parabolic gradient  $(\nabla_x,\partial_t^{\frac{1}{2s}})$ satisfying either an $s$-parabolic $\text{BMO}$ or $\text{Lip}_{\alpha}$ condition. The reader who is not familiar with the notion of removability may conceive removable sets as those which ``do not matter'' when solving the $\Theta^s$-equation, $0<s\leq 1$. This has to be understood in the sense that any solution defined on their complement that satisfies the above $(1,\frac{1}{2s})$-gradient estimates, can be extended to verify the $\Theta^s$-equation throughout the entire domain, including the set itself.

Our main result characterizes removability in terms of two different capacities: one requiring solutions to the $\Theta^s$-equation satisfy $s$-parabolic BMO estimates, and another one requiring solutions satisfy $s$-parabolic $\text{Lip}_\alpha$ bounds.  These capacities, denoted by $\Gamma_{\Theta^s,\ast}$ and $\Gamma_{\Theta^s,\alpha}$ respectively, are related to certain $s$-parabolic Hausdorff content $ \pazocal{H}^{m}_{\infty,p_s}$, which is defined as in the Euclidean case (see \cite{Mat}, for instance), just replacing the Euclidean distance by the parabolic distance introduced above. Our main result reads as follows:
	\begin{thm*}
		Let $s\in(1/2,1]$, $\alpha\in(0,1)$ and $E\subset \mathbb{R}^{n+1}$ compact set. Then,
		\begin{align*}
			\Gamma_{\Theta^s,\ast}(E) &\approx_{n,s} \pazocal{H}^{n+1}_{\infty,p_s}(E),\\ \text{if $\alpha <2s-1$,} \quad \Gamma_{\Theta^s,\alpha}(E) &\approx_{n,s,\alpha} \pazocal{H}^{n+1+\alpha}_{\infty,p_s}(E).
		\end{align*}
		Moreover, the nullity of these capacities is equivalent to the removability of the corresponding compact set for solutions satisfying $(1,\frac{1}{2s})$-gradient estimates in either $s$-parabolic  $\text{\normalfont{BMO}}$ or $\text{\normalfont{Lip}}_{\alpha}$, assuming $\alpha<2s-1$ in the latter case. 
	\end{thm*}
	
	We further study the same type of question for a generalization of the capacities presented by Mateu and Prat in \cite[\textsection 4 \& \textsection 7]{MaPr}. That is, we will ask for a characterization of removable sets for solutions to the $\Theta^s$-equation satisfying conditions of the form
	\begin{equation*}
		\|(-\Delta)^{\sigma} f\|  < \infty, \hspace{0.75cm} \|\partial_t^{\sigma/s}f\| <\infty, \qquad \text{$s\in (0,1]$ and $\sigma\in[0,s)$}.
	\end{equation*}
	Here the symbols $\|\cdot\|$ can refer both to $s$-parabolic $\text{BMO}$ norms or both to $s$-parabolic $\text{Lip}_{\alpha}$ seminorms. We prove the following result:
	\begin{thm*}
		For any $s\in (0,1]$, $\sigma\in[0,s)$, $\alpha\in(0,1)$ and $E\subset \mathbb{R}^{n+1}$ compact set,
		\begin{align*}
			\gamma_{\Theta^s,\ast}^{\sigma}(E) &\approx_{n,s,\sigma} \pazocal{H}^{n+2\sigma}_{\infty,p_s}(E),\\
			\text{if $\alpha <2s-2\sigma$,} \quad \gamma_{\Theta^s,\alpha}^{\sigma}(E) &\approx_{n,s,\sigma,\alpha} \pazocal{H}^{n+2\sigma+\alpha}_{\infty,p_s}(E).
		\end{align*}
		The nullity of these capacities is equivalent to the removability of the corresponding compact set for solutions satisfying $(\sigma, \sigma/s)$-Laplacian estimates in either $s$-parabolic $\text{\normalfont{BMO}}$ or $\text{\normalfont{Lip}}_{\alpha}$, assuming $\alpha<2s-2\sigma$ in the latter case.
	\end{thm*}
	The previous study has been motivated by the one carried out for the BMO variant of analytic capacity by Kaufman \cite{K} and Verdera \cite{Ve} (for a brief overview the reader may consult \cite[\textsection 13.5.1]{AsIM}); and that for the $\text{Lip}_\alpha$ variant of the same capacity in the direction presented by Mel'nikov \cite{Me} or O'Farrell \cite{O}. We remark that the results presented here also generalize those of \cite[\textsection 5 \& \textsection 6]{He}.
    
	A brief overview of the paper for the reader: sections \textsection \ref{sec2.1} and \textsection \ref{sec2.2} focus on kernel estimates and growth estimates for the so-called \textit{admissible} functions. Moreover, in \textsection \ref{sec2.3} we deduce some important properties regarding potentials defined against positive Borel measures with some growth properties. Finally, in \textsection \ref{sec2.4}, we define all the different capacities and characterize them in terms of certain $s$-parabolic Hausdorff contents.
    
	\textit{About the notation}: Constants appearing in the sequel may depend on the dimension of the ambient space and the  parameter $s$, and their value may change at different occurrences. They will be frequently denoted by the letters $c$ or $C$. The notation $A\lesssim B$ means that there exists such a constant, say $C$, so that $A\leq CB$. Moreover, $A\approx B$ is equivalent to $A\lesssim B \lesssim A$. Also, $A \simeq B$ will mean $A= CB$. If the reader finds expressions of the form $\lesssim_{\beta}$ or $\approx_\beta$, for example, it will mean that the implicit constants depend on $n,s$ and $\beta$.
    
	Since Laplacian operators (fractional or not) will frequently appear in our discussion and will be always taken with respect to spatial variables, we will write:
	\begin{equation*}
		(-\Delta)^s:=(-\Delta_x)^s, \qquad s\in(0,1], \quad \text{and we convey $(-\Delta)^0:=\text{Id}$}.
	\end{equation*}
	We will also write $\|\cdot\|_\infty:=\|\cdot\|_{L^\infty(\mathbb{R}^{n+1})}$. Finally, we stress that an important parameter which will play a fundamental role in \textsection \ref{sec2.1} is $2\zeta:=\min\{1,2s\}$.
    
	\section{Basic notation and kernel estimates}
	\label{sec2.1}
    
	We begin by noticing that the $s$-\textit{parabolic distance} between $\ox:=(x,t),\,\oy:=(y,\tau)$ in $\mathbb{R}^{n+1}$, defined in the introduction as 
	\begin{equation*}
		|\ox-\oy|_{p_s}=\text{dist}_{p_s}(\ox,\oy):=\max\big\{ |x-y|,|t-\tau|^{\frac{1}{2s}} \big\}, \qquad \text{for }\, 0<s\leq 1,
	\end{equation*}
	is, in fact, equivalent to
	\begin{equation*}
		\text{dist}_{p_s}(\ox,\oy) \approx \big(|x-y|^2+|t-\tau|^{1/s}\big)^{1/2}.
	\end{equation*}
	The $s$\textit{-parabolic dilation} of factor $\lambda>0$, written $\delta_\lambda$, is given by 
	\begin{equation*}
		\delta_\lambda(x,t)=\big(\lambda x, \lambda^{2s} t\big
		).
	\end{equation*}
	To ease notation, since we will always work with $s$-parabolic distances, we will write $\lambda Q$ to denote $\delta_{\lambda}(Q)$, the $s$-parabolic cube concentric with $Q$ of side length $\lambda \ell(Q)$.
    
	As the reader may suspect, the notion of $s$\textit{-parabolic BMO space}, $\text{BMO}_{p_s}$, refers to the space of usual BMO functions (strictly, equivalence classes of functions where constants are identified as 0) obtained by replacing Euclidean cubes by $s$-parabolic ones. 
	Similarly, a function $f:\mathbb{R}^{n+1}\to\mathbb{R}$ is said to be $s$\textit{-parabolic} $\textit{\text{\textit{Lip}}}_\alpha$ for some $0<\alpha<1$, shortly $\text{\textit{Lip}}_{\alpha,p_s}$, if
	\begin{equation*} \|f\|_{\text{Lip}_\alpha,p_s}:=\sup_{\overline{x},\overline{y}\in\mathbb{R}^{n+1}}\frac{|f(\overline{x})-f(\overline{y})|}{|\overline{x}-\overline{y}|_{p_s}^{\alpha}}\lesssim 1.
	\end{equation*} 
	For each $s\in(0,1]$, the fundamental solution $P_s(x,t)$ to the $\Theta^s$-equation, i.e. that associated with the operator
	\begin{equation*}
		\Theta^s:=(-\Delta)^s+\partial_t,
	\end{equation*} 
	is the inverse spatial Fourier transform of $e^{-4\pi^2 t|\xi|^{2s}}$ for $t>0$, and it equals $0$ if $t\leq 0$. For the special case $s=1$, we retrieve the classical \textit{heat kernel}, given by:
	\begin{equation*}
		W(\ox):=P_1(\ox)=c_nt^{-\frac{n}{2}}\phi_{n,1}(|x|t^{-\frac{1}{2}}), \qquad \text{if }\, t>0,
	\end{equation*}
	where $\phi_{n,1}(\rho):=e^{-\rho^2/4}$, independent of $n$. Although the expression of $P_s$ is not explicit in general, Blumenthal and Getoor \cite[Theorem 2.1]{BG} established that for $s<1$,
	\begin{equation}
		\label{eq2.1}
		P_s(\ox)=c_{n,s}\,t^{-\frac{n}{2s}}\phi_{n,s}\big( |x|t^{-\frac{1}{2s}} \big)\chi_{t>0},
	\end{equation}
	Here, $\phi_{n,s}$ is a smooth function, radially decreasing and satisfying, for $0<s<1$,
	\begin{equation}
		\label{eq2.2}
		\phi_{n,s}(\rho)\approx \big(1+\rho^2\big)^{-(n+2s)/2},
	\end{equation}
	being an exact equality if $s=1/2$ \cite{Va}. Therefore,
	\begin{equation*}
		P_s(\ox)\approx \frac{t}{|\ox|_{p_s}^{n+2s}}\chi_{t>0}.
	\end{equation*}
	The function $\phi_{n,s}$ is tightly related to the Fourier transform of $e^{-4\pi^2|\xi|^{2s}}$. Indeed, taking the spatial Fourier transform in both sides of identity \eqref{eq2.1}, we get
	\begin{equation*}
		e^{-4\pi^2 t|\xi|^{2s}}=c_{n,s}\,t^{-\frac{n}{2s}}\big[\phi_{n,s}\big(|\,\cdot \,|\,t^{-\frac{1}{2s}}\big)\big]^\wedge(\xi).
	\end{equation*}
	Recall that for $\lambda>0$, the dilation $f_{\lambda} := f(\lambda x)$ satisfies $\widehat{f_\lambda}(\xi)=\lambda^{-n}\widehat{f}(\lambda^{-1}\xi)$. Then,
	\begin{equation*}
		e^{-4\pi^2 t|\xi|^{2s}} = c_{n,s}\,\widehat{\phi_{n,s}\big(|\,\cdot\,|\big)}\big(\xi t^{\frac{1}{2s}}\big), \hspace{0.5cm} \text{that implies} \hspace{0.5cm} e^{-4\pi^2 |\xi|^{2s}}\simeq\widehat{\phi_{n,s}\big(|\,\cdot\,|\big)}(\xi).
	\end{equation*}
	The above relations will allow us to obtain explicit bounds for the derivatives of $\phi_{n,s}$. 
	Let us present our first lemma. Although it can be deduced straightforwardly from \cite[Theorem 1.1]{GrT},  we shall give a detailed proof for the sake of clarity and completeness.
	\begin{lem}
		\label{lem2.1}
		Let $s\in (0,1]$ and $\beta\in (0,1)$. We define the following function in $\mathbb{R}^n$:
		\begin{align*}
			\psi_{n,s}^{(\beta)}(x):=(-\Delta)^\beta\phi_{n,s}\big( |x| \big).
		\end{align*}
		Then,
		\begin{enumerate}
			\item[1.] $\phi'_{n,s}(\rho)\simeq -\rho \, \phi_{n+2,s}(\rho)$.
			\item[2.] $|\psi_{n,s}^{(\beta)}(x)|\lesssim_{\beta} \big( 1+|x|^2 \big)^{-(n+2\beta)/2}$.
			\item[3.] $\nabla \psi_{n,s}^{(\beta)}(x) \simeq - x\,\psi^{(\beta)}_{n+2,s}(x)$.
		\end{enumerate}
	\end{lem}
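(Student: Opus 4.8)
The plan is to pass to the Fourier side from the outset, using the relation $e^{-4\pi^2|\xi|^{2s}}=\widehat{\phi_{n,s}(|\cdot|)}(\xi)$ recorded before the statement, which (consistently across dimensions, up to a dimensional normalisation that is immaterial here since all assertions are only up to constants) reads $\phi_{m,s}(|\cdot|)=\mathcal{F}_m^{-1}\big[e^{-4\pi^2|\cdot|^{2s}}\big]$ on $\mathbb{R}^m$, and likewise $\psi_{m,s}^{(\beta)}=\mathcal{F}_m^{-1}\big[|\cdot|^{2\beta}e^{-4\pi^2|\cdot|^{2s}}\big]$ on $\mathbb{R}^m$. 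Both $\phi'_{n,s}(\rho)\simeq-\rho\,\phi_{n+2,s}(\rho)$ and the gradient identity $\nabla\psi^{(\beta)}_{n,s}(x)\simeq-x\,\psi^{(\beta)}_{n+2,s}(x)$ are then instances of one ``dimension shift'' argument, which I would carry out as follows. Write $\phi_{n+2,s}(|x|)=\mathcal{F}_{n+2}^{-1}[e^{-4\pi^2|\cdot|^{2s}}]$ on $\mathbb{R}^{n+2}=\mathbb{R}^n\times\mathbb{R}^2$ and restrict it to the slice $\{\,z'=0\,\}\cong\mathbb{R}^n$; integrating out the two extra frequency variables in polar coordinates and substituting $u=|\xi|^2+|\eta'|^2$ gives $\int_{\mathbb{R}^2}e^{-4\pi^2(|\xi|^2+|\eta'|^2)^s}\dd\eta'=\pi\,g(|\xi|)$ with $g(\rho):=\int_{\rho^2}^{\infty}e^{-4\pi^2u^s}\dd u$, so that $\phi_{n+2,s}(|x|)=\pi\,\mathcal{F}_n^{-1}[g(|\cdot|)](x)$. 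Since $g'(\rho)=-2\rho\,e^{-4\pi^2\rho^{2s}}$, on $\mathbb{R}^n$ one has $\nabla_\xi[g(|\xi|)]=-2\xi\,e^{-4\pi^2|\xi|^{2s}}$; taking inverse Fourier transforms, $\mathcal{F}_n^{-1}[\xi_j\,e^{-4\pi^2|\cdot|^{2s}}](x)$ equals on the one hand $-\tfrac{1}{2}\mathcal{F}_n^{-1}\big[\partial_{\xi_j}(g(|\cdot|))\big](x)=\pi i\,x_j\,\mathcal{F}_n^{-1}[g(|\cdot|)](x)=i\,x_j\,\phi_{n+2,s}(|x|)$, and on the other hand $\tfrac{1}{2\pi i}\partial_{x_j}\phi_{n,s}(|x|)=\tfrac{1}{2\pi i}\phi'_{n,s}(|x|)\tfrac{x_j}{|x|}$; equating and evaluating along a coordinate axis yields $\phi'_{n,s}(\rho)=-2\pi\rho\,\phi_{n+2,s}(\rho)$, which is (1). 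Running the identical computation with $e^{-4\pi^2|\cdot|^{2s}}$ replaced by $|\cdot|^{2\beta}e^{-4\pi^2|\cdot|^{2s}}$ — the only change being that $g$ is replaced by $G(\rho)=\int_{\rho^2}^{\infty}u^{\beta}e^{-4\pi^2u^s}\dd u$, with $G'(\rho)=-2\rho^{1+2\beta}e^{-4\pi^2\rho^{2s}}$ — gives in the same way $\nabla\psi^{(\beta)}_{n,s}(x)=-2\pi\,x\,\psi^{(\beta)}_{n+2,s}(|x|)$, the remaining assertion of (2). (This is nothing but the classical dimension-shift for radial Fourier transforms, whence the reference to \cite{GrT}.)

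It remains to prove the bound $|\psi^{(\beta)}_{n,s}(x)|\lesssim(1+|x|^2)^{-(n+2\beta)/2}$. First, from part (1) together with \eqref{eq2.2} — differentiating the relation in (1) once more and using it again with $n$ replaced by $n+2$ — one records $|\phi_{n,s}(\rho)|\lesssim(1+\rho^2)^{-(n+2s)/2}$, $|\phi'_{n,s}(\rho)|\lesssim(1+\rho^2)^{-(n+2s+1)/2}$ and $|\phi''_{n,s}(\rho)|\lesssim(1+\rho^2)^{-(n+2s+2)/2}$; hence $\phi_{n,s}(|\cdot|)\in C^2(\mathbb{R}^n)\cap L^1(\mathbb{R}^n)$ (the $L^1$-bound needs only $s>0$) with $|D^2[\phi_{n,s}(|\cdot|)](z)|\lesssim(1+|z|^2)^{-(n+2s+2)/2}$. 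I would then start from the pointwise integral representation $\psi^{(\beta)}_{n,s}(x)\simeq\int_{\mathbb{R}^n}\frac{\phi_{n,s}(|x+y|)-2\phi_{n,s}(|x|)+\phi_{n,s}(|x-y|)}{|y|^{n+2\beta}}\dd y$ (which converges absolutely since $0<\beta<1$). Splitting at $|y|=1$ and bounding the numerator by $|y|^2\|D^2[\phi_{n,s}(|\cdot|)]\|_{\infty}$ for $|y|<1$ and by $4\|\phi_{n,s}\|_{\infty}$ for $|y|\ge1$ gives $\|\psi^{(\beta)}_{n,s}\|_{\infty}\lesssim1$, which already settles the range $|x|\le1$. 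For $|x|\ge1$ I would decompose $\mathbb{R}^n$ into the regions $\{|y|\le|x|/2\}$, $\{|x|/2<|y|<2|x|\}$ and $\{|y|\ge2|x|\}$. On the first, $|x\pm y|\approx|x|$, and a second-order Taylor estimate bounds the numerator by $|y|^2|x|^{-(n+2s+2)}$, whose integral over $\{|y|\le|x|/2\}$ is $\lesssim|x|^{-(n+2s+2\beta)}\le|x|^{-(n+2\beta)}$. On the third, $|x\pm y|\ge|y|/2$, so $\phi_{n,s}(|x\pm y|)\lesssim|y|^{-(n+2s)}$ and, together with the contribution $\phi_{n,s}(|x|)|x|^{-2\beta}\approx|x|^{-(n+2s+2\beta)}$ of the $-2\phi_{n,s}(|x|)$ term, the integral is again $\lesssim|x|^{-(n+2s+2\beta)}$. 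On the middle annulus $|y|\approx|x|$, so one only uses $|y|^{-(n+2\beta)}\approx|x|^{-(n+2\beta)}$ together with $\int_{|x|/2<|y|<2|x|}\phi_{n,s}(|x\pm y|)\dd y\le\|\phi_{n,s}(|\cdot|)\|_{L^1(\mathbb{R}^n)}$ and $\phi_{n,s}(|x|)|x|^n\lesssim|x|^{-2s}\lesssim1$, obtaining $\lesssim|x|^{-(n+2\beta)}$. Summing the three contributions gives $|\psi^{(\beta)}_{n,s}(x)|\lesssim|x|^{-(n+2\beta)}$ for $|x|\ge1$, which together with the $L^\infty$ bound is exactly the claimed estimate.

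The only genuinely delicate point is the intermediate annulus $|y|\approx|x|$ in the last step: there the second-order cancellation of $(-\Delta)^\beta$ is unavailable, and the decay must be extracted from the integrability of $\phi_{n,s}(|\cdot|)$ rather than from a Taylor expansion — this is where the hypotheses $s>0$ (so that $\phi_{n,s}(|\cdot|)\in L^1$, via \eqref{eq2.2}) and $0<\beta<1$ (so that the defining singular integral converges at both ends) are used. Everything else is a routine computation once the polar-coordinate slicing of the dimension-shift step has been set up.
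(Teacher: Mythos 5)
Your proof is correct, and it takes a genuinely different route from the paper's. For the two identities $\phi'_{n,s}(\rho)\simeq-\rho\,\phi_{n+2,s}(\rho)$ and $\nabla\psi_{n,s}^{(\beta)}(x)\simeq-x\,\psi_{n+2,s}^{(\beta)}(x)$, the paper works directly from the Bessel-kernel integral representation of the inverse radial Fourier transform (formulas \eqref{eq2.3} and \eqref{eq2.4}) and differentiates under the integral using the Bessel recurrence $J_k'(x)=\tfrac{k}{x}J_k(x)-J_{k+1}(x)$, whereas you replace this with the dimension-shift / slicing argument: expressing $\phi_{n+2,s}$ (resp.\ $\psi^{(\beta)}_{n+2,s}$) restricted to a hyperplane as the $n$-dimensional inverse transform of a radial antiderivative $g$ (resp.\ $G$), and then exploiting the trivial ODE $g'(\rho)=-2\rho\,e^{-4\pi^2\rho^{2s}}$. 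I checked the constants: with the paper's normalization $\mathcal{F}_n^{-1}[e^{-4\pi^2|\cdot|^{2s}}]=\phi_{n,s}(|\cdot|)$ and $\widehat{f}(\xi)=\int f(x)e^{-2\pi i x\cdot\xi}\dd x$, the slicing gives $\phi_{n+2,s}(|x|)=\pi\,\mathcal{F}_n^{-1}[g(|\cdot|)](x)$, and equating $\mathcal{F}_n^{-1}[\xi_j e^{-4\pi^2|\cdot|^{2s}}]=ix_j\phi_{n+2,s}(|x|)=\tfrac{1}{2\pi i}\phi'_{n,s}(|x|)x_j/|x|$ recovers $\phi'_{n,s}(\rho)=-2\pi\rho\,\phi_{n+2,s}(\rho)$, matching the paper exactly; the same computation with $G$ gives $\nabla\psi^{(\beta)}_{n,s}=-2\pi x\,\psi^{(\beta)}_{n+2,s}$. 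This route in fact hews closer to the cited \cite[Theorem~1.1]{GrT} than the paper's own Bessel computation and avoids any appeal to Bessel-function calculus.

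For the decay estimate $|\psi^{(\beta)}_{n,s}(x)|\lesssim(1+|x|^2)^{-(n+2\beta)/2}$, the paper again uses the radial-Fourier integral representation and appeals to \cite[Lemma~1]{PruTa} for the asymptotics. You instead work from the real-variable singular-integral representation of $(-\Delta)^\beta$, feeding in the second-order decay of $\phi_{n,s}$ obtained from part (1) and \eqref{eq2.2}, and decompose $\mathbb{R}^n$ into three regions. All three pieces check out: near $0$ the second-order Taylor bound with $|D^2[\phi_{n,s}(|\cdot|)](z)|\lesssim(1+|z|^2)^{-(n+2s+2)/2}$ (note the $\phi'_{n,s}(\rho)/\rho=-2\pi\phi_{n+2,s}(\rho)$ piece of the Hessian is controlled by the same bound) gives $|x|^{-(n+2s+2\beta)}$; the annulus $|y|\approx|x|$ uses only $\|\phi_{n,s}(|\cdot|)\|_{L^1}$ and $\phi_{n,s}(|x|)|x|^n\lesssim1$; and the far region gives $|x|^{-(n+2s+2\beta)}$ again. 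You correctly identify that the middle annulus is the delicate piece and that $s>0$, $\beta\in(0,1)$ are exactly the hypotheses needed there. What the two approaches trade off: the paper's Pruitt--Taylor citation is shorter on the page but outsources the asymptotics; your three-region decomposition is longer but fully self-contained, elementary, and makes transparent exactly where the decay exponent $n+2\beta$ (rather than $n+2s+2\beta$) comes from — the cancellation-free annulus $|y|\approx|x|$.
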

	\begin{proof}
		We begin by proving \textit{1} for $s<1$ (the case $s=1$ is trivial). To do so, we will use the explicit integral representation for the inverse Fourier transform of a radial function in \cite[\textsection B.5]{Gr} or \cite[\textsection{IV.I}]{StW}. Applying it to the Fourier transform $e^{-4\pi^2|\xi|^{2s}}$ we get
		\begin{equation*}
			\phi_{n,s}(|z|) = 2\pi|z|^{1-n/2}\int_0^\infty e^{-4\pi^2r^{2s}}r^{n/2}J_{n/2-1}(2\pi r|z|)\dd r, \quad \text{for any $z\in\mathbb{R}^n\setminus{\{0\}}$},
		\end{equation*}
		where $J_k$ is the classical Bessel function of order $k$ \cite[\textsection 9]{AS}. Since we are interested in the derivatives of $\phi_{n,s}$ as a radial real variable function, let us rewrite the previous expression in terms of $\rho\in(0,\infty)$ so that it reads as
		\begin{equation}
			\label{eq2.3}
			\phi_{n,s}(\rho) = 2\pi\rho^{1-n/2}\int_0^\infty e^{-4\pi^2r^{2s}}r^{n/2}J_{n/2-1}(2\pi r\rho) \dd r.
		\end{equation}
		Therefore, to estimate the derivatives of $\phi_{n,s}$ we need to determine first if we can differentiate under the integral sign. To that end, we use the following recurrence relation for classical Bessel functions \cite[\textsection{9.1.27}]{AS},
		\begin{align*}
			J_{k}'(x)= \frac{k}{x}J_{k}(x)-J_{k+1}(x).
		\end{align*}
		This recurrence formula together with \eqref{eq2.3} remain valid for the case $k=-1/2$, conveying that $J_{-1/2}(x)=\sqrt{\frac{2}{\pi x}}\cos{x}$. In our case these imply
		\begin{align*}
			\partial_\rho J_{n/2-1}(2\pi r\rho)&=\bigg(\frac{n}{2}-1\bigg)\rho^{-1}J_{n/2-1}(2\pi r\rho)-2\pi r\,J_{n/2}(2\pi r\rho).
		\end{align*}
		If we differentiated under the integral sign in \eqref{eq2.3}, we would get integrands of the form
		\begin{align*}
			&e^{-r^{2s}}r^{n/2}J_{n/2-1}(2\pi r\rho), \hspace{0.75cm} e^{-r^{2s}}r^{n/2+1}J_{n/2}(2\pi r\rho).
		\end{align*}
		Notice that both are bounded by integrable functions in the domain of integration, locally for each $\rho>0$ (by the boundedness of the functions $J_k$ for $n>1$, and by that of $\cos{x}$ if $n=1$). Hence, we can indeed differentiate under the integral sign to compute $\phi'_{n,s}$, obtaining the desired result:
		\begin{align*}
			\phi_{n,s}'(\rho)&= 2\pi \bigg[ \bigg( 1-\frac{n}{2} \bigg)\rho^{-n/2}\int_0^\infty e^{-4\pi^2r^{2s}}r^{n/2}J_{n/2-1}(2\pi r\rho) \dd r\\
			&\hspace{3cm}+\rho^{1-n/2}\partial_\rho\bigg( \int_0^\infty e^{-4\pi^2r^{2s}}r^{n/2}J_{n/2-1}(2\pi r\rho)\dd r \bigg)\bigg]\\
			&= 2\pi \bigg[ \bigg( 1-\frac{n}{2} \bigg)\rho^{-n/2}\int_0^\infty e^{-4\pi^2r^{2s}}r^{n/2}J_{n/2-1}(r\rho) \dd r\\
			&\hspace{1.3cm}\rho^{1-n/2}\bigg(\frac{n}{2}-1\bigg)\rho^{-1}\bigg( \int_0^\infty e^{-4\pi^2r^{2s}}r^{n/2}J_{n/2-1}(2\pi r\rho)\dd r \bigg)\\
			&\hspace{1.15cm}-2\pi\rho\,\rho^{1-(n+2)/2} \int_{0}^\infty e^{-4\pi^2r^{2s}}r^{(n+2)/2}J_{(n+2)/2-1}(r\rho)\dd r\bigg] = -2\pi\rho\,\phi_{n+2,s}(\rho).
		\end{align*}
		Next we prove statement \textit{2}. Observe that for $s\in(0,1]$ and $\beta\in(0,1)$, we have $\widehat{\psi_{n,s}^{(\beta)}}(\xi)=|\xi|^{2\beta}e^{-4\pi^2|\xi|^{2s}}$, 
		which is an integrable function, and thus $\psi_{n,s}^{(\beta)}$ is bounded (in fact, since the product of $\widehat{\psi_{n,s}^{(\beta)}}$ by any polynomial is also integrable, we infer that $\psi_{n,s}^{(\beta)}$ is smooth). By the integral representation formula for inverse Fourier transforms of radial functions,
		\begin{equation}
			\label{eq2.4}
			\psi_{n,s}^{(\beta)}(x) = 2\pi|x|^{1-n/2}\int_0^\infty e^{-4\pi^2r^{2s}}r^{n/2+2\beta}J_{n/2-1}(2\pi r|x|)\dd r, \qquad x\in\mathbb{R}^n\setminus\{0\}.
		\end{equation}
		Now, we apply \cite[Lemma 1]{PruTa} to deduce the desired decaying property $|\psi_{n,s}^{(\beta)}(x)|=O\big(|x|^{-n-2\beta}\big)$, for $|x|$ large. Hence, since $\psi_{n,s}^{(\beta)}$ is bounded, we deduce the desired bound $|\psi_{n,s}^{(\beta)}(x)|\lesssim_\beta \big( 1+|x|^2 \big)^{-(n+2\beta)/2}$.
        
		We are left to control the norm of $\nabla\psi_{n,s}^{(\beta)}$, provided the latter is well-defined. We claim that this is the case, since we can differentiate under the integral sign in \eqref{eq2.4}. Indeed, by the recurrence relation satisfied by the derivatives of $J_k$ we get
		\begin{align*}
			|\nabla_xJ_{n/2-1}(r|x|)|=\bigg\rvert \bigg(\frac{n}{2}-1\bigg)\frac{1}{|x|}J_{n/2-1}(2\pi r|x|)-2\pi rJ_{n/2}(2\pi r|x|)\bigg\rvert.
		\end{align*}
		So the resulting integrands to study are terms of the form
		\begin{align*}
			&e^{-4\pi^2r^{2s}}r^{n/2+2\beta}|J_{n/2-1}(2\pi r|x|)|, \hspace{0.75cm} e^{-4\pi^2r^{2s}}r^{n/2+2\beta+1}|J_{n/2}(2\pi r|x|)|,
		\end{align*}
		both bounded by the integrable functions $C_1e^{-r^{2s}}r^{n/2+2\beta}$ and $C_2e^{-r^{2s}}r^{n/2+2\beta+1}$ for some constants $C_1,C_2$ depending on $n,s$ and $\beta$, and locally for each $x\in\mathbb{R}^n$ with $|x|>0$. Hence, we can differentiate under the integral sign in \eqref{eq2.4} and obtain
		\begin{align*}
			\nabla\psi_{n,s}^{(\beta)}(x)&= 2\pi\bigg[\bigg( 1-\frac{n}{2} \bigg)\frac{x}{|x|^{n/2+1}}\int_0^\infty e^{-4\pi^2r^{2s}}r^{n/2+2\beta}J_{n/2-1}(2\pi r|x|)\dd r\\
			&\hspace{3cm}+\bigg(\frac{n}{2}-1\bigg)\frac{x}{|x|^{n/2+1}}\int_0^\infty e^{-4\pi^2r^{2s}}r^{n/2+2\beta}J_{n/2-1}(2\pi r|x|)\dd r\\
			&\hspace{3cm} -2\pi\frac{x}{|x|^{n/2}}\int_0^\infty e^{-4\pi^2r^{2s}}r^{(n+2)/2+2\beta}J_{(n+2)/2-1}(2\pi r|x|)\dd r\bigg]\\
			&= -2\pi x\,\psi_{n+2,s}^{(\beta)}(x).
		\end{align*}
	\end{proof}
	Using the above lemma together with \eqref{eq2.2} we can estimate the derivatives of $\phi_{n,s}$ and $\psi_{n,s}^{(\beta)}$. In particular, the following relations hold:
	\begin{align}
		\label{eq2.5}
		\text{If } s<1\text{,}\quad \phi_{n,s}'(\rho) &\approx \frac{-\rho}{(1+\rho^2)^{(n+2s+2)/2}}, \qquad \phi_{n,s}''(\rho) \approx \frac{-1+(2\pi-1)\rho^2}{(1+\rho^2)^{(n+2s+4)/2}},\\
		\label{eq2.6}
		|\nabla \psi_{n,s}^{(\beta)}(x)|&\lesssim_{\beta} \frac{|x|}{(1+|x|^2)^{(n+2\beta+2)/2}}. 
	\end{align}
	
	\subsection{Estimates for \mathinhead{\nabla_xP_s}{} and \mathinhead{\Delta^\beta P_s}{}}
	
	We shall now present some growth estimates for the kernels $P_s$. Our first result provides bounds for $\nabla_xP_s$, $s\in(0,1)$. These estimates are analogous to those of \cite[Lemma 5.4]{MaPrTo} which cover the case $s=1$. In the forthcoming results, the parameter $2\zeta:=\min\{1,2s\}$ will play an important role. 
	\begin{thm}
		\label{C-Z_thm2}
		The following estimates hold for any $\ox\neq 0$ and $s\in(0,1)$:
		\begin{align*}
			|\nabla_xP_{s}(\ox)|&\lesssim \frac{|xt|}{|\ox|_{p_s}^{n+2s+2}}, \qquad |\Delta P_{s}(\ox)|\lesssim \frac{|t|}{|\ox|_{p_s}^{n+2s+2}}, \qquad |\partial_t\nabla_xP_{s}(\ox)| \lesssim  \frac{|x|}{|\ox|_{p_s}^{n+2s+2}}.
		\end{align*}
		The last bound is only valid for points with $t\neq 0$. Also, if $\ox'$ is such that $|\ox-\ox'|_{p_s}\leq |\ox|_{p_s}/2$,
		\begin{align*}
			|\nabla_x P_{s}(\ox)-\nabla_xP_{s}(\ox')|\lesssim \frac{|\ox-\ox'|_{p_s}^{2\zeta}}{|\ox|_{p_s}^{n+1+2\zeta}}.
		\end{align*}
	\end{thm}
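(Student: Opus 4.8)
The plan is to reduce every estimate to the self-similar profile $\phi_{n,s}$ via \eqref{eq2.1} and then differentiate, invoking Lemma \ref{lem2.1} and the pointwise bounds \eqref{eq2.2}, \eqref{eq2.5}, \eqref{eq2.6}. Fix $s\in(0,1)$; since $P_s\equiv 0$ for $t\le 0$ only the region $t>0$ matters, and there we write $\rho=|x|\,t^{-1/(2s)}$, so that $P_s(\ox)=c_{n,s}\,t^{-n/(2s)}\phi_{n,s}(\rho)$ and $|\ox|_{p_s}=\max\{|x|,t^{1/(2s)}\}$. The chain rule together with Lemma \ref{lem2.1}(1) gives $\nabla_xP_s(\ox)\simeq -c_{n,s}\,t^{-(n+2)/(2s)}\,x\,\phi_{n+2,s}(\rho)$, which by \eqref{eq2.2} is $\lesssim t^{-(n+2)/(2s)}|x|\,(1+\rho^2)^{-(n+2s+2)/2}$. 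Splitting according to whether $|x|\le t^{1/(2s)}$ (so $\rho\le 1$ and the last factor is $\approx 1$) or $|x|>t^{1/(2s)}$ (so $\rho>1$ and $(1+\rho^2)^{-(n+2s+2)/2}\approx |x|^{-(n+2s+2)}t^{(n+2s+2)/(2s)}$) turns this into $|\nabla_xP_s(\ox)|\lesssim |xt|\,|\ox|_{p_s}^{-(n+2s+2)}$ in either regime.

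Differentiating twice in $x$ one gets $\partial_{x_i}\partial_{x_j}P_s(\ox)=c_{n,s}\,t^{-(n+2)/(2s)}\big[\phi''_{n,s}(\rho)\tfrac{x_ix_j}{|x|^2}+\tfrac{\phi'_{n,s}(\rho)}{\rho}(\delta_{ij}-\tfrac{x_ix_j}{|x|^2})\big]$, hence $|D^2_xP_s(\ox)|\lesssim t^{-(n+2)/(2s)}\big(|\phi''_{n,s}(\rho)|+|\phi'_{n,s}(\rho)|/\rho\big)\lesssim t^{-(n+2)/(2s)}(1+\rho^2)^{-(n+2s+2)/2}$ by \eqref{eq2.5}; the same dichotomy yields $|D^2_xP_s(\ox)|\lesssim |t|\,|\ox|_{p_s}^{-(n+2s+2)}$, in particular the asserted bound for $\Delta P_s$. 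For the mixed derivative, differentiate $\partial_{x_i}P_s=c_{n,s}t^{-(n+1)/(2s)}\phi'_{n,s}(\rho)\,x_i/|x|$ in $t$ using $\partial_t\rho=-\rho/(2st)$ to obtain $\partial_t\partial_{x_i}P_s(\ox)=-c_{n,s}\,t^{-(n+1+2s)/(2s)}\big[\tfrac{n+1}{2s}\phi'_{n,s}(\rho)+\tfrac{\rho}{2s}\phi''_{n,s}(\rho)\big]\tfrac{x_i}{|x|}$; by \eqref{eq2.5} the bracket is $\lesssim \rho\,(1+\rho^2)^{-(n+2s+2)/2}$, so $|\partial_t\nabla_xP_s(\ox)|\lesssim |x|\,t^{-(n+2+2s)/(2s)}(1+\rho^2)^{-(n+2s+2)/2}$, which the dichotomy converts into $|\partial_t\nabla_xP_s(\ox)|\lesssim |x|\,|\ox|_{p_s}^{-(n+2s+2)}$. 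Here one must restrict to $t\ne 0$: for $t<0$ the derivative vanishes, while at $t=0$ it is discontinuous when $s<1$, since $\partial_tP_s(x,t)\to c_{n,s}|x|^{-(n+2s)}$ as $t\to 0^+$ by \eqref{eq2.2} whereas $\partial_tP_s\equiv 0$ for $t<0$.

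For the last estimate assume $s\ge 1/2$ and split through the intermediate point $(x,t')$: $\nabla_xP_s(\ox)-\nabla_xP_s(\ox')=\big[\nabla_xP_s(x,t)-\nabla_xP_s(x,t')\big]+\big[\nabla_xP_s(x,t')-\nabla_xP_s(x',t')\big]$. The hypothesis $|\ox-\ox'|_{p_s}\le |\ox|_{p_s}/2$ forces $|x-x'|\le |\ox|_{p_s}/2$ and $|t-t'|^{1/(2s)}\le |\ox|_{p_s}/2$, and a short check — using that $r\mapsto r^{1/(2s)}$ is subadditive for $s\ge 1/2$, so $\text{dist}_{p_s}$ is a genuine metric — shows every point of the segments $[x',x]\times\{t'\}$ and $\{x\}\times[t',t]$ has $s$-parabolic distance to the origin comparable to $|\ox|_{p_s}$. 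Integrating the Hessian bound along the first segment, with $|t'|\lesssim |\ox|_{p_s}^{2s}$, gives $|\nabla_xP_s(x,t')-\nabla_xP_s(x',t')|\lesssim |x-x'|\cdot|t'|\,|\ox|_{p_s}^{-(n+2s+2)}\lesssim |\ox-\ox'|_{p_s}\,|\ox|_{p_s}^{-(n+2)}$. Along $\{x\}\times[t',t]$, if $0\notin[t',t]$ we integrate the mixed-derivative bound and use $|x|\le|\ox|_{p_s}$, $|t-t'|\le|\ox-\ox'|_{p_s}^{2s}$ to get $|\nabla_xP_s(x,t)-\nabla_xP_s(x,t')|\lesssim |t-t'|\,|\ox|_{p_s}^{-(n+2s+1)}\lesssim |\ox-\ox'|_{p_s}^{2s}\,|\ox|_{p_s}^{-(n+2s+1)}$; if $0\in[t',t]$, one endpoint sits at a nonpositive time where $\nabla_xP_s$ vanishes, so that difference reduces to $|\nabla_xP_s(x,t)|$ or $|\nabla_xP_s(x',t')|$, which the first estimate again bounds by $\lesssim |\ox-\ox'|_{p_s}^{2s}\,|\ox|_{p_s}^{-(n+2s+1)}$ because the relevant time coordinate is then $\lesssim |\ox-\ox'|_{p_s}^{2s}$ in absolute value. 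Finally $|\ox-\ox'|_{p_s}^{2s}=|\ox-\ox'|_{p_s}\,|\ox-\ox'|_{p_s}^{2s-1}\lesssim |\ox-\ox'|_{p_s}\,|\ox|_{p_s}^{2s-1}$ since $2s-1\ge 0$ and $|\ox-\ox'|_{p_s}\le|\ox|_{p_s}$, so the time-direction term is also $\lesssim |\ox-\ox'|_{p_s}\,|\ox|_{p_s}^{-(n+2)}$, and adding the two pieces finishes the proof.

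The routine part is the differentiation plus the $\rho\lesssim 1$ / $\rho\gtrsim 1$ bookkeeping that produces the first three bounds. The main obstacle is the Lipschitz estimate: one must handle the anisotropic parabolic metric carefully (comparability of $|\cdot|_{p_s}$ along the two straight segments), deal with the jump of $\partial_t\nabla_xP_s$ across $t=0$, and — crucially — track the exponent produced by the time increment, which is $|\ox-\ox'|_{p_s}^{2s}$ and can be upgraded to the Lipschitz factor $|\ox-\ox'|_{p_s}$ (at the price of the harmless power $|\ox|_{p_s}^{2s-1}$) exactly when $2s\ge 1$; this is where the hypothesis $s\ge 1/2$ is indispensable.
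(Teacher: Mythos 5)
Your treatment of the first three pointwise bounds is the same as the paper's: write $P_s$ via \eqref{eq2.1}, differentiate using the chain rule and Lemma~\ref{lem2.1}, and trade the $\phi$-bounds \eqref{eq2.2}, \eqref{eq2.5} for $|\ox|_{p_s}$-bounds via the dichotomy $\rho\lessgtr 1$. For the Lipschitz estimate the strategy is also the same --- split through an intermediate point, integrate the second-order bounds along the two legs, handle separately the case where the time leg crosses $\{t=0\}$, and invoke $s\geq 1/2$ to upgrade $|\ox-\ox'|_{p_s}^{2s}$ to $|\ox-\ox'|_{p_s}|\ox|_{p_s}^{2s-1}$ --- but your choice of intermediate point $(x,t')$ (time shift first, then space) is genuinely simpler than the paper's $\widehat{x}=(x',t)$ (space first, then time). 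With your order, the time leg stays at the spatial coordinate $x$ of $\ox$, so you only need $|x|\leq|\ox|_{p_s}$ to close the estimate; and when the time leg straddles $\{t=0\}$ the surviving term $|\nabla_xP_s(x,t)|$ has $|t|\leq|t-t'|\leq|\ox-\ox'|_{p_s}^{2s}$, which plugs straight into the first bound. The paper's time leg sits at $x'$, so they must first bound $|x'|$ from below; in the case $t>0>t'$ this requires the geometric observation that $B(\ox,|\ox|_{p_s}/2)$ meeting $\{t=0\}$ forces $t^{1/(2s)}\lesssim|x|$, hence $|\ox|_{p_s}=|x|$ and then $|x'|\geq|x|/2$ --- a step your intermediate point avoids entirely. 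One minor slip: in the boundary case the surviving term should read $|\nabla_xP_s(x,t)|$ or $|\nabla_xP_s(x,t')|$, not $|\nabla_xP_s(x',t')|$, since the time leg is at fixed spatial coordinate $x$; this is a typo and does not affect the argument.
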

	\begin{proof}
        To simplify the arguments below, we specify the dependence of $P_s$ with respect to $n$. Let us write $P_{s,n+1}$ to refer to the fundamental solution to the $\Theta^s$-equation in $\mathbb{R}^{n+1}$ and use the following abuse of notation: given $\ox=(x_1,\ldots,x_n,t)\in\mathbb{R}^{n+1}$, write
    	\begin{align*}
    		P_{s,n+3}(\ox)&:=P_{s,n+3}(x_1,\ldots,x_n,0,0,t),\\
    		P_{s,n+5}(\ox)&:=P_{s,n+5}(x_1,\ldots,x_n,0,0,0,0,t).
    	\end{align*}
		This way, we directly apply relations \eqref{eq2.1} and \eqref{eq2.5} to obtain for each $t>0$,
		\begin{align*}
			|\nabla_xP_s(\ox)| &\simeq t^{-\frac{n+1}{2s}}|\phi_{n,s}'(|x|t^{-\frac{1}{2s}})| \simeq |xP_{s,n+3}(\ox)| \approx \frac{|xt|}{|\ox|_{p_s}^{n+2s+2}}.
		\end{align*}
		The bounds for $\Delta P_{s,n+1}$ and $\partial_t\nabla_xP_{s,n+1}$ can be obtained from the previous result and \eqref{eq2.1}. Indeed,
        \begin{align*}
            |\Delta P_{s,n+1}(\ox)|&\simeq P_{s,n+3}(\ox)+|x|^2P_{s,n+5}(\ox) \lesssim \frac{|t|}{|\ox|_{p_s}^{n+2s+2}},\\
            |\partial_t\nabla_xP_{s,n+1}(\ox)|&\lesssim  \frac{|x|}{t} \Big( P_{s,n+3}(\ox)+|x|^2P_{s,n+5}(\ox) \Big) \lesssim  \frac{|x|}{|\ox|_{p_s}^{n+2s+2}}.
        \end{align*}
        For the final estimate, we recover the notation $P_s:=P_{s,n+1}$. Let $\ox'=(x',t')\in\mathbb{R}^{n+1}$ with $|\ox-\ox'|_{p_s}\leq |\ox|_{p_s}/2$ and use the definition of $\text{dist}_{p_s}$ to obtain
		\begin{equation}
			\label{new_eq2.7}
			|\ox|_{p_s}\leq 2 |\ox'|_{p_s} \quad \text{and} \quad |x'|\geq |x|-\frac{|\ox|_{p_s}}{2}.
		\end{equation}
		Put $\widehat{x}=(x',t)$ and write
		\begin{equation*}
			|\nabla_xP_s(\ox)-\nabla_xP_s(\ox')|\leq |\nabla_xP_s(\ox)-\nabla_xP_s(\widehat{x})|+|\nabla_xP_s(\widehat{x})-\nabla_xP_s(\ox')|.
		\end{equation*}
		We observe that the first term in the above inequality satisfies the desired bound,
		\begin{equation*}
			|x-x'|\sup_{\xi\in[x,x']}|\Delta P_s(\xi,t)|\lesssim \frac{|x-x'|}{|\ox|_{p_s}^{n+2}}\leq \frac{|\ox-\ox'|_{p_s}^{2\zeta}}{|\ox|_{p_s}^{n+1+2\zeta}}\bigg( \frac{|\ox-\ox'|_{p_s}}{|\ox|_{p_s}} \bigg)^{1-2\zeta}\leq \frac{|\ox-\ox'|_{p_s}^{2\zeta}}{|\ox|_{p_s}^{n+1+2\zeta}}.
		\end{equation*}
		Regarding the second term, assume without loss of generality $t>t'$. If $t'>0$, use $|\ox'|_{p_s}\geq |\ox|_{p_s}/2$ so that we also have
		\begin{align*}
			|t-t'|\sup_{\tau\in[t,t']}|\partial_t\nabla_xP_s(x',\tau)|\lesssim \frac{|t-t'|}{|\ox|_{p_s}^{n+2s+1}}\leq \frac{|\ox-\ox'|_{p_s}^{2\zeta}}{|\ox|_{p_s}^{n+1+2\zeta}}\Bigg(  \frac{|\ox-\ox'|_{p_s}}{|\ox|_{p_s}}\Bigg)^{2s-2\zeta} \lesssim \frac{|\ox-\ox'|_{p_s}^{2\zeta}}{|\ox|_{p_s}^{n+1+2\zeta}},
		\end{align*}
		If $t<0$ then $|\nabla_xP_s(\widehat{x})-\nabla_xP_s(\ox')|=0$, and the estimate becomes trivial. Then, we are left to study the case $t>0$ and $t'<0$. These two conditions imply that the $p_s$-ball
		\begin{equation*}
			B(\ox):=\bigg\{\oy\in \mathbb{R}^{n+1} \,: \, |\ox-\oy|_{p_s}\leq \frac{|\ox|_{p_s}}{2}\bigg\}\ni \ox'
		\end{equation*}
		intersects the hyperplane $\{t=0\}$. Since the radius of $B(\ox)$ also depends on $\ox$, the previous property imposes
		the following condition over $\ox$,
		\begin{equation*}
			t^{1/s}\leq \frac{x_1^2+\cdots +x_n^2}{3}, \quad \text{that is} \quad t^{\frac{1}{2s}}\leq \frac{|x|}{\sqrt{3}},
		\end{equation*}
		which is attained if the point $(x,0)$ belongs to $\partial B(\ox)$. Therefore $|\ox|_{p_s}:=\max\big\{|x|,t^{\frac{1}{2s}}\big\} = |x|$, so by \eqref{new_eq2.7} we get $|x'|\geq |x|/2$, and this in turn implies
		\begin{equation}
			\label{eq2.8}
			\frac{|\ox|_{p_s}}{2}\leq |\ox'|_{p_s}\leq |\ox-\ox'|_{p_s}+|\ox|_{p_s}\leq \frac{3|x|}{2}\leq 3 |x'|.
		\end{equation}
		Using this last inequality we can finally conclude:
		\begin{align*}
			|\nabla_xP_s(\widehat{x})-\nabla_xP_s(\ox')|&=|\nabla_xP_s(x',t)-\nabla_xP_s(x',0)|\lesssim |t|\sup_{\tau\in(0,t]}|\partial_t\nabla_xP_s(x',\tau)|\\
			&\lesssim \frac{|t|}{|x'|^{n+2s+1}} \lesssim \frac{|t|}{|\ox|_{p_s}^{n+2s+1}}\leq \frac{|t-t'|}{|\ox|_{p_s}^{n+2s+1}}\lesssim \frac{|\ox-\ox'|_{p_s}^{2\zeta}}{|\ox|_{p_s}^{n+1+2\zeta}}.
		\end{align*}
	\end{proof}
	
	\begin{thm}
		\label{lem2.3}
		Let $s\in(0,1]$ and $\beta,\gamma\in[0,1)$. Then, for any $\ox\neq 0$ we have,
		\begin{align*}
			\textit{1}\hspace{0.03cm}.&\;\;|(-\Delta)^{\beta}P_s(\ox)|\lesssim_{\beta} \frac{1}{|\ox|_{p_{s}}^{n+2\beta}},\\  \textit{2}\hspace{0.03cm}.&\;\;|(-\Delta)^{\gamma} (-\Delta)^{\beta}P_s(\ox)|\lesssim_{\beta,\gamma} \; \frac{1}{|\ox|_{p_s}^{n+2\beta+2\gamma}},  \hspace{0.5cm}  \textit{3}\hspace{0.03cm}.\;\;|\nabla_x (-\Delta)^{\beta}P_s(\ox)|\lesssim_\beta \frac{|x|}{|\ox|_{p_s}^{n+2\beta+2}}.
		\end{align*}
		Moreover, for any $\ox\neq(x,0)$,
		\begin{align*}
			\textit{4}\hspace{0.03cm}.\;\; |\partial_t (-\Delta)^{\beta}P_s(\ox)|\lesssim_{\beta} \frac{1}{|\ox|_{p_s}^{n+2\beta+2s}}.
		\end{align*}
		Finally, if $\ox'\in\mathbb{R}^{n+1}$ is such that $|\ox-\ox'|_{p_s}\leq |\ox|_{p_s}/2$,
		\begin{align*}
			\textit{5}\hspace{0.03cm}.\;\; |(-\Delta)^{\beta}P_s(\ox)-(-\Delta)^{\beta}P_s(\ox')|\lesssim_{\beta} \frac{|\ox-\ox'|_{p_s}^{2\zeta}}{|\ox|_{p_s}^{n+2\beta+2\zeta}}.
		\end{align*}
	\end{thm}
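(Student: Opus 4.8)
The plan is to reduce every bound to the estimates for $\phi_{n,s}$ and $\psi_{n,s}^{(\beta)}$ already recorded in Lemma~\ref{lem2.1}, \eqref{eq2.2}, \eqref{eq2.5} and \eqref{eq2.6}, by exploiting the $s$-parabolic homogeneity of $P_s$. Since $(-\Delta)^\beta$ acts only in the spatial variable, the rescaling rule for the Fourier transform gives $(-\Delta)^\beta[f(\lambda\,\cdot\,)](x)=\lambda^{2\beta}[(-\Delta)^\beta f](\lambda x)$, so \eqref{eq2.1} yields, for $t>0$,
\[
	(-\Delta)^\beta P_s(x,t)=c_{n,s}\,t^{-\frac{n+2\beta}{2s}}\,\psi_{n,s}^{(\beta)}\!\big(x\,t^{-\frac{1}{2s}}\big),
\]
while $(-\Delta)^\beta P_s\equiv 0$ for $t\le 0$, so every estimate involving only spatial operators is trivially true there. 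Feeding $|\psi_{n,s}^{(\beta)}(y)|\lesssim(1+|y|^2)^{-(n+2\beta)/2}$ (Lemma~\ref{lem2.1}(2), or \eqref{eq2.2} when $\beta=0$) into this identity, together with the elementary simplification $t^{-\frac{n+2\beta}{2s}}(1+|x|^2t^{-1/s})^{-\frac{n+2\beta}{2}}=(|x|^2+t^{1/s})^{-\frac{n+2\beta}{2}}\approx|\ox|_{p_s}^{-(n+2\beta)}$, gives the first estimate at once. Differentiating the displayed identity in $x$ and inserting the bound \eqref{eq2.6} for $\nabla\psi_{n,s}^{(\beta)}$ (which itself comes from $\nabla\psi_{n,s}^{(\beta)}\simeq-y\,\psi_{n+2,s}^{(\beta)}$) gives, after the same simplification, $|\nabla_x(-\Delta)^\beta P_s(x,t)|\lesssim|x|(|x|^2+t^{1/s})^{-\frac{n+2\beta+2}{2}}\approx|x|\,|\ox|_{p_s}^{-(n+2\beta+2)}$, the third estimate.

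For the iterated Laplacian note that $(-\Delta)^\gamma(-\Delta)^\beta=(-\Delta)^{\beta+\gamma}$; the only subtlety is that $\beta+\gamma$ may exceed $1$, so Lemma~\ref{lem2.1}(2) does not apply literally. But its proof uses only the rapid decay of $e^{-4\pi^2 r^{2s}}$ (so that one may differentiate the Bessel integral \eqref{eq2.4} and invoke \cite[Lemma 1]{PruTa}) and the integrability of $|\xi|^{2(\beta+\gamma)}e^{-4\pi^2|\xi|^{2s}}$, neither of which requires the exponent to be $<1$; hence $|\psi_{n,s}^{(\beta+\gamma)}(y)|\lesssim(1+|y|^2)^{-(n+2\beta+2\gamma)/2}$ for $\beta+\gamma\in[0,2)$, and scaling as above yields $|(-\Delta)^{\beta+\gamma}P_s(\ox)|\lesssim|\ox|_{p_s}^{-(n+2\beta+2\gamma)}$. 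For the time derivative, on $\{t>0\}$ the kernel solves the equation, so $\partial_t(-\Delta)^\beta P_s=(-\Delta)^\beta\partial_t P_s=-(-\Delta)^{s+\beta}P_s$ with $s+\beta\in(0,2)$, and the first estimate in this extended form gives $|\partial_t(-\Delta)^\beta P_s(\ox)|\lesssim|\ox|_{p_s}^{-(n+2\beta+2s)}$; for $t<0$ it vanishes and at $t=0$ it is not defined, whence the restriction $\ox\ne(x,0)$.

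For the final, Hölder-type estimate I would follow the proof of Theorem~\ref{C-Z_thm2} line by line, with the bounds just obtained in place of the ones used there. Fixing $s\ge1/2$ and $|\ox-\ox'|_{p_s}\le|\ox|_{p_s}/2$, insert the intermediate point $\widehat x=(x',t)$. The spatial difference is bounded by $|x-x'|\sup_{\xi\in[x,x']}|\nabla_x(-\Delta)^\beta P_s(\xi,t)|\lesssim|\ox-\ox'|_{p_s}|\ox|_{p_s}^{-(n+2\beta+1)}$, using $|\xi|\le|(\xi,t)|_{p_s}$ and $|(\xi,t)|_{p_s}\gtrsim|\ox|_{p_s}$ exactly as in \eqref{new_eq2.7}. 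When $t$ and $t'$ have the same sign (and the case $t\le0$ is trivial), the time difference is bounded by $|t-t'|\sup_\tau|\partial_t(-\Delta)^\beta P_s(x',\tau)|\lesssim|t-t'|\,|\ox|_{p_s}^{-(n+2\beta+2s)}\le|\ox-\ox'|_{p_s}^{2s}|\ox|_{p_s}^{-(n+2\beta+2s)}$, which is $\le|\ox-\ox'|_{p_s}|\ox|_{p_s}^{-(n+2\beta+1)}$ precisely because $s\ge1/2$ and $|\ox-\ox'|_{p_s}/|\ox|_{p_s}\le1/2$.

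I expect the genuine obstacle to be the remaining case $t>0\ge t'$, where the segment joining $\ox$ and $\ox'$ crosses $\{t=0\}$: as in \eqref{eq2.8} the hypothesis then forces $|\ox|_{p_s}=|x|$ and $|x'|\approx|x|$, and since $(-\Delta)^\beta P_s(\ox')=0$ one is left to bound $|(-\Delta)^\beta P_s(x',t)|$ by $|\ox-\ox'|_{p_s}|\ox|_{p_s}^{-(n+2\beta+1)}$. For $\beta=0$ this closes as in Theorem~\ref{C-Z_thm2}, since $P_s(x',t)\lesssim t\,|\ox|_{p_s}^{-(n+2s)}$ and $t\le|\ox-\ox'|_{p_s}^{2s}$; but for $\beta>0$, $(-\Delta)^\beta P_s(x',t)$ does not tend to $0$ as $t\to0^+$ for $x'\ne0$ (the displayed scaling identity shows it converges to a nonzero multiple of $|x'|^{-n-2\beta}$, reflecting the nonlocality of $(-\Delta)^\beta$), so this crossing case must be handled with care --- e.g. by keeping the perturbation inside the half-space $\{t>0\}$, where $(-\Delta)^\beta P_s$ is smooth and the argument above applies verbatim. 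This crossing case, together with the passage of Lemma~\ref{lem2.1} past exponent $1$, is where the real work lies; everything else is bookkeeping on top of Lemma~\ref{lem2.1}.
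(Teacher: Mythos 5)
Your treatment of the first four estimates matches the paper's in substance: the scaling identity $(-\Delta)^\beta P_s(x,t)=c_{n,s}\,t^{-(n+2\beta)/2s}\psi_{n,s}^{(\beta)}(xt^{-1/2s})$ (which the paper obtains by a change of variables in the integral representation of $(-\Delta)^\beta$, you via Fourier scaling), together with the pointwise bounds on $\psi_{n,s}^{(\beta)}$, $\nabla\psi_{n,s}^{(\beta)}$ and on $(-\Delta)^\gamma\psi_{n,s}^{(\beta)}$ via \cite[Lemma 1]{PruTa}, and the identity $\partial_t(-\Delta)^\beta P_s=-(-\Delta)^{\beta+s}P_s$ on $\{t>0\}$. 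You are briefer on the commutativity of $\partial_t$ with $(-\Delta)^\beta$, which the paper justifies carefully by dominated convergence; that is worth writing out, but it is a routine verification.

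The genuine content of your proposal is the observation about the crossing case $t>0>t'$ in the Hölder estimate, and you are right to flag it. The paper simply says one "can follow the same proof" as in Theorem~\ref{C-Z_thm2}, but that argument hinges on the fact that $\nabla_x P_s(x',\tau)\to 0$ as $\tau\to 0^+$, so that $\nabla_x P_s$ is continuous across $\{\tau=0\}$ and the one-sided mean value bound $|\nabla_x P_s(x',t)-\nabla_x P_s(x',0)|\le |t|\sup_{\tau\in(0,t]}|\partial_t\nabla_x P_s(x',\tau)|$ makes sense. For $\beta\in(0,1)$ this fails: from the scaling identity and the decay $\psi_{n,s}^{(\beta)}(y)\sim C_{n,\beta}|y|^{-n-2\beta}$ as $|y|\to\infty$ (with $C_{n,\beta}\ne 0$, since $\widehat{\psi_{n,s}^{(\beta)}}(\xi)=|\xi|^{2\beta}e^{-4\pi^2|\xi|^{2s}}$ has the genuine singular factor $|\xi|^{2\beta}$ at $\xi=0$ and $2\beta$ is not an even integer, hence $(-\Delta)^\beta\delta_0$ coincides away from $0$ with a nonzero multiple of $|x|^{-n-2\beta}$), one gets for fixed $x'\ne 0$
\begin{equation*}
	\lim_{\tau\to 0^+}(-\Delta)^\beta P_s(x',\tau)=c\,|x'|^{-n-2\beta}\ne 0=(-\Delta)^\beta P_s(x',0).
\end{equation*}
So $(-\Delta)^\beta P_s(x',\cdot)$ has a jump at $\tau=0$ for every $x'\ne0$, and the paper's argument does not transfer. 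Worse, the stated inequality is actually false in the crossing regime: take $|x|=1$, $\ox=(x,\varepsilon^{2s})$, $\ox'=(x,-\varepsilon^{2s})$ with $\varepsilon$ small. Then $|\ox|_{p_s}=1$, $|\ox-\ox'|_{p_s}=2^{1/2s}\varepsilon\le 1/2$, $(-\Delta)^\beta P_s(\ox')=0$, while the scaling identity gives $(-\Delta)^\beta P_s(\ox)=c_{n,s}\varepsilon^{-(n+2\beta)}\psi_{n,s}^{(\beta)}(x/\varepsilon)\to c_{n,s}C_{n,\beta}\ne0$ as $\varepsilon\to0$; yet the claimed right-hand side is $\approx\varepsilon\to0$. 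Hence the final bound of Theorem~\ref{lem2.3} cannot hold for $\beta\in(0,1)$ when $t$ and $t'$ have opposite signs. The correct statement should restrict to $t,t'$ on the same side of $\{0\}$ (where your argument closes without further work), and the applications of this estimate in Theorems \ref{thm2.4.9} and \ref{thm2.4.12} need a separate treatment of the crossing contribution — for instance using the first bound $|(-\Delta)^\beta P_s(\ox)|\lesssim|\ox|_{p_s}^{-(n+2\beta)}$ together with the fact that the crossing region sits inside a time slab of thickness $\lesssim r^{2s}$, so the relevant sums still converge. Your suggestion to "keep the perturbation inside $\{t>0\}$'' therefore identifies the correct scope of the lemma but does not by itself repair the theorem or its downstream uses.
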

	
	\begin{proof}
		We shall also assume $\beta>0$, since the case $\beta=0$ is already covered in \cite[Lemma 2.2]{MaPr}. For the sake of notation, in this proof we will write $\phi:=\phi_{n,s}$ and $\psi:=\psi_{n,s}^{(\beta)}$, and we also set $K_\beta:=(-\Delta)^{\beta}P_s$. Let us begin by applying the integral representation of $K_{\beta}$ together with relation \eqref{eq2.1} to obtain for $t>0$,
		\begin{align*}
			K_{\beta}(x,t):=(-\Delta)^{\beta}P_s(x,t)&\simeq_{\beta} \text{p.v.} \int_{\mathbb{R}^n}\frac{P_s(x,t)-P_s(y,t)}{|x-y|^{n+2\beta}}\dd y\\
			&=t^{-\frac{n}{2s}}\, \text{p.v.} \int_{\mathbb{R}^n}\frac{\phi\big( |x|t^{-\frac{1}{2s}} \big)-\phi\big( |y|t^{-\frac{1}{2s}} \big)}{|x-y|^{n+2\beta}}\dd y\\
			&=t^{-\frac{n+2\beta}{2s}}\, \text{p.v.} \int_{\mathbb{R}^n}\frac{\phi\big( |x|t^{-\frac{1}{2s}} \big)-\phi(|z|)}{|xt^{-\frac{1}{2s}}-z|^{n+2\beta}}\dd z\\
			&=t^{-\frac{n+2\beta}{2s}}(-\Delta)^{\beta}\phi\big( |x|t^{-\frac{1}{2s}} \big)=t^{-\frac{n+2\beta}{2s}}\psi\big( xt^{-\frac{1}{2s}} \big)
		\end{align*}
		Using the estimate proved in Lemma \ref{lem2.1} for $\psi$ we deduce the desired bound:
		\begin{equation*}
			\big\rvert K_{\beta}(x,t) \big\rvert\lesssim_\beta \frac{t^{-\frac{n+2\beta}{2s}}}{\big( 1+|x|^2t^{-1/s} \big)^{(n+2\beta)/2}}=\frac{1}{\big( t^{1/s}+|x|^2 \big)^{(n+2\beta)/2}}\approx \frac{1}{|\ox|_{p_s}^{n+2\beta}}.
		\end{equation*}
		We shall continue by studying estimate \textit{2} in a similar way. Indeed,
		\begin{align*}
			(-\Delta)^{\gamma}K_\beta(x,t)&\simeq_{\gamma} \text{p.v.}\int_{\mathbb{R}^n}\frac{K_\beta(x,t)-K_\beta(y,t)}{|x-y|^{n+2\gamma}}\dd y\\
			&\simeq_{\beta}t^{-\frac{n+2\beta}{2s}}\,\text{p.v.}\int_{\mathbb{R}^n}\frac{\psi\big( xt^{-\frac{1}{2s}} \big)-\psi\big( yt^{-\frac{1}{2s}} \big)}{|x-y|^{n+2\gamma}}\dd y\\
			&=t^{-\frac{n+2\beta+2\gamma}{2s}}\, \text{p.v.}\int_{\mathbb{R}^n}\frac{\psi\big( xt^{-\frac{1}{2s}} \big)-\psi(z)}{|xt^{-\frac{1}{2s}}-z|^{n+2\gamma}}\dd z=t^{-\frac{n+2\beta+2\gamma}{2s}}(-\Delta)^{\gamma}\psi\big( xt^{-\frac{1}{2s}} \big).
		\end{align*}
		Set $\Psi:=(-\Delta)^{\gamma}\psi(\,\cdot\,)$ and notice that
		\begin{equation*}
			\widehat{\Psi}(\xi)=|\xi|^{2\gamma}|\xi|^{2\beta}e^{-4\pi^2|\xi|^{2s}}=|\xi|^{2\beta+2\gamma}e^{-4\pi^2|\xi|^{2s}}.
		\end{equation*}
		Thus, since $\widehat{\Psi}$ is integrable, $\Psi$ is the radial bounded function in $\mathbb{R}^n$ given by
		\begin{equation*}
			\Psi(z) = 2\pi|z|^{1-n/2}\int_0^\infty e^{-4\pi^2 r^{2s}}r^{n/2+2\beta+2\gamma}J_{n/2-1}(2\pi r|z|)\dd r,
		\end{equation*}
		By \cite[Lemma 1]{PruTa} $\Psi$ decays as
		\begin{equation*}
			|\Psi(z)|=O\big(|z|^{-n-2\beta-2\gamma}\big), \hspace{0.5cm} \text{for }\; |z| \;\text{ large}.
		\end{equation*}
		Therefore
		\begin{equation*}
			|\Psi(z)|\lesssim_{\beta,\gamma} \big( 1+|z|^2 \big)^{-(n+2\beta+2\gamma)/2}.
		\end{equation*}
		So analogously to the proof of \textit{1}, we deduce the desired result:
		\begin{equation*}
			\big\rvert (-\Delta)^\gamma K_\beta(x,t) \big\rvert\lesssim_{\beta,\gamma} \frac{t^{-\frac{n+2\beta+2\gamma}{2s}}}{\big( 1+|x|^2t^{-1/s} \big)^{(n+2\beta+2\gamma)/2}} \approx \frac{1}{|\ox|_{p_s}^{n+2\beta+2\gamma}}.
		\end{equation*}
		
		Regarding estimate \textit{3}, notice that
		\begin{equation*}
			|\nabla_x K_\beta(x,t)|\simeq_{\beta} \Big\rvert \nabla_x \Big( t^{-\frac{n+2\beta}{2s}}\psi\big( xt^{-\frac{1}{2s}} \big)\Big)\Big\rvert =t^{-\frac{n+2\beta+1}{2s}}\big\rvert\nabla\psi\big( xt^{-\frac{1}{2s}} \big)\big\rvert.
		\end{equation*}
		Therefore, applying the bound obtained for $\nabla \psi$ in \eqref{eq2.6} we deduce
		\begin{align*}
			\big\rvert \nabla_x K_\beta(x,t) \big\rvert\lesssim_{\beta} t^{-\frac{n+2\beta+1}{2s}} \frac{|x|t^{-\frac{1}{2s}}}{\big( 1+|x|^2t^{-1/s} \big)^{(n+2\beta+1)/2}}&\approx \frac{|x|}{|\ox|_{p_s}^{n+2\beta+2}}.
		\end{align*}
		We move on to estimate \textit{4}, that is, the one concerning $\partial_tK_\beta(\ox)$ at points of the form $\ox\neq (x,0)$. Observe that the previous derivative is well defined if $t>0$, since the expression of $K_\beta$ can be written as 
		\begin{align*}
			K_\beta(x,t\big) &\simeq t^{-\frac{n+2\beta}{2s}}(-\Delta)^{\beta}\phi\big( |x|t^{-\frac{1}{2s}} \big) \\
			&\simeq_{\beta} |x|^{1-n/2}\bigg( \frac{1}{t^{\frac{n+4\beta+2}{4s}}} \int_0^\infty e^{-4\pi^2r^{2s}}r^{n/2+2\beta}J_{n/2-1}(2\pi r|x|t^{-\frac{1}{2s}})\dd r\bigg),
		\end{align*}
		so differentiating under the integral sign, it is clear that temporal derivatives of any order exist in $\mathbb{R}^{n+1}\setminus{\{t=0\}}$. We claim now that the operators $\partial_t$ and $(-\Delta)^{\beta}$ commute when applied to $P_s$. To prove this, let us first observe that for each $t_0>0$ fixed we have
		\begin{align*}
			\big[ (-\Delta)^{\beta}\big( \partial_t P_s \big) \big]^\wedge(\xi, t_0)=|\xi|^{2\beta}  \widehat{\partial_t P_s}(\xi, t_0)=|\xi|^{2\beta}\int_{\mathbb{R}^n}e^{-2\pi i\langle x,\xi \rangle}\partial_tP_s(x,t_0)\dd x.
		\end{align*}
		If we can bound $\partial_t P_s$ by an integrable function on $\mathbb{R}^n$ in a neighborhood of $t_0$, we will be able to locally differentiate outside the integral sign for each $t_0$. If $0<s<1$, this is a consequence of \cite[Equation 2.6]{Va} and \eqref{eq2.2}. Indeed,
		\begin{equation*}
			|\partial_tP_s(x,t_0)|\lesssim \frac{1}{t_0}|P_s(x,t_0)|\lesssim \frac{1}{t_0^{\frac{n+2s}{2s}}}\Bigg[ \frac{1}{\big( 1+|x|^2t_0^{-1/s} \big)^{(n+2s)/2}} \Bigg].
		\end{equation*}
		On the other hand, if $s=1$ by definition we have
		\begin{equation*}
			|\partial_tW(x,t_0)|\lesssim \Bigg( 1+\frac{|x|^2}{t_0} \Bigg)\frac{1}{t_0^{n/2+1}}e^{-|x|^2/(4t_0)}.
		\end{equation*}
		In both cases we obtain a bounded function of $x$ that decreases like $|x|^{-n-2}$ at infinity (for the case $s=1$, see \cite[Lemma 2.1]{MaPrTo}) and thus it is integrable on $\mathbb{R}^n$. Therefore, differentiating outside the integral sign we have
		\begin{equation*}
			\big[ (-\Delta)^{\beta}\big( \partial_t P_s \big) \big]^\wedge(\xi, t_0) = \partial_t\big[ (-\Delta)^{\beta}P_s \big]^\wedge(\xi, t_0), \qquad \forall t_0>0.
		\end{equation*}
		So we are left to check whether we can enter $\partial_t$ inside the previous Fourier transform, that is, whether the following holds
		\begin{equation*}
			\partial_t\big[ (-\Delta)^{\beta}P_s \big]^\wedge(\xi, t_0)=\big[\partial_t (-\Delta)^{\beta}P_s \big]^\wedge(\xi, t_0).
		\end{equation*}
		Again, the latter is just a matter of being able to bound $|\partial_t (-\Delta)^{\beta}P_s|=|\partial_tK_\beta|$ locally for each $t_0>0$ by an integrable function, so that we can differentiate under the integral defining the Fourier transform. We know that
		\begin{align*}
			|\partial_tK_\beta(x,t_0)| &= \Big\rvert \partial_t\Big[ t^{-\frac{n+2\beta}{2s}}\psi\big( xt^{-\frac{1}{2s}} \big) \Big]_{t=t_0}\Big\rvert \\
			&\hspace{3cm}\lesssim_{\beta} C_1(t_0)\big\rvert \psi\big( xt_0^{-\frac{1}{2s}} \big) \big\rvert + C_2(t_0)|x|\big\rvert \nabla \psi\big( xt_0^{-\frac{1}{2s}} \big) \big\rvert.
		\end{align*}
		For the first summand, using  that $|\psi|$ is bounded and decays as $|x|^{-n-2\beta}$, we deduce the desired integrability condition. For the second summand we can argue exactly in the same manner, using that $|\nabla \psi|$ is bounded and decays as $|x|^{-n-2\beta-1}$. Hence, we conclude that $\partial_t$ and $(-\Delta)^{\beta}$ commute.
        
		The previous commutativity relation and \cite[Eq.
		2.5]{MaPr} yield the following for $t>0$,
		\begin{align*}
			\partial_tK_\beta(x,t)&=\partial_t\big[(-\Delta)^{\beta}P_s\big](x,t)=(-\Delta)^{\beta}\big( \partial_tP_s \big)(x,t)\\
			&=(-\Delta)^{\beta}\Big[ -(-\Delta)^s P_s \Big](x,t)=-(-\Delta)^{s}K_\beta(x,t),
		\end{align*}
		where we have commuted the operators $(-\Delta)^{s}$ and $(-\Delta)^{\beta}$, that can be easily checked via their Fourier transform. Then, applying \textit{2} with $\gamma=s$ we are done.
        
		Finally, regarding estimate \textit{5}, we can follow the same proof to that presented for the last estimate in Theorem \ref{C-Z_thm2}, using estimates \textit{3} and \textit{4} from above.
	\end{proof}
	
	\subsection{Estimates for \mathinhead{\partial_t^{\beta}P_s}{}}
	
	In this subsection we obtain similar estimates now for the kernel $\partial_t^{\beta}P_s$, with $\beta\in(0,1)$. Recall that the $\beta$\textit{-temporal derivative} of $f:\mathbb{R}^{n+1}\to\mathbb{R}$ is defined, provided it exists, as
	\begin{equation*}
		\partial_t^{\beta}f(x,t):=\int_{\mathbb{R}}\frac{f(x,\tau)-f(x,t)}{|\tau-t|^{1+s}}\dd \tau.
	\end{equation*}
	The study below considers the cases $s<1$ and $s=1$ separately. In the following theorem, which generalizes \cite[Lemma 2.2]{MaPr}, we get dimensional restrictions that in the end will not matter for our purposes.
	
	\begin{thm}
		\label{lem2.4}
		Let $\beta,s\in(0,1)$. Then, the following hold for any $\ox=(x,t)\neq (0,t):$
		\begin{align*}
			\textit{1}\hspace{0.03cm}.&  \;\; \text{If}\hspace{0.25cm} n>1, \hspace{0.5cm} |\partial_t^{\beta}P_s(\ox)|\lesssim_{\beta} \frac{1}{|x|^{n-2s}|\ox|_{p_s}^{2s(1+\beta)}},\\
			\textit{2}\hspace{0.03cm}.&\;\; \text{If}\hspace{0.25cm} n=1 \text{ and }\, \beta>1-\frac{1}{2s}, \hspace{0.5cm} |\partial_t^{\beta}P_s(\ox)|\lesssim_{\beta,\alpha} \frac{1}{|x|^{1-2s+\alpha}|\ox|_{p_s}^{2s(1+\beta)-\alpha}}, \;\;\forall \alpha\in(2s-1,4s).
		\end{align*}
		Moreover, for every $n$,
		\begin{equation*}
			\textit{3}\hspace{0.03cm}.\;\;|\nabla_x \partial_t^{\beta}P_s(\ox)|\lesssim_{\beta} \frac{1}{|x|^{n-2s+1}|\ox|_{p_s}^{2s(1+\beta)}}, \hspace{0.5cm}  \textit{4}\hspace{0.03cm}.\;\;|\partial_t \partial_t^{\beta}P_s(\ox)|\lesssim_{\beta} \frac{1}{|x|^{n}|\ox|_{p_s}^{2s(1+\beta)}}, \hspace{0.5cm} \text{for} \hspace{0.35cm} t\neq 0.
		\end{equation*}
		Finally, if $\ox'\in\mathbb{R}^{n+1}$ is such that $|\ox-\ox'|_{p_s}\leq |x|/2$,
		\begin{align*}
			\textit{5}\hspace{0.03cm}.\;\; |\partial_t^{\beta}P_s(\ox)-\partial_t^{\beta}P_s(\ox')|\lesssim_{\beta} \frac{|\ox-\ox'|_{p_s}^{2\zeta}}{|x|^{n+2\zeta-2s}|\ox|_{p_s}^{2s(1+\beta)}}.
		\end{align*}
	\end{thm}
	\begin{proof}
		To prove \textit{1}, we use \cite[Equation 2.9]{MaPr} and deduce the existence of a function $F_s$ such that for $t>0$,
		\begin{equation}
			\label{eq2.9}
			P_s(x,t)=\frac{1}{|x|^n}F_s\bigg( \frac{t}{|x|^{2s}} \bigg),
		\end{equation}
		and such that
		\begin{equation}
			\label{eq2.10}
			F_s(u)\approx \frac{u}{\big( 1+u^{1/s} \big)^{(n+2s)/2}}.
		\end{equation}
		We extend continuously $F_s(u):=0$ for $u\leq 0$, so that \eqref{eq2.9} is verified for any value of $t$. The existence of $F_s$ is clear, since for $t>0$ the function $P_s$ can be written as
		\begin{equation*}
			P_s(x,t)=\frac{1}{|x|^n}\bigg( \frac{t}{|x|^{2s}} \bigg)^{-\frac{n}{2s}}\phi_{n,s}\Bigg[ \bigg( \frac{t}{|x|^{2s}} \bigg)^{-\frac{1}{2s}} \Bigg],
		\end{equation*}
		and defining for $u>0$, $F_s(u):=u^{-\frac{n}{2s}}\phi_{n,s}\big(u^{-\frac{1}{2s}}\big)$, we are done. Notice that $F_s$ is a bounded continuous function, null for negative values of $u$, smooth in the domain $u>0$ and vanishing at $\infty$. Moreover, using the bounds obtained for $\phi'$ and $\phi''$ we obtain the following estimates for $u>0$,
		\begin{equation}
			\label{eq2.11}
			|F_s'(u)|\lesssim \frac{1}{\big( 1+u^{1/s} \big)^{(n+2s)/2}}, \qquad |F_s''(u)|\lesssim \frac{1}{u\big( 1+u^{1/s} \big)^{(n+2s)/2}}
		\end{equation}
		Let us argue that, in fact, $|F_s''(u)|$ is also a bounded function. Notice that, by definition,
		\begin{equation*}
			\partial_\tau^2P_s(x,\tau)=\frac{1}{|x|^{n+4s}}F_s''\bigg( \frac{\tau}{|x|^{2s}} \bigg) \hspace{0.5cm} \Leftrightarrow \hspace{0.5cm} \bigg\rvert F_s''\bigg( \frac{\tau}{|x|^{2s}} \bigg) \bigg\rvert = |x|^{n+4s}\big\rvert \partial_\tau^2P_s(x,\tau)\big\rvert,
		\end{equation*}
		and using that $P_s$ is the fundamental solution to the $\Theta^s$-equation and that $\tau>0$, we have
		\begin{align*}
			\partial_\tau^2P_s(x,\tau)=\partial_\tau\big[-(-\Delta)^s P_s(x,\tau)  \big].
		\end{align*}
		By the commutativity of $\partial_\tau$ and $(-\Delta)^s$, we deduce
		\begin{equation*}
			\big\rvert \partial_\tau^2P_s(x,\tau) \big\rvert = \big\rvert(-\Delta)^s \big[ \partial_\tau P_s (x,\tau) \big]\big\rvert = \big\rvert(-\Delta)^s \big[ -(-\Delta)^s P_s (x,\tau) \big]\big\rvert\lesssim \frac{1}{|\ox|_{p_s}^{n+4s}}.
		\end{equation*}
		Therefore,
		\begin{align*}
			\bigg\rvert F_s''\bigg( \frac{\tau}{|x|^{2s}} \bigg) \bigg\rvert \lesssim \frac{|x|^{n+4s}}{|\ox|_{p_s}^{n+4s}}=\frac{1}{\max\Big\{ 1,\big(\tau/|x|^{2s}\big)^{1/{(2s)}} \Big\}^{n+4s}}\lesssim \frac{1}{\Big[ 1+ \big(\tau/|x|^{2s}\big)^{1/{s}}\Big]^{(n+4s)/2}},
		\end{align*}
		that implies the following (improved) bound for $F''_s$,
		\begin{equation}
			\label{eq2.12}
			|F''_s(u)|\lesssim \frac{1}{\big( 1+u^{1/s} \big)^{(n+4s)/2}}\leq 1, \qquad u>0.
		\end{equation}
		We continue by observing that by a change of variables the following holds,
		\begin{align}
			\partial_{t}^{\beta}P_s(x,t)&=\frac{1}{|x|^n}\bigg[ \partial_t^{\beta} F_s\bigg( \frac{\cdot}{|x|^{2s}} \bigg) \bigg](t) =\frac{1}{|x|^{n+2s\beta}}\partial^{\beta} F_s\bigg( \frac{t}{|x|^{2s}} \bigg). \label{eq2.13}
		\end{align}
		We shall prove the following inequality,
		\begin{equation}
			\big\rvert\partial^{\beta}F_s(u)\big\rvert\lesssim_{\beta} \min\bigg\{ 1,\frac{1}{|u|^{1+\beta}} \bigg\},\label{eq2.14}
		\end{equation}
		where for $u=0$ is just asking for $\big\rvert\partial^{\beta}F_s(0)\big\rvert$ to be bounded. To verify \eqref{eq2.14} we distinguish whether if $u=0$, $u<0$ or $u>0$. For $u=0$ observe that by definition and relation \eqref{eq2.10},
		\begin{align*}
			\big\rvert\partial^{\beta}F_s(0)\big\rvert &\leq \int_{\mathbb{R}}\frac{|F_s(0)-F_s(w)|}{|0-w|^{1+\beta}}\dd w=\int_0^\infty\frac{|F_s(w)|}{w^{1+\beta}}\dd w\\
			&\lesssim_{\beta} \int_0^\infty\frac{1}{w^{1+\beta}}\frac{w}{\big( 1+w^{1/s} \big)^{(n+2s)/2}}\dd w\\
			&=\int_0^1 \frac{\dd w}{w^{\beta}\big( 1+w^{1/s} \big)^{(n+2s)/2}} +\int_1^\infty \frac{\dd w}{w^{\beta}\big( 1+w^{1/s} \big)^{(n+2s)/2}}\\
			&\approx \int_0^1\frac{\dd w}{w^{\beta}}+\int_1^\infty\frac{\dd w}{w^{\frac{n}{2s}+1+\beta}}\lesssim (1-\beta)^{-1}+\bigg(\frac{n}{2s}+\beta\bigg)^{-1}\lesssim_{\beta} 1,
		\end{align*}
		so case $u=0$ is done. Let us assume $u<0$, so that
		\begin{align*}
			\big\rvert\partial ^{\beta}F_s(u)\big\rvert &\leq \int_{\mathbb{R}}\frac{|F_s(w)|}{||u|+w|^{1+\beta}}\dd w\lesssim \int_0^\infty\frac{1}{(|u|+w)^{1+\beta}}\frac{w}{\big( 1+w^{1/s} \big)^{(n+2s)/2}}\dd w.
		\end{align*}
		On the one hand notice that the since $|u|+w>w$, the previous expression is bounded by a constant depending on $n,s$ and $\beta$ (by the same arguments given for the case $u=0$). On the other hand, observe that
		\begin{align*}
			\big\rvert\partial^{\beta}F_s(u)\big\rvert &\lesssim \frac{1}{|u|^{1+\beta}}\int_0^\infty\frac{1}{(w/|u|+1)^{1+\beta}}\frac{w}{\big( 1+w^{1/s} \big)^{(n+2s)/2}}\dd w\\
			&=\frac{1}{|u|^{1+\beta}}\Bigg[ \int_0^1\frac{1}{(w/|u|+1)^{1+\beta}}\frac{w}{\big( 1+w^{1/s} \big)^{(n+2s)/2}}\dd w \\
			&\hspace{2cm}+ \int_1^\infty\frac{1}{(w/|u|+1)^{1+\beta}}\frac{w}{\big( 1+w^{1/s} \big)^{(n+2s)/2}}\dd w  \Bigg] =:\frac{1}{|u|^{1+\beta}}(I_1+I_2).
		\end{align*}
		Regarding $I_1$, since the denominators are bigger than $1$, we directly have
		\begin{equation}
			\label{eq2.15}
			I_1\lesssim \int_0^1w\,\dd w\leq 1
		\end{equation}
		Turning to $I_2$, we similarly obtain
		\begin{align}
			\label{eq2.16}
			I_2\leq \int_1^\infty \frac{w}{\big( 1+w^{1/s} \big)^{(n+2s)/2}}\dd w\leq \int_1^\infty \frac{\dd w}{w^{\frac{n}{2s}}}=w^{-\frac{n}{2s}+1}\bigg\rvert_1^\infty=1,
		\end{align}
		where notice that $-\frac{n}{2s}+1<0$ because $n>1$ and $s<1$. Therefore, we also have  $\big\rvert\partial^{\beta}F_s(u)\big\rvert\lesssim |u|^{-1-\beta}$ and we conclude that for $u\leq 0$,
		\begin{equation*}
			\big\rvert\partial^{\beta}F_s(u)\big\rvert\lesssim_{\beta}\min\bigg\{ 1,\frac{1}{|u|^{1+\beta}} \bigg\}.
		\end{equation*}
		Let us finally assume $u>0$. Begin by writing
		\begin{align*}
			\big\rvert\partial^{\beta}F_s(u)\big\rvert &\leq \int_{|w|\leq u/2} \frac{|F_s(w)-F_s(u)|}{|w-u|^{1+\beta}}\dd w+\int_{u/2\leq |w|\leq 2u} \frac{|F_s(w)-F_s(u)|}{|w-u|^{1+\beta}}\dd w\\
			&\hspace{1cm}+\int_{|w|> 2u} \frac{|F_s(w)-F_s(u)|}{|w-u|^{1+\beta}}\dd w=:I_1+I_2+I_{3}.
		\end{align*}
		We study each of the previous integrals separately. Concerning the first, notice that in its domain of integration $u/2\leq |w-u|\leq 3u/2$, i.e. $|w-u|\approx u$. We split it as follows
		\begin{equation*}
			I_1=\int_{-u/2}^0\frac{|F_s(u)|}{|w-u|^{1+\beta}}\dd w+\int_{0}^{u/2}\frac{|F_s(w)-F_s(u)|}{|w-u|^{1+\beta}}\dd w=:I_{11}+I_{12}.
		\end{equation*}
		Observe that $I_{11}$ can be estimated by
		\begin{equation*}
			I_{11}\lesssim \frac{u}{\big( 1+u^{1/s} \big)^{(n+2s)/2}}\int_{-u/2}^0\frac{\dd w}{|u|^{1+\beta}}\simeq_{\beta} \frac{u^{1-\beta}}{\big( 1+u^{1/s} \big)^{(n+2s)/2}}.
		\end{equation*}
		The expression of the right, viewed as a continuous function of $u$, tends to zero as $u\to 0$ and decays as $|u|^{-\beta-\frac{n}{2s}}$ as $u\to\infty$. Hence, it is bounded by a constant (depending on $n,s$ and $\beta$) and so $I_{11}\lesssim_{\beta} 1$. On the other hand, to prove that $I_{11}\lesssim_{\beta} |u|^{-1-\beta}$ it suffices to check that the following expression is bounded by a constant,
		\begin{equation*}
			\frac{u^{2}}{\big( 1+u^{1/s} \big)^{(n+2s)/2}}\approx u F_s(u).
		\end{equation*}
		Again, it is clear it that tends to zero as $u\to 0$, but observe that it behaves as $|u|^{-\frac{n}{2s}+1}$ as $u\to\infty$, which vanishes only if $n>2s$, that is, only if $n>1$, since $s<1$. But this is satisfied by hypothesis. Therefore we deduce $I_{11}\lesssim_{\beta} \min\{1,|u|^{-1-\beta}\}$.
		Regarding $I_{12}$ proceed in a similar manner to obtain
		\begin{align*}
			I_{12}\lesssim_{\beta} \frac{1}{u^{1+\beta}}\int_0^{u/2}\frac{w}{\big( 1+w^{1/s} \big)^{(n+2s)/2}}\dd w +\frac{u^{1-\beta}}{\big( 1+u^{1/s} \big)^{(n+2s)/2}}.
		\end{align*}
		The second summand has already been studied in $I_{11}$. Regarding the first, notice that
		\begin{align*}
			\int_0^{u/2}\frac{w}{\big( 1+w^{1/s} \big)^{(n+2s)/2}}\dd w&\leq \int_0^{1}\frac{w}{\big( 1+w^{1/s} \big)^{(n+2s)/2}}\dd w+\int_1^{\infty}\frac{w}{\big( 1+w^{1/s} \big)^{(n+2s)/2}}\dd w\\
			&\leq \int_0^1w\,\dd w+\int_1^\infty\frac{\dd w}{w^{\frac{n}{2s}}}\lesssim 1,
		\end{align*}
		where we have applied the same arguments as in \eqref{eq2.15} and \eqref{eq2.16}. On the other hand, by applying the following inequality for $w>0$,
		\begin{equation*}
			(1+w^{1/s})^{(n+2s)/2}> w^{1-\beta},
		\end{equation*}
		that can be checked by a direct computation, we deduce
		\begin{align*}
			\int_0^{u/2}\frac{w}{\big( 1+w^{1/s} \big)^{(n+2s)/2}}\dd w&<  \int_0^{u/2} w^{\beta}\dd w\lesssim_{\beta} u^{1+\beta}.
		\end{align*}
		Therefore we conclude
		\begin{equation*}
			I_{12}\lesssim_{\beta} \frac{1}{u^{1+\beta}}\min\Big\{ 1,u^{1+\beta} \Big\}+\min\bigg\{ 1,\frac{1}{u^{1+\beta}} \bigg\}= 2\min\bigg\{ 1,\frac{1}{u^{1+\beta}}\bigg\},
		\end{equation*}
		that implies the desired estimate for $I_1$.
        
		Moving on to $I_2$, we split it as follows
		\begin{equation*}
			I_2=\int_{-2u}^{-u/2}\frac{|F_s(u)|}{|w-u|^{1+\beta}}\dd w+\int_{u/2}^{2u}\frac{|F_s(w)-F_s(u)|}{|w-u|^{1+\beta}}\dd w=:I_{21}+I_{22}.
		\end{equation*}
		The study of $I_{21}$ is exactly the same as the one presented for $I_{11}$, so we focus on $I_{22}$. Apply the mean value theorem to obtain
		\begin{equation*}
			I_{22}\leq \sup_{\nu\in[u/2,2u]}|F_s'(\nu)|\int_{u/2}^{2u}\frac{\dd w}{|w-u|^{\beta}}\lesssim_{\beta} \sup_{\nu\in[u/2,2u]}|F_s'(\nu)| \, u^{1-\beta}.
		\end{equation*}
		Therefore, if we are able to bound $|F'_s|$ by $u^{\beta-1}$ and $u^{-2}$ we will be done. But recalling relation \eqref{eq2.11}, this is equivalent to proving that the following functions are bounded by a constant:
		\begin{equation}
			\label{eq2.17}
			\frac{u^{\beta-1}}{\big( 1+u^{1/s} \big)^{(n+2s)/2}}, \hspace{0.75cm} \frac{u^{2}}{\big( 1+u^{1/s} \big)^{(n+2s)/2}},
		\end{equation}
		that has already been done in $I_{11}$. Therefore, we are only left to study $I_{3}$,
		\begin{align*}
			I_{3}=\int_{-\infty}^{-2u}\frac{|F_s(u)|}{|w-u|^{1+\beta}}\dd w+\int_{2u}^{\infty}\frac{|F_s(w)-F_s(u)|}{|w-u|^{1+\beta}}\dd w=:I_{31}+I_{32}.
		\end{align*}
		To deal with $I_{31}$ we first notice that in the domain of integration $|w-u|\approx |w|$, implying
		\begin{align*}
			I_{31}\approx \frac{u}{\big( 1+u^{1/s} \big)^{(n+2s)/2}}\int_{-\infty}^{-2u}\frac{\dd w}{|w|^{1+\beta}}\lesssim_{\beta} \frac{u^{1-\beta}}{\big( 1+u^{1/s} \big)^{(n+2s)/2}}\lesssim_{\beta} \min\bigg\{ 1,\frac{1}{u^{1+\beta}}\bigg\}
		\end{align*}
		We study $I_{32}$ by splitting it as
		\begin{equation*}
			I_{32}\leq \int_{2u}^{\infty}\frac{|F_s(w)|}{|w-u|^{1+\beta}}\dd w+\int_{2u}^{\infty}\frac{|F_s(u)|}{|w-u|^{1+\beta}}\dd w.
		\end{equation*}
		The second summand is tackled in exactly the same way as $I_{31}$, so we focus on the first one. Using that $|w-u|\approx |w|\gtrsim u$, we have
		\begin{align*}
			\int_{2u}^{\infty}\frac{|F_s(w)|}{|w-u|^{1+\beta}}\dd w&\lesssim \frac{1}{u^{1+\beta}}\int_{2u}^{\infty}\frac{w}{\big( 1+w^{1/s} \big)^{(n+2s)/2}}\dd w \\
			&\leq \frac{1}{u^{1+\beta}}\Bigg[\int_0^1w\,\dd w+\int_1^\infty\frac{\dd w}{w^{\frac{n}{2s}}} \Bigg] \lesssim \frac{1}{u^{1+\beta}},
		\end{align*}
		by the same arguments used in \eqref{eq2.15} and \eqref{eq2.16}. On the other hand, we also have
		\begin{align*}
			\int_{2u}^{\infty}\frac{|F_s(w)|}{|w-u|^{1+\beta}}\dd w&\lesssim \int_{2u}^\infty \frac{1}{w^{1+\beta}}\frac{w}{\big( 1+w^{1/s} \big)^{(n+2s)/2}}\dd w\leq \int_0^1\frac{\dd w}{w^{\beta}}+\int_{1}^\infty\frac{\dd w}{w^{\frac{n}{2s}}}.
		\end{align*}
		We already know that the second integral is bounded by a constant for $n>1$, while the first one is also bounded, since $0<\beta<1$. So we conclude that $I_{32}\lesssim_{\beta} \min\{1,|u|^{-1-\beta}\}$ and we obtain the desired bound for $I_{3}$ and thus for $|\partial^{\beta} F_s(u)|$ if $u>0$.
        
		All in all, returning to \eqref{eq2.13}, we finally have
		\begin{align*}
			|\partial_t^{\beta}P_s(x,t)|&=\frac{1}{|x|^{n+2s\beta}}\bigg\rvert \partial^{\beta} F_s\bigg( \frac{t}{|x|^{2s}} \bigg)\bigg\rvert \lesssim_{\beta} \frac{1}{|x|^{n+2s\beta}} \min\bigg\{ 1, \frac{|x|^{2s(1+\beta)}}{|t|^{1+\beta}} \bigg\}\\
			&=\frac{1}{|x|^{n-2s}} \min\bigg\{ \frac{1}{|x|^{2s(1+\beta)}}, \frac{1}{|t|^{\frac{2s(1+\beta)}{2s}}} \bigg\}=\frac{1}{|x|^{n-2s}|\ox|_{p_s}^{2s(1+\beta)}},
		\end{align*}
		that is estimate \textit{1} in the statement of the lemma.
        
		In order to prove \textit{2}, we follow the same scheme. Indeed, the desired estimate follows once we prove 
		\begin{equation*}
			\big\rvert\partial^{\beta}F_s(u)\big\rvert\lesssim_{\beta,\alpha} \min\bigg\{ 1,\frac{1}{|u|^{1+\beta-\frac{\alpha}{2s}}} \bigg\}, \qquad \text{for }\, 2s-1<\alpha<4s.
		\end{equation*}
		If one followed the same arguments used to prove \textit{1}, in the regime $u<0$ one already encounters a first bound for which dimension $n=1$ is troublesome, namely when trying to obtain $|\partial^{\beta}F_s(u)|\lesssim |u|^{-1-\beta+\frac{\alpha}{2s}}$. However, in our current setting we observe that
		\begin{align*}
			\int_0^\infty&\frac{1}{(w+|u|)^{1+\beta}}\frac{w}{\big( 1+w^{1/s} \big)^{(n+2s)/2}}\dd w \\
			&\hspace{-0.5cm}\quad\lesssim \frac{1}{|u|^{1+\beta-\frac{\alpha}{2s}}}\int_0^\infty \frac{1}{(w/|u|+1)^{1+\beta-\frac{\alpha}{2s}}}\frac{1}{(w+|u|)^{\frac{\alpha}{2s}}}\frac{w}{\big( 1+w^{1/s} \big)^{(n+2s)/2}}\dd w \\
			&\hspace{-0.5cm}\quad\lesssim \frac{1}{|u|^{1+\beta-\frac{\alpha}{2s}}}\bigg(\int_0^1 w^{1-\frac{\alpha}{2s}}\dd w+ \int_1^\infty \frac{\dd w }{w^{\frac{n+\alpha}{2s}}}\bigg)\lesssim_{\beta,\alpha} \frac{1}{|u|^{1+\beta-\frac{\alpha}{2s}}}, \quad \text{since $2s-1<\alpha<4s$},
		\end{align*}
		so the desired bound for $|\partial^{\beta}F_s(u)|$ follows. For the case $u>0$ we also proceed analogously. Let us comment those steps where the hypotheses on $\alpha$ and $\beta$ come into play. In $I_1$, using the same notation as for the case $n>1$, we obtain the estimates
		\begin{equation*}
			I_{11}\lesssim_{\beta} \frac{u^{1-\beta}}{\big( 1+u^{1/s} \big)^{(n+2s)/2}} \qquad \text{and} \qquad I_{12}\lesssim_{\beta} \frac{1}{u^{1+\beta}}\int_0^{u/2}\frac{w}{\big( 1+w^{1/s} \big)^{(n+2s)/2}}\dd w,
		\end{equation*}
		expression that we already know to be bounded by a constant. To prove that $I_{11}\lesssim_{\beta,\alpha} |u|^{-1-\beta+\frac{\alpha}{2s}}$ observe that the function
		\begin{equation*}
			\frac{u^{2-\frac{\alpha}{2s}}}{\big( 1+u^{1/s} \big)^{(n+2s)/2}}\approx u^{1-\frac{\alpha}{2s}} F_s(u)
		\end{equation*}
		tends to zero as $u\to 0$, since $\alpha<4s$. Moreover, it behaves as $|u|^{-\frac{n+\alpha}{2s}+1}$ as $u\to\infty$, which also tends to $0$ because $\alpha<2s-1$. Thus, $I_{11}\lesssim_{\beta,\alpha} \min\{1,|u|^{-1-\beta+\frac{\alpha}{2s}}\}$. On the other hand, since the following holds
		\begin{equation*}
			\big( 1+w^{1/s} \big)^{(n+2s)/2}>w^{2-\frac{\alpha}{2s}},
		\end{equation*}
		we obtain
		\begin{align*}
			\frac{1}{u^{1+\beta}}\int_0^{u/2}\frac{w}{\big( 1+w^{1/s} \big)^{(n+2s)/2}}\dd w < \frac{1}{u^{1+\beta}}\int_0^{u/2}\frac{\dd w}{w^{1-\frac{\alpha}{2s}}}\lesssim_{\beta,\alpha} \frac{1}{u^{1+\beta-\frac{\alpha}{2s}}}.
		\end{align*}
		Therefore, $I_{12}\lesssim_{\beta,\alpha} \min\{1,|u|^{-1-\beta+\frac{\alpha}{2s}}\}$, hence $I_1$ satisfies the same estimate. The study of $I_2$ is completely analogous to that of $n>1$. Therefore we are only left to study $I_{3}$. The arguments can be carried out analogously up to the point of estimating
		\begin{align*}
			\int_{2u}^{\infty}\frac{|F_s(w)|}{|w-u|^{1+\beta}}\dd w.
		\end{align*}
		Using that $|w-u|\approx |w|\gtrsim u$, we have
		\begin{align*}
			\int_{2u}^{\infty}\frac{|F_s(w)|}{|w-u|^{1+\beta}}\dd w&\lesssim \frac{1}{u^{1+\beta-\frac{\alpha}{2s}}}\int_{2u}^{\infty}\frac{w^{1-\frac{\alpha}{2s}}}{\big( 1+w^{1/s} \big)^{(n+2s)/2}}\dd w \\
			&\leq \frac{1}{u^{1+\beta-\frac{\alpha}{2s}}}\Bigg[\int_0^1w^{1-\frac{\alpha}{2s}}\,\dd w+\int_1^\infty\frac{\dd w}{w^{\frac{n+\alpha}{2s}}} \Bigg] \lesssim_{\beta,\alpha} \frac{1}{u^{1+\beta-\frac{\alpha}{2s}}},
		\end{align*}
		since $2s-1<\alpha<4s$. Therefore, $I_{32}\lesssim_{\beta,\alpha} \min\{1,|u|^{-1-\beta+\frac{\alpha}{2s}}\}$, and with this we get the desired bound for $I_{3}$ and the completion of the proof for the case $n=1$.
        
		Moving on to estimate \textit{3}, we begin by defining for $u>0$ the real variable function
		\begin{equation*}
			G_s(u):=u^{-\frac{n+1}{2s}}\phi'_n\big(u^{-\frac{1}{2s}}\big),
		\end{equation*}
		so that in light of relation \eqref{eq2.1} we have
		\begin{equation*}
			\nabla_x P_{s}(x,t)\simeq \frac{x}{|x|^{n+2}}G_s\bigg( \frac{t}{|x|^{2s}} \bigg), \qquad \text{for}\quad t>0, x\neq 0. 
		\end{equation*}
		
		By \eqref{eq2.5} it is clear that
		\begin{equation}
			\label{eq2.18}
			|G_s(u)|\approx \frac{u}{\big( 1+u^{1/s}\big)^{(n+2s+2)/2}}.
		\end{equation}
		Hence, as done for $F_s$, we can extend continuously the definition of $G_s$ by zero for negative values of $u$. Notice also that the previous estimate implies that $G_s$ is bounded on $\mathbb{R}$.
        
		Our next claim is that the operators $\nabla_x$ and $\partial_t^{\beta}$ commute when applied to $P_s$. To prove this, it suffices to check that the following integral is locally well-defined for every $x$ and $t$,
		\begin{equation*}
			\int_{\mathbb{R}}\frac{|\nabla_x P_s(x,t)-\nabla_x P_{s}(x,w)|}{|t-w|^{1+\beta}}\dd w.
		\end{equation*}
		Split the domain of integration as
		\begin{equation*}
			\int_{|t-w|<1}\frac{|\nabla_x P_s(x,t)-\nabla_x P_{s}(x,w)|}{|t-w|^{1+\beta}}\dd w+\int_{|t-w|\geq 1}\frac{|\nabla_x P_s(x,t)-\nabla_x P_{s}(x,w)|}{|t-w|^{1+\beta}}\dd w.
		\end{equation*}
		The second integral is clearly well-defined, since $\nabla_x P_s(x,t)\simeq x/|x|^{n+2}G_s(t/|x|^{2s})$ and we know that $G_s$ is bounded. Thus, directly applying the triangle inequality in the numerator and using that $\beta>0$, we deduce that, indeed, the second integral is finite. For the first one, we need some more work. We shall distinguish four possibilities:
        
		\textit{Case 1: $t\leq -1$}. For such values of $t$ the integral becomes null, since $\nabla_xP_s(x,t)$ and $\nabla_x P(x,w)$ are zero.
        
		\textit{Case 2: $t\in(-1,0]$}. Observe that in this setting the integral can be rewritten as
		\begin{align*}
			\int_{0}^{1-|t|}\frac{|\nabla_x P_{s}(x,w)|}{|w-t|^{1+\beta}}\dd w&=\int_{0}^{1-|t|}\frac{|\nabla_x P_{s}(x,w)-\nabla_x P_{s}(x,0)|}{|w-t|^{1+\beta}}\dd w\\
			&\hspace{4cm}\lesssim \frac{1}{|x|^{n+2s+1}}\int_0^{1-|t|}\frac{|G'_s(\tau/|x|^{2s})|}{|w|^{\beta}}\dd w,
		\end{align*}
		for some $\tau\in(0,w)$. By definition, there are constants $C_1,C_2$ so that for $u>0$
		\begin{equation*}
			G_s'(u)=C_1\,u^{-(n+2s+1)/(2s)}\phi_{n,s}'\big( u^{-\frac{1}{2s}} \big)+C_2\,u^{-(n+2s+2)/(2s)}\phi_{n,s}''\big( u^{-\frac{1}{2s}}\big),
		\end{equation*}
		so using the estimates for $\phi'_n$ and $\phi_{n,s}''$ in \eqref{eq2.5} we deduce
		\begin{align}
			\label{eq2.19}
			|G'_s(u)|\approx \frac{1}{\big( 1+u^{1/s} \big)^{(n+2s+2)/2}},
		\end{align}
		which is a bounded function. Therefore
		\begin{equation*}
			\frac{1}{|x|^{n+2s+1}}\int_0^{1-|t|}\frac{|G'_s(\tau/|x|^{2s})|}{|w|^{\beta}}\dd w\lesssim_{\beta}\frac{1}{|x|^{n+2s+1}}<\infty,
		\end{equation*}
		for every $x\neq 0$.
        
		\textit{Case 3: $t\in(0,1]$}. The integral we were initially studying can be written as
		\begin{align*}
			\int_{t-1}^{0}\frac{|\nabla_x P_{s}(x,t)|}{|t-w|^{1+\beta}}\dd w+\int_{0}^{t+1}\frac{|\nabla_x P_{s}(x,t)-\nabla_x P_{s}(x,w)|}{|t-w|^{1+\beta}}\dd w.
		\end{align*}
		The second integral can be tackled in exactly the same way as the integral in \textit{Case 2}. Regarding the first one, estimate it as follows
		\begin{align*}
			\int_{t-1}^{0}\frac{|\nabla_x P_{s}(x,t)-\nabla_x P_{s}(x,0)|}{|t-w|^{1+\beta}}\dd w&\leq \frac{1}{|x|^{n+2s+1}}\int_{t-1}^0\frac{|G_s'(\tau/|x|^{2s})||t|}{|t-w|^{1+\beta}}\dd w\\
			&\lesssim \frac{1}{|x|^{n+2s+1}} \int_{t-1}^0\frac{|G_s'(\tau/|x|^{2s})|}{|w|^{\beta}}\dd w <\infty,
		\end{align*}
		where we have used $|t|\leq |t-w|+|w|$ and also that $|t-w|=(t+|w|)\geq |w|$. The last inequality follows by the same arguments used in \textit{Case 2}.
        
		\textit{Case 4: $t> 1$}. For this final case, the integral can be estimated as
		\begin{align*}
			\int_{t-1}^{t+1}\frac{|\nabla_x P_{s}(x,t)-\nabla_x P_{s}(x,w)|}{|t-w|^{1+\beta}}\dd w\lesssim \frac{1}{|x|^{n+2s+1}}  \int_{t-1}^{t+1}\frac{|G_s'(\tau/|x|^{2s})|}{|t-w|^{\beta}}\dd w\lesssim_{\beta} \frac{1}{|x|^{n+2s+1}} <\infty.
		\end{align*}
		Thus, we have obtained the desired commutativity between $\partial_t^{\beta}$ and $\nabla_x$, which yields
		\begin{align*}
			\nabla_x \partial_t^{\beta}P_s(x,t)=\partial_t^{\beta}\big[\nabla_x P_s\big](x,t)&=\frac{x}{|x|^{n+2}}\bigg[\partial_t^{\beta} G_s\bigg( \frac{\cdot}{|x|^{2s}} \bigg) \bigg](t) =\frac{x}{|x|^{n+2s\beta+2}}\partial^{\beta} G_s\bigg( \frac{t}{|x|^{2s}} \bigg).
		\end{align*}
		Now it is a matter of showing that the following inequality holds
		\begin{equation}
			\big\rvert\partial^{\beta}G_s(u)\big\rvert\lesssim_{\beta} \min\bigg\{ 1,\frac{1}{|u|^{1+\beta}} \bigg\},\label{eq2.20}
		\end{equation}
		The proof of \eqref{eq2.20} is essentially identical to the one given for \eqref{eq2.14}, using the bounds for $G_s$ and $G_s'$ (\eqref{eq2.18} and \eqref{eq2.19} respectively) instead of those for $F_s$ and $F_s'$. The faster decay of $G_s$ and its derivative implies that one does not find any obstacles in \eqref{eq2.16}. In fact, the integral that appears in the current analysis is $\int_1^\infty w^{-\frac{n+1}{2s}} \dd w$, which also converges for $n=1$. So using the previous estimate we deduce, for any $n>0$,
		\begin{align*}
			|\nabla_x \partial_t^{\beta}P_s(x,t)|&=\frac{1}{|x|^{n+2s\beta+1}}\bigg\rvert \partial_t^{\beta} G_s\bigg( \frac{t}{|x|^{2s}} \bigg)\bigg\rvert \lesssim_\beta \frac{1}{|x|^{n+2s\beta+1}} \min\bigg\{ 1, \frac{|x|^{2s(1+\beta)}}{|t|^{1+\beta}} \bigg\}\\
			&=\frac{1}{|x|^{n-2s+1}} \min\bigg\{ \frac{1}{|x|^{2s(1+\beta)}}, \frac{1}{|t|^{\frac{2s(1+\beta)}{2s}}} \bigg\}=\frac{1}{|x|^{n-2s+1}|\ox|_{p_s}^{2s(1+\beta)}},
		\end{align*}
		which proves the statement \textit{3} in our lemma.
        
		We continue by estimating $\partial_t\partial_t^{\beta}P_s(x,t)$ for $x\neq 0$ and $t\neq 0$. Using \eqref{eq2.13} we rewrite it as
		\begin{equation*}
			\partial_t\partial_t^{\beta}P_s(\ox)=\frac{1}{|x|^{n+2s(1+\beta)}}\partial^{\beta}F_s'\bigg( \frac{t}{|x|^{2s}} \bigg),
		\end{equation*}
		and we claim that the following inequality holds for $u\neq 0$,
		\begin{equation*}
			\big\rvert\partial^{\beta}F_s'(u)\big\rvert\lesssim_{\beta} \min\bigg\{ 1,\frac{1}{|u|^{1+\beta}} \bigg\}.
		\end{equation*}
		Let us also recall that we had the following estimates for $u>0$,
		\begin{align*}
			|F_s'(u)|&\lesssim \frac{1}{\big( 1+u^{1/s} \big)^{(n+2s)/2}}, \hspace{1cm} |F_s''(u)|\lesssim \frac{1}{\big( 1+u^{1/s} \big)^{(n+4s)/2}} \leq \frac{1}{u\big( 1+u^{1/s} \big)^{(n+2s)/2}}.
		\end{align*}
		Observe that, on the one hand,
		\begin{align*}
			\big\rvert\partial^{\beta}F_s'(u)\big\rvert &\leq \int_{\mathbb{R}}\frac{|F_s'(u)-F_s'(w)|}{|u-w|^{1+\beta}}\dd w\\
			&\leq \sup_{\nu\in \mathbb{R}}|F_s''(\nu)|  \int_{|u-w|<1}\frac{\dd w}{|u-w|^{\beta}} + 2\sup_{\nu\in \mathbb{R}}|F_s'(\nu)|\int_{|u-w|\geq 1}\frac{\dd w}{|u-w|^{1+\beta}} \lesssim_\beta 1, 
		\end{align*}
		by the boundedness of $F_s'$ and $F_s''$, and the fact that $\beta\in(0,1)$. Therefore we are left to verify $\big\rvert\partial^{\beta}F_s'(u)\big\rvert \lesssim_\beta |u|^{-1-\beta}$. If $u<0$, since $F_s'$ is supported on $(0,\infty)$ and $|u-w|>|u|$ for $w\geq 0$, we have
		\begin{align*}
			\big\rvert\partial^{\beta}F_s'(u)\big\rvert &\leq \int_0^\infty \frac{|F_s'(w)|}{|u-w|^{1+\beta}}\dd w \lesssim \int_0^{1} \frac{\dd w}{|u-w|^{1+\beta}} +\int_{1}^\infty \frac{|F_s'(w)| }{|u-w|^{1+\beta}}\dd w\\
			&\lesssim \frac{1}{|u|^{1+\beta}}\bigg( 1 + \int_{1}^\infty \frac{\dd w}{\big( 1+w^{1/s} \big)^{(n+2s)/2}} \bigg)\leq \frac{1}{|u|^{1+\beta}}\bigg( 1+\int_1^\infty \frac{\dd w}{w^{\frac{n}{2s}+1}} \bigg) \lesssim \frac{1}{|u|^{1+\beta}},
		\end{align*}
		and we are done. If on the other hand $u>0$, we estimate $\rvert\partial^{\beta}F_s'\rvert$ in a similar way as $\rvert\partial^{\beta}F_s\rvert$ in the proof of point \textit{1} of this lemma. Namely, we write
		\begin{align*}
			\big\rvert\partial^{\beta}F_s'(u)\big\rvert &\leq \int_{|w|\leq u/2} \frac{|F_s'(w)-F_s'(u)|}{|w-u|^{1+\beta}}\dd w+\int_{u/2\leq |w|\leq 2u} \frac{|F_s'(w)-F_s'(u)|}{|w-u|^{1+\beta}}\dd w\\
			&\hspace{1cm}+\int_{|w|> 2u} \frac{|F_s'(w)-F_s'(u)|}{|w-u|^{1+\beta}}\dd w=:I_1+I_2+I_{3}.
		\end{align*}
		Regarding $I_1$, notice that in the domain of integration we have $|w-u|\approx u$, so
		\begin{align*}
			I_1&\lesssim \int_{-u/2}^{0} \frac{|F_s'(u)|}{|w-u|^{1+\beta}}\dd w + \int_0^{u/2} \frac{|F_s'(u)|}{|w-u|^{1+\beta}}\dd w+ \int_0^{u/2} \frac{|F_s'(w)|}{|w-u|^{1+\beta}}\dd w.
		\end{align*}
		The first two integrals can be directly bounded by
		\begin{equation*}
			\frac{1}{|u|^{1+\beta}}|F_s'(u)|\int_{0}^{u/2}\dd w \leq  \frac{1}{|u|^{1+\beta}}\bigg(\frac{u}{(1+u^{1/s})^{(n+2s)/2}}\bigg)\leq \frac{1}{|u|^{1+\beta}}.
		\end{equation*}
		For the third,
		\begin{align*}
			\int_0^{u/2} \frac{|F_s'(w)|}{|w-u|^{1+\beta}}\dd w &\lesssim \frac{1}{|u|^{1+\beta}}\bigg( \int_0^1|F_s'(w)|\dd w + \int_1^\infty |F_s'(w)|\dd w \bigg) \\
			&\lesssim \frac{1}{|u|^{1+\beta}}\bigg( 1 + \int_1^\infty \frac{\dd w}{w^{\frac{n}{2s}+1}} \bigg) \lesssim \frac{1}{|u|^{1+\beta}},
		\end{align*}
		and we are done with $I_1$. Moving on to $I_2$, we split it as follows
		\begin{equation*}
			I_2=\int_{-2u}^{-u/2}\frac{|F_s'(u)|}{|w-u|^{1+\beta}}\dd w+\int_{u/2}^{2u}\frac{|F_s'(w)-F_s'(u)|}{|w-u|^{1+\beta}}\dd w=:I_{21}+I_{22}.
		\end{equation*}
		The study of $I_{21}$ can be carried analogously to that of $I_1$, since in that domain of integration one has $|w-u|\geq 3|u|/2$, so we focus on $I_{22}$. Applying the mean value theorem and the bound for $|F_s''|$ of \eqref{eq2.11} as well as relation \eqref{eq2.10} we get 
		\begin{align*}
			I_{22}\leq \sup_{\nu\in[u/2,2u]}|F_s''(\nu)|\int_{u/2}^{2u}\frac{\dd w}{|w-u|^{\beta}}&\lesssim_\beta \sup_{\nu\in[u/2,2u]}|F_s''(\nu)| \, u^{1-\beta} \\
			&\lesssim \frac{u^{1-\beta}}{u(1+u^{1/s})^{(n+2s)/2}} \approx F_s(u) u^{-1-\beta}\leq u^{-1-\beta}.
		\end{align*}
		So we are left to study $I_{3}$. Since in its domain of integration we have $|w-u|\gtrsim w$, we get
		\begin{align*}
			I_{3} \lesssim \int_{-\infty}^{-2u} \frac{|F_s'(u)|}{|w|^{1+\beta}}\dd w &+ \int_{2u}^{\infty} \frac{|F_s'(w)|+|F_s'(u)|}{|w|^{1+\beta}}\dd w \\
			&\lesssim \frac{3}{(1+u^{1/s})^{(n+2s)/2}}\int_{2u}^{\infty}\frac{\dd w}{|w|^{1+\beta}} \lesssim_\beta \frac{u^{-\beta}}{(1+u^{1/s})^{(n+2s)/2}}\leq u^{-1-\beta},
		\end{align*}
		that allows us to finally conclude
		\begin{equation*}
			\big\rvert\partial^{\beta}F_s'(u)\big\rvert\lesssim_\beta \min\bigg\{ 1,\frac{1}{|u|^{1+\beta}} \bigg\}, \qquad u\neq 0.
		\end{equation*}
		So using the previous estimate get, for $x\neq 0$ and $t\neq 0$,
		\begin{align*}
			|\partial_t \partial_t^{\beta}P_s(x,t)|&=\frac{1}{|x|^{n+2s(1+\beta)}}\bigg\rvert \partial_t^{\beta} F_s'\bigg( \frac{t}{|x|^{2s}} \bigg)\bigg\rvert \lesssim_\beta \frac{1}{|x|^{n+2s(1+\beta)}} \min\bigg\{ 1, \frac{|x|^{2s(1+\beta)}}{|t|^{1+\beta}} \bigg\}\\
			&=\frac{1}{|x|^{n}} \min\bigg\{ \frac{1}{|x|^{2s(1+\beta)}}, \frac{1}{|t|^{\frac{2s(1+\beta)}{2s}}} \bigg\}=\frac{1}{|x|^{n}|\ox|_{p_s}^{2s(1+\beta)}}.
		\end{align*}
		Finally, the proof of estimate \textit{5} is analogous to that of \textit{5} in Theorem \ref{lem2.3}. Indeed, let $\ox'=(x',t')\in\mathbb{R}^{n+1}$ such that $|\ox-\ox'|_{p_s}\leq |x|/2$, which is a stronger assumption than that of Theorem \ref{lem2.3}. In fact, it can be checked by a direct computation that this already implies $|\ox|_{p_s}\leq 2 |\ox'|_{p_s}$ and $|x|\leq 2|x'|$. Write again $\widehat{x}=(x',t)$ and consider
		\begin{equation*}
			|\partial_t^{\beta}P_s(\ox)-\partial_t^{\beta}P_s(\ox')|\leq |\partial_t^{\beta}P_s(\ox)-\partial_t^{\beta}P_s(\widehat{x})|+|\partial_t^{\beta}P_s(\widehat{x})-\partial_t^{\beta}P_s(\ox')|.
		\end{equation*}
		By estimate \textit{2}, the first term in the above inequality now satisfies
		\begin{align*}
			|x-x'|\sup_{\xi\in[x,x']}|\nabla_x\partial_t^{\beta}P_s(\xi,t)|\lesssim_\beta \frac{|x-x'|}{|x|^{n-2s+1}|\ox|_{p_s}^{2s(1+\beta)}}&\leq \frac{|\ox-\ox'|_{p_s}}{|x|^{n-2s+1}|\ox|_{p_s}^{2s(1+\beta)}}\\
			&\lesssim \frac{|\ox-\ox'|_{p_s}^{2\zeta}}{|x|^{n+2\zeta-2s}|\ox|_{p_s}^{2s(1+\beta)}},
		\end{align*}
		where we have used that $1-2\zeta\geq 0$ and that condition $|\ox-\ox'|_{p_s}\leq |x|/2$ implies that the line segment joining $\ox$ with $\ox'$ is at a distance of the time axis comparable to $|x|$. Regarding the second term, assume $t>t'$. If $t$ and $t'$ share sign we apply estimate \textit{3} to directly deduce
		\begin{align*}
			|t-t'|\sup_{\tau\in[t,t']}&|\partial_t\partial_t^{\beta}P_s(x',\tau)|\lesssim_\beta \frac{|t-t'|}{|x|^{n}|\ox|_{p_s}^{2s(1+\beta)}}\leq \frac{|\ox-\ox'|_{p_s}^{2s}}{|x|^{n}|\ox|_{p_s}^{2s(1+\beta)}} \lesssim \frac{|\ox-\ox'|_{p_s}^{2\zeta}}{|x|^{n+2\zeta-2s}|\ox|_{p_s}^{2s(1+\beta)}}
		\end{align*}
		If on the other hand $t>0$ and $t'<0$, we use relation \eqref{eq2.8}, valid also in this case, together with $|x'|\geq |x|/2$ to finally obtain
		\begin{align*}
			|\partial_t^{\beta}P_s(\widehat{x})&-\partial_t^{\beta}P_s(\ox')|\\
			&\leq|\partial_t^{\beta}P_s(x',t)-\partial_t^{\beta}P_s(x',0)|+|\partial_t^{\beta}P_s(x',0)-\partial_t^{\beta}P_s(x',t')|\\
			&\lesssim t\sup_{\tau\in(0,t)}|\partial_t\partial_t^{\beta}P_s(x',\tau)|+|t'|\sup_{\tau\in(t',0)}|\partial_t\partial_t^{\beta}P_s(x',\tau)|\\
			&\lesssim_\beta \frac{t+|t'|}{|x'|^n|x'|^{2s(1+\beta)}}\lesssim \frac{|t-t'|}{|x|^{n}|\ox|_{p_s}^{2s(1+\beta)}}\lesssim  \frac{|\ox-\ox'|_{p_s}^{2\zeta}}{|x|^{n+2\zeta-2s}|\ox|_{p_s}^{2s(1+\beta)}}.
		\end{align*}
	\end{proof}
	
	We will now carry out the same study for the case $s=1$. First, we prove the following auxiliary lemma:
	\begin{lem}
		\label{lem3.6}
		Let $f_1,f_2,f_3:\mathbb{R}\to\mathbb{R}$ be defined as
		\begin{equation*}
			f_1(t):=\frac{e^{-1/t}}{t^{n/2}}\, \chi_{t>0},\hspace{0.75cm} f_2(t):= \frac{e^{-1/t}}{t^{n/2+1}}\, \chi_{t>0}, \hspace{0.75cm} f_3(t):= \frac{e^{-1/t}}{t^{n/2+2}}\, \chi_{t>0}.
		\end{equation*}
		Then, if $\beta\in(0,1)$, the following estimates hold
		\begin{align*}
			\text{if}\hspace{0.25cm} n>2,& \hspace{0.75cm} |\partial_t^{\beta}f_1(t)|\lesssim_\beta \min\big\{ 1, |t|^{-1-\beta}\big\},\\
			\text{if}\hspace{0.25cm} n=2 \text{ and }\, \beta>\frac{1}{2},& \hspace{0.75cm} |\partial_t^{\beta}f_1(t)|\lesssim_{\beta,\alpha} \min\big\{ 1, |t|^{-1-\beta+\alpha/2}\big\}, \;\;\forall \alpha\in (0,2+2\beta],\\
			\text{if}\hspace{0.25cm} n=1,& \hspace{0.75cm} |\partial_t^{\beta}f_1(t)|\lesssim_{\beta} 1.
		\end{align*}
		In addition, for every $n$,
		\begin{equation*}
			|\partial_t^{\beta}f_2(t)|\lesssim_{\beta} \min\big\{ 1,|t|^{-1-\beta} \big\},\hspace{0.75cm} |\partial_t^{\beta}f_3(t)|\lesssim_{\beta} \min\big\{ 1,|t|^{-1-\beta} \big\}.
		\end{equation*}
		For $t=0$ the previous estimates have to be understood simply as a bound by a constant depending on $n$ and $\beta$.
	\end{lem}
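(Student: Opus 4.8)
The proof follows the strategy of Lemma~\ref{lem2.4}, the essential new feature being that for $s=1$ the relevant profile functions decay exponentially as $t\to 0^{+}$ but only polynomially as $t\to\infty$; it is this slow decay at infinity that produces the dimensional restrictions. Set $g_{m}(t):=e^{-1/t}\,t^{-m/2}\chi_{t>0}$, so that $f_{1}=g_{n}$, $f_{2}=g_{n+2}$ and $f_{3}=g_{n+4}$. The plan is to prove the single statement: $|\partial_{t}^{\beta}g_{m}(t)|\lesssim 1$ for every $m\ge 1$; moreover $|\partial_{t}^{\beta}g_{m}(t)|\lesssim|t|^{-1-\beta}$ for $|t|\ge 1$ when $m>2$; and $|\partial_{t}^{\beta}g_{m}(t)|\lesssim|t|^{-1-\beta+\alpha/2}$ for $|t|\ge 1$, for every $\alpha\in(0,2+2\beta]$, when $m=2$ and $\beta>1/2$. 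Since $n+2\ge 3>2$ and $n+4\ge 5>2$ for all $n\ge 1$, the assertions for $f_{2}$ and $f_{3}$ follow at once from the case $m>2$, and the three assertions for $f_{1}$ are the three displayed cases with $m=n$.

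I would first record the pointwise bounds $|g_{m}(t)|\lesssim\min\{1,t^{-m/2}\}$, $|g_{m}'(t)|\lesssim\min\{1,t^{-m/2-1}\}$ and $|g_{m}''(t)|\lesssim\min\{1,t^{-m/2-2}\}$ for $t>0$: each of $g_{m},g_{m}',g_{m}''$ is $e^{-1/t}$ times a polynomial in $1/t$, hence continuous on $(0,\infty)$, vanishing to infinite order at $0$, and with the indicated polynomial decay at $\infty$; in particular all three are bounded. These replace \eqref{eq2.10} and \eqref{eq2.11}, and the exponential flatness at the origin makes the analogues of the delicate near-origin estimates of Lemma~\ref{lem2.4} routine. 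Then, exactly as there, I would write $\partial_{t}^{\beta}g_{m}(t)=\int_{\mathbb{R}}\frac{g_{m}(w)-g_{m}(t)}{|w-t|^{1+\beta}}\,\dd w$ and treat $t=0$, $t<0$ and $t>0$ separately, splitting in the last case the $w$-integral over $\{|w|\le t/2\}$, $\{t/2\le|w|\le 2t\}$ and $\{|w|>2t\}$ and applying the mean value theorem on the middle region. The boundedness part holds for all $m\ge 1$ and amounts essentially to $\int_{0}^{\infty}g_{m}(w)\,w^{-1-\beta}\,\dd w<\infty$, which is true regardless of $m$ because the integrand is integrable near $0$ (exponential decay) and near $\infty$ (there it is $\approx w^{-m/2-1-\beta}$). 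For $|t|\ge 1$ every piece reduces, after these standard splittings, to a tail integral of the form $\int_{1}^{\infty}w^{-m/2}(w+|t|)^{-1-\beta}\,\dd w$, together with harmless terms such as $|t|^{-1-\beta}\int_{1}^{t/2}w^{-m/2}\,\dd w$ and the mean value contribution $\sup_{[t/2,2t]}|g_{m}'|\cdot t^{1-\beta}$.

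The only real point is the behaviour of this tail integral, and this is where the hypotheses on $n$ and $\beta$ enter. If $m>2$ then $\int_{1}^{\infty}w^{-m/2}\,\dd w<\infty$, so $\int_{1}^{\infty}w^{-m/2}(w+|t|)^{-1-\beta}\,\dd w\le|t|^{-1-\beta}\int_{1}^{\infty}w^{-m/2}\,\dd w\lesssim|t|^{-1-\beta}$, and all remaining terms are at least as good; this gives the clean estimate. If $m=1$, the divergence of $\int_{1}^{\infty}w^{-1/2}\,\dd w$ is too strong for any polynomial decay to survive, and only boundedness is retained. In the borderline case $m=2$ (i.e. $n=2$), $\int_{1}^{\infty}w^{-1}\,\dd w$ diverges logarithmically: splitting at $w=|t|$ produces a factor $|t|^{-1-\beta}\log|t|$, and to absorb it one uses, for $\alpha\in(0,2+2\beta]$, the elementary inequality $(w+|t|)^{1+\beta}\ge|t|^{\,1+\beta-\alpha/2}w^{\alpha/2}$ (valid since $1+\beta-\alpha/2\ge 0$), which turns the tail into $|t|^{-1-\beta+\alpha/2}\int_{1}^{\infty}w^{-1-\alpha/2}\,\dd w\lesssim|t|^{-1-\beta+\alpha/2}$, the last integral converging because $\alpha>0$. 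The assumption $\beta>1/2$ is the $s=1$ endpoint of the condition $\beta>1-\tfrac1{2s}$ of Lemma~\ref{lem2.4}(2); it is precisely what keeps the exponents generated by the $t>0$ splitting in the range for which the corresponding monomial inequalities (the $n=2$, $s=1$ analogues of $(1+w^{1/s})^{(n+2s)/2}>w^{2-\alpha/(2s)}$) hold, so that each term obeys $\min\{1,|t|^{-1-\beta+\alpha/2}\}$. Assembling the three cases and recalling $f_{1}=g_{n}$, $f_{2}=g_{n+2}$, $f_{3}=g_{n+4}$ completes the proof, the reading at $t=0$ being, as stated, just the boundedness bound. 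The hard part here is nothing conceptual but the bookkeeping of the exponents in the $m=2$ case.
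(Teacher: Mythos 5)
Your proposal is correct and follows essentially the same route as the paper: the same splitting $t=0$, $t<0$, $t>0$, the same three-region decomposition of the $w$-integral with the mean-value estimate on the middle annulus, and the same mechanism — exponential flatness of $g_m$ and its derivatives at $0^+$, polynomial decay $\sim t^{-m/2}$ at infinity — with the dimensional restrictions arising exactly from the convergence (or logarithmic divergence at $m=2$, absorbed via $\log t\lesssim t^{\alpha/2}$) of the tail $\int_1^\infty w^{-m/2}(w+|t|)^{-1-\beta}\,dw$. The one genuine improvement in your write-up is the uniform reformulation $f_1=g_n$, $f_2=g_{n+2}$, $f_3=g_{n+4}$, which makes the dimension-independence of the $f_2,f_3$ bounds an immediate corollary of the $m>2$ case rather than a separate hand-verified repetition as in the paper.
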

	The above result will imply the following estimates for $\partial_{t}^{\beta}W$:
	\begin{thm}
		\label{lem3.5}
		For any $\ox=(x,t)\neq (0,t)$ and $\beta\in(0,1)$, the following hold:
		\begin{align*}
			\textit{1}\hspace{0.03cm}.&  \;\; \text{For}\hspace{0.25cm} n>2, \hspace{0.75cm} |\partial_{t}^{\beta}W(\ox)|\lesssim_{\beta} \frac{1}{|x|^{n-2}|\ox|_{p_1}^{2+2\beta}},\\
			\textit{2}\hspace{0.03cm}.&\;\; \text{For}\hspace{0.25cm} n=2, \hspace{0.75cm} |\partial_{t}^{\beta}W(\ox)|\lesssim_{\beta,\alpha} \frac{1}{|x|^{\alpha}|\ox|_{p_1}^{2+2\beta-\alpha}}, \;\;\forall \alpha\in (0,2+2\beta],\\
			\textit{3}\hspace{0.03cm}.&\;\; \text{For}\hspace{0.25cm} n=1, \hspace{0.75cm} |\partial_{t}^{\beta}W(\ox)|\lesssim_{\beta} \frac{1}{|\ox|_{p_1}^{1+2\beta}}.
		\end{align*}
		Moreover, for every $n$,
		\begin{equation*}
			\textit{4}\hspace{0.03cm}.\;\;|\nabla_x\partial_{t}^{\beta}W(\ox)|\lesssim_{\beta} \frac{1}{|x|^{n-1}|\ox|_{p_1}^{2+2\beta}}, \hspace{0.75cm}  \textit{5}\hspace{0.03cm}.\;\;|\partial_t\partial_{t}^{\beta}W(\ox)|\lesssim_{\beta} \frac{1}{|x|^{n}|\ox|_{p_1}^{2+2\beta}}.
		\end{equation*}
		Finally, if $\ox'\in\mathbb{R}^{n+1}$ is such that $|\ox-\ox'|_{p_1}\leq |x|/2$, then
		\begin{equation*}
			\textit{6}\hspace{0.03cm}.\;\; |\partial_{t}^{\beta}W(\ox)-\partial_{t}^{\beta}W(\ox')|\lesssim_{\beta} \frac{|\ox-\ox'|_{p_1}}{|x|^{n-1}|\ox|_{p_1}^{2+2\beta}}.
		\end{equation*}
	\end{thm}
	\begin{proof}[Proof of Lemma \ref{lem3.6}]
		We deal first with the estimate concerning $\partial_t^{\beta}f_1$ for $n>2$. We distinguish whether if $t=0$, $t<0$ or $t>0$. If $t=0$ we are done because,
		\begin{equation*}
			|\partial_t^{\beta}f_1(0)|\leq\int_{\mathbb{R}}\frac{|f_1(u)-f_1(0)|}{|u-0|^{1+\beta}}\dd u=\int_0^\infty \frac{e^{-1/u}}{u^{(n+2+2\beta)/2}}\dd u=\Gamma\bigg(\frac{n+2\beta}{2}\bigg)\lesssim_{\beta} 1,
		\end{equation*}
		where $\Gamma$ denotes the usual gamma function.
        
		Let us continue by assuming $t<0$. By definition,
		\begin{equation*}
			|\partial_t^{\beta}f_1(t)|\leq \int_{\mathbb{R}}\frac{|f_1(u)|}{|u+|t||^{1+\beta}}\dd u=\int_{0}^{\infty}\frac{e^{-1/u}}{u^{n/2}(u+|t|)^{1+\beta}}\dd u.
		\end{equation*}
		Observe that on the one hand, since $|u+|t||\geq u$,
		\begin{equation*}
			|\partial_t^{\beta}f_1(t)|\leq \int_0^\infty\frac{e^{-1/u}}{u^{(n+2+2\beta)/2}}\dd u\lesssim_{\beta} 1.
		\end{equation*}
		On the other hand, since $n>2$,
		\begin{equation*}
			|\partial_t^{\beta}f_1(t)|\leq \frac{1}{|t|^{1+\beta}}\int_{0}^\infty\frac{e^{-1/u}}{u^{n/2}(u/|t|+1)^{1+\beta}}\dd u\leq \frac{1}{|t|^{1+\beta}}\int_{0}^\infty\frac{e^{-1/u}}{u^{n/2}}\dd u\lesssim \frac{1}{|t|^{1+\beta}},
		\end{equation*}
		Therefore, $ |\partial_t^{\beta}f_1(t)|\lesssim_{\beta} \min\big\{ 1,|t|^{-1-\beta} \big\}$ and we are done.
        
		If $t>0$, we split the integral as follows
		\begin{align*}
			|\partial_t^{\beta}f_1(t)|\leq&\int_{|u|\leq t/2}\frac{|f_1(u)-f_1(t)|}{|u-t|^{1+\beta}}\dd u+\int_{t/2\leq |u|\leq 2t}\frac{|f_1(u)-f_1(t)|}{|u-t|^{1+\beta}}\dd u\\
			&+\int_{|u|\geq 2t}\frac{|f_1(u)-f_1(t)|}{|u-t|^{1+\beta}}\dd u =: I_1+I_2+I_{3}.
		\end{align*}
		In $I_1$ we have $t/2\leq |u-t|\leq 3t/2$. Therefore,
		\begin{align*}
			I_1:=\int_{-t/2}^{0}\frac{|f_1(t)|}{|u-t|^{1+\beta}}\dd u+\int_{0}^{t/2}\frac{|f_1(u)-f_1(t)|}{|u-t|^{1+\beta}}\dd u\lesssim \frac{e^{-1/t}}{t^{(n+2\beta)/2}}+\int_{0}^{t/2}\frac{|f_1(u)-f_1(t)|}{t^{1+\beta}}\dd u.
		\end{align*}
		By the definition of $f_1$, the last term can be bound by
		\begin{align}
			\frac{1}{t^{1+\beta}}\int_{0}^{t/2}\frac{e^{-1/u}}{u^{n/2}}\dd u+\frac{1}{t^{1+\beta}}\int_{0}^{t/2}&\frac{e^{-1/t}}{t^{n/2}}\dd u\nonumber\\
			&\simeq  \frac{1}{t^{1+\beta}}\int_{0}^{t/2}\frac{e^{-1/u}}{u^{n/2}}\dd u+\frac{e^{-1/t}}{t^{(n+2\beta)/2}}.\label{eq2.1.21}
		\end{align}
		We split the remaining integral as follows
		\begin{align*}
			\int_{0}^{t/2}\frac{e^{-1/u}}{u^{n/2}}\dd u=\int_{0}^{1}\frac{e^{-1/u}}{u^{n/2}}\dd u&+\int_{1}^{t/2}\frac{e^{-1/u}}{u^{n/2}}\dd u\\
			&\leq e^{-\frac{1}{2t}}\int_{0}^{1}\frac{e^{-\frac{1}{2u}}}{u^{n/2}}\dd u+ e^{-2/t}\int_{1}^{t/2}\frac{1}{u^{n/2}}\dd u\\
			&\lesssim e^{-\frac{1}{2t}}+\frac{e^{-\frac{1}{2t}}}{t^{n/2-1}},
		\end{align*}
		where in the first inequality we have used $e^{-1/u}\leq e^{-\frac{1}{2u}}\,e^{-\frac{1}{2t}}$, which is true for $0\leq u\leq t/2$; and in the second the general inequality $e^{-2/t}\leq e^{-\frac{1}{2t}}$. In addition, observe that in the last step we have used that $n\neq 2$ in order to compute the corresponding integral. Thus, returning to \eqref{eq2.1.21}, we obtain
		\begin{align*}
			I_1\lesssim \frac{e^{-\frac{1}{2t}}}{t^{1+\beta}}+ \frac{e^{-\frac{1}{2t}}}{t^{(n+2\beta)/2}}.
		\end{align*}
		Notice that for $t>0$
		\begin{equation}
			\label{eq2.1.22}
			e^{-\frac{1}{2t}}\leq 3\min\big\{1,t^{1+\beta}\big\}, \hspace{0.75cm} e^{-\frac{1}{2t}}\leq C \min\big\{t^{(n+2\beta)/2}, t^{(n-2)/2}\big\},
		\end{equation}
		where $C$ depends only on $n$ and $\beta$, and the second estimate only holds for $n>1$ (if $n=1$, $e^{-\frac{1}{2t}}\leq C t^{(n+2\beta)/2}$ still holds). Therefore, we finally get
		\begin{equation*}
			I_1\lesssim_{\beta} \frac{\min\big\{1,t^{1+\beta}\big\}}{t^{1+\beta}}+\frac{\min\big\{t^{(n+2\beta)/2}, t^{(n-2)/2}\big\}}{t^{(n+2\beta)/2}}\simeq \min\bigg\{ 1,\frac{1}{t^{1+\beta}} \bigg\}.
		\end{equation*}
		Let us turn to $I_2$. Write
		\begin{equation}
			\label{eq2.1.23}
			I_2:=\int_{-2t}^{-t/2}\frac{|f_1(t)|}{|u-t|^{1+\beta}}\dd u+\int_{t/2}^{2t}\frac{|f_1(u)-f_1(t)|}{|u-t|^{1+\beta}}\dd u\lesssim \frac{e^{-1/t}}{t^{(n+2\beta)/2}}+\int_{t/2}^{2t}\frac{|f_1(u)-f_1(t)|}{|u-t|^{1+\beta}}\dd u,
		\end{equation}
		where in the first integral we have used that $3t/2\leq |u-t|\leq 3t$. For the second integral observe that
		\begin{equation*}
			|f_1(u)-f_1(t)|\leq \sup_{\xi\in [s,t]}|f_1'(\xi)||u-t|, \hspace{0.5cm} \text{where} \hspace{0.5cm} f_1'(\xi)=\bigg(1- \frac{n}{2}\xi  \bigg)\frac{e^{-1/\xi}}{\xi^{n/2+2}}\,\chi_{\xi>0}.
		\end{equation*}
		Since $t/2\leq \xi\leq 2t$, we have
		\begin{align*}
			|f_1'(\xi)|&\lesssim \big( 1+t \big)\frac{e^{-\frac{1}{2t}}}{t^{n/2+2}}\,\chi_{t>0}=\big( 1+t \big)\frac{e^{-\frac{1}{2t}}}{t^{n/2+2}}\,\chi_{t>1}+\big( 1+t \big)\frac{e^{-\frac{1}{2t}}}{t^{n/2+2}}\,\chi_{0<t\leq 1}\\
			&\lesssim \frac{e^{-\frac{1}{2t}}}{t^{n/2+1}}\,\chi_{t>1}+\frac{e^{-\frac{1}{2t}}}{t^{n/2+2}}\,\chi_{0<t\leq 1}.
		\end{align*}
		Combining the last two estimates we can bound the remaining integral of \eqref{eq2.1.23} by
		\begin{align*}
			\bigg( \frac{e^{-\frac{1}{2t}}}{t^{n/2+1}}\,\chi_{t>1}+\frac{2e^{-\frac{1}{2t}}}{t^{n/2+2}}&\,\chi_{0<t\leq 1} \bigg)\int_{t/2}^{2t}\frac{\dd u}{|u-t|^{\beta}}\lesssim_{\beta} \frac{e^{-\frac{1}{2t}}}{t^{(n+2\beta)/2}}\,\chi_{t>1}+\frac{e^{-\frac{1}{2t}}}{t^{(n+2+2\beta)/2}}\,\chi_{0<t\leq 1}.
		\end{align*}
		Thus,
		\begin{equation*}
			I_2\lesssim_{\beta} \frac{2e^{-\frac{1}{2t}}}{t^{(n+2\beta)/2}} +\frac{e^{-\frac{1}{2t}}}{t^{(n+2+2\beta)/2}}.
		\end{equation*}
		If we now apply estimates 
		\begin{equation*}
			e^{-\frac{1}{2t}}\leq C_1 \min\big\{t^{(n+2\beta)/2}, t^{(n-2)/2}\big\}, \hspace{0.75cm} e^{-\frac{1}{2t}}\leq C_2 \min\big\{t^{(n+2+2\beta)/2}, t^{n/2}\big\},
		\end{equation*}
		for some constants $C_1,C_2$ depending on $n$ and $\beta$, we conclude
		\begin{equation*}
			I_2\lesssim_{\beta} \frac{\min\big\{t^{(n+2\beta)/2}, t^{(n-2)/2}\big\}}{t^{(n+2\beta)/2}}+\frac{\min\big\{t^{(n+2+2\beta)/2}, t^{n/2}\big\}}{t^{(n+2+2\beta)/2}}\simeq\min\bigg\{ 1,\frac{1}{t^{1+\beta}} \bigg\}.
		\end{equation*}
		Finally, for $I_{3}$, since $|u|/2 \leq |u-t|\leq 3|u|/2$, we have
		\begin{align*}
			I_{3}&:=\int_{-\infty}^{-2t}\frac{|f_1(t)|}{|u-t|^{1+\beta}}\dd u+\int_{2t}^{\infty}\frac{|f_1(u)-f_1(t)|}{|u-t|^{1+\beta}}\dd u \simeq \frac{e^{-1/t}}{t^{(n+2\beta)/2}}+\int_{2t}^{\infty}\frac{|f_1(u)-f_1(t)|}{|u-t|^{1+\beta}}\dd u\\
			&\leq \frac{e^{-1/t}}{t^{(n+2\beta)/2}} +  \int_{2t}^{\infty}\frac{e^{-1/u}}{u^{(n+2+2\beta)/2}}\dd u+\int_{2t}^{\infty}\frac{e^{-1/t}}{t^{n/2}u^{1+\beta}}\dd u\simeq_{\beta} \frac{e^{-1/t}}{t^{(n+2\beta)/2}} +  \int_{2t}^{\infty}\frac{e^{-1/u}}{u^{(n+2+2\beta)/2}}\dd u\\
			&\lesssim_{\beta} \min\bigg\{ 1,\frac{1}{t^{1+\beta}} \bigg\}+\int_{2t}^{\infty}\frac{e^{-1/u}}{u^{(n+2+2\beta)/2}}\dd u.
		\end{align*}
		For the remaining integral observe that on the one hand
		\begin{equation*}
			\int_{2t}^{\infty}\frac{e^{-1/u}}{u^{(n+2+2\beta)/2}}\dd u \leq \Gamma\bigg( \frac{n+2\beta}{2} \bigg)\lesssim_{\beta} 1,
		\end{equation*}
		while on the other hand, since $u>2t$,
		\begin{equation*}
			\int_{2t}^{\infty}\frac{e^{-1/u}}{u^{(n+2+2\beta)/2}}\dd u \lesssim \frac{1}{t^{1+\beta}}\int_{2t}^\infty\frac{e^{-1/u}}{u^{n/2}}\dd u\leq \frac{1}{t^{1+\beta}}\int_{0}^\infty\frac{e^{-1/u}}{u^{n/2}}\dd u \lesssim \frac{1}{t^{1+\beta}},
		\end{equation*}
		where the last inequality holds since $n>2$. Therefore, combining the previous estimates we conclude that for $n>2$, $|\partial_t^{\beta}f_1(t)|\lesssim_{\beta} \min\big\{1,t^{-1-\beta}\big\}$.
        
		Before approaching the case $n=2$, let us comment that the case $n=1$ also follows from the above arguments. We also notice that the bounds for $|\partial_t^{\beta}f_2|$ and $|\partial_t^{\beta}f_3|$ are obtained by exactly the same computations. 
		So we are left to verify the following estimate
		\begin{equation*}
			|\partial_t^{\beta}f_1(t)|\lesssim_{\beta,\alpha} \min\big\{ 1, |t|^{-1-\beta+\alpha/2}\big\}, \;\;\forall \alpha\in (0,3], \;\; n=2,
		\end{equation*}
		that can be also obtained following the same scheme of proof.
	\end{proof}
	\begin{proof}[\textit{Proof} of Theorem \ref{lem3.5}]
		We write $K_\beta(\ox):=\partial_t^{\beta}W(\ox)$. Regarding estimate \textit{1}, by the same reasoning presented at the beginning of the proof of \cite[Lemma 2.1]{MaPrTo} we get
		\begin{equation*}
			K_\beta(\ox) \simeq \frac{1}{|x|^{n+2\beta}}\partial_t^{\beta}f_1\bigg( \frac{4t}{|x|^2} \bigg).
		\end{equation*}
		Hence, if $n>2$, by Lemma \ref{lem3.6} we get
		\begin{equation*}
			|K_\beta(\ox)|\lesssim_{\beta} \frac{1}{|x|^{n+2\beta}}\min\bigg\{ 1, \frac{|x|^{2+2\beta}}{|t|^{1+\beta}} \bigg\}=\frac{1}{|x|^{n-2}}\min\bigg\{ \frac{1}{|x|^{2+2\beta}}, \frac{1}{|t|^{1+\beta}} \bigg\}=\frac{1}{|x|^{n-2}|\ox|_{p_1}^{2+2\beta}}.
		\end{equation*}
		For estimates \textit{2} and \textit{3} we follow the same procedure.
        
		We move on to estimate \textit{4}. First, observe that the expression $\nabla_x K$ is well-defined and that the operators $\nabla_x$ and $\partial_t^\beta$ commute when applied to $W$.
		We also observe that there is a constant $C$ such that
		\begin{equation*}
			\nabla_x W (x,t)=C\frac{x}{(4t)^{n/2+1}}\,e^{-|x|^2/(4t)}\, \chi_{t>0}=C\frac{x}{|x|^{n+2}}\bigg( \frac{|x|^2}{4t} \bigg)^{n/2+1}e^{-|x|^2/(4t)}\, \chi_{t>0},
		\end{equation*}
		so we can write
		\begin{equation*}
			\nabla_x W(x,t)=C\frac{x}{|x|^{n+2}}\,f_2\bigg(\frac{4t}{|x|^2}\bigg),
		\end{equation*}
		with $f_2$ defined in Lemma \ref{lem3.6}. Since $\nabla_x$ and $\partial_t^{\beta}$ commute,
		\begin{equation*}
			\nabla_x K (x,t) = C\frac{x}{|x|^{n+2}}\,\partial_t^{\beta}\bigg[f_2\bigg(\frac{4\,\cdot}{|x|^2}\bigg)\bigg](t).
		\end{equation*}
		The previous fractional derivative can be written as follows
		\begin{align*}
			\partial_t^{\beta}\bigg[f_2\bigg(\frac{4\,\cdot}{|x|^2}\bigg)\bigg](t)&=\int_{\mathbb{R}}\frac{f_2(4u/|x|^2)-f_2(4t/|x|^2)}{|u-t|^{1+\beta}}\dd u\simeq \frac{1}{|x|^{2\beta}}\partial_t^{\beta}f_2\bigg( \frac{4t}{|x|^2} \bigg),
		\end{align*}
		yielding the final equality
		\begin{equation*}
			\nabla_xK (\ox)= C\frac{x}{|x|^{n+2+2\beta}}\,\partial_t^{\beta}f_2\bigg(\frac{4t}{|x|^2}\bigg).
		\end{equation*}
		Applying Lemma \ref{lem3.6} we finally deduce \textit{3}:
		\begin{equation*}
			|\nabla_x K (\ox)|\lesssim_{\beta} \frac{1}{|x|^{n+1+2\beta}}\min\bigg\{ 1, \frac{|x|^{2+2\beta}}{|t|^{1+\beta}} \bigg\}=\frac{1}{|x|^{n-1}|\ox|_{p_1}^{2+2\beta}}.
		\end{equation*}
		Concerning inequality \textit{4}, since the operators $\partial_t^{\beta}$ and $\partial_t$ commute, we directly have
		\begin{equation*}
			\partial_t K(\ox)= \int_{\mathbb{R}}\frac{\partial_{t} W(x,u)-\partial_{t}W(x,t)}{|u-t|^{1+\beta}}\dd u,
		\end{equation*}
		and this integral makes sense. 
		As done for $\nabla_x W$, we can also rewrite $\partial_t W$ as follows,
		\begin{align*}
			\partial_t W(x,t) &= C_1\,\frac{e^{-|x|^2/(4t)}}{t^{n/2+1}}+C_2|x|^2\,\frac{e^{-|x|^2/(4t)}}{t^{n/2+2}}\\
			&=\bigg[\frac{C_1'}{|x|^{n+2}}\bigg( \frac{|x|^2}{4t} \bigg)^{n/2+1}+\frac{C_2'}{|x|^{n+2}}\bigg( \frac{|x|^2}{4t} \bigg)^{n/2+2}\bigg]e^{-|x|^2/(4t)}\\
			&=\frac{C_1'}{|x|^{n+2}}f_2\bigg( \frac{4t}{|x|^2} \bigg)+\frac{C_2'}{|x|^{n+2}}f_3\bigg( \frac{4t}{|x|^2} \bigg),
		\end{align*}
		where $f_3$ is defined in Lemma \ref{lem3.6}. By exactly the same change of variables as the one performed when studying $\nabla_x K$, we reach the identity
		\begin{equation*}
			\partial_t K(\ox)=\frac{C_1'}{|x|^{n+2+2\beta}}\partial_t^{\beta}f_2\bigg( \frac{4t}{|x|^2} \bigg)+\frac{C_2'}{|x|^{n+2+2\beta}}\partial_t^{\beta}f_3\bigg( \frac{4t}{|x|^2} \bigg).
		\end{equation*}
		By Lemma \ref{lem3.6}, we get inequality \textit{4}:
		\begin{equation*}
			|\partial_t K (\ox)|\lesssim_{\beta} \frac{1}{|x|^{n+2+2\beta}}\min\bigg\{ 1, \frac{|x|^{2+2\beta}}{|t|^{1+\beta}} \bigg\}=\frac{1}{|x|^{n}|\ox|_{p_1}^{2+2\beta}}.
		\end{equation*}
		Finally, regarding \textit{5}, we follow exactly the same proof as that of estimate \textit{4} in Theorem \ref{lem2.4}.
	\end{proof}

	\section{Growth estimates for admissible functions}
	\label{sec2.2}
	
	We will say that a positive Borel measure $\mu$ in $\mathbb{R}^{n+1}$ has upper $s$-parabolic \textit{growth of degree $\rho$ \text{\normalfont{(}}with constant $C$\text{\normalfont{)}}} or simply $s$\textit{-parabolic $\rho$-growth} if there is some constant $C(n,s)>0$ such that for any $s$-parabolic ball $B(\ox,r)$,
	\begin{equation*}
		\mu\big( B(\overline{x},r) \big) \leq Cr^\rho.
	\end{equation*}
	It is clear that this property is invariant if formulated using cubes instead of balls. We will be interested in a generalized version of such growth that can be defined not only for measures, but also for general distributions. To introduce such notion we present the concept of admissible function:
	\begin{defn}
		Let $s\in(0,1)$. Given $\phi\in \pazocal{C}^\infty(\mathbb{R}^{n+1})$, we will say that it is an \textit{admissible} function for an $s$-parabolic cube $Q$ if $\text{supp}(\phi)\subset Q$ and
		\begin{equation*}
			\|\phi\|_\infty \leq 1, \qquad \|\nabla_x\phi\|_\infty \leq \ell(Q)^{-1}, \qquad \|\partial_t\phi\|_\infty \leq \ell(Q)^{-2s}, \qquad \|\Delta \phi\|_\infty \leq \ell(Q)^{-2}.
		\end{equation*}
	\end{defn}
	
	\begin{rem}
		\label{lem2.2.1}
		If $\phi$ is a $\pazocal{C}^{2}$ function supported on $Q$ $s$-parabolic cube with $\|\phi\|_\infty \leq 1$, $\|\nabla_x\phi\|_\infty \leq \ell(Q)^{-1}$ and $\|\Delta \phi\|_\infty \leq \ell(Q)^{-2}$, then it also satisfies
		\begin{equation*}
			\|(-\Delta)^s\phi\|_\infty \lesssim \ell(Q)^{-2s}.
		\end{equation*}
		Indeed, begin by observing that translations in $\mathbb{R}^n$ commute with $\nabla_x$ and $(-\Delta)^s$. From it, it is clear that we may assume $Q$ to be centered at the origin. Assuming this, let us fix $t\in \mathbb{R}$ and compute 
		\begin{align*}
			(-\Delta)^s\phi(x,t)&:=c_{n,s}\int_{\mathbb{R}^n}\frac{\phi(x+y,t)-2\phi(x,t)+\phi(x-y,t)}{|y|^{n+2s}}\dd y\\
			&=c_{n,s}\int_{2Q}\frac{\phi(x+y,t)-2\phi(x,t)+\phi(x-y,t)}{|y|^{n+2s}}\dd y\\
			&\hspace{2cm}c_{n,s}\int_{\mathbb{R}^n\setminus{2Q}}\frac{\phi(x+y,t)-2\phi(x,t)+\phi(x-y,t)}{|y|^{n+2s}}\dd y=:I_1+I_2.
		\end{align*}
		Regarding $I_2$, integration in polar coordinates yields
		\begin{equation*}
			|I_2|\leq 4c_{n,s}\int_{\mathbb{R}^n\setminus{2Q}}\frac{\dd y}{|y|^{n+2s}}\lesssim \ell(Q)^{-2s}.
		\end{equation*}
		For $I_1$, we apply twice the mean value theorem so that
		\begin{align*}
			|I_1|\leq c_{n,s}\int_{2Q}\frac{|\langle \nabla_x \phi(x+\eta_1y, t),y\rangle + \langle \nabla_x \phi(x-\eta_2y, t),y\rangle|}{|y|^{n+2s-1}}\lesssim \int_{2Q}\frac{\|\Delta \phi\|_{\infty}}{|y|^{n+2s-2}}\dd y\lesssim \ell(Q)^{-2s}.
		\end{align*}
	\end{rem}
	\begin{defn}
		We will say that a distribution $T$ has $s$-parabolic $n$-\textit{growth}if there exists some constant $C=C(n,s)>0$ such that, given any $s$-parabolic cube $Q$ and any function $\phi$ admissible for $Q$, we have
		\begin{equation*}
			|\langle T, \phi \rangle |\leq C \ell(Q)^n.
		\end{equation*}
	\end{defn}
	In the end, the results below will help us estimate the growth of distributions of the form $\varphi T$, for some particular choices of $T$ and a fixed admissible function $\varphi$, associated with a fixed $s$-parabolic cube.
    
	In any case, let us clarify that in the following Theorems \ref{Growth_thm1}, \ref{Growth_thm2}, \ref{thm0.3} and \ref{Growth_thm4}, we will fix $s\in(0,1]$ and $Q$ and $R$ will be $s$-parabolic cubes in $\mathbb{R}^{n+1}$ with $Q\cap R \neq \varnothing$. We will write $Q:=Q_1\times I_Q \subset \mathbb{R}^n\times \mathbb{R}$ and analogously for $R$. Moreover, $\varphi$ and $\phi$ will denote $\pazocal{C}^1$ functions with $\text{supp}(\varphi)\subset Q$, $\text{supp}(\phi)\subset R$ and such that $\|\varphi\|_\infty\leq 1$ and $\|\phi\|_\infty\leq 1$.
	
	\begin{thm}
		\label{Growth_thm1}
		Let $\beta\in(0,1)$, $\alpha\in(0,1)$ and $f:\mathbb{R}^{n+1}\to \mathbb{R}$. Assume  $\|\partial_t\varphi\|_\infty\leq \ell(Q)^{-2s}$ and $\|\partial_t\phi\|_\infty\leq \ell(R)^{-2s}$. Then, if $\ell(R)\leq \ell(Q)$,
		\begin{enumerate}
			\item[1.] If $f\in \text{\normalfont{BMO}}_{p_s}$,
			\begin{equation*}
				|\langle f, \partial_t(\varphi\phi)\ast_t |t|^{-\beta}\rangle|\lesssim_{\beta} \|f\|_{\ast,p_s} \ell(R)^{n+2s(1-\beta)}.
			\end{equation*}
			\item[2.] If $f\in \text{\normalfont{Lip}}_{\alpha,p_s}$ and $\alpha<2s\beta$,
			\begin{equation*}
				|\langle f, \partial_t(\varphi\phi)\ast_t |t|^{-\beta}\rangle|\lesssim_{\beta,\alpha} \|f\|_{\text{\normalfont{Lip}}_{\alpha,p_s}} \ell(R)^{n+2s(1-\beta)+\alpha}.
			\end{equation*}
		\end{enumerate}
	\end{thm}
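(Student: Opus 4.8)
The plan is to move the operator onto $f$ and reduce, via a zero–integral identity, to estimating $f$ against an explicit, essentially compactly supported kernel. Write $\Phi:=\varphi\phi$: it is $\mathcal{C}^1$, supported in $Q\cap R\subset R$, with $\|\Phi\|_\infty\le 1$ and, since $\ell(R)\le\ell(Q)$, $\|\partial_t\Phi\|_\infty\le\|\partial_t\varphi\|_\infty+\|\partial_t\phi\|_\infty\lesssim\ell(R)^{-2s}$. Put $K:=\partial_t\Phi\ast_t|t|^{-\beta}$. First I would record the two pointwise bounds on $K$: if $B\subset\mathbb{R}^n$ denotes the spatial projection of $R$ (a Euclidean ball of radius $\approx\ell(R)$) and $I$ its time interval ($|I|\approx\ell(R)^{2s}$), then $K(x,t)=0$ for $x\notin B$, and for $x\in B$
\[
|K(x,t)|\lesssim\ell(R)^{-2s\beta}\ \ (t\in 2I),\qquad |K(x,t)|\lesssim\frac{\ell(R)^{2s}}{\text{dist}(t,I)^{1+\beta}}\ \ (t\notin 2I),
\]
the first from $\|\partial_t\Phi\|_\infty\le\ell(R)^{-2s}$ and $\beta<1$, the second by writing $\int_{\mathbb{R}}\partial_u\Phi(x,u)\,du=0$, subtracting $|t-c_I|^{-\beta}$ (with $c_I$ the center of $I$) and using the mean value theorem. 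In particular $K\in L^1(\mathbb{R}^{n+1})$ with $\|K\|_{L^1}\lesssim\ell(R)^{n+2s(1-\beta)}$; and since $K=\partial_t\big[\Phi\ast_t|t|^{-\beta}\big]$ while $\Phi\ast_t|t|^{-\beta}(x,t)\to 0$ as $t\to\pm\infty$, we get $\int_{\mathbb{R}}K(x,t)\,dt=0$ for every $x$, hence $\int_{\mathbb{R}^{n+1}}K=0$. Therefore I may replace $f$ by $f-f_R$ in part 1 and by $f-f(\ox_0)$, with $\ox_0$ the center of $R$, in part 2 (this will also show the pairing converges absolutely, once the estimates below are in place).

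Next I would run a single dyadic decomposition in time that serves both parts. Split the $t$–line into $2I$ and the annuli $A_k:=2^{k+1}I\setminus 2^kI$, $k\ge 1$. On $A_k$ one has $\text{dist}(t,I)\approx 2^k\ell(R)^{2s}$, so $\sup_{t\in A_k}|K(x,t)|\lesssim 2^{-k(1+\beta)}\ell(R)^{-2s\beta}$, while $B\times A_k$ has Lebesgue measure $\approx 2^k\ell(R)^{n+2s}$. The crucial geometric point is that, in the $s$–parabolic scaling, $B\times A_k$ is a long, thin \emph{tube}: it is covered by $N_k\lesssim 2^k$ translates $R_1,\dots,R_{N_k}$ of $R$ stacked in the time direction, rather than by one large $s$–parabolic cube.

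For part 1, each $R_i$ sits, together with $R$, inside a common $s$–parabolic cube of side $\approx 2^{(k+1)/(2s)}\ell(R)$, so telescoping the averages along $\approx(k+1)/(2s)$ dyadic parabolic dilations gives $|f_{R_i}-f_R|\lesssim(k+1)\|f\|_{\ast,p_s}$ (implicit constant depending on $n,s$), hence $\int_{R_i}|f-f_R|\lesssim(k+1)\ell(R)^{n+2s}\|f\|_{\ast,p_s}$ and $\int_{B\times A_k}|f-f_R|\lesssim 2^k(k+1)\ell(R)^{n+2s}\|f\|_{\ast,p_s}$. Pairing with the bound on $\sup_{A_k}|K|$, the $A_k$–contribution is $\lesssim 2^{-k\beta}(k+1)\|f\|_{\ast,p_s}\ell(R)^{n+2s(1-\beta)}$, summable in $k$ since $\beta>0$; the piece $t\in 2I$ is immediate from $\int_{B\times 2I}|f-f_R|\lesssim\ell(R)^{n+2s}\|f\|_{\ast,p_s}$. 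This gives part 1. For part 2 I would instead bound, on $B\times A_k$, $|f(\ox)-f(\ox_0)|\lesssim\|f\|_{\text{\normalfont{Lip}}_{\alpha,p_s}}|\ox-\ox_0|_{p_s}^{\alpha}\lesssim 2^{(k+1)\alpha/(2s)}\ell(R)^{\alpha}\|f\|_{\text{\normalfont{Lip}}_{\alpha,p_s}}$, whence the $A_k$–contribution is $\lesssim 2^{-k(\beta-\alpha/(2s))}\ell(R)^{n+2s(1-\beta)+\alpha}\|f\|_{\text{\normalfont{Lip}}_{\alpha,p_s}}$; the sum converges precisely because of the hypothesis $\alpha<2s\beta$, and the region $t\in 2I$ is again trivial.

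The main obstacle is exactly the tube geometry in the second paragraph: one must recognize that $B\times A_k$ is ``one–dimensional in time'' under the parabolic metric, so it decomposes into only $\approx 2^k$ unit $s$–parabolic cubes whose averages deviate from $f_R$ by merely $O(k)\|f\|_{\ast,p_s}$. Had one instead enclosed $B\times A_k$ in a single $s$–parabolic cube, the comparison constant $\approx 2^{kn/(2s)}$ would overwhelm the decay $2^{-k(1+\beta)}$ of $K$ and the series would diverge. Everything else — the elementary kernel bounds for $K$, the zero–integral reduction, and the summation of the two geometric-type series (the constraint $\alpha<2s\beta$ in part 2 being dictated by the second of them) — is routine.
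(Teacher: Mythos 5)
Your proof is correct, and the overall architecture — establishing the near ($\lesssim\ell(R)^{-2s\beta}$) and far ($\lesssim\ell(R)^{2s}\,\mathrm{dist}(t,I)^{-1-\beta}$) pointwise bounds on the kernel $K$, using $\int K=0$ to subtract $f_{R}$ or $f(\ox_0)$, and splitting into a near part and a decaying tail — matches the paper's. Where you genuinely diverge is in the tail. The paper works with $s$-parabolic annuli $A_j=2^jR\setminus 2^{j-1}R$ and applies H\"older's inequality with an exponent $q>n/(2s\beta)$, using the $L^q$ form of John–Nirenberg on $2^jR$ together with the measure of the thin slab $\mathrm{supp}(g)\cap 2^jR\approx 2^{2sj}\ell(R)^{n+2s}$; the term $|f_{2^jR}-f_{2R}|\lesssim j\|f\|_{\ast,p_s}$ is handled by the same telescoping you use. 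You instead parametrize the tail by time slabs $B\times A_k$ with $A_k=2^{k+1}I\setminus 2^kI$ (equivalent under $k\leftrightarrow 2sj$), cover each slab by $\lesssim 2^k$ temporal translates $R_i$ of $R$, and estimate $\int_{R_i}|f-f_R|\lesssim (k+1)\ell(R)^{n+2s}\|f\|_{\ast,p_s}$ directly from the definition of the BMO norm plus telescoping — no H\"older or John–Nirenberg needed. Your route is more elementary and gives the cleaner summand $2^{-k\beta}(k+1)$; the paper's route, though it carries the extra free exponent $q$, is a general-purpose template that the authors reuse verbatim in Theorems 3.3 and 3.4. The Lipschitz case is handled identically in both, with the constraint $\alpha<2s\beta$ arising for the same reason. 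One minor cosmetic point: you call $B$ a Euclidean ball of radius $\approx\ell(R)$ while it is really the spatial face $R_1$ (a cube of side $\ell(R)$), but this has no bearing on the argument.
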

	\begin{proof} 
		Set $g:=\partial_t(\varphi \phi)\ast_t |t|^{-\beta}$ and begin by proving that $g$ is integrable. Firstly, observe that if $c_{Q\cap R}$ is the center of $I_Q\cap I_R$, then for each $t\notin 2(I_Q\cap I_R)$ we get
		\begin{align}
			|g(x,t)|&=\bigg\rvert \int_{I_Q\cap I_R} \frac{\partial_t(\varphi\phi)(x,u)}{|t-u|^{\beta}}\dd u \bigg\rvert \leq \int_{I_Q\cap I_R}|\partial_t(\varphi\phi)(x,u)|\bigg\rvert \frac{1}{|t-u|^{\beta}}-\frac{1}{|t-c_{Q\cap R}|^{\beta}} \bigg\rvert\dd u \nonumber\\
			&\lesssim \frac{\ell(I_Q\cap I_R)}{|t-c_{Q\cap R}|^{1+\beta}} \int_{I_Q\cap I_R} |\partial_t(\varphi\phi)(x,u)|\dd u \nonumber\\
			&\lesssim_{\beta} \frac{\ell(I_Q\cap I_R)}{|t-c_{Q\cap R}|^{1+\beta}}\bigg( \frac{1}{\ell(Q)^{2s}} + \frac{1}{\ell(R)^{2s}}\bigg)\ell(I_Q\cap I_R)\lesssim \frac{\ell(I_Q\cap I_R)}{|t-c_{Q\cap R}|^{1+\beta}}. \label{new_eq3.1}
		\end{align} 
		
		That is, $|g|$ decays as $|t|^{-1-\beta}$ for large values of $t$. Hence, since $\text{supp}(g)\subset (Q_1\cap R_1)\times \mathbb{R}$, this implies $g\in L^1(\mathbb{R}^{n+1})$. Then, for any constant $c\in \mathbb{R}$ we have
		\begin{align*}
			|\langle f, g \rangle | = \bigg\rvert \int (f-c)g \bigg\rvert \leq \int_{2R}|f-c||g|+\int_{\mathbb{R}^{n+1}\setminus{2R}}|f-c||g|=: I_1+I_2,
		\end{align*}
		where we have used that $g$ has null integral (it can be easily checked taking the Fourier transform, for example). To study $I_1$, observe that for $t\in 4I_R$ we get
		\begin{align*}
			|g(x,t)|&\leq \|\partial_t(\varphi \phi)\|_\infty \int_{-5\ell(I_R)}^{5\ell(I_R)}\frac{\dd u}{|u|^{\beta}} \lesssim_{\beta} \bigg( \frac{1}{\ell(R)^{2s}}+\frac{1}{\ell(Q)^{2s}} \bigg)\ell(I_R)^{1-\beta}\lesssim \ell(R)^{-2s\beta},
		\end{align*}
		since $\ell(R)\leq \ell(Q)$. Therefore,
		\begin{equation*}
			I_1\lesssim_{\beta} \frac{1}{\ell(R)^{2s\beta}}\int_{2R}|f-c|.
		\end{equation*}
		If $f\in \text{\normalfont{BMO}}_{p_s}$, pick $c:= f_{2R}$, the average of $f$ over $2R$, so that
		\begin{equation*}
			I_1\lesssim_{\beta} \ell(R)^{n+2s(1-\beta)}\|f\|_{\ast,p_s}.
		\end{equation*}
		If $f\in \text{\normalfont{Lip}}_{\alpha,p_s}$, pick $c:= f(\ox_R)$, where $\ox_R$ is the center of $2R$, so that
		\begin{align*}
			I_1\lesssim_{\beta,\alpha} \ell(R)^{n+2s(1-\beta)+\alpha}\|f\|_{\text{\normalfont{Lip}}_{\alpha,p_s}},
		\end{align*}
		and we are done with $I_1$. To study $I_2$, define the $s$-parabolic annuli $A_j:=2^jR\setminus{2^{j-1}R}$ for $j\geq 2$. Then, since $\text{supp}(g)\subset (Q_1\cap R_1)\times \mathbb{R}$ applying \ref{new_eq3.1} we have
		\begin{align}
			\label{eq1.2.2}
			I_2&=\sum_{j=2}^{\infty} \int_{A_j\cap \text{supp}(g)}|f(\ox)-c||g(\ox)|\dd \ox \lesssim_{\beta} \frac{1}{\ell(R)^{2s\beta}} \sum_{j=2}^\infty \frac{1}{2^{2s(1+\beta)j}} \int_{A_j\cap \text{supp}(g)}|f(\ox)-c|\dd \ox
		\end{align}
		If $f\in \text{\normalfont{BMO}}_{p_s}$, pick again $c:=f_{2R}$ and observe
		\begin{align*}
			I_2\lesssim_{\beta} \frac{1}{\ell(R)^{2s\beta}} \sum_{j=2}^\infty \frac{1}{2^{2s(1+\beta)j}}\bigg( \int_{A_i\cap\text{supp}(g)} |f(\ox)-f_{2^jR}|\dd\ox + \int_{A_j\cap\text{supp}(g)}|f_{2^jR}-f_{2R}|\dd \ox \bigg),
		\end{align*}
		Regarding the first integral, apply Hölder's inequality (with exponent $q$, to be fixed later) and John-Nirenberg's, so that
		\begin{align*}
			\int_{A_i\cap\text{supp}(g)} |f(\ox)-&f_{2^jR}|\dd\ox  \leq \bigg(\int_{A_j\cap\text{supp}(g)} |f(\ox)-f_{2^jR}|^q\dd\ox \bigg)^{\frac{1}{q}}|\text{supp}(g)\cap 2^jR|^{\frac{1}{q'}}\\
			&\leq \|f\|_{\ast,p_s} (2^j\ell(R))^{\frac{n+2s}{q}}\big[ 2^{2sj}\ell(R)^{n+2s} \big]^{\frac{1}{q'}} = \|f\|_{\ast,p_s} 2^{j(\frac{n}{q}+2s)}\ell(R)^{n+2s}.
		\end{align*}
		For the second integral we apply \cite[Ch.VI, Lemma 1.1]{Ga} to deduce $|f_{2^jR}-f_{2R}|\lesssim j\|f\|_{\ast,p_s}\leq j$, so
		\begin{align*}
			\int_{A_j\cap\text{supp}(g)}|f_{2^jR}-f_{2R}|\dd \ox \lesssim j\,|\text{supp}(g)\cap 2^jR|=j \|f\|_{\ast,p_s} \,2^{2sj}\ell(R)^{n+2s}.
		\end{align*}
		Therefore, choosing $q>\frac{n}{2s\beta}$,
		\begin{equation*}
			I_2\lesssim_{\beta} \frac{\|f\|_{\ast,p_s}}{\ell(R)^{2s\beta}}\sum_{j=2}^{\infty}\frac{1}{2^{2s(1+\beta)j}}\big( 2^{j(\frac{n}{q}+2s)}+j2^{2sj} \big)\ell(R)^{n+2s}\lesssim \|f\|_{\ast,p_s} \ell(R)^{n+2s(1-\beta)}.
		\end{equation*}
		If on the other hand $f\in \text{\normalfont{Lip}}_{\alpha,p_s}$, pick $c:=f(\ox_R)$ so that Hölder's inequality in \eqref{eq1.2.2} yields
		\begin{align*}
			I_2&\lesssim_{\beta} \frac{\|f\|_{\text{\normalfont{Lip}}_{\alpha,p_s}}}{\ell(R)^{2s\beta}} \sum_{j=2}^\infty \frac{\big(2^{j}\ell(R)\big)^\alpha}{2^{2s(1+\beta)j}}|\text{supp}(g)\cap 2^jR|\lesssim_{\beta,\alpha} \|f\|_{\text{\normalfont{Lip}}_{\alpha,p_s}}  \sum_{j=2}^\infty \frac{1\ell(R)^{n+2s(1-\beta)+\alpha}}{2^{(2s\beta-\alpha)j}},
		\end{align*}
		being this last sum convergent because $\alpha<2s\beta$, so we are done.
	\end{proof}
	
	\begin{thm}
		\label{Growth_thm2}
		Let $\alpha\in(0,1)$ and $f:\mathbb{R}^{n+1}\to\mathbb{R}$. Assume $\|\nabla_x\varphi\|_\infty\leq \ell(Q)^{-1}$ and $\|\nabla_x\phi\|_\infty\leq \ell(R)^{-1}$. Then, if $\ell(R)\leq \ell(Q)$, for each $i=1,\ldots, n$ we have
		\begin{enumerate}
			\item[1.] If $f\in \text{\normalfont{BMO}}_{p_s}$,
			\begin{equation*}
				|\langle f, \partial_{x_i}(\varphi \phi)\rangle|\lesssim_{\beta} \|f\|_{\ast,p_s} \ell(R)^{n+2s-1}.
			\end{equation*}
			\item[2.] If $f\in \text{\normalfont{Lip}}_{\alpha,p_s}$,
			\begin{equation*}
				|\langle f, \partial_{x_i}(\varphi \phi)\rangle|\lesssim_{\beta,\alpha} \|f\|_{\text{\normalfont{Lip}}_{\alpha,p_s}} \ell(R)^{n+2s-1+\alpha}.
			\end{equation*}
		\end{enumerate}
	\end{thm}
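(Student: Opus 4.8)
The plan is to use that, unlike in Theorem~\ref{Growth_thm1}, the function $\partial_{x_i}(\varphi\phi)$ is itself compactly supported (in $R$) and involves no temporal convolution, so the dyadic--annuli step is not needed; everything reduces to a one--line oscillation estimate together with a suitable choice of subtracted constant.

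First I would write $\partial_{x_i}(\varphi\phi)=(\partial_{x_i}\varphi)\phi+\varphi(\partial_{x_i}\phi)$. Since $\text{supp}(\phi)\subset R$ this function is supported in $R$, and using $\|\varphi\|_\infty,\|\phi\|_\infty\le 1$, the hypotheses $\|\nabla_x\varphi\|_\infty\le\ell(Q)^{-1}$, $\|\nabla_x\phi\|_\infty\le\ell(R)^{-1}$ and $\ell(R)\le\ell(Q)$, one gets
\[
\|\partial_{x_i}(\varphi\phi)\|_\infty\le \ell(Q)^{-1}+\ell(R)^{-1}\le 2\,\ell(R)^{-1}.
\]
Moreover $\partial_{x_i}(\varphi\phi)$ has vanishing integral: fixing $t$ and $x_j$ for $j\neq i$, the map $x_i\mapsto(\varphi\phi)(x,t)$ is $\pazocal{C}^1$ with compact support, so $\int_{\mathbb R}\partial_{x_i}(\varphi\phi)\,\dd x_i=0$, and then one integrates in the remaining variables (equivalently, one looks at the Fourier transform, as in the proof of Theorem~\ref{Growth_thm1}). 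Hence for every constant $c$,
\[
|\langle f,\partial_{x_i}(\varphi\phi)\rangle|=\Big|\int_{R}(f-c)\,\partial_{x_i}(\varphi\phi)\Big|\le\|\partial_{x_i}(\varphi\phi)\|_\infty\int_{R}|f-c|\lesssim\frac{1}{\ell(R)}\int_{R}|f-c|.
\]

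Finally I would choose $c$ according to the normalization of $f$ and use that an $s$-parabolic cube of side length $\ell(R)$ has measure $|R|=\ell(R)^{n}\cdot\ell(R)^{2s}=\ell(R)^{n+2s}$. If $f\in\text{BMO}_{p_s}$, take $c=f_{R}$, the average of $f$ over $R$; then $\int_R|f-f_R|\le\|f\|_{\ast,p_s}|R|$, which yields $|\langle f,\partial_{x_i}(\varphi\phi)\rangle|\lesssim\|f\|_{\ast,p_s}\,\ell(R)^{n+2s-1}$. If $f\in\text{Lip}_{\alpha,p_s}$, take $c=f(\ox_R)$ with $\ox_R$ the center of $R$; since the $p_s$-diameter of $R$ is comparable to $\ell(R)$, we have $|f(\ox)-f(\ox_R)|\le\|f\|_{\text{Lip}_{\alpha,p_s}}|\ox-\ox_R|_{p_s}^{\alpha}\lesssim\|f\|_{\text{Lip}_{\alpha,p_s}}\ell(R)^{\alpha}$ for $\ox\in R$, so $\int_R|f-f(\ox_R)|\lesssim\|f\|_{\text{Lip}_{\alpha,p_s}}\ell(R)^{\alpha}|R|$ and $|\langle f,\partial_{x_i}(\varphi\phi)\rangle|\lesssim\|f\|_{\text{Lip}_{\alpha,p_s}}\,\ell(R)^{n+2s-1+\alpha}$. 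I do not expect a genuine obstacle here; the only points requiring a little care are verifying that $\partial_{x_i}(\varphi\phi)$ is supported in $R$ with zero mean, and that $\text{diam}_{p_s}(R)\approx\ell(R)$ so that the $\text{Lip}_\alpha$ oscillation bound on $R$ is legitimate.
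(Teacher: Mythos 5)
Your argument is correct, and it is a touch simpler than the one in the paper while sharing the same core structure (subtract a constant, exploit that $\partial_{x_i}(\varphi\phi)$ has zero mean and is supported in $R$, then bound the oscillation of $f$). Where you differ is in the final estimate: you pair the $L^\infty$ bound $\|\partial_{x_i}(\varphi\phi)\|_\infty\lesssim \ell(R)^{-1}$ against the $L^1$ oscillation $\int_R|f-c|$, which for BMO is just the definition of $\|f\|_{\ast,p_s}$ (no further tool needed), whereas the paper applies Cauchy--Schwarz, estimates the $L^2$ norm of $\partial_{x_i}(\varphi\phi)$, and then needs an $s$-parabolic John--Nirenberg inequality to control $\big(\int_R|f-f_R|^2\big)^{1/2}$ in the BMO case. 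Both routes land on $\ell(R)^{n+2s-1}$ (resp.\ $\ell(R)^{n+2s-1+\alpha}$), and yours does so with fewer moving parts. One small remark: the paper keeps the two terms $\ell(Q)^{-1}$ and $\ell(R)^{-1}$ separate before estimating, using $|Q\cap R|$ rather than $|R|$ to recover the same exponent; you crush both to $\ell(R)^{-1}$ right away via $\ell(R)\le\ell(Q)$, which loses nothing for the stated bound. Your verification of zero mean and of $\operatorname{diam}_{p_s}(R)\approx\ell(R)$ are also the right things to check, and they are correct.
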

	\begin{proof}
		First, observe that for any real constant $c$, we have the identity
		\begin{equation*}
			\langle f, \partial_{x_i}(\varphi\phi)\rangle = \langle f-c, \partial_{x_i}(\varphi\phi)\rangle,
		\end{equation*}
		Therefore,
		\begin{align*}
			\langle f, \partial_{x_i}(\varphi\phi)\rangle &= \bigg\rvert \int_{Q\cap R}f(\ox) \partial_{x_i}(\varphi\phi)(\ox)\dd \ox\bigg\rvert \leq \int_{Q\cap R}\big\rvert f(\ox)-c\big\rvert \big\rvert\partial_{x_i}(\varphi\phi)(\ox)\big\rvert \dd \ox\\
			&\leq \bigg( \int_R \big\rvert f(\ox)-c\big\rvert^2\dd \ox \bigg)^{1/2}\bigg( \int_{Q\cap R} \big\rvert\partial_{x_i}(\varphi\phi)(\ox)\big\rvert^2 \dd \ox\bigg)^{1/2}\\
			&\lesssim \bigg( \int_R \big\rvert f(\ox)-c\big\rvert^2\dd \ox \bigg)^{1/2}\bigg( \int_{Q\cap R} \Big[ \|\nabla_x \varphi\|_\infty^2\|\phi\|_\infty^2 + \| \varphi\|_\infty^2\|\nabla_x\phi\|_\infty^2 \Big] \dd \ox\bigg)^{1/2}\\
			&\leq \bigg( \int_R \big\rvert f(\ox)-c\big\rvert^2\dd \ox \bigg)^{1/2}|Q\cap R|^{1/2}\Big(|Q|^{-\frac{1}{n+2s}} + |R|^{-\frac{1}{n+2s}}\Big)\\
			&\leq \bigg( \int_R \big\rvert f(\ox)-c\big\rvert^2\dd \ox \bigg)^{1/2}\bigg(\frac{|Q\cap R|^{1/2}}{\ell(Q)}+\ell(R)^{\frac{n+2s}{2}-1}\bigg)\\
			&=\bigg( \int_R \big\rvert f(\ox)-c\big\rvert^2\dd \ox \bigg)^{1/2}\bigg(|Q\cap R|^{\frac{n+2s-2}{2(n+2s)}}\frac{|Q\cap R|^{\frac{1}{n+2s}}}{\ell(Q)} + \ell(R)^{\frac{n+2s}{2}-1}\bigg)\\
			&\leq \bigg( \int_R \big\rvert f(\ox)-c\big\rvert^2\dd \ox \bigg)^{1/2} \ell(R)^{\frac{n+2s}{2}-1}.
		\end{align*}
		Now, if $f\in \text{\normalfont{BMO}}_{p_s}$, choose $c:=f_{R}$ and apply an $s$-parabolic version of John-Nirenberg's inequality (that admits an analogous proof) to deduce estimate \textit{1}. On the other hand, if $f\in \text{Lip}_{\alpha,p_s}$, choose $c:=f(\ox_R)$ to obtain estimate \textit{2}.
	\end{proof}
	
	\begin{thm}
		\label{thm0.3}
		Let $\beta\in(0,1)$, $\alpha\in(0,1)$ and $f:\mathbb{R}^{n+1}\to\mathbb{R}$. Assume that $\varphi$ and $\phi$ are $\pazocal{C}^{2}$ with $\|\nabla_x\varphi\|_\infty \leq \ell(Q)^{-1}$, $\|\Delta \varphi\|_\infty \leq \ell(Q)^{-2}$ and $\|\nabla_x\phi\|_\infty \leq \ell(R)^{-1}$, $\|\Delta \phi\|_\infty \leq \ell(R)^{-2}$. Then, if $\ell(R)\leq \ell(Q)$,
		\begin{enumerate}
			\item[1.] If $f \in\text{\normalfont{BMO}}_{p_s}$,
			\begin{equation*}
				|\langle f, (-\Delta)^{\beta}(\varphi \phi)\rangle|\lesssim_{\beta} \|f\|_{\ast,p_s} \ell(R)^{n+2(s-\beta)}.
			\end{equation*}
			\item[2.] If $f\in \text{\normalfont{Lip}}_{\alpha,p_s}$ and $\alpha<2\beta$,
			\begin{equation*}
				|\langle f, (-\Delta)^{\beta}(\varphi \phi)\rangle|\lesssim_{\beta,\alpha} \|f\|_{\text{\normalfont{Lip}}_{\alpha,p_s}} \ell(R)^{n+2(s-\beta)+\alpha}.
			\end{equation*}
		\end{enumerate}
	\end{thm}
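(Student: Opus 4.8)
The plan is to follow the template of Theorems \ref{Growth_thm1} and \ref{Growth_thm2}. Put $h:=\varphi\phi$ and $g:=(-\Delta)^{\beta}h$. By the product rule and $\ell(R)\leq\ell(Q)$ the function $h$ is $\pazocal{C}^{2}$, supported in $Q\cap R$, and satisfies $\|h\|_\infty\leq1$, $\|\nabla_x h\|_\infty\lesssim\ell(R)^{-1}$ and $\|\Delta h\|_\infty\lesssim\ell(R)^{-2}$; in particular $h$ is, up to a dimensional constant, admissible for $R$. Running the argument of Remark \ref{lem2.2.1} with $(-\Delta)^{\beta}$ in place of $(-\Delta)^{s}$ (that proof only uses $\beta\in(0,1)$: one splits the defining integral at $|y|=\ell(R)$, bounds the inner part by two applications of the mean value theorem together with $\|\Delta h\|_\infty$, and the outer part by $\int_{|y|>\ell(R)}|y|^{-n-2\beta}\dd\pazocal{L}^n(y)\lesssim\ell(R)^{-2\beta}$) yields the uniform estimate $\|g\|_\infty\lesssim\ell(R)^{-2\beta}$.

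Next I would record the two structural facts about $g$ that replace the decay estimate \eqref{new_eq3.1} of Theorem \ref{Growth_thm1}, now in the spatial rather than the temporal variable. First, $(-\Delta)^{\beta}$ acts only on the space variable and $h(\cdot,t)\equiv0$ for $t\notin I_R$, so $\text{supp}(g)\subset\mathbb{R}^n\times I_R$ (writing $R=R_1\times I_R$). Second, for $x$ outside the doubled spatial cube $2R_1$ the function $h(\cdot,t)$ vanishes in a neighbourhood of $x$, so the principal value is superfluous and $g(x,t)=-c_{n,\beta}\int_{R_1}h(y,t)\,|x-y|^{-n-2\beta}\,\dd\pazocal{L}^n(y)$, whence $|g(x,t)|\lesssim\ell(R)^{n}\,\text{dist}(x,R_1)^{-n-2\beta}$. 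These two facts together with $\|g\|_\infty\lesssim\ell(R)^{-2\beta}$ give $g\in L^1(\mathbb{R}^{n+1})$, and $\int g=0$ since $\widehat{g}(\xi,t)=|\xi|^{2\beta}\widehat{h}(\xi,t)$ vanishes at $\xi=0$.

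For any constant $c$ write $\langle f,g\rangle=\int(f-c)\,g=I_1+I_2$, with $I_1$ the integral over $2R$ and $I_2$ over $\mathbb{R}^{n+1}\setminus2R$. For $I_1$ use $\|g\|_\infty\lesssim\ell(R)^{-2\beta}$ and estimate $\int_{2R}|f-c|$ by $\|f\|_{\ast,p_s}|2R|\approx\|f\|_{\ast,p_s}\ell(R)^{n+2s}$ with $c=f_{2R}$ in the BMO case, and by $\|f\|_{\text{Lip}_{\alpha,p_s}}\ell(R)^{\alpha}|2R|$ with $c=f(\ox_R)$ in the Lipschitz case; this already gives $I_1\lesssim\|f\|_{\ast,p_s}\ell(R)^{n+2(s-\beta)}$, respectively $I_1\lesssim\|f\|_{\text{Lip}_{\alpha,p_s}}\ell(R)^{n+2(s-\beta)+\alpha}$. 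For $I_2$ decompose $\mathbb{R}^{n+1}\setminus2R$ into the $s$-parabolic annuli $A_j:=2^jR\setminus2^{j-1}R$, $j\geq2$; since $\text{supp}(g)\subset\mathbb{R}^n\times I_R$ and the temporal side of $2^{j-1}R$ already contains $I_R$ for every $j\geq2$, a point of $A_j\cap\text{supp}(g)$ lies outside $2^{j-1}R_1$, so $\text{dist}(x,R_1)\approx2^{j}\ell(R)$, giving $|g|\lesssim2^{-j(n+2\beta)}\ell(R)^{-2\beta}$ there and $|A_j\cap\text{supp}(g)|\lesssim2^{jn}\ell(R)^{n+2s}$. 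In the BMO case one splits $f-f_{2R}=(f-f_{2^jR})+(f_{2^jR}-f_{2R})$, bounds $|f_{2^jR}-f_{2R}|\lesssim j\|f\|_{\ast,p_s}$ by \cite[Ch.VI, Lemma 1.1]{Ga} and the remaining term by Hölder's inequality and the $s$-parabolic John--Nirenberg inequality with any exponent $q>s/\beta$, arriving at $I_2\lesssim\|f\|_{\ast,p_s}\ell(R)^{n+2s-2\beta}\sum_{j\geq2}\bigl(2^{j(2s/q-2\beta)}+j\,2^{-2\beta j}\bigr)\lesssim\|f\|_{\ast,p_s}\ell(R)^{n+2(s-\beta)}$, the series converging because $\beta>0$ and $q>s/\beta$. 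In the Lipschitz case $|f(\ox)-f(\ox_R)|\lesssim\|f\|_{\text{Lip}_{\alpha,p_s}}(2^j\ell(R))^{\alpha}$ on $A_j$, so $I_2\lesssim\|f\|_{\text{Lip}_{\alpha,p_s}}\ell(R)^{n+2s-2\beta+\alpha}\sum_{j\geq2}2^{j(\alpha-2\beta)}$, which converges precisely under the hypothesis $\alpha<2\beta$.

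The step requiring care is the production of the three ingredients in the first two paragraphs: the uniform bound $\|(-\Delta)^{\beta}(\varphi\phi)\|_\infty\lesssim\ell(R)^{-2\beta}$, the off-cube spatial decay $\ell(R)^{n}\,\text{dist}(x,R_1)^{-n-2\beta}$, and the confinement $\text{supp}(g)\subset\mathbb{R}^n\times I_R$. These are exactly what makes $\langle f,g\rangle$ meaningful for merely BMO data, since $\text{supp}(g)$ is unbounded in the spatial directions: the pairing converges because of the decay and the convergence of $\sum_j j\,2^{-2\beta j}$, not because of compact support. Once they are in hand the rest is the same annular / John--Nirenberg bookkeeping already used in Theorems \ref{Growth_thm1} and \ref{Growth_thm2}.
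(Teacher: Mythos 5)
Your proof is correct and follows essentially the same route as the paper: bound the inner part via Remark \ref{lem2.2.1}, bound the outer part by the far-field spatial decay of $(-\Delta)^\beta(\varphi\phi)$ together with the cylinder structure of its support, then run the annular / John--Nirenberg bookkeeping with $q>s/\beta$ in the BMO case and $\alpha<2\beta$ in the Lipschitz case. The one noteworthy difference is in how you obtain the far-field decay: the paper writes $(-\Delta)^\beta$ as $\sum_j R_j\partial_j$ and uses the cancellation $\int_{Q_1\cap R_1}\partial_j(\varphi\phi)\,dz=0$ together with the mean value theorem, whereas you simply observe that for $x\notin 2R_1$ the principal value is superfluous, so $(-\Delta)^\beta(\varphi\phi)(x,t)=-c_{n,\beta}\int(\varphi\phi)(y,t)\,|x-y|^{-n-2\beta}\,dy$, which gives the same $\ell(R)^n\,\text{dist}(x,R_1)^{-n-2\beta}$ bound without any cancellation argument --- a clean simplification.
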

	\begin{proof}
		Observe that for any real constant $c$,
		\begin{align*}
			|\langle f, (-\Delta)^{\beta}&(\varphi\phi)\rangle| = |\langle f-c, (-\Delta)^{\beta}(\varphi\phi)\rangle| \\
			&\leq \int_{2R_1\times (I_Q\cap I_R)}|f(\ox)-c| \big\rvert (-\Delta)^{\beta}(\varphi\phi)(\ox) \big\rvert \dd \ox\\
			&\hspace{1cm}+\int_{(\mathbb{R}^n\setminus{2R_1})\times (I_Q\cap I_R)}| f(\ox)-c| \big\rvert (-\Delta)^{\beta}(\varphi\phi)(\ox) \big\rvert \dd \ox=:I_1+I_2.
		\end{align*}
		Regarding $I_1$, observe that for any $\ox\in \mathbb{R}^{n+1}$ by Remark \ref{lem2.2.1} we have $|(-\Delta)^{\beta}(\varphi\phi)(\ox)|\lesssim_{\beta} \ell(R)^{-2\beta}.$ Therefore,
		\begin{align*}
			I_1\lesssim_{\beta} \frac{1}{\ell(R)^{2\beta}}\int_{2R_1\times (I_Q\cap I_R)}|f(\ox)-c| \dd \ox.
		\end{align*}
		Let $\ox_0$ be the center of $2R_1\times(I_Q\cap I_R)$. Choosing $c:=f_{2R}$ or $c:=f(\ox_0)$ for $f\in \text{\normalfont{BMO}}_{p_s}$ or $f\in\text{Lip}_{\alpha,p_s}$ respectively, we obtain the desired estimates.
        
		Let us turn to $I_2$. We first notice that, taking the Fourier transform, the operator $(-\Delta)^\beta$ can be rewritten as
		\begin{equation*}
			(-\Delta)^\beta(\cdot) \simeq_{\beta} \sum_{j=1}^n \partial_{x_j}\bigg( \frac{1}{|x|^{n+2\beta-2}} \bigg) \ast_n \partial_{x_j}(\cdot),
		\end{equation*}
		where the notation $\ast_n$ is used to stress that the convolution is taken with respect the first $n$ spatial variables. With this, if $x_0\in \mathbb{R}^n$ denotes the center of $Q_1\cap R_1$, for any $\ox\in (\mathbb{R}^n\setminus{2R_1})\times (I_Q\cap I_R)$ we get
		\begin{align}
			\big\rvert (-\Delta)^{\beta}(\varphi\phi)(\ox) \big\rvert &\lesssim_{\beta} \sum_{j=1}^{n}\bigg\rvert \int_{Q_1\cap R_1} \partial_j(\varphi\phi)(z,t)\frac{z_j-x_j}{|z-x|^{n+2\beta}} \dd z\bigg\rvert\nonumber \\
			&=\sum_{j=1}^{n}\bigg\rvert \int_{Q_1\cap R_1} \partial_j(\varphi\phi)(z,t)\bigg(\frac{z_j-x_j}{|z-x|^{n+2\beta}}-\frac{x_{0,j}-x_j}{|x_{0}-x|^{n+2\beta}}\bigg) \dd z \bigg\rvert\nonumber\\
			&\lesssim_{\beta} \sum_{j=1}^{n} \frac{\ell(R)}{|x_{0}-x|^{n+2\beta}}\|\nabla_x(\varphi\phi)\|_{\infty}\ell(R)^n \lesssim \frac{\ell(R)^{n}}{|x_{0}-x|^{n+2\beta}}, \label{eq2.2.2}
		\end{align}
		by the mean value theorem. So, defining the cylinders $C_j:=2^jR_1\times (I_Q\cap I_R)$ for $j\geq 1$, relation \eqref{eq2.2.2} implies
		\begin{align*}
			I_2\lesssim_{\beta} \frac{1}{\ell(R)^{2\beta}} \sum_{j=1}^\infty \frac{1}{2^{j(n+2\beta)}} \int_{C_{j+1}\setminus{C_j}} \big\rvert f(\ox)-c \big\rvert \dd \ox,
		\end{align*}
		If $f\in \text{\normalfont{BMO}}_{p_s}$, we choose $c:=f_{2R}$ and proceed as in Theorem \ref{Growth_thm1},
		\begin{align*}
			I_2&\lesssim_{\beta} \frac{1}{\ell(R)^{2\beta}} \sum_{j=1}^\infty \frac{1}{2^{j(n+2\beta)}}\bigg( \int_{C_{j+1}\setminus{C_j}} \big\rvert f(\ox)-f_{2^jR} \big\rvert \dd \ox+\int_{C_{j+1}\setminus{C_j}} \big\rvert f_{2R}-f_{2^jR} \big\rvert \dd \ox\bigg)\\
			&\lesssim \frac{\|f\|_{\ast,p_s}}{\ell(R)^{2\beta}} \sum_{j=1}^\infty \frac{1}{2^{j(n+2\beta)}}\bigg[ \big( 2^j\ell(R) \big)^{\frac{n+2s}{q}}|C_{j+1}\setminus{C_j}|^{\frac{1}{q'}}+ j |C_{j+1}\setminus{C_j}| \bigg]\\
			&\lesssim \frac{\|f\|_{\ast,p_s}}{\ell(R)^{2\beta}} \sum_{j=1}^\infty \frac{1}{2^{j(n+2\beta)}} \Big[ \ell(R)^{n+2s}2^{j(n+\frac{2s}{q})+\frac{2s}{q'}}+j\ell(R)^{n+2s}\,2^{jn+2s} \Big]\\
			&\lesssim \|f\|_{\ast,p_s}\ell(R)^{n+2(s-\beta)}\bigg(1+\sum_{j=1}^\infty \frac{2^{\frac{2}{q'}}}{2^{j(2\beta-\frac{2s}{q})}}\bigg).
		\end{align*}
		Fixing $q>s/\beta$ so that this last sum is convergent, proves the result.
        
		On the other hand, if $f\in \text{Lip}_{\alpha,p_s}$ let $c:=f(\ox_0)$ and also proceed as in Theorem \ref{Growth_thm1} to deduce
		\begin{align*}
			I_2\lesssim_{\beta,\alpha} \frac{\|f\|_{\text{Lip}_{\alpha,p_s}}}{\ell(R)^{2\beta}} \sum_{j=1}^\infty \frac{(2^j\ell(R))^\alpha}{2^{j(n+2\beta)}}  |C_{j+1}\setminus{C_j}|\lesssim \|f\|_{\text{Lip}_{\alpha,p_s}}\ell(R)^{n+2(s-\beta)+\alpha}\sum_{j=1}^\infty \frac{1}{2^{(2\beta-\alpha)j}},
		\end{align*}
		that is a convergent sum since $\alpha<2\beta$ by hypothesis.
	\end{proof}
	
	Recall that given $f: \mathbb{R}^{n+1}\to \mathbb{R}$ and $\beta\in(0,n)$, we define its $n$-dimensional $\beta$-Riesz transform (whenever it makes sense) as
	\begin{align*}
		\pazocal{I}_\beta^n f(\cdot,t):= \frac{1}{|x|^{n-\beta}}\ast f (\cdot,t),
	\end{align*}
	for each $t$, where the convolution is thought in a principal value sense. Let us observe that for a test function $f$, for example, the operators $\pazocal{I}_\beta^n$ and $\partial_{x_i}$ commute.
	\begin{thm}
		\label{Growth_thm4}
		Let $\beta\in(0,1)$, $\alpha\in(0,1-\beta)$ and $f:\mathbb{R}^{n+1}\to\mathbb{R}$. Assume $\|\nabla_x\varphi\|_\infty \leq \ell(Q)^{-1}$ and $\|\nabla_x\phi\|_\infty \leq \ell(R)^{-1}$. Then, if $\ell(R)\leq \ell(Q)$, for each $i=1,\ldots, n$ we have
		\begin{enumerate}
			\item[1.] If $f \in\text{\normalfont{BMO}}_{p_s}$,
			\begin{equation*}
				|\langle f, \partial_{x_i}[\pazocal{I}^n_\beta (\varphi \phi)]\rangle|\lesssim_{\beta} \|f\|_{\ast,p_s} \ell(R)^{n+2s+\beta-1}.
			\end{equation*}
			\item[2.] If $f\in \text{\normalfont{Lip}}_{\alpha,p_s}$,
			\begin{equation*}
				|\langle f, \partial_{x_i}[\pazocal{I}^n_\beta (\varphi \phi)]\rangle|\lesssim_{\beta,\alpha} \|f\|_{\text{\normalfont{Lip}}_{\alpha,p_s}} \ell(R)^{n+2s+\beta+\alpha-1}.
			\end{equation*}
		\end{enumerate}
	\end{thm}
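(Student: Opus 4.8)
The plan is to adapt, almost verbatim, the proof of Theorem \ref{thm0.3}, replacing the kernel of $(-\Delta)^{\beta}$ by that of $\pazocal{I}^n_\beta$. Write $Q=Q_1\times I_Q$ and $R=R_1\times I_R$. Since $\varphi\phi$ is $\pazocal{C}^1$ with compact support, the operators $\pazocal{I}^n_\beta$ and $\partial_{x_i}$ commute on it, so I set
\[
g:=\partial_{x_i}\big[\pazocal{I}^n_\beta(\varphi\phi)\big]=\pazocal{I}^n_\beta\big[\partial_{x_i}(\varphi\phi)\big].
\]
Here $h:=\partial_{x_i}(\varphi\phi)$ is continuous, supported in $(Q_1\cap R_1)\times(I_Q\cap I_R)$, satisfies $\|h\|_\infty\lesssim\ell(R)^{-1}$ (since $\ell(R)\leq\ell(Q)$), and has $\int_{\mathbb{R}^n}h(z,t)\,\dd z=0$ for every $t$. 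The first step is to record two pointwise bounds on $g$. A crude one: as $h(\cdot,t)$ is supported in a ball of radius $\lesssim\ell(R)$ and $\beta>0$, one has $\|\,|x|^{-(n-\beta)}\ast h(\cdot,t)\|_{\infty}\lesssim\|h\|_\infty\,\ell(R)^{\beta}$, hence $|g(x,t)|\lesssim\ell(R)^{\beta-1}$ for $t\in I_Q\cap I_R$, and $g\equiv 0$ otherwise. A decay one: letting $x_0$ be the centre of $Q_1\cap R_1$ and using the spatial cancellation of $h$,
\[
g(x,t)=\int_{Q_1\cap R_1}h(z,t)\Big(\frac{1}{|x-z|^{n-\beta}}-\frac{1}{|x-x_0|^{n-\beta}}\Big)\dd z,
\]
so that $|g(x,t)|\lesssim\ell(R)^{-1}\cdot\ell(R)^{n}\cdot\dfrac{\ell(R)}{|x-x_0|^{n+1-\beta}}=\dfrac{\ell(R)^{n}}{|x-x_0|^{n+1-\beta}}$ for $x\notin 2R_1$, $t\in I_Q\cap I_R$. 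Since $\beta<1$, these bounds show $g\in L^1(\mathbb{R}^{n+1})$; moreover $\int g=0$ (as seen on the Fourier side, since the multiplier $|\xi|^{-\beta}\xi_i$ vanishes at $\xi=0$), so $\langle f,g\rangle=\langle f-c,g\rangle$ for any constant $c$.

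Next I would split, exactly as in the proof of Theorem \ref{thm0.3},
\[
|\langle f-c,g\rangle|\leq\int_{2R_1\times(I_Q\cap I_R)}|f-c||g|+\int_{(\mathbb{R}^n\setminus 2R_1)\times(I_Q\cap I_R)}|f-c||g|=:I_1+I_2.
\]
For $I_1$, the crude bound together with $|2R_1\times(I_Q\cap I_R)|\lesssim\ell(R)^{n+2s}$ give $I_1\lesssim\ell(R)^{\beta-1}\int_{2R}|f-c|$. Choosing $c:=f_{2R}$ when $f\in\text{\normalfont{BMO}}_{p_s}$ yields $I_1\lesssim\|f\|_{\ast,p_s}\,\ell(R)^{n+2s+\beta-1}$, while choosing $c:=f(\ox_0)$, where $\ox_0$ is the centre of $2R_1\times(I_Q\cap I_R)$, when $f\in\text{\normalfont{Lip}}_{\alpha,p_s}$ yields $I_1\lesssim\|f\|_{\text{\normalfont{Lip}}_{\alpha,p_s}}\,\ell(R)^{n+2s+\beta-1+\alpha}$.

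For $I_2$ I would use the dyadic cylinders $C_j:=2^jR_1\times(I_Q\cap I_R)$, $j\geq1$, observing that $C_j\subset 2^jR$ (an $s$-parabolic cube) and $|C_{j+1}\setminus C_j|\lesssim 2^{jn}\ell(R)^{n+2s}$; the decay bound then produces
\[
I_2\lesssim\ell(R)^{n}\sum_{j\geq1}\frac{1}{(2^j\ell(R))^{n+1-\beta}}\int_{C_{j+1}\setminus C_j}|f-c|.
\]
When $f\in\text{\normalfont{BMO}}_{p_s}$, taking $c:=f_{2R}$, I would estimate $\int_{C_{j+1}\setminus C_j}|f-f_{2^jR}|$ via Hölder's inequality on $2^jR$ with an exponent $q$ and the $s$-parabolic John--Nirenberg inequality, and $\int_{C_{j+1}\setminus C_j}|f_{2^jR}-f_{2R}|\lesssim j\,\|f\|_{\ast,p_s}\,|C_{j+1}\setminus C_j|$ by \cite[Ch.VI, Lemma 1.1]{Ga}; the resulting series are $\sum_j 2^{-j(1-\beta-2s/q)}$ and $\sum_j j\,2^{-j(1-\beta)}$, both convergent once $q>2s/(1-\beta)$, whence $I_2\lesssim\|f\|_{\ast,p_s}\,\ell(R)^{n+2s+\beta-1}$. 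When $f\in\text{\normalfont{Lip}}_{\alpha,p_s}$, taking $c:=f(\ox_0)$, one has $|f-f(\ox_0)|\lesssim\|f\|_{\text{\normalfont{Lip}}_{\alpha,p_s}}(2^j\ell(R))^{\alpha}$ on $C_{j+1}$, so $I_2\lesssim\|f\|_{\text{\normalfont{Lip}}_{\alpha,p_s}}\,\ell(R)^{n+2s+\beta-1+\alpha}\sum_{j\geq1}2^{-j(1-\beta-\alpha)}$, and this series converges precisely because $\alpha<1-\beta$.

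I do not anticipate a real obstacle here, since the argument tracks the proof of Theorem \ref{thm0.3} step by step. The two points that genuinely need care are establishing the two pointwise bounds on $g$ (and, via them, that $g\in L^1$ with vanishing integral, which is what legitimises subtracting the constant $c$), and checking that the Hölder exponent in the BMO case can be chosen with $q>2s/(1-\beta)$; note that the final geometric series in the Lip case forces exactly the constraint $\alpha<1-\beta$ assumed in the hypothesis. Everything else is routine bookkeeping of powers of $\ell(R)$.
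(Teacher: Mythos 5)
Your proof is correct. The treatment of $I_2$ coincides with the paper's: the annular cylinders $C_j:=2^jR_1\times(I_Q\cap I_R)$, the decay bound $|g(x,t)|\lesssim\ell(R)^{n}/|x-x_0|^{n+1-\beta}$ obtained from the spatial cancellation of $\partial_{x_i}(\varphi\phi)$, and the final series estimates (H\"older plus John--Nirenberg and the telescoping $|f_{2^jR}-f_{2R}|\lesssim j\|f\|_{\ast,p_s}$ in the BMO case, the pointwise $\text{Lip}_\alpha$ bound in the other), with the resulting convergence requirements $q>2s/(1-\beta)$ and $\alpha<1-\beta$, are all the same. The genuine departure is in $I_1$. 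You observe that $h:=\partial_{x_i}(\varphi\phi)$ is bounded by $\lesssim\ell(R)^{-1}$ and supported in a ball of radius $\lesssim\ell(R)$, so that the potential satisfies the uniform bound $\|\pazocal{I}^n_\beta h(\cdot,t)\|_\infty\lesssim\ell(R)^{\beta-1}$, the kernel $|y|^{-(n-\beta)}$ being locally integrable since $\beta>0$. This gives $I_1\lesssim\ell(R)^{\beta-1}\int_{2R}|f-c|$, and then the definition of BMO (respectively the pointwise Lipschitz estimate) closes the argument directly. The paper instead applies H\"older in $L^q\times L^{q'}$ with $q>n/(n-\beta)$ and the Hardy--Littlewood--Sobolev inequality \cite[Theorem 6.1.3]{Gr} to pass from the $L^q$ norm of $\pazocal{I}^n_\beta h(\cdot,t)$ to an $L^{qn/(n+q\beta)}$ norm of $h$, and then needs the John--Nirenberg inequality to control $\|f-f_{2R}\|_{L^{q'}(2R)}$. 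Both routes produce the same power $\ell(R)^{n+2s+\beta-1}$ (plus $\alpha$ in the Lipschitz case); yours is more elementary in that it eliminates HLS and the $L^{q'}$ John--Nirenberg step for $I_1$. You also make explicit why $g\in L^1(\mathbb{R}^{n+1})$ with vanishing integral (the multiplier $\xi_i|\xi|^{-\beta}$ vanishes at $\xi=0$, and the spatial tail $|x-x_0|^{-(n+1-\beta)}$ is integrable because $\beta<1$), which is precisely what legitimises replacing $f$ by $f-c$ and what makes the pairing $\langle f,g\rangle$ absolutely convergent even though $f$ need not be bounded -- a point the paper uses without comment.
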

	\begin{proof}
		Notice that for any $c\in\mathbb{R}$,
		\begin{align*}
			|\langle f, \partial_{x_i}[\pazocal{I}^n_\beta (\varphi \phi)]\rangle| &= |\langle f-c, \partial_{x_i}[\pazocal{I}^n_\beta (\varphi \phi)]\rangle| \\
			&\leq \int_{2R_1\times (I_Q\cap I_R)}|f(\ox)-c| \big\rvert \partial_{x_i}[\pazocal{I}^n_\beta (\varphi \phi)](\ox) \big\rvert \dd \ox\\
			&\hspace{1cm}+\int_{(\mathbb{R}^n\setminus{2R_1})\times (I_Q\cap I_R)}| f(\ox)-c| \big\rvert \partial_{x_i}[\pazocal{I}^n_\beta (\varphi \phi)](\ox) \big\rvert \dd \ox=:I_1+I_2.
		\end{align*}
		Regarding $I_1$, we have for some conjugate exponents $q,q'$ to be fixed later on,
		\begin{align*}
			I_1&\lesssim \bigg( \int_{2R}|f(\ox)-c|^{q'}\dd \ox\bigg)^{\frac{1}{q'}}\bigg( \int_{I_Q\cap I_R} \int_{2R_1} \big\rvert \pazocal{I}_\beta^n[\partial_{x_i}(\varphi\phi)](x,t)\big\rvert^q \dd x \dd t \bigg)^{\frac{1}{q}}\\
			&\lesssim \bigg( \int_{2R}|f(\ox)-c|^{q'}\dd \ox\bigg)^{\frac{1}{q'}} \bigg( \int_{I_Q\cap I_R} \|\pazocal{I}^n_\beta[\partial_{x_i}(\varphi\phi)(\cdot,t)]\|_q^q \dd t  \bigg)^{\frac{1}{q}}.
		\end{align*}
		Choosing $q>\frac{n}{n-\beta}$, we shall apply \cite[Theorem 6.1.3]{Gr} and obtain
		\begin{align*}
			I_1&\lesssim_{\beta} \bigg( \int_{2R}|f(\ox)-c|^{q'}\dd \ox\bigg)^{\frac{1}{q'}} \bigg( \int_{I_Q\cap I_R} \|\partial_{x_i}(\varphi\phi)(\cdot,t)\|_{\frac{qn}{n+q\beta}}^q \dd t  \bigg)^{\frac{1}{q}}\\
			&\lesssim \bigg( \int_{2R}|f(\ox)-c|^{q'}\dd \ox\bigg)^{\frac{1}{q'}}\ell(R)^{\frac{n+q\beta+2s}{q}-1}.
		\end{align*}
		If we assume $f\in \text{\normalfont{BMO}}_{p_s}$, we choose $c:=f_{2R}$ and apply a $s$-parabolic version of John-Nirenberg's inequality to deduce
		\begin{equation*}
			I_1\lesssim_{\beta} \|f\|_{\ast,p_s}\ell(R)^{\frac{n+2s}{q'}}\ell(R)^{\frac{n+q\beta+2s}{q}-1}=\|f\|_{\ast,p_s}\ell(R)^{n+2s+\beta-1}.
		\end{equation*}
		If we assume $f\in \text{Lip}_{\alpha,p_s}$, we choose $c:=f(\ox_R)$, being $\ox_R$ the center of $R$, and obtain
		\begin{equation*}
			I_1\lesssim_{\beta,\alpha} \|f\|_{\text{Lip}_\alpha,p_s}\ell(R)^{\frac{n+2s}{q'}+\alpha}\ell(R)^{\frac{n+q\beta+2s}{q}-1}=\|f\|_{\ast,p_s}\ell(R)^{n+2s+\beta+\alpha-1}.
		\end{equation*}
		To study $I_2$, we proceed as in Theorem \ref{thm0.3}. For any $\ox\in (\mathbb{R}^n\setminus{2R_1})\times (I_Q\cap I_R)$, if $x_0\in \mathbb{R}^n$ denotes the center of $Q_1\cap R_1$, by the mean value theorem we get
		\begin{align*}
			\big\rvert \pazocal{I}_\beta^n[\partial_{x_i}(\varphi\phi)](\ox) \big\rvert &=\bigg\rvert \int_{Q_1\cap R_1} \partial_{x_i}(\varphi\phi)(z,t)\frac{1}{|z-x|^{n-\beta}} \dd z\bigg\rvert \\
			&=\bigg\rvert \int_{Q_1\cap R_1} \partial_{x_i}(\varphi\phi)(z,t)\bigg(\frac{1}{|z-x|^{n-\beta}}-\frac{1}{|x_{0}-x|^{n-\beta}}\bigg) \dd z \bigg\rvert\\
			&\lesssim_{\beta} \sum_{j=1}^{n} \frac{\ell(R)}{|x_{0}-x|^{n-\beta+1}}\|\nabla_x(\varphi\phi)\|_{\infty}\,\ell(R)^n \lesssim \frac{\ell(R)^{n}}{|x_{0}-x|^{n-\beta+1}} . 
		\end{align*}
		This way, putting $C_j:=2^jR_1\times (I_Q\cap I_R)$ for $j\geq 1$, as in Theorem \ref{thm0.3},
		\begin{align*}
			I_2\lesssim_{\beta} \frac{1}{\ell(R)^{-\beta+1}} \sum_{j=1}^\infty \frac{1}{2^{j(n-\beta+1)}} \int_{C_{j+1}\setminus{C_j}} \big\rvert f(\ox)-c \big\rvert \dd \ox.
		\end{align*}
		The case $f\in \text{\normalfont{BMO}}_{p_s}$ is dealt with analogously as in Theorem \ref{thm0.3}, obtaining
		\begin{align*}
			I_2\lesssim_{\beta} \|f\|_{\ast,p_s}\ell(R)^{n+2s+\beta-1}\sum_{j=1}^\infty \frac{1}{2^{j(n-\beta+1)}} \Big[ 2^{j(n+\frac{2s}{q})}+j\,2^{jn} \Big],
		\end{align*}
		so choosing $q>\frac{2s}{1-\beta}$ we are done. Observe that we also need $\beta<1$ in order for the above sum to converge. The case $f\in \text{Lip}_{\alpha,p_s}$ can be dealt with as follows
		\begin{align*}
			I_2&\lesssim_{\beta,\alpha} \frac{\|f\|_{\text{Lip}_{\alpha,p_s}}}{\ell(R)^{-\beta+1}} \sum_{j=1}^\infty \frac{(2^j\ell(R))^\alpha }{2^{j(n-\beta+1)}} |C_{j+1}\setminus{C_j}|\lesssim \|f\|_{\text{Lip}_{\alpha,p_s}}\sum_{j=1}^\infty \frac{\ell(R)^{n+2s+\beta+\alpha-1}}{2^{(1-\beta-\alpha)j}},
		\end{align*}
		and this sum is convergent by the hypothesis $\alpha<1-\beta$.
	\end{proof}
	
	\section{Potentials of positive measures with growth restrictions}
	\label{sec2.3}
	
	The main goal of this section is to deduce some important $\text{BMO}_{p_s}$ and $\text{Lip}_{\alpha,p_s}$ estimates of potentials of the form $\partial_t^\beta P_s\ast \mu$, where $\mu$ is a finite positive Borel measure with some upper $s$-parabolic growth. We begin by proving a generalization of \cite[Lemma 4.2]{MaPrTo} and \cite[Lemma 7.2]{MaPr}.
	\begin{lem}
		\label{Pos_lem1}
		Let $s\in(0,1]$, $\eta \in (0,1)$ and $\mu$ be a positive measure in $\mathbb{R}^{n+1}$ which has upper $s$-parabolic growth of degree $n+2s\eta$. Then
		\begin{equation*}
			\|P_s\ast \mu \|_{\text{\normalfont{Lip}}_{\eta,t}}\lesssim_{\eta} 1.
		\end{equation*}
	\end{lem}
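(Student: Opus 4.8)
The plan is to fix $x\in\mathbb{R}^n$ and $t\neq u$ in $\mathbb{R}$, assume without loss of generality $t>u$, set $h:=t-u>0$ and $r:=h^{1/(2s)}$, and estimate
\[
\big|(P_s\ast\mu)(x,t)-(P_s\ast\mu)(x,u)\big|\le\int_{\mathbb{R}^{n+1}}\big|P_s(x-y,t-\tau)-P_s(x-y,u-\tau)\big|\,\dd\mu(y,\tau)=:D,
\]
showing $D\lesssim h^\eta$, which is exactly $\|P_s\ast\mu\|_{\text{Lip}_{\eta,t}}\lesssim 1$ (the implicit constant depending on $n,s,\eta$ and the growth constant of $\mu$; well-definedness of $P_s\ast\mu$ follows from $\mu$ being finite together with $|P_s(\ox)|\lesssim|\ox|_{p_s}^{-n}$ and the decay of $P_s$). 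I split the domain of integration into the near region $B((x,t),2r)$ and its complement. I shall freely use that, from \eqref{eq2.1}--\eqref{eq2.2} (equivalently Theorem \ref{lem2.3} with $\beta=0$), $|P_s(\ox)|\lesssim|\ox|_{p_s}^{-n}$ and $|\partial_tP_s(\ox)|\lesssim|\ox|_{p_s}^{-(n+2s)}$ for $\ox\neq(x,0)$, that $P_s(x,\cdot)$ is continuous on $\mathbb{R}$ with $P_s(x,\sigma)=0$ for $\sigma\le0$, and that for $s\ge1/2$ the quantity $\text{dist}_{p_s}$ satisfies the triangle inequality.

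For the near region I would bound $|P_s(x-y,t-\tau)-P_s(x-y,u-\tau)|\le|P_s(x-y,t-\tau)|+|P_s(x-y,u-\tau)|$ and run a dyadic annular decomposition around $(x,t)$ for the first term (and around $(x,u)$ for the second, noting $\text{dist}_{p_s}((x,t),(x,u))=r$ so that $B((x,t),2r)\subset B((x,u),Cr)$). On the annulus where $|(x-y,t-\tau)|_{p_s}\approx 2^{-j}r$ one has $|P_s|\lesssim(2^{-j}r)^{-n}$ and, by the growth hypothesis, $\mu$-mass $\lesssim(2^{-j}r)^{n+2s\eta}$; hence the annulus contributes $\lesssim(2^{-j}r)^{2s\eta}$, and summing over $j\ge0$ (convergent since $\eta>0$) gives $\lesssim r^{2s\eta}=h^\eta$ for each term.

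For the far region $|(x-y,t-\tau)|_{p_s}>2r$ the heart of the matter is the pointwise estimate
\[
\big|P_s(x-y,t-\tau)-P_s(x-y,u-\tau)\big|\lesssim\frac{h}{|(x-y,t-\tau)|_{p_s}^{\,n+2s}},
\]
after which a dyadic annular sum exactly as above — now using $|\partial_tP_s|\lesssim|\cdot|_{p_s}^{-(n+2s)}$ and $\mu$-mass $\lesssim(2^jr)^{n+2s\eta}$ on the annulus $|(x-y,t-\tau)|_{p_s}\approx 2^jr$, $j\ge0$ — yields $\lesssim h\sum_{j\ge0}(2^jr)^{-2s(1-\eta)}\lesssim h\,r^{-2s(1-\eta)}=h^\eta$, the series converging because $\eta<1$. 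To prove the pointwise bound I would apply the mean value theorem in the time variable, distinguishing: $\tau\ge t$ (the difference vanishes); $u\le\tau<t$ (then $P_s(x-y,u-\tau)=0$ and one writes $P_s(x-y,t-\tau)=P_s(x-y,t-\tau)-P_s(x-y,0)$, with $0<t-\tau\le h$, while $|(x-y,t-\tau)|_{p_s}>2r$ forces $|x-y|>2r$, so $|(x-y,t-\tau)|_{p_s}=|x-y|$); and $\tau<u$ (then the whole segment $[u-\tau,t-\tau]$ lies in $(0,\infty)$). In each case the intermediate point $(x-y,\sigma)$ avoids $\{(w,0)\}$, so the bound on $\partial_tP_s$ applies, and one checks $|(x-y,\sigma)|_{p_s}\gtrsim|(x-y,t-\tau)|_{p_s}$: immediate when $|x-y|$ realizes the $p_s$-distance, while if instead $(t-\tau)^{1/(2s)}$ realizes it then $t-\tau>(2r)^{2s}=2^{2s}h$, hence $\sigma\ge u-\tau=(t-\tau)-h\ge(1-2^{-2s})(t-\tau)$, and here $s\ge1/2$ is used to bound $(1-2^{-2s})^{1/(2s)}$ below by an absolute constant.

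The main obstacle is precisely this far-region pointwise estimate: the derivative bound for $\partial_tP_s$ is available only off the slice $\{t=0\}$, so one must ensure it can be invoked along the entire segment joining $u-\tau$ to $t-\tau$ — handled by the case split above together with the vanishing $P_s(\,\cdot\,,0)=0$ — and the restriction $s\ge1/2$ enters exactly when the time coordinate dominates the $s$-parabolic distance. The remaining ingredients, namely the two geometric summations and the verification that $P_s\ast\mu$ is a well-defined finite function, are routine.
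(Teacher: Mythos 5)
Your argument is correct and matches the paper's proof in all essentials: the split into a near ball of $p_s$-radius $\approx |t-u|^{1/(2s)}$ and its complement, the dyadic annular decomposition using $|P_s|\lesssim |\cdot|_{p_s}^{-n}$ inside, and the pointwise bound $|P_s(x-y,t-\tau)-P_s(x-y,u-\tau)|\lesssim |t-u|\,|(x-y,t-\tau)|_{p_s}^{-(n+2s)}$ outside are exactly the paper's $I_2$ and $I_1$ (the paper centers the balls at the $p_s$-midpoint rather than at $(x,t)$, which is immaterial). Your explicit three-case analysis for the far-region mean-value estimate simply spells out what the paper compresses into ``arguing as in the last estimate of Theorem \ref{C-Z_thm2}.''
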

	\begin{proof}
		Let $\ox:=(x,t), \widehat{x}:=(x,\tau)$ be fixed points in $\mathbb{R}^{n+1}$ with $t\neq \tau$, and set $\ox_0:=(\ox+\widehat{x})/2$. Writing $\oy:=(y,u)$ and  $B_0:=B(\ox_0,|\ox-\widehat{x}|_{p_s})=B(\ox_0,|t-\tau|^{\frac{1}{2s}})$, we split
		\begin{align*}
			|P_s\ast \mu(\ox)&-P_s\ast \mu(\widehat{x})|\\
			&\leq \int_{\mathbb{R}^{n+1}\setminus{2B_0}}|P_s(x-y,t-u)-P_s(x-y,\tau-u)|\text{d}\mu(\oy)\\
			&\hspace{1cm}+\int_{2B_0}|P_s(x-y,t-u)-P_s(x-y,\tau-u)|\text{d}\mu(\oy)=:I_1+I_2.
		\end{align*}
		Defining the $s$-parabolic annuli $A_j:=2^{j+1}B_0\setminus{2^{j}B_0}$ for $j\geq 1$ and arguing as in the last estimate of Theorem \ref{C-Z_thm2} we get
		\begin{align*}
			I_1\lesssim \sum_{j\geq 1} \int_{A_j} \frac{|t-\tau|}{|\ox-\oy|_{p_s}^{n+2s}}\text{d}\mu(\oy) &\lesssim |t-\tau|\sum_{j\geq 1}\frac{\mu\big(2^{j+1}B_0\big)}{\big(2^{j}|t-\tau|^{\frac{1}{2s}}\big)^{n+2s}}\\
			& \lesssim |t-\tau|^{\eta}\sum_{j\geq 1}\frac{1}{2^{2s(1-\eta)}} \simeq_{\eta} |t-\tau|^{\eta},
		\end{align*}
		that is the desired estimate. Regarding $I_2$, observe that
		\begin{equation*}
			I_2\leq P_s\ast (\chi_{2B_0}\mu)(\ox)+P_s\ast(\chi_{2B_0}\mu)(\widehat{x}).
		\end{equation*}
		Notice now that
		\begin{align*}
			P_s\ast (\chi_{2B_0}\mu)(\ox)\lesssim \int_{2B_0}\frac{\text{d}\mu(\oy)}{|\ox-\oy|_{p_s}^n}&\leq \int_{|\ox-\oy|_{p_s}\leq 5|t-\tau|^{\frac{1}{2s}}} \frac{\text{d}\mu(\oy)}{|\ox-\oy|_{p_s}^n} \lesssim_\eta |t-\tau|^{\eta},
		\end{align*}
		where we have split the latter domain of integration into (decreasing) $s$-parabolic annuli. Since this also holds replacing $\ox$ by $\widehat{x}$, we also have $I_2\lesssim |t-\tau|^{\eta}$ and we are done.
	\end{proof}
	
	The above result allows us to prove that, given a positive measure as in the above statement, we can ensure that the potential $\partial_t^{\beta}P_s\ast \mu$ already belongs to $\text{BMO}_{p_s}$.
	
	\begin{lem}
		\label{Pos_lem2}
		Let $s\in(0,1]$, $\beta \in (0,1)$. Let $\mu$ be a finite positive Borel measure in $\mathbb{R}^{n+1}$ with upper $s$-parabolic growth of degree $n+2s\beta$. Then,
		\begin{equation*}
			\big\| \partial_t^{\beta}P_s\ast \mu \big\|_{\ast,p_s}\lesssim_{\beta} 1.
		\end{equation*}
	\end{lem}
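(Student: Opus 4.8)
The plan is to follow the classical BMO strategy for the potential: fix an $s$-parabolic cube $Q$ (equivalently a ball $B$), subtract a well-chosen constant, and split the contribution of $\mu$ into a local part (on $2Q$) and a tail part. Concretely, for a fixed $s$-parabolic ball $B=B(\ox_0,r)$ I would write $\mu = \mu_1 + \mu_2$ with $\mu_1 := \chi_{2B}\mu$ and $\mu_2 := \mu - \mu_1$, and estimate
\[
\frac{1}{|B|}\int_B \big| \partial_t^\beta P_s\ast\mu(\ox) - c_B \big|\dd\ox
\]
for a suitable constant $c_B$, showing it is $\lesssim 1$ uniformly in $B$. For the tail $\mu_2$ the natural constant is $c_B := \partial_t^\beta P_s\ast\mu_2(\ox_0)$, and one controls $|\partial_t^\beta P_s\ast\mu_2(\ox)-\partial_t^\beta P_s\ast\mu_2(\ox_0)|$ for $\ox\in B$ using the Hölder-type estimate \textit{5} of Theorem \ref{C-Z_thm7} (valid since $s\geq 1/2$): $|\partial_t^\beta P_s(\ox-\oy)-\partial_t^\beta P_s(\ox_0-\oy)|\lesssim |\ox-\ox_0|_{p_s}\,|\ox_0-\oy|_{p_s}^{-n-2s(1+\beta)} \cdot |x_0-y|^{-(n-2s)}$-type factors; splitting $\mathbb{R}^{n+1}\setminus 2B$ into $s$-parabolic annuli $A_j := 2^{j+1}B\setminus 2^jB$ and using the growth $\mu(A_j)\lesssim (2^jr)^{n+2s\beta}$ makes the resulting geometric series converge, giving a bound $\lesssim 1$. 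Here I must be slightly careful: the kernel bounds in Theorem \ref{C-Z_thm7} carry an extra spatial factor $|x|^{-(n-2s)}$ (for $n>2$), so in the annular sum I would bound $|x_0-y|\gtrsim 2^jr$ as well; when $s\le 1$ this still leaves enough decay because the total homogeneity of the bound for $\partial_t^\beta P_s$ is $-(n+2s\beta)$, exactly matching the growth degree of $\mu$, so each annulus contributes a constant times $2^{-j\varepsilon}$ for some $\varepsilon>0$ coming from the gain $|\ox-\ox_0|_{p_s}/(2^jr)$.

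For the local part $\mu_1$ the idea is to exploit Lemma \ref{Pos_lem1}: the function $h := P_s\ast\mu_1$ is, by that lemma applied with $\eta=\beta$ (the hypothesis $\mu$ has growth $n+2s\beta$ is precisely $n+2s\eta$ with $\eta=\beta$), Hölder-$\beta$ in the time variable with seminorm $\lesssim 1$, uniformly. Now $\partial_t^\beta$ of a function that is $\mathrm{Lip}_\beta$ in $t$ with uniformly bounded seminorm is pointwise uniformly bounded: indeed
\[
|\partial_t^\beta h(x,t)| \le \int_{\mathbb{R}} \frac{|h(x,\tau)-h(x,t)|}{|\tau-t|^{1+\beta}}\dd\tau \lesssim \|h\|_{\mathrm{Lip}_\beta,t}\int_{|\tau-t|\le 1}\frac{\dd\tau}{|\tau-t|}\;,
\]
which however diverges logarithmically, so I need a small extra argument: split the integral at $|\tau-t|=1$, bound the near part using the $\mathrm{Lip}_\beta$ bound on the \emph{difference quotient at a scale where one also knows $h$ is bounded} — here one uses that $P_s\ast\mu_1$ is bounded on $2B$ dilated, because $\mu_1$ is a finite measure and $\int_{2B}|P_s|\ast(\chi_{2B}\mu)\lesssim r^{n+2s\beta}$ by the annular computation already appearing in the proof of Lemma \ref{Pos_lem1}. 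Alternatively, and more cleanly, I would commute: $\partial_t^\beta(P_s\ast\mu_1) = (\partial_t^\beta P_s)\ast\mu_1$ and estimate $\int_B |(\partial_t^\beta P_s)\ast\mu_1|$ directly by Fubini, $\int_B\int_{2B}|\partial_t^\beta P_s(\ox-\oy)|\dd\mu_1(\oy)\dd\ox$, using the kernel bound of Theorem \ref{C-Z_thm7} and the growth of $\mu$ on the inner integral and the growth-type control on the outer one; the mild singularity $|x-y|^{-(n-2s)}$ is locally integrable in $\ox$ since $2s>0$, and the $|\ox-\oy|_{p_s}^{-(n+2s\beta)}$ singularity integrates against $\mu$ to $\lesssim r^{2s(1-\beta)}$, so dividing by $|B|\approx r^{n+2s}$ leaves $\lesssim 1$. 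I would present this Fubini route as the main one.

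The main obstacle I anticipate is the bookkeeping around the dimension-dependent extra factor $|x|^{-(n-2s)}$ (and the special cases $n=1,2$) in the kernel estimates of Theorem \ref{C-Z_thm7}, which complicates both the local integrability in $\ox$ and the annular summation: one must check that this factor never destroys convergence, using $|x_0-y|\gtrsim 2^j r$ in the tail and local integrability of $|x|^{-(n-2s)}$ (valid precisely because $n-2s<n$) in the local term, and one must handle $n=1$ with the $\alpha$-parameter version of the bound and $n=2$ (for $s<1$) where the bound has no spatial singularity at all. A secondary, more technical point is justifying the commutation $\partial_t^\beta(P_s\ast\mu_1)=(\partial_t^\beta P_s)\ast\mu_1$ and the vanishing-mean property needed to insert the constant $c_B$; both follow from Fubini together with the integrability estimate $\int_{\mathbb{R}^{n+1}}|\partial_t^\beta P_s(\ox-\oy)|\,\dd|\mu_1|(\oy)<\infty$ supplied by the same kernel bounds, but should be stated carefully. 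Once these are in place the John--Nirenberg formulation of $\mathrm{BMO}_{p_s}$ (the $L^1$ oscillation bound suffices) closes the argument.
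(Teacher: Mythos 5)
There is a genuine gap in your treatment of the tail, and it is precisely the point where the paper is forced to do something much more elaborate. Your decomposition is two-fold, $\mu_1 = \chi_{2B}\mu$ and $\mu_2 = \chi_{(2B)^c}\mu$, and you plan to bound the contribution of $\mu_2$ via the H\"older estimate \emph{5} of Theorem~\ref{C-Z_thm7} applied kernelwise, summing over $s$-parabolic annuli and using the claimed lower bound $|x_0-y|\gtrsim 2^j r$ for $\oy$ in the $j$-th annulus. That lower bound is false: a point $\oy=(y,u)$ in $2^{j+1}B\setminus 2^jB$ may satisfy $y=x_0$ exactly, with the full parabolic distance carried by the time coordinate. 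For such $\oy$, the hypothesis $|\ox-\ox_0|_{p_s}\le |x_0-y|/2$ required by estimate \emph{5} fails (its right-hand side is zero), so the estimate simply cannot be invoked. This is why the paper splits $\mathbb{R}^{n+1}$ into \emph{three} pieces $R_1=5B$, $R_2=\mathbb{R}^{n+1}\setminus(5B_0\times\mathbb{R})$ (spatially far) and $R_3=(5B_0\times\mathbb{R})\setminus 5B$ (spatially near, temporally far): on $R_2$ the hypothesis of estimate \emph{5} is automatic, while $R_3$ requires an entirely different argument — a careful case analysis, the $\mathrm{Lip}_\beta$-in-time estimate of Lemma~\ref{Pos_lem1}, and direct control of differences of $P_s\ast(\chi_3\mu)$ rather than of the kernel $\partial_t^\beta P_s$ itself. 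Your proposal does not account for this region at all.

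A secondary issue concerns your annular sum even in the ``good'' region. On $R_2$ the kernel bound has two independent decay factors, in $|x|^{n-2s+1}$ and in $|\ox|_{p_s}^{2s(1+\beta)}$, which do not scale together along an $s$-parabolic annulus; bounding both by a single power of $2^jr$ loses information and leaves a series $\sum_j 2^{j(n-2s)}$ that diverges for $n>2s$. The paper handles this by a two-parameter decomposition of $R_2$ into the sets $\widehat{C}_{j,k}$ (a spatial cylinder index $j$ tracking $|x_0-z|\approx 5^jr$ and a parabolic annulus index $k\ge j$ tracking $|\ox_0-\oz|_{p_s}\approx 5^kr$), together with the count $\mu(\widehat{C}_{j,k})\lesssim 5^{2s(k-j)}(5^jr)^{n+2s\beta}$, which makes the double sum converge. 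Your treatment of the local part by Fubini with the kernel bounds from Theorem~\ref{C-Z_thm7} is in line with the paper's estimate of $I_1$, and the commutation/Fubini justifications you flag are indeed routine; the structural gap is in the tail.
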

	\begin{proof}
		Fix $\ox_0\in\mathbb{R}^{n+1}$ and $r>0$. Consider the $s$-parabolic ball $B:=B(\ox_0,r)=B_0\times I_0\subset \mathbb{R}^n\times \mathbb{R}$ and a constant $c_B$ to be determined later. We want to show that $c_B$ can be chosen so that
		\begin{align*}
			\frac{1}{|B|}\int_B |\partial_t^{\beta}P_s\ast \mu(\oy)-c_B|\dd \oy \lesssim_{\beta}1.
		\end{align*}
		To that end, begin by considering the following sets, which define a partition of $\mathbb{R}^{n+1}$:
		\begin{align*}
			R_1:=5B, \hspace{0.75cm} R_2:=\mathbb{R}^{n+1}\setminus{(5B_0\times\mathbb{R})}, \hspace{0.75cm} R_3:=(5B_0\times \mathbb{R})\setminus{5B},
		\end{align*}
		as well as their corresponding characteristic functions $\chi_1,\chi_2$ and $\chi_3$. Bearing in mind the estimates proved in Theorems \ref{lem2.4} and \ref{lem3.5} for $\partial_t^{\beta}P_s$ and the fact that $\mu$ is finite, it is clear that the quantity $|\partial_t^{\beta}P_s\ast(\chi_2\mu)(\ox_0) |$ is also finite. Moreover, notice that $|\partial_t^{\beta}P_s|$ is bounded by $s$-parabolically homogeneous functions of degree $-n-2s\beta$ for any dimension. In fact, we deduce the following estimates: given any $\varepsilon,\alpha>0$, we obtain if $n>2$,
		\begin{equation*}
			|\partial_t^{\beta}P_s(\ox)|\lesssim_{\beta} \frac{1}{|x|^{n-2s}|\ox|_{p_s}^{2s(1+\beta)}}\leq \frac{1}{|x|^{n-\varepsilon}|t|^{\frac{\varepsilon+2s\beta}{2s}}}, \qquad \text{if }\;\varepsilon<2s(1-\beta).
		\end{equation*}
		For $n=2$,
		\begin{align*}
			\text{if $s<1$,}& \quad |\partial_t^{\beta}P_s(\ox)|\lesssim_{\beta} \frac{1}{|x|^{2-2s}|\ox|_{p_s}^{2s(1+\beta)}}\leq \frac{1}{|x|^{2-\varepsilon}|t|^{\frac{\varepsilon+2s\beta}{2s}}}, \qquad \text{if }\;\varepsilon<2s(1-\beta),\\
			\text{if $s=1$,}& \quad |\partial_t^{\beta}W(\ox)|\lesssim_{\beta,\alpha} \frac{1}{|x|^{\alpha}|\ox|_{p_s}^{2+2\beta-\alpha}}\leq \frac{1}{|x|^{\alpha}|t|^{1+\beta-\frac{\alpha}{2}}}, \qquad \hspace{0.6cm} \text{if }\; 2\beta<\alpha<2.
		\end{align*}
		And for $n=1$,
		\begin{align*}
			\text{if $s<1$,}& \quad |\partial_t^{\beta}P_s(\ox)|\lesssim_{\beta,\alpha}\frac{1}{|x|^{1-2s+\alpha}|\ox|_{p_s}^{2s(1+\beta)-\alpha}}\leq \frac{1}{|x|^{1-2s+\alpha}|t|^{1+\beta-\frac{\alpha}{2s}}}, \quad \text{if }\; 2s\beta<\alpha<2s,\\
			\text{if $s=1$,}& \quad |\partial_t^{\beta}W(\ox)|\lesssim_{\beta} \frac{1}{|\ox|_{p_s}^{1+2\beta}}\leq \frac{1}{|x|^{\varepsilon}|t|^{\frac{1+2\beta-\varepsilon}{2}}}, \qquad \hspace{3.025cm} \text{if }\; 2\beta-1<\varepsilon<1.
		\end{align*}
		In light of the above inequalities, and using that $\beta<1$, it is clear that $\partial_t^{\beta}P_s$ defines a $\pazocal{L}^{n+1}$-locally integrable function in $\mathbb{R}^{n+1}$ once endowed with the $s$-parabolic distance. Hence, there exists some $\overline{\xi}_0\in B$ (that we may think as close as we need to $\ox_0$) such that $|\partial_t^{\beta}P_s\ast(\chi_3\mu)(\overline{\xi}_0)|$ is finite. Bearing all these observations in mind, we choose $c_B$ to be
		\begin{equation*}
			c_B:=\partial_t^{\beta}P_s\ast(\chi_2\mu)(\ox_0)+\partial_t^{\beta}P_s\ast(\chi_3\mu)(\overline{\xi}_0).
		\end{equation*}
		Therefore, we are interested in bounding by a constant the following quantity:
		\begin{align*}
			\frac{1}{|B|}\int_B |&\partial_t^{\beta}P_s\ast \mu(\oy)-c_B|\dd \oy \leq\frac{1}{|B|}\int_B |\partial_t^{\beta}P_s\ast (\chi_1\mu)(\oy)|\dd \oy\\
			&\hspace{0.5cm}+\frac{1}{|B|}\int_B |\partial_t^{\beta}P_s\ast (\chi_2\mu)(\oy)-\partial_t^{\beta}P_s\ast(\chi_2\mu)(\ox_0)|\dd \oy\\
			&\hspace{0.5cm}+\frac{1}{|B|}\int_B |\partial_t^{\beta}P_s\ast (\chi_3\mu)(\oy)-\partial_t^{\beta}P_s\ast(\chi_3\mu)(\overline{\xi}_0)|\dd \oy=:I_1+I_2+I_{3}.
		\end{align*}
		For $I_1$, simply notice that
		\begin{equation*}
			I_1\leq \frac{1}{|B|}\int_{5B}\bigg( \int_{B}|\partial_t^\beta P_s(\oy-\oz)|\dd \oy \bigg)\text{d}\mu(\oz).
		\end{equation*}
		Using any of the bounds above for $\partial_t^\beta P_s$, depending on $n$ and $s$, integration in polar coordinates yields
		\begin{equation*}
			I_1\lesssim_{\beta} \frac{1}{|B|}r^{2s(1-\beta)}\mu(5B)\lesssim_{\beta} 1.
		\end{equation*}
		Regarding $I_2$, write
		\begin{align*}
			I_2\leq \frac{1}{|B|}\int_{B}\bigg(\int_{R_2}|\partial_t^{\beta}P_s(\oy-\oz)-\partial_t^{\beta}P_s(\ox_0-\oz)|\text{d}\mu(\oz)\bigg)\dd \oy.
		\end{align*}
		If we name $\ox:=\ox_0-\oz$ and $\ox':=\oy-\oz$, we have in particular
		\begin{equation*}
			|\ox-\ox'|_{p_s} = |\ox_0-\oy|_{p_s}\leq r < \frac{|x_0-z|}{2}=\frac{|x|}{2},
		\end{equation*}
		where the second inequality holds because $\oz\in R_2$. Therefore, by the last estimate of Theorems  \ref{lem2.4} and \ref{lem3.5}, writing $2\zeta:=\min\{1,2s\}$ we get
		\begin{align*}
			I_2\lesssim_{\beta} \frac{1}{|B|}\int_{B}\bigg(\int_{R_2} \frac{|\oy-\ox_0|_{p_s}^{2\zeta}}{|x_0-z|^{n+2\zeta-2s}|\ox_0-\oz|_{p_s}^{2s(1+\beta)}} &\text{d}\mu(\oz)\bigg)\dd \oy\\
			&\lesssim r^{2\zeta}\int_{R_2} \frac{\text{d}\mu(\oz)}{|x_0-z|^{n+2\zeta-2s}|\ox_0-\oz|_{p_s}^{2s(1+\beta)}}.
		\end{align*}
		Let us split $R_2$ into proper disjoint pieces. Take the cylinders given by $C_j:= 5^j B_0\times \mathbb{R},\, j\in \mathbb{Z},\;j\geq 1$, as well as the annular cylinders $\widehat{C}_j:=C_{j+1}\setminus{C_j}, \,j\geq 1$. The partition of $R_2$ we are interested in is given by the disjoint union of all the sets $\widehat{C}_j, \,j\geq 1$, which clearly cover $R_2$. Therefore
		\begin{equation}
			\label{Pos_eq1}
			I_2 \lesssim_{\beta} r^{2\zeta}\sum_{j=1}^\infty \int_{\widehat{C}_j} \frac{\text{d}\mu(\oz)}{|x_0-z|^{n+2\zeta-2s}|\ox_0-\oz|_{p_s}^{2s(1+\beta)}}.
		\end{equation}
		At the same time, for each $j\geq 1$, we shall consider a proper partition of $\widehat{C}_j$. Denote $A_k=5^{k+1}B\setminus{5^kB}$ for every positive integer $k$ and define $\widehat{C}_{j,k}:=\widehat{C}_j\cap A_k, \,k\geq 1$. Let us make some observations about the sets $\widehat{C}_{j,k}$. First, notice that by definition, for each $j\geq 1$,
		\begin{equation*}
			\widehat{C}_{j,k} = \big[ (5^{j+1}B_0\setminus 5^{j}B_0)\times \mathbb{R}\big] \cap \big(5^{k+1}B\setminus{5^kB}\big).
		\end{equation*}
		Hence, using that
		\begin{equation*}
			\big[ (5^{j+1}B_0\setminus 5^{j}B_0)\times \mathbb{R}\big] \cap \big(5^{k+1}B\setminus{5^kB}\big)=\varnothing, \hspace{0.5cm} \text{for} \hspace{0.5cm} k<j,
		\end{equation*}
		we have that, in fact, $\widehat{C}_j$ can be covered by $\widehat{C}_{j,k}$ for $k\geq j$, that is
		\begin{equation*}
			\widehat{C}_j =\bigcup_{k=1}^\infty\widehat{C}_{j,k}=\bigcup_{k=j}^\infty\widehat{C}_{j,k}.
		\end{equation*}
		Secondly, in order to estimate $\mu(\widehat{C}_{j,k})$, observe that for any $k\geq j$, by definition, the set $\widehat{C}_{j,k}$ can be written explicitly as follows:
		\begin{align*}
			\widehat{C}_{j,k}&= \big[\big(5^{j+1}B_0\setminus{5^jB_0}\big)\times\mathbb{R}\big]\cap \big(5^{k+1}B\setminus 5^kB\big)\\
			&=\big[\big(5^{j+1}B_0\setminus{5^jB_0}\big)\times\mathbb{R}\big]\\
			&\hspace{2.5cm}\cap\Big\{\big[ \big(5^{k+1}B_0\setminus{5^kB_0}\big)\times 5^{2s(k+1)}I_0 \big]\cup \big[ 5^kB_0\times \big(5^{2s(k+1)}I_0\setminus 5^{2k}I_0\big) \big] \Big\}.
		\end{align*}
		Continue by observing that if $k=j$, the intersection with the second element of the union is empty, so
		\begin{equation*}
			\widehat{C}_{j,j}=\big( 5^{j+1}B_0\setminus{5^{j}B_0}\big)\times 5^{2s(j+1)}I_0;
		\end{equation*}
		while if $k>j$ one has the contrary, that is, the intersection with the first element is empty, and therefore, since $5^{j+1}B_0\setminus{5^jB_0}\subset 5^kB_0$,
		\begin{equation*}
			\widehat{C}_{j,k}=\big( 5^{j+1}B_0\setminus{5^{j}B_0}\big)\times\big[5^{2s(k+1)}I_0\setminus 5^{2sk}I_0\big].
		\end{equation*}
		Observe that $\widehat{C}_{j,j}\subset 5^{j+1}B$, which implies $\mu(\widehat{C}_{j,j})\leq \mu(5^{j+1}B)\lesssim (5^{j+1}r)^{n+2s\beta}$. On the other hand, for $k>j$, notice that the set $\widehat{C}_{j,k}$ can be covered by disjoint temporal translates of $\widehat{C}_{j,j}$, and the number needed to do it is proportional to the ratio between their respective time lengths, that is
		\begin{equation*}
			\frac{2\big( 5^{2s(k+1)}-5^{2sk}\big)}{5^{2s(j+1)}}\simeq \frac{5^{2sk}}{5^{2sj}}.
		\end{equation*}
		Therefore, since this last ratio is also valid for the case $k=j$, for every $k\geq j$ we have
		\begin{equation*}
			\mu(\widehat{C}_{j,k})\simeq \frac{5^{2sk}}{5^{2sj}}\mu(\widehat{C}_{j,j})\lesssim_{\beta} \frac{5^{2sk}}{5^{2sj}}\big(5^{j+1}r\big)^{n+2s\beta}.
		\end{equation*}
		All in all, we finally obtain
		\begin{align*}
			I_2 &\lesssim_{\beta} r^{2\zeta} \sum_{j=1}^\infty\sum_{k\geq j} \int_{\widehat{C}_{j,k}}\frac{\text{d}\mu(\oz)}{|x_0-z|^{n+2\zeta-2s}|\ox_0-\oz|_{p_s}^{2s(1+\beta)}} \lesssim r^{2\zeta} \sum_{j=1}^\infty\sum_{k\geq j} \frac{\mu(\widehat{C}_{j,k})}{(5^jr)^{n+2\zeta-2s}(5^kr)^{2s(1+\beta)}}\\
			&\lesssim_{\beta} \sum_{j=1}^\infty\sum_{k\geq j} \frac{1}{5^{j(2\zeta-2s\beta)}5^{2s\beta k}} = \sum_{k=1}^\infty \frac{1}{5^{2s\beta k}} \sum_{j=1}^k \frac{1}{5^{j(2\zeta-2s\beta)}}\lesssim \sum_{k=1}^\infty \frac{1}{5^{2s\beta k}}\bigg( 1+\frac{1}{5^{(2\zeta-2s\beta)k}} \bigg) \lesssim_{\beta} 1.
		\end{align*}
		Finally, let us study $I_{3}$. Notice that the estimate we want to check is deduced if we prove
		\begin{equation*}
			|\partial_t^{\beta} P_s\ast (\chi_{3}\mu)(\oy)-\partial_t^{\beta} P_s\ast (\chi_{3}\mu)(\overline{\xi}_0)|\lesssim_{\beta} 1,
		\end{equation*}
		that at the same time, can be obtained if we show that for any $\ox,\oy\in B$ we have
		\begin{equation}
			\label{eq2.3.2}
			|\partial_t^{\beta} P_s\ast (\chi_{3}\mu)(\ox)-\partial_t^{\beta} P_s\ast (\chi_{3}\mu)(\oy)|\lesssim_{\beta} 1.
		\end{equation}
		It is clear that it suffices to check the latter estimate in two particular cases: when $\ox$ and $\oy$ share their time coordinate, and when they share their spatial coordinate.
        
		\textit{Case 1: $\ox=(x,t)$ and $\oy=(y,t)$ points of $B$}.\; Let us begin by observing that
		\begin{align*}
			|&\partial_t^{\beta} P_s\ast (\chi_{3}\mu)(\ox)-\partial_t^{\beta} P_s\ast (\chi_{3}\mu)(\oy)|\\
			&\hspace{-0.1cm}=\bigg\rvert\int \frac{P_s\ast(\chi_3\mu)(x,\tau)-P_s\ast(\chi_3\mu)(x,t)}{|\tau-t|^{1+\beta}}\dd \tau-\int\frac{P_s\ast(\chi_3\mu)(y,\tau)-P_s\ast(\chi_3\mu)(y,t)}{|\tau-t|^{1+\beta}}\dd \tau \bigg\rvert\\
			&\hspace{-0.1cm}\leq \int_{|\tau-t|\leq (2r)^{2s}} \frac{|P_s\ast(\chi_3\mu)(x,\tau)-P_s\ast(\chi_3\mu)(x,t)|}{|\tau-t|^{1+\beta}}\dd \tau\\
			&\hspace{0.01cm}+\int_{|\tau-t|\leq (2r)^{2s}} \frac{|P_s\ast(\chi_3\mu)(y,\tau)-P_s\ast(\chi_3\mu)(y,t)|}{|\tau-t|^{1+\beta}}\dd \tau\\
			&\hspace{0.01cm}+\int_{|\tau-t|> (2r)^{2s}} \frac{|P_s\ast(\chi_3\mu)(x,\tau)-P_s\ast(\chi_3\mu)(x,t)-P_s\ast(\chi_3\mu)(y,\tau)+P_s\ast(\chi_3\mu)(y,t)|}{|\tau-t|^{1+\beta}}\dd \tau\\
			&\hspace{0.2cm}=:I_1+I_2+I_{3}.
		\end{align*}
		First, we estimate $I_1$. Argue as in the proof of the last estimate of Theorem \ref{C-Z_thm2} to obtain
		\begin{align*}
			|P_s\ast(\chi_3\mu)(x,\tau)-P_s\ast(\chi_3\mu)(x,t)|\leq |\tau-t|\int_{R_3} \frac{\dd \mu(\oz)}{|\ox-\oz|_{p_s}^{n+2s}}\lesssim_{\beta} \frac{|t-\tau|}{r^{2s(1-\beta)}}
		\end{align*}
		where the last inequality can be obtained by splitting the domain of integration into $s$-parabolic annuli and using the $s$-parabolic growth condition of degree $n+2s\beta$ of $\mu$. Thus,
		\begin{align*}
			I_1\lesssim_{\beta} \frac{1}{r^{2s(1-\beta)}}\int_{|\tau-t|\leq (2r)^{2s}}\frac{\dd \tau}{|\tau-t|^{\beta}}\lesssim_{\beta} \frac{(r^{2s})^{(1-\beta)}}{r^{2s(1-\beta)}}=1.
		\end{align*}
		The arguments to obtain $I_2\lesssim_{\beta} 1$ are exactly the same (just write $y$ instead of $x$ in the lines above). Concerning the term $I_{3}$, we split it as follows
		\begin{align*}
			I_{3}&\leq \int_{|\tau-t|> (2r)^{2s}} \frac{|P_s\ast(\chi_3\mu)(x,\tau)-P_s\ast(\chi_3\mu)(y,\tau)|}{|\tau-t|^{1+\beta}}\dd \tau\\
			&\hspace{2cm}+\int_{|\tau-t|> (2r)^{2s}} \frac{|P_s\ast(\chi_3\mu)(x,t)-P_s\ast(\chi_3\mu)(y,t)|}{|\tau-t|^{1+\beta}}\dd \tau=:I_{31}+I_{32}.
		\end{align*}
		First, let us deal with integral $I_{32}$. Since $(x,t), (y,t)\in B$, 
		\begin{equation*}
			|P_s\ast(\chi_3\mu)(x,t)-P_s\ast(\chi_3\mu)(y,t)|\leq |x-y|\,\|\nabla_xP_s\ast (\chi_3\mu)\|_{\infty,B} 
		\end{equation*}
		Notice that for any $\oz\in B$, by Theorem \ref{C-Z_thm2} and the fact that $s\beta < 1$, we have
		\begin{align*}
			|\nabla_xP_s\ast (\chi_3\mu)(\oz)|\lesssim  \int_{R_3}\frac{|z-w|}{|\oz-\overline{w}|_{p_s}^{n+2}}\text{d}\mu(\overline{w}) \lesssim r \int_{\mathbb{R}^{n+1}\setminus{5B}}\frac{\text{d}\mu(\overline{w})}{|\oz-\overline{w}|_{p_s}^{n+2}}\lesssim_{\beta} r^{2s\beta-1}.
		\end{align*}
		Therefore, since $|x-y|\leq r$,
		\begin{equation}
			\label{eq2.3.3}
			I_{32}\lesssim_{\beta} r^{2s\beta}\int_{|\tau-t|>(2r)^{2s}}\frac{\dd \tau}{|\tau-t|^{1+\beta}}\lesssim_{\beta} r^{2s\beta}\frac{1}{(r^{2s})^{\beta}}=1.
		\end{equation}
		Regarding $I_{31}$, observe that for each $\tau$ the points $(x,\tau)$ and $(y,\tau)$ belong to a temporal translate of $B$ that does not intersect $B$, since $|\tau-t|>(2r)^{2s}$ and $t\in I_0$. We call it $B_\tau$.  Hence, bearing in mind the first estimate of \cite[Lemma 2.1]{MaPrTo} we deduce 
		\begin{align}
			|P_s\ast(\chi_3&\mu)(x,\tau)-P_s\ast(\chi_3\mu)(y,\tau)|\nonumber\\
			&\leq \int_{2B_{\tau}}|P_s((x,\tau)-\overline{w})-P_s((y,\tau)-\overline{w})|\text{d}\mu(\overline{w}) \nonumber\\
			&\hspace{2.5cm}+ \int_{[(5B_0\times \mathbb{R})\setminus{5B}]\cap(2B_{\tau})^c}|P_s((x,\tau)-\overline{w})-P_s((y,\tau)-\overline{w})|\text{d}\mu(\overline{w}) \nonumber\\
			&\lesssim \int_{2B_{\tau}}\frac{\text{d}\mu(\overline{w})}{|(x,\tau)-\overline{w}|_{p_s}^n}+\int_{2B_{\tau}}\frac{\text{d}\mu(\overline{w})}{|(y,\tau)-\overline{w}|_{p_s}^n} \label{eq2.3.4} \\
			&\hspace{2.5cm}+ |x-y|\int_{[(5B_0\times \mathbb{R})\setminus{5B}]\cap(2B_{\tau})^c}|\nabla_xP_s((\widetilde{x},\tau)-\overline{w})|\text{d}\mu(\overline{w})\nonumber\\
			&\lesssim_{\beta} r^{2s\beta} + r\int_{[(5B_0\times \mathbb{R})\setminus{5B}]\cap(2B_{\tau})^c}\frac{|\widetilde{x}-w|}{|(\widetilde{x},\tau)-\overline{w}|_{p_s}^{n+2}}\text{d}\mu(\overline{w})\nonumber \\
			&\lesssim  r^{2s\beta} + r^2\int_{[(5B_0\times \mathbb{R})\setminus{5B}]\cap(2B_{\tau})^c}\frac{\text{d}\mu(\overline{w})}{|(\widetilde{x},\tau)-\overline{w}|_{p_s}^{n+2}}\nonumber \\
			&\leq r^{2s\beta} + r^2\int_{\mathbb{R}^{n+1}\setminus{2B_{\tau}}}\frac{\text{d}\mu(\overline{w})}{|(\widetilde{x},\tau)-\overline{w}|_{p_s}^{n+2}}
			\lesssim_{\beta} r^{2s\beta}\nonumber,
		\end{align}
		where for both integrals in \eqref{eq2.3.4} we have split the domain of integration into (decreasing) $s$-parabolic annuli; while in the remaining term, $\widetilde{x}$ belongs to the segment joining $x$ and $y$ and we have split the domain of integration into $s$-parabolic annuli centered at $(x_0,t+s)$. Hence, similarly to \eqref{eq2.3.3} we get $I_{31}\lesssim_{\beta} 1$ and we are done with \textit{Case 1}.
        
		\textit{Case 2: $\ox=(x,t)$ and $\oy=(x,u)$ points of $B$}.\; Write
		\begin{align*}
			|&\partial_t^{\beta} P_s\ast (\chi_{3}\mu)(\ox)-\partial_t^{\beta} P_s\ast (\chi_{3}\mu)(\oy)|\\
			&=\bigg\rvert\int \frac{P_s\ast(\chi_3\mu)(x,\tau)-P_s\ast(\chi_3\mu)(x,t)}{|\tau-t|^{1+\beta}}\dd \tau-\int\frac{P_s\ast(\chi_3\mu)(x,\tau)-P_s\ast(\chi_3\mu)(x,u)}{|\tau-u|^{1+\beta}}\dd \tau \bigg\rvert\\
			&\leq \int_{|\tau-t|\leq (2r)^{2s}} \frac{|P_s\ast(\chi_3\mu)(x,\tau)-P_s\ast(\chi_3\mu)(x,t)|}{|\tau-t|^{1+\beta}}\dd \tau\\
			&\hspace{0.25cm}+\int_{|\tau-t|\leq (2r)^{2s}} \frac{|P_s\ast(\chi_3\mu)(x,\tau)-P_s\ast(\chi_3\mu)(x,u)|}{|\tau-u|^{1+\beta}}\dd \tau\\
			&\hspace{0.25cm}+ \int_{|\tau-t|> (2r)^{2s}}\bigg\rvert \frac{P_s\ast(\chi_3\mu)(x,\tau)-P_s\ast(\chi_3\mu)(x,t)}{|\tau-t|^{1+\beta}}\\
			&\hspace{4.5cm}- \frac{P_s\ast(\chi_3\mu)(x,\tau)-P_s\ast(\chi_3\mu)(x,u)}{|\tau-u|^{1+\beta}}\bigg\rvert\dd \tau=:I_1'+I_2'+I_{3}'.
		\end{align*}
		The expressions corresponding to $I_1',I_2'$ can be tackled in the same way as $I_1,I_2$. Hence, $I_1'\lesssim_{\beta} 1$ and $I_2'\lesssim_{\beta} 1$. Finally, for $I_{3}'$, adding and subtracting $P_s\ast(\chi_3\mu)(x,t)/|\tau-u|^{1+\beta}$,
		\begin{align*}
			I_{3}'&\leq \int_{|\tau-t|>(2r)^{2s}}\bigg\rvert \frac{1}{|\tau-t|^{1+\beta}}-\frac{1}{|\tau-u|^{1+\beta}} \bigg\rvert| P_s\ast(\chi_3\mu)(x,\tau)-P_s\ast(\chi_3\mu)(x,t) |\dd \tau\\
			&\hspace{0.5cm}+ \int_{|\tau-t|>(2r)^{2s}} \frac{1}{|\tau-u|^{1+\beta}} |P_s\ast(\chi_3\mu)(x,t)-P_s\ast(\chi_3\mu)(x,u) |\dd \tau.
		\end{align*}
		Since $|\tau-t|>(2r)^{2s}$ we can apply the mean value theorem to deduce
		\begin{align*}
			\bigg\rvert \frac{1}{|\tau-t|^{1+\beta}}-\frac{1}{|\tau-u|^{1+\beta}} \bigg\rvert\lesssim_{\beta} \frac{|t-u|}{|\tau-t|^{2+\beta}}\lesssim \frac{r^{2s}}{|\tau-t|^{2+\beta}}.
		\end{align*}
		In addition, since $\mu$ has upper $s$-parabolic growth of degree $n+2s\beta$, by Lemma \ref{Pos_lem1}, with $\eta:=\beta$, the time function $P_s\ast (\chi_3\mu)(x,\cdot)$ is Lip-$\beta$. Therefore,
		\begin{align*}
			I_{3}'\lesssim_{\beta} \int_{|\tau-t|>(2r)^{2s}}\frac{r^{2s}}{|\tau-t|^{2+\beta}}|\tau-t|^{\beta}\dd \tau+\int_{|\tau-t|>(2r)^{2s}}\frac{1}{|\tau-u|^{1+\beta}}|t-u|^{\beta}\dd \tau\lesssim_{\beta} 1.
		\end{align*}
		Therefore estimate \eqref{eq2.3.2} is satisfied and we are done with $I_{3}$ and also with the proof.
	\end{proof}
	
	In the same spirit, if we ask the positive measure for an extra $\alpha$ growth, the potential $\partial_t^{\beta}P_s\ast \mu$ will satisfy a $\text{Lip}_{\alpha,p_s}$ property. Recall that $2\zeta:=\min\{1,2s\}$.
	
	\begin{lem}
		\label{lem2.3.3}
		Let $s\in(0,1]$, $\beta \in (0,1)$ and $\alpha\in(0,2\zeta)$ such that $2s\beta+\alpha<2$. Let $\mu$ be a positive measure in $\mathbb{R}^{n+1}$ which has upper $s$-parabolic growth of degree $n+2s\beta+\alpha$. Then, 
		\begin{equation*}
			\|\partial_t^{\beta}P_s\ast \mu\|_{\text{\normalfont{Lip}}_\alpha,p_s}\lesssim_{\beta,\alpha} 1.
		\end{equation*}
	\end{lem}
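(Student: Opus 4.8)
The plan is to reproduce the proof of Lemma \ref{Pos_lem2} step by step, the sole structural change being that the enlarged growth of $\mu$ upgrades each ``$\lesssim 1$'' obtained there to ``$\lesssim r^{\alpha}$''. Fixing $\ox_0\in\mathbb{R}^{n+1}$, $r>0$ and writing $B:=B(\ox_0,r)=B_0\times I_0$, I would show that there is a constant $c_B$ with
\[
\frac{1}{|B|}\int_{B}\big|\partial_t^{\beta}P_s\ast\mu(\oy)-c_B\big|\,\dd\oy\;\lesssim\;r^{\alpha};
\]
by the $s$-parabolic analogue of the Campanato--Meyers characterization of Hölder classes (whose proof is a routine adaptation of the Euclidean one, just as for the $s$-parabolic John--Nirenberg inequality used earlier) this gives $\|\partial_t^{\beta}P_s\ast\mu\|_{\mathrm{Lip}_{\alpha},p_s}\lesssim1$. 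As in Lemma \ref{Pos_lem2} I would split $\mathbb{R}^{n+1}=R_1\cup R_2\cup R_3$ with $R_1=5B$, $R_2=\mathbb{R}^{n+1}\setminus(5B_0\times\mathbb{R})$, $R_3=(5B_0\times\mathbb{R})\setminus 5B$, take $c_B:=\partial_t^{\beta}P_s\ast(\chi_2\mu)(\ox_0)+\partial_t^{\beta}P_s\ast(\chi_3\mu)(\oxi_0)$ for a point $\oxi_0\in B$ making the second summand finite (legitimate since $\partial_t^{\beta}P_s$ is $\pazocal{L}^{n+1}$-locally integrable for $\beta<1$, by Theorem \ref{C-Z_thm7}), and estimate the resulting pieces $I_1,I_2,I_3$ separately; as there, the hypothesis $s\ge1/2$ is what makes available the Hölder bound \emph{5} of Theorem \ref{C-Z_thm7} for $\partial_t^{\beta}P_s$.

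For $I_1$, integrating the size estimates of Theorem \ref{C-Z_thm7} for $\partial_t^{\beta}P_s$ in $s$-parabolic polar coordinates gives, exactly as in Lemma \ref{Pos_lem2},
\[
I_1\;\lesssim\;\frac{r^{2s(1-\beta)}}{|B|}\,\mu(5B)\;\lesssim\;\frac{r^{2s(1-\beta)}}{r^{\,n+2s}}\,r^{\,n+2s\beta+\alpha}\;=\;r^{\alpha},
\]
the gain of $r^{\alpha}$ being precisely the effect of the larger growth exponent. For $I_2$ I would reuse verbatim the decomposition of $R_2$ into the annular cylinders $\widehat{C}_{j,k}$ of Lemma \ref{Pos_lem2}, together with $\mu(\widehat{C}_{j,k})\lesssim 5^{2s(k-j)}(5^{j}r)^{\,n+2s\beta+\alpha}$ and estimate \emph{5} of Theorem \ref{C-Z_thm7}; carrying the $\alpha$ through the same computation turns the double sum into $r^{\alpha}\sum_{j\ge1}5^{\,j(2s\beta+\alpha-1)}\sum_{k\ge j}5^{-2s\beta k}\approx r^{\alpha}\sum_{j\ge1}5^{\,j(\alpha-1)}$, which converges because $\alpha<1$, so $I_2\lesssim r^{\alpha}$.

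The heart of the matter, as in Lemma \ref{Pos_lem2}, is $I_3$: one must show $|\partial_t^{\beta}P_s\ast(\chi_3\mu)(\ox)-\partial_t^{\beta}P_s\ast(\chi_3\mu)(\oy)|\lesssim r^{\alpha}$ for $\ox,\oy\in B$, and it is enough to treat $\ox,\oy$ differing only in the spatial, resp.\ only in the temporal, coordinate. In the spatial case I would follow \emph{Case 1} of Lemma \ref{Pos_lem2} verbatim: with the growth degree now $n+2s\beta+\alpha$, each quantity appearing there — for instance $\|\nabla_xP_s\ast(\chi_3\mu)\|_{\infty,B}$ and the local integrals $\int_{2B_\tau}|\oz-\ow|_{p_s}^{-n}\,\dd\mu(\ow)$ — acquires an extra $r^{\alpha}$, so the final bound is $\lesssim r^{\alpha}$ rather than $\lesssim1$. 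In the temporal case I would follow \emph{Case 2}; the one genuinely new point is the control of the oscillation of $P_s\ast(\chi_3\mu)(x,\cdot)$ in time over $B$, which I would obtain either from Lemma \ref{Pos_lem1} applied with $\eta:=\beta+\alpha/(2s)$ in place of $\beta$ — admissible because $n+2s\eta=n+2s\beta+\alpha$ and a nonzero finite positive measure cannot have $s$-parabolic growth of degree beyond the Ahlfors dimension $n+2s$ (a standard covering argument, using $|B(\ox,r)|\approx r^{n+2s}$), so that $2s\beta+\alpha\le 2s$ and $\eta\le1$ — or directly, bounding $|P_s\ast(\chi_3\mu)(x,t)-P_s\ast(\chi_3\mu)(x,u)|$ by $|t-u|\int_{\mathbb{R}^{n+1}\setminus 5B}|\ox_0-\oz|_{p_s}^{-(n+2s)}\,\dd\mu(\oz)\lesssim r^{2s}\cdot r^{2s\beta+\alpha-2s}=r^{2s\beta+\alpha}$; either way this converts the bound ``$A_3'\lesssim1$'' of Lemma \ref{Pos_lem2} into ``$A_3'\lesssim(r^{2s})^{-\beta}\,r^{2s\beta+\alpha}=r^{\alpha}$''. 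The Ahlfors-critical borderline $2s\beta+\alpha=2s$ (possible only for $s<1$), where $\eta=1$ and a logarithm appears, needs a small extra argument.

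I expect the temporal half of $I_3$ to be the main obstacle: one must convert the additional decay of $\mu$ into exactly the power $r^{\alpha}$, pin down the admissible Hölder exponent $\beta+\alpha/(2s)$ fed into Lemma \ref{Pos_lem1}, and reproduce the intricate annulus- and cylinder-indexed summations of Lemma \ref{Pos_lem2} with the shifted exponents while checking that every geometric series still converges under the standing hypothesis. The terms $I_1$, $I_2$ and the spatial half of $I_3$ should be essentially mechanical adaptations of Lemma \ref{Pos_lem2}.
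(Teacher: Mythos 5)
Your proposal is correct in essence but takes a genuinely different route from the paper. The paper's proof is a direct two-point argument: it fixes arbitrary $\ox\neq\oy$, partitions $\mathbb{R}^{n+1}$ into $R_1$ (points $\oz$ spatially far from $\ox$ or $\oy$) and $R_2:=\mathbb{R}^{n+1}\setminus R_1$ (a bounded spatial cylinder), bounds the integrals $I_{1,\ox},I_{1,\oy}$ over $R_1$ by the cylinder-annulus summation borrowed from Lemma \ref{Pos_lem2}'s $I_2$, and handles the cylindrical term $I_2$ by an $A_1,A_2,A_3$-type decomposition (the analogue of Lemma \ref{Pos_lem2}'s $I_3$). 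You instead invoke the $s$-parabolic Campanato–Meyers characterization of $\text{Lip}_\alpha$ by $r^\alpha$-mean-oscillation and reuse Lemma \ref{Pos_lem2}'s ball-centred $R_1,R_2,R_3$ decomposition verbatim, with each ``$\lesssim 1$'' upgraded to ``$\lesssim r^\alpha$''. The two proofs are essentially isomorphic in the estimates they use (Theorem \ref{C-Z_thm7} estimates \emph{1}–\emph{5}, the cylinder partition, and Lemma \ref{Pos_lem1} with $\eta=\beta+\alpha/(2s)$). What your route buys is modularity (you reuse Lemma \ref{Pos_lem2} essentially unchanged); what it costs is the additional Campanato–Meyers machinery, plus some care with the well-definedness of the constant $c_B$, which in Lemma \ref{Pos_lem2} relies on $\mu$ being finite — the paper's two-point formulation sidesteps this because its $R_2$ has bounded support and the $R_1$ integrals involve kernel differences decaying like $|\oz|^{-(n+2s\beta+1)}$, hence automatically convergent under growth of degree $n+2s\beta+\alpha$ with $\alpha<1$. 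Two points in your favour that are worth flagging: (i) your Ahlfors-dimension argument correctly shows a nonzero measure cannot have $s$-parabolic growth of degree $>n+2s$, giving $\eta\le 1$ a priori even though the hypothesis $2s\beta+\alpha<2$ alone does not; and (ii) you explicitly note that the endpoint $\eta=1$ is not covered by Lemma \ref{Pos_lem1} as stated — this is a genuine subtlety that the paper's proof inherits silently (the paper also feeds $\eta=\beta+\alpha/(2s)$ into Lemma \ref{Pos_lem1} without excluding $\eta=1$), though in every application of Lemma \ref{lem2.3.3} within the paper one in fact has $2s\beta+\alpha<2s$, hence $\eta<1$.
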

	\begin{proof}
		Fix any $\ox,\oy\in\mathbb{R}^{n+1}, \ox\neq \oy$. We have to check if the following holds 
		\begin{equation*}
			|\partial_{t}^{\beta} P_s\ast \mu(\ox)-\partial_{t}^{\beta} P_s\ast \mu(\oy)|\lesssim_{\beta,\alpha} |\ox-\oy|_{p_s}^{\alpha}.
		\end{equation*}
		Begin by choosing the following partition of $\mathbb{R}^{n+1}$
		\begin{align*}
			R_1:&= \big\{\oz \;:\;|\ox-\oy|_{p_s}\leq |x-z|/5\big\}\cup\big\{\oz \;:\;|\oy-\ox|_{p_s}\leq |y-z|/5\big\},\\
			R_2:= \mathbb{R}^{n+1}\setminus{R_1}&=\big\{\oz \;:\;|\ox-\oy|_{p_s}> |x-z|/5\big\}\cap\big\{\oz \;:\;|\oy-\ox|_{p_s}>|y-z|/5\big\},
		\end{align*}
		and their corresponding characteristic functions $\chi_1,\chi_2$. From the latter we have
		\begin{align}
			&\frac{|\partial_{t}^{\beta} P_s\ast \mu(\ox)-\partial_{t}^{\beta} P_s\ast \mu(\oy)|}{|\ox-\oy|_{p_s}^{\alpha}}  \nonumber \\
			&\hspace{1cm}\leq\frac{1}{|\ox-\oy|_{p_s}^\alpha}\int_{ |\ox-\oy|_{p_s}\leq |x-z|/5}|\partial_{t}^{\beta} P_s(\ox-\oz)-\partial_{t}^{\beta} P_s(\oy-\oz)|\text{d}\mu(\oz) \nonumber \\
			& \hspace{2cm}+\frac{1}{|\ox-\oy|_{p_s}^\alpha}\int_{ |\oy-\ox|_{p_s}\leq |y-z|/5}|\partial_{t}^{\beta} P_s(\ox-\oz)-\partial_{t}^{\beta} P_s(\oy-\oz)|\text{d}\mu(\oz) \nonumber \\
			&\hspace{2cm} +\frac{1}{|\ox-\oy|_{p_s}^\alpha}\big\rvert\partial_{t}^{\beta} P_s\ast (\chi_2\mu)(\ox)-\partial_{t}^{\beta} P_s\ast (\chi_2\mu)(\oy)\big\rvert =:I_{1,\ox}+I_{1,\oy}+I_2. \label{eq2.4.2}
		\end{align}
		
		Regarding $I_{1,\ox}$, name $\overline{\xi}:=\ox-\oz$, $\overline{\xi}':=\oy-\oz$ and observe that, in particular, one has
		\begin{equation*}
			|\overline{\xi}-\overline{\xi}'|_{p_s} = |\ox-\oy|_{p_s} < \frac{|x-z|}{2}=\frac{|\xi|}{2},
		\end{equation*}
		Applying the last estimate either of Theorem \ref{lem2.4} or Theorem \ref{lem3.5}, we deduce
		\begin{equation*}
			I_{1,\ox}\lesssim_{\beta} \frac{1}{|\ox-\oy|_{p_s}^{\alpha-2\zeta}}\int_{ |\ox-\oy|_{p_s}\leq |x-z|/5} \frac{\text{d}\mu(\oz)}{|x-z|^{n+2\zeta-2s}|\ox-\oz|_{p_s}^{2s(1+\beta)}} 
		\end{equation*}
		Let us split the domain of integration into proper disjoint pieces. For $\ox=(x,t)$, we denote
		\begin{equation*}
			B_{\ox}:=B(\ox, |\ox-\oy|_{p_s})=B_1(x,|\ox-\oy|_{p_s})\times J_{\ox},
		\end{equation*}
		where $B_1 (x,|\ox-\oy|_{p_s})$ is an Euclidean ball in $\mathbb{R}^n$ and $J_{\ox}$ is a real interval centered at $t$ with length $2|\ox-\oy|_{p_s}^{2s}$. As in Lemma \ref{Pos_lem2},  take cylinders $C_{j,\ox}:= 5^j B_1(\ox, |\ox-\oy|_{p_s})\times \mathbb{R}$ for $j\geq 1$, as well as the annular cylinders $\widehat{C}_{j,\ox}:=C_{j+1,\ox}\setminus{C_{j,\ox}}$, for $j\geq 1$. We express $\{\oz\,:\, |\ox-\oy|_{p_s}\leq |x-z|/5 \}$ as the disjoint union of the sets $\widehat{C}_{j,\ox}$, so that
		\begin{equation*}
			I_{1,\ox} \lesssim_{\beta} \frac{1}{|\ox-\oy|_{p_s}^{\alpha-2\zeta}}\sum_{j=1}^\infty \int_{\widehat{C}_{j,\ox}} \frac{\text{d}\mu(\oz)}{|x-z|^{n+2\zeta-2s}|\ox-\oz|_{p_s}^{2s(1+\beta)}}.
		\end{equation*}
		The above integral can be studied as that appearing in \eqref{Pos_eq1}, in the study of the term $I_2$ of Lemma \ref{Pos_lem2} (centering now the cylinders in $\ox$ and interchanging the roles of $r$ and $|\ox-\oy|_{p_s}$). Doing so, and taking into account the $n+2s\beta+\alpha$ growth of $\mu$, one obtains
		\begin{align*}
			I_{1,\ox} &\lesssim_{\beta,\alpha} \frac{1}{|\ox-\oy|_{p_s}^{\alpha-2\zeta}} \sum_{j=1}^\infty\sum_{k\geq j} \frac{(5^{j+1}|\ox-\oy|_{p_s})^{n+2s\beta+\alpha}}{(5^j|\ox-\oy|_{p_s})^{n+2\zeta-2s}(5^k|\ox-\oy|_{p_s})^{2s(1+\beta)}}\frac{5^{2sk}}{5^{2sj}}\\
			&=\sum_{j=1}^\infty\sum_{k\geq j} \frac{5^{j(n+2s\beta+\alpha)}}{5^{j(n+2\zeta-2s)}5^{2s(1+\beta)k}}\frac{5^{2sk}}{5^{2sj}}\simeq \sum_{j=1}^\infty\sum_{k\geq j}\frac{5^{j(2s\beta + \alpha-2\zeta)}}{5^{2s\beta k}}=\sum_{k=1}^\infty \frac{1}{5^{2s\beta k}} \sum_{j=1}^k\frac{1}{5^{j(2\zeta-2s\beta-\alpha)}}\\
			&\lesssim \sum_{k=1}^\infty \frac{1}{5^{2s\beta k}} \bigg( 1+\frac{1}{5^{(2\zeta-2s\beta-\alpha)k}} \bigg) \lesssim_{\beta,\alpha} 1, \quad \text{if $\alpha < 2\zeta$}.
		\end{align*}
		The study of $I_{1,\oy}$ is analogous, interchanging the roles of $\ox$ and $\oy$. Finally we deal with $I_2$. We claim that the following estimate holds
		\begin{equation*}
			\big\rvert\partial_{t}^{\beta} P_s\ast (\chi_{2}\mu)(\ox)- \partial_{t}^{\beta}P_s\ast (\chi_{2}\mu)(\oy)\big\rvert\lesssim_{\beta,\alpha} |\ox-\oy|_{p_s}^\alpha.
		\end{equation*}
		The general case will follows from the following two cases: whether $\ox$ and $\oy$ share their time coordinate, or if they share their spatial coordinate.  Indeed, write $\ox=(x,t), \oy=(y,\tau)$ and set $\widehat{x}:=(x,\tau)$ so that
		\begin{align*}
			\big\rvert\partial_{t}^{\beta} P_s&\ast (\chi_{2}\mu)(\ox)- \partial_{t}^{\beta}P_s\ast (\chi_{2}\mu)(\oy)\big\rvert \\
			&\hspace{-0.9cm}\leq \big\rvert\partial_{t}^{\beta} P_s\ast (\chi_{2}\mu)(\ox)- \partial_{t}^{\beta}P_s\ast (\chi_{2}\mu)(\widehat{x})\big\rvert + \big\rvert\partial_{t}^{\beta} P_s\ast (\chi_{2}\mu)(\widehat{x})- \partial_{t}^{\beta}P_s\ast (\chi_{2}\mu)(\oy)\big\rvert\\
			&\hspace{-0.9cm}\lesssim_{\beta,\alpha} |\ox-\widehat{x}|_{p_s}^\alpha+|\widehat{x}-\oy|_{p_s}^\alpha=|t-\tau|^{\alpha/2}+|x-y|^\alpha\leq 2 |\ox-\oy|_{p_s}^\alpha, \;\; \text{and we are done.}
		\end{align*}
		\textit{Case 1: $\ox=(x,t)$ and $\oy=(x,u)$}. Write $\mu_{2}:=\chi_{2}\mu$ and estimate $|\partial_t^{\beta} P_s\ast \mu_{2}(\ox)-\partial_t^{\beta} P_s\ast \mu_{2}(\oy)|$ as follows
		\begin{align*}
			&\bigg\rvert\int \frac{P_s\ast\mu_{2}(x,\tau)-P_s\ast\mu_{2}(x,t)}{|\tau-t|^{1+\beta}}\dd \tau-\int\frac{P_s\ast\mu_{2}(x,\tau)-P_s\ast\mu_{2}(x,u)}{|\tau-u|^{1+\beta}}\dd \tau \bigg\rvert\\
			&\leq \int_{|\tau-t|\leq 2^{2s}|\ox-\oy|_{p_s}^{2s}} \frac{|P_s\ast\mu_{2}(x,\tau)-P_s\ast\mu_{2}(x,t)|}{|\tau-t|^{1+\beta}}\dd \tau\\
			&\hspace{0.25cm}+\int_{|\tau-t|\leq 2^{2s}|\ox-\oy|_{p_s}^{2s}} \frac{|P_s\ast\mu_{2}(x,\tau)-P_s\ast\mu_{2}(x,u)|}{|\tau-u|^{1+\beta}}\dd \tau\\
			&\hspace{0.25cm}+ \int_{|\tau-t|> 2^{2s}|\ox-\oy|_{p_s}^{2s}}\bigg\rvert \frac{P_s\ast\mu_{2}(x,\tau)-P_s\ast\mu_{2}(x,t)}{|\tau-t|^{1+\beta}}\\
			&\hspace{5cm}- \frac{P_s\ast\mu_{2}(x,\tau)-P_s\ast\mu_{2}(x,u)}{|\tau-u|^{1+\beta}}\bigg\rvert\dd \tau=:I_1+I_2+I_{3}.
		\end{align*}
		By a direct application of Lemma \ref{Pos_lem1} we are able to obtain, straightforwardly,
		\begin{align*}
			I_1&\lesssim_{\beta,\alpha} \int_{|\tau-t|\leq 2^{2s}|\ox-\oy|_{p_s}^{2s}}\frac{\dd \tau}{|\tau-t|^{1-\frac{\alpha}{2s}}}\lesssim_{\alpha} |\ox-\oy|_{p_s}^\alpha \quad \text{and}\\
			I_2&\lesssim_{\beta,\alpha} \int_{|\tau-t|\leq 2^{2s}|\ox-\oy|_{p_s}^{2s}}\frac{\dd \tau}{|\tau-u|^{1-\frac{\alpha}{2s}}} \lesssim_{\alpha} |\ox-\oy|_{p_s}^\alpha.
		\end{align*}
		For $I_{3}$, adding and subtracting the term $P_s\ast\mu_{2}(x,t)/|\tau-u|^{1+\beta}$ we get
		\begin{align*}
			I_{3}&\leq \int_{|\tau-t|> 2^{2s}|\ox-\oy|_{p_s}^{2s}}\bigg\rvert \frac{1}{|\tau-t|^{1+\beta}}-\frac{1}{|\tau-u|^{1+\beta}} \bigg\rvert| P_s\ast\mu_{2}(x,\tau)-P_s\ast\mu_{2}(x,t) |\dd \tau\\
			&\hspace{0.5cm}+ \int_{|\tau-t|> 2^{2s}|\ox-\oy|_{p_s}^{2s}} \frac{1}{|\tau-u|^{1+\beta}} |P_s\ast\mu_{2}(x,t)-P_s\ast\mu_{2}(x,u) |\dd \tau.
		\end{align*}
		Since $|\tau-t|> 2^{2s}|\ox-\oy|_{p_s}^{2s}$ we can apply the mean value theorem to deduce
		\begin{align*}
			\bigg\rvert \frac{1}{|\tau-t|^{1+\beta}}-\frac{1}{|\tau-u|^{1+\beta}} \bigg\rvert\lesssim_{\beta} \frac{|t-u|}{|\tau-t|^{2+\beta}}\lesssim \frac{|\ox-\oy|_{p_s}^{2s}}{|\tau-t|^{2+\beta}}.
		\end{align*}
		Therefore, by Lemma \ref{Pos_lem1} with $\eta:=\beta+\frac{\alpha}{2s}$, we finally have
		\begin{align*}
			I_{3}\lesssim_{\beta,\alpha} \int_{|\tau-t|> 2^{2s}|\ox-\oy|_{p_s}^{2s}}&\frac{|\ox-\oy|_{p_s}^{2s}}{|\tau-t|^{2+\beta}} |\tau-t|^{\beta+\frac{\alpha}{2s}}\dd \tau\\
			&+\int_{|\tau-t|> 2^{2s}|\ox-\oy|_{p_s}^{2s}}\frac{1}{|\tau-u|^{1+\beta}}|t-u|^{\beta+\frac{\alpha}{2s}}\dd \tau\lesssim_{\beta,\alpha} |\ox-\oy|_{p_s}^\alpha.
		\end{align*}
		Therefore $|\partial_t^{\beta} P_s\ast \mu_{2}(\ox)-\partial_t^{\beta} P_s\ast \mu_{2}(\oy)|\leq I_1+I_2+I_{3} \lesssim_{\beta,\alpha} |\ox-\oy|_{p_s}^\alpha$, and this ends the study of \textit{Case 1}.
        
		\textit{Case 2: $\ox=(x,t)$ and $\oy=(y,t)$}. To tackle this case, let us first rewrite the set $R_2$ as
		\begin{align*}
			R_2&=\Big[5B_1\big( x, |\ox-\oy|_{p_s} \big) \times \mathbb{R}\Big]\cap \Big[5B_1\big( y, |\oy-\ox|_{p_s} \big) \times \mathbb{R}\Big] =\big( 5B_{1,x}\times \mathbb{R} \big)\cap \big( 5B_{1,y}\times \mathbb{R} \big),
		\end{align*}
		Continue rewriting $R_2$ as follows
		\begin{align*}
			R_2&=\Big\{ 5B_{\ox}\cup \big[(5B_{1,x}\times \mathbb{R})\setminus{5B_{\ox}}\big] \Big\}\cap\Big\{ 5B_{\oy}\cup \big[(5B_{1,y}\times \mathbb{R})\setminus{5B_{\oy}}\big] \Big\}\\
			&=\big( 5B_{\ox}\cap 5B_{\oy} \big)\cup \Big\{ 5B_{\ox}\cap \big[(5B_{1,y}\times \mathbb{R})\setminus{5B_{\oy}}\big] \Big\} \\
			&\hspace{3.09cm}\cup \Big\{ 5B_{\oy}\cap \big[(5B_{1,x}\times \mathbb{R})\setminus{5B_{\ox}}\big] \Big\}\\
			&\hspace{3.09cm}\cup \Big\{ \big[(5B_{1,x}\times \mathbb{R})\setminus{5B_{\ox}}\big]\cap \big[(5B_{1,y}\times \mathbb{R})\setminus{5B_{\oy}}\big] \Big\}\\
			&=:R_{21}\cup R_{22}\cup R_{23}\cup R_{24}.
		\end{align*}
		Observe that in \textit{Case 2} the real intervals $J_{\ox}$ and $J_{\oy}$ coincide. We name them $J$. Therefore,
		\begin{align*}
			R_{22}&:= 5B_{\ox}\cap \big[ (5B_{1,y}\times \mathbb{R})\setminus{5B_{\oy}} \big]=(5B_{1,x}\times J)\cap \big[5B_{1,y}\times (\mathbb{R}\setminus{J})\big]=\varnothing,\\
			R_{23}&:= 5B_{\oy}\cap \big[ (5B_{1,y}\times \mathbb{R})\setminus{5B_{\oy}} \big]=(5B_{1,y}\times J)\cap \big[5B_{1,x}\times (\mathbb{R}\setminus{J})\big]=\varnothing,
		\end{align*}
		meaning that, in fact, $R_2=R_{21}\cup R_{24}$. Observe also that $R_{24}$ can be rewritten as
		\begin{align*}
			R_{24}:&=\big[(5B_{1,x}\times \mathbb{R})\setminus{5B_{\ox}}\big]\cap \big[(5B_{1,y}\times \mathbb{R})\setminus{5B_{\oy}}\big]\\
			&=(5B_{1,x}\cap 5B_{1,y})\times(\mathbb{R}\setminus{J}).
		\end{align*}
		Therefore, if $\chi_{21}$ and $\chi_{24}$ are the characteristic functions of $R_{21}$ and $R_{24}$, we have, naming $\mu_{21}:=\chi_{21}\mu$ and $\mu_{24}:=\chi_{24}\mu$,
		\begin{align*}
			I_2&\leq \frac{1}{|\ox-\oy|_{p_s}^\alpha}\big\rvert\partial_{t}^{\beta} P_s\ast \mu_{21}(\ox)-\partial_{t}^{\beta} P_s\ast \mu_{21}(\oy)\big\rvert\nonumber\\
			&\hspace{2cm}+\frac{1}{|\ox-   \oy|_{p_s}^\alpha}\big\rvert\partial_{t}^{\beta} P_s\ast \mu_{24}(\ox)-\partial_{t}^{\beta} P_s\ast \mu_{24}(\oy)\big\rvert=:I_{21}+I_{24}.
		\end{align*}
		Hence, fixing $j\in\{1,4\}$, begin by establishing the following estimate
		\begin{align*}
			|\partial&_t^{\beta} P_s\ast \mu_{2j}(\ox)-\partial_t^{\beta} P_s\ast \mu_{2j}(\oy)|\\
			&=\bigg\rvert\int \frac{P_s\ast\mu_{2j}(x,\tau)-P_s\ast\mu_{2j}(x,t)}{|\tau-t|^{1+\beta}}\dd \tau-\int\frac{P_s\ast\mu_{2j}(y,\tau)-P_s\ast\mu_{2j}(y,t)}{|\tau-t|^{1+\beta}}\dd \tau \bigg\rvert\\
			&\leq \int_{|\tau-t|\leq 2^{2s}|\ox-\oy|_{p_s}^{2s}} \frac{|P_s\ast\mu_{2j}(x,\tau)-P_s\ast\mu_{2j}(x,t)|}{|\tau-t|^{1+\beta}}\dd \tau\\
			&\hspace{0.2cm}+\int_{|\tau-t|\leq 2^{2s}|\ox-\oy|_{p_s}^{2s}} \frac{|P_s\ast\mu_{2j}(y,\tau)-P_s\ast\mu_{2j}(y,t)|}{|\tau-t|^{1+\beta}}\dd \tau\\
			&\hspace{0.2cm}+\int_{|\tau-t|> 2^{2s}|\ox-\oy|_{p_s}^{2s}} \frac{|P_s\ast\mu_{2j}(x,\tau)-P_s\ast\mu_{2j}(x,t)-P_s\ast\mu_{2j}(y,\tau)+P_s\ast\mu_{2j}(y,t)|}{|\tau-t|^{1+\beta}}\dd \tau\\
			&\hspace{0.2cm}=:C_1+C_2+C_3.
		\end{align*}
		Lemma \ref{Pos_lem1} with $\eta=\beta$ yields $C_1\lesssim_{\beta,\alpha} |\ox-\oy|_{p_s}^\alpha$ and $C_2\lesssim_{\beta,\alpha} |\ox-\oy|_{p_s}^\alpha$, so we focus on $C_3$. Split it as follows
		\begin{align*}
			C_3&\leq \int_{|\tau-t|> 2^{2s}|\ox-\oy|_{p_s}^{2s}} \frac{|P_s\ast\mu_{2j}(x,\tau)-P_s\ast\mu_{2j}(y,\tau)|}{|\tau-t|^{1+\beta}}\dd \tau\\
			&\hspace{2cm}+\int_{|\tau-t|> 2^{2s}|\ox-\oy|_{p_s}^{2s}} \frac{|P_s\ast\mu_{2j}(x,t)-P_s\ast\mu_{2j}(y,t)|}{|\tau-t|^{1+\beta}}\dd \tau=:C_{31}+C_{32}.
		\end{align*}
		First, let us deal with integral $C_{32}$. On the one hand, if $j=1$, observe that for any $\oz\in 2B_{\ox}$, since $2B_{\ox}\subset R_{21}\subset 5B_{\ox}$, we can contain $R_{21}$ into $s$-parabolic annuli centered at $\oz$ and (exponentially decreasing) radii proportional to $|\ox-\oy|_{p_s}$. Hence, by \cite[Lemma 2.2]{MaPr} and the upper $s$-parabolic growth of degree $n+2s\beta+\alpha$ of $\mu$, we deduce 
		\begin{align*}
			|P_s\ast \mu_{21}(\oz)|&\lesssim  \int_{5B_{\ox}\cap 5B_{\oy}}\frac{\text{d}\mu(\overline{w})}{|\oz-\overline{w}|_{p_s}^{n}}\lesssim_{\beta,\alpha} |\ox-\oy|_{p_s}^{2s\beta+\alpha}.
		\end{align*}
		If $j=4$, observe that $|P_s\ast\mu_{24}(x,t)-P_s\ast\mu_{2j}(y,t)|\leq |x-y|\,\|\nabla_xP_s\ast \mu_{24}\|_{\infty,2B_{\ox}}$. 
		So for any $\oz\in 2B_{\ox}$, by Theorem \ref{C-Z_thm2} we obtain
		\begin{align*}
			|\nabla_xP_s&\ast \mu_{24}(\oz)|\lesssim  \int_{(5B_{\ox}\cap 5B_{\oy})\times(\mathbb{R}\setminus{J})}\frac{|z-w|}{|\oz-\overline{w}|_{p_s}^{n+2}}\text{d}\mu(\overline{w}) \\
			&\lesssim |\ox-\oy|_{p_s} \int_{\mathbb{R}^{n+1}\setminus{(5B_{p,\ox}\cap 5B_{p,\oy})}}\frac{\text{d}\mu(\overline{w})}{|\oz-\overline{w}|_{p_s}^{n+2}}\lesssim_{\beta,\alpha} |\ox-\oy|_{p_s}^{2s\beta+\alpha-1}, \quad \text{since }\, 2s\beta+\alpha<2.
		\end{align*}
		For the last inequality we can split, for example, the domain of integration into $s$-parabolic annuli centered at $\oz$ with (exponentially increasing) radii proportional to $2|\ox-\oy|_{p_s}$. Then,
		\begin{equation}
			\label{eq2.4.3}
			C_{32}\lesssim_{\beta,\alpha} |\ox-\oy|_{p_s}^{2s\beta+\alpha}\int_{|\tau-t|> 2^{2s}|\ox-\oy|_{p_s}^{2s}}\frac{\dd \tau}{|\tau-t|^{1+\beta}}\lesssim_{\beta} |\ox-\oy|_{p_s}^{\alpha}.
		\end{equation}
		Regarding $C_{31}$, the points $(x,\tau)$ and $(y,\tau)$ belong to a temporal translate of $2B_{\ox}\cap 2B_{\oy}$ that does not intersect $2B_{\ox}\cap 2B_{\oy}$, since $|\tau-t|> 2^{2s}|\ox-\oy|_{p_s}^{2s}$. We call it $2B_{\ox}^{\tau}\cap 2B_{\oy}^{\tau}$. For each $j\in\{1,4\}$ and $\tau$ (and bearing in mind Theorem \ref{C-Z_thm2}) we deduce
		\begin{align}
			|P_s\ast&\mu_{2j}(x,\tau)-P_s\ast\mu_{2j}(y,\tau)|\nonumber\\
			&\leq \int_{2B_{\ox}^{\tau}\cap 2B_{\oy}^{\tau}}|P_s((x,\tau)-\overline{w})-P_s((y,\tau)-\overline{w})|\text{d}\mu(\overline{w}) \nonumber\\
			&\hspace{2.5cm}+ \int_{R_{2j}\setminus{(2B_{\ox}^{\tau}\cap 2B_{\oy}^{\tau})}}|P_s((x,\tau)-\overline{w})-P_s((y,\tau)-\overline{w})|\text{d}\mu(\overline{w}) \nonumber\\
			&\lesssim \int_{2B_{\ox}^{\tau}}\frac{\text{d}\mu(\overline{w})}{|(x,\tau)-\overline{w}|_{p_s}^n}+\int_{2B_{\oy}^{\tau}}\frac{\text{d}\mu(\overline{w})}{|(y,\tau)-\overline{w}|_{p_s}^n} \label{eq2.4.4} \\
			&\hspace{2.5cm}+ |x-y|\int_{R_{2j}\setminus{(2B_{\ox}^{\tau}\cap 2B_{\oy}^{\tau})}}|\nabla_xP_s((\widetilde{x},\tau)-\overline{w})|\text{d}\mu(\overline{w})\nonumber\\
			&\lesssim_{\beta,\alpha} |\ox-\oy|_{p_s}^{2s\beta+\alpha} + |\ox-\oy|_{p_s}^2\int_{\mathbb{R}^{n+1}\setminus{(2B_{\ox}^{\tau}\cap 2B_{\oy}^{\tau})}}\frac{\text{d}\mu(\overline{w})}{|(\widetilde{x},\tau)-\overline{w}|_{p_s}^{n+2}}, \label{eq2.4.5}
		\end{align}
		where for both integrals of \eqref{eq2.4.4} we have split the domain of integration into (exponentially decreasing) $s$-parabolic annuli; while in the remaining term $\widetilde{x}$ belongs to the segment joining $x$ and $y$. Observe also that in the last inequality we have used that the spatial distance between any two points of $R_{21}\setminus{(2B_{\ox}^{\tau}\cap 2B_{\oy}^{\tau})}$ and $R_{24}\setminus{(2B_{\ox}^{\tau}\cap 2B_{\oy}^{\tau})}$ is bounded by a multiple of $|x-y|$ and thus of $|\ox-\oy|_{p_s}$. Observe now that, if $\xi:=(x+y)/2$, we have
		\begin{align*}
			2B_{\ox}^{\tau}\cap 2B_{\oy}^{\tau} &= B\big( (x,t+\tau), 2|\ox-\oy|_{p_s} \big)\cap B\big( (y,t+\tau), 2|\ox-\oy|_{p_s} \big)\\
			&\supset B\big( (\xi,t+\tau), |\ox-\oy|_{p_s} \big)=:\widehat{B}^{\tau},
		\end{align*}
		meaning that
		\begin{equation*}
			\mathbb{R}^{n+1}\setminus{(2B_{\ox}^{\tau}\cap 2B_{\oy}^{\tau})} \subset \mathbb{R}^{n+1}\setminus{\widehat{B}^\tau}.
		\end{equation*}
		Return to \eqref{eq2.4.5} and estimate the remaining integral by another one with the same integrand, but over the enlarged domain $\mathbb{R}^{n+1}\setminus{\widehat{B}^\tau}$. Afterwards, split the latter into $s$-parabolic annuli centered at $(\widetilde{x}, \tau)$ and (exponentially increasing) radii proportional to $|\ox-\oy|_{p_s}/2$ and use that $2s\beta+\alpha<2$ so that 
		\begin{align*}
			|P_s\ast\mu_{2j}(x,\tau)&-P_s\ast\mu_{2j}(y,\tau)| \lesssim_{\beta,\alpha}  |\ox-\oy|_{p_s}^{2s\beta+\alpha}+\frac{|\ox-\oy|_{p_s}^{2}}{|\ox-\oy|_{p_s}^{2-2s\beta-\alpha}}\simeq |\ox-\oy|_{p_s}^{2s\beta+\alpha}.
		\end{align*}
		Hence, similarly to \eqref{eq2.4.3} we deduce $C_{31}\lesssim_{\beta,\alpha} |\ox-\oy|_{p_s}^{\alpha}$, which means $I_2\leq I_{21}+I_{24}\lesssim_{\beta,\alpha} 1$ and we are done with \textit{Case 2}. This last estimate finally implies 
		\begin{equation*}
			\big\rvert\partial_{t}^{\beta} P_s\ast (\chi_{2}\mu)(\ox)- \partial_t^{\beta}P_s\ast (\chi_{2}\mu)(\oy)\big\rvert\lesssim_{\beta,\alpha} |\ox-\oy|_{p_s}^\alpha,
		\end{equation*}
		which means $I_2\lesssim_{\beta,\alpha} 1$. So applying it to \eqref{eq2.4.2} we conclude that
		\begin{equation*}
			\frac{|\partial_{t}^{\beta} P_s\ast \mu(\ox)-\partial_{t}^{\beta} P_s\ast \mu(\oy)|}{|\ox-\oy|_{p_s}^{\alpha}}\leq I_{1,\ox}+I_{1,\oy}+I_2\lesssim_{\beta,\alpha} 1,
		\end{equation*}
		and the desired $s$-parabolic $\text{Lip}_\alpha$ condition follows.
	\end{proof}
	
	\section{The \mathinhead{s}{}-parabolic BMO and \mathinhead{\text{Lip}_{\alpha}}{} caloric capacities}
	\label{sec2.4}
	
	We are finally ready to introduce the $s$-parabolic BMO and $\text{Lip}_\alpha$ variants of the caloric capacities presented in \cite{MaPrTo, MaPr}. This section generalizes the concept to include a broader set of variants. The principal result will be that, in any case, such capacities will turn out to be comparable to a certain $s$-parabolic Hausdorff content. Moreover, we will be able to characterize removable sets for $\text{BMO}_{p_s}$ and $\text{Lip}_{\alpha,p_s}$ solutions to the $\Theta^s$-equation in terms of the nullity of the respective capacities. In order to do so, we will need a fundamental lemma that we present before introducing the different capacities. The result below will characterize distributions supported on a compact set with finite $d$-dimensional Hausdorff measure that satisfy some growth property only for small enough $s$-parabolic cubes.
	
	\begin{lem}
		\label{Cap_thm2}
		Let $d>0$ and $E\subset \mathbb{R}^{n+1}$ be a compact set with $\pazocal{H}_{p_s}^{d}(E)<\infty$. Let $T$ be a distribution supported on $E$ with the property that there exists $0<\ell_0\leq \infty$ such that for any $R\subset \mathbb{R}^{n+1}$ $s$-parabolic cube with $\ell(R)\leq \ell_0$,
		\begin{equation*}
			|\langle T, \phi \rangle | \lesssim \ell(R)^{d}, \qquad \forall \phi \;\; \text{admissible for}\;\; R.
		\end{equation*}
		Then, $T$ is a signed measure satisfying
		\begin{equation*}
			|\langle T, \psi \rangle|\lesssim \pazocal{H}_{p_s}^{d}(E)\|\psi\|_\infty,\qquad \forall \psi\in \pazocal{C}_c^{\infty}(\mathbb{R}^{n+1}).
		\end{equation*}
	\end{lem}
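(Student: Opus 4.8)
The argument follows the model of the corresponding removability lemmas in \cite{MaPrTo,MaPr}: we test $T$ against an admissible partition of unity subordinate to an efficient covering of $E$, invoke the growth hypothesis term by term, and let the scale of the covering shrink to zero.

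\emph{Reductions.} It suffices to prove $|\langle T,\psi\rangle|\lesssim \|\psi\|_\infty\,\pazocal{H}^{d}_{p_s}(E)$ for every $\psi\in\pazocal{C}^\infty_c(\mathbb{R}^{n+1})$ (while also tracking $\|\psi\|_{\pazocal{C}^2}$ in intermediate estimates, where it will be harmless): once this holds, $T$ has order $0$, extends to a bounded functional on $\pazocal{C}_0(\mathbb{R}^{n+1})$, and by Riesz--Markov it is a finite signed Radon measure supported on $E$ with total variation $\lesssim\pazocal{H}^{d}_{p_s}(E)$, which is exactly the claim. Fix $\psi$, $\varepsilon>0$ and a scale $\delta\in(0,\ell_0/10]$ (any $\delta>0$ if $\ell_0=\infty$). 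Since $\pazocal{H}^{d}_{p_s,\delta}(E)\le\pazocal{H}^{d}_{p_s}(E)<\infty$ and $E$ is compact, we may cover $E$ by finitely many $s$-parabolic cubes $\{Q_i\}$ with $\ell(Q_i)\le\delta$ and $\sum_i\ell(Q_i)^{d}\le\pazocal{H}^{d}_{p_s}(E)+\varepsilon$ (comparing $p_s$-diameters with side lengths costs only a dimensional constant). Applying the Vitali $5r$-covering lemma in the doubling quasi-metric space $(\mathbb{R}^{n+1},\text{dist}_{p_s})$ we extract a pairwise disjoint subfamily and set $R_k:=5Q_{i_k}$; then $E\subset\bigcup_kR_k$, $\ell(R_k)\le 5\delta$, the cores $\tfrac15R_k$ are pairwise disjoint, and $\sum_k\ell(R_k)^{d}\lesssim\pazocal{H}^{d}_{p_s}(E)+\varepsilon$.

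\emph{The partition of unity.} Order the $R_k$ by non-increasing side length, pick bumps $\theta_k\in\pazocal{C}^\infty$ with $\chi_{\frac32R_k}\le\theta_k\le\chi_{2R_k}$ and $\|\nabla_x\theta_k\|_\infty\lesssim\ell(R_k)^{-1}$, $\|\partial_t\theta_k\|_\infty\lesssim\ell(R_k)^{-2s}$, $\|\Delta\theta_k\|_\infty\lesssim\ell(R_k)^{-2}$ (dilating and translating a fixed model bump in the $s$-parabolic way), and set $\varphi_k:=\theta_k\prod_{j<k}(1-\theta_j)$. Then $0\le\varphi_k\le1$, $\operatorname{supp}\varphi_k\subset2R_k$, and $\sum_k\varphi_k=1-\prod_j(1-\theta_j)\equiv1$ on the open neighbourhood $\bigcup_k\operatorname{int}(\tfrac32R_k)$ of $E$; since $\operatorname{supp}T=E$ this gives $\langle T,\psi\rangle=\sum_k\langle T,\psi\varphi_k\rangle$. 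The crucial point is that each $\varphi_k$ is, up to a dimensional constant, admissible for $2R_k$: the bounds $\|\nabla_x\varphi_k\|_\infty\lesssim\ell(R_k)^{-1}$, $\|\partial_t\varphi_k\|_\infty\lesssim\ell(R_k)^{-2s}$, $\|\Delta\varphi_k\|_\infty\lesssim\ell(R_k)^{-2}$ follow by Leibniz from the disjointness of the cores, which forces, for any point $\ox$ and any dyadic scale $2^m$, at most $C(n,s)$ indices $j$ with $\ell(R_j)\sim 2^m$ and $\ox\in2R_j$ (by a volume/packing argument: the disjoint cores of those $R_j$ are $p_s$-cubes of side $\sim 2^m$ contained in a fixed $p_s$-ball of radius $\sim 2^m$ centred at $\ox$); hence the sums $\sum_{j<k,\ \ox\in2R_j}\ell(R_j)^{-a}$ that appear in the derivatives of $\varphi_k$ are geometric series controlled by their smallest scale, which is $\sim\ell(R_k)$.

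\emph{Conclusion.} For each $k$ write $\psi\varphi_k=\psi(\ox_k)\varphi_k+\big(\psi-\psi(\ox_k)\big)\varphi_k$, where $\ox_k$ is the centre of $2R_k$. Since $\ell(2R_k)\le10\delta\le\ell_0$, the growth hypothesis applied to the $C$-admissible function $\varphi_k$ gives $|\langle T,\psi(\ox_k)\varphi_k\rangle|\lesssim\|\psi\|_\infty\,\ell(R_k)^{d}$. On $2R_k$ one has $|\psi-\psi(\ox_k)|\lesssim\|\psi\|_{\pazocal{C}^1}\ell(R_k)^{\min\{1,2s\}}$ (for $\delta\le1$), and a routine Leibniz computation using the admissibility of $\varphi_k$ shows that $\big(\psi-\psi(\ox_k)\big)\varphi_k$, divided by $C\|\psi\|_{\pazocal{C}^2}\ell(R_k)^{\min\{1,2s\}}$, is $C$-admissible for $2R_k$, so $|\langle T,(\psi-\psi(\ox_k))\varphi_k\rangle|\lesssim\|\psi\|_{\pazocal{C}^2}\ell(R_k)^{d+\min\{1,2s\}}$. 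Summing over $k$ and using $\sum_k\ell(R_k)^d\lesssim\pazocal{H}^{d}_{p_s}(E)+\varepsilon$ together with $\ell(R_k)\le5\delta$,
\[
|\langle T,\psi\rangle|\lesssim\Big(\|\psi\|_\infty+\|\psi\|_{\pazocal{C}^2}\,\delta^{\min\{1,2s\}}\Big)\big(\pazocal{H}^{d}_{p_s}(E)+\varepsilon\big).
\]
Letting $\delta\to0$ and then $\varepsilon\to0$ yields $|\langle T,\psi\rangle|\lesssim\|\psi\|_\infty\,\pazocal{H}^{d}_{p_s}(E)$, and the Riesz--Markov step from the first paragraph finishes the proof. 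The main difficulty is the second paragraph: securing the admissibility bounds for the $\varphi_k$ uniformly in $k$ when the covering cubes may have wildly different sizes — this is precisely where the Vitali selection (disjointness of the cores) and the per-scale packing estimate are needed; the rest is bookkeeping and the harmless $\delta\to0$ limit.
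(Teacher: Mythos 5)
Your argument is correct and follows the same basic strategy as the paper: cover $E$ efficiently by small $s$-parabolic cubes, build a partition of unity subordinate to (slightly dilated) versions of those cubes so that each piece is admissible at its own scale, apply the growth hypothesis termwise, and shrink the covering scale to zero to kill the higher-order error terms and extract the measure bound.

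The one place you diverge is in how you manufacture the admissible partition of unity. The paper covers $E$ by cubes $Q_i$ with $\sum\ell(Q_i)^d\lesssim\pazocal{H}^d_{p_s}(E)+\varepsilon$, refines each $Q_i$ by boundedly many dyadic $s$-parabolic subcubes, and then simply cites the $s$-parabolic analogue of Harvey--Polking's lemma \cite{HP} (already used in \cite{MaPrTo}) to produce, in one stroke, non-negative $\varphi_i$ with $\operatorname{supp}\varphi_i\subset 2Q_i$, $c\varphi_i$ admissible for $2Q_i$, and $\sum_i\varphi_i\equiv1$ near $E$. You instead run a Vitali $5r$-selection to force disjointness of the cores $\tfrac15R_k$, and then build the partition explicitly via the telescoping product $\varphi_k=\theta_k\prod_{j<k}(1-\theta_j)$, verifying the derivative bounds by a per-scale packing estimate that exploits that disjointness. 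This is, in essence, an in-line reproof of Harvey--Polking in the quasi-metric setting. Both routes are sound; the paper's is shorter because it leans on a cited lemma, while yours is more self-contained and makes transparent exactly where the uniform admissibility constant comes from. One small cost of the Vitali step is that the $5$-fold dilation forces you to take $\delta\le\ell_0/10$ rather than the paper's $\varepsilon\le\ell_0/4$, but this is harmless. Similarly, your treatment of the cutoff-times-test-function ($\psi\varphi_k=\psi(\ox_k)\varphi_k+(\psi-\psi(\ox_k))\varphi_k$) differs cosmetically from the paper's single normalization $\eta_i=\varphi_i\psi/(\|\psi\|_\infty+\ell(Q_i)\|\nabla_x\psi\|_\infty+\ell(Q_i)^{2s}\|\partial_t\psi\|_\infty+\ell(Q_i)^2\|\Delta\psi\|_\infty)$; both lead to the same $\delta\to0$ conclusion.
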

	\begin{proof}
		We follow the proof of \cite[Lemma 6.2]{MaPrTo}. Let $\psi\in \pazocal{C}_c^{\infty}(\mathbb{R}^{n+1})$ and $0<\varepsilon\leq \ell_0/4$. Let $Q_i, i\in I_\varepsilon$ be a collection of $s$-parabolic cubes with $F\subset \bigcup_{i\in I_\varepsilon} Q_i$ with $\ell(Q_i)\leq \varepsilon$ and
		\begin{equation*}
			\sum_{i\in I_\varepsilon}\ell(Q_i)^{d}\leq C\pazocal{H}_{p_s}^{d}(E)+\varepsilon.
		\end{equation*}
		Now cover each $Q_i$ by a bounded number (depending on the dimension) of dyadic $s$-parabolic cubes $R_i^1,\ldots, R_i^m$ with $\ell(R_i^j)\leq \ell(Q_i)/8$ and apply an $s$-parabolic version of Harvey-Polking's lemma (that admits an analogous proof, see \cite[Lemma 3.1]{HP}) to obtain a collection of non-negative functions $\{\varphi_i\}_{i\in I_\varepsilon}$ with $\text{supp}(\varphi_i)\subset 2Q_i$, $c\varphi_i$ admissible for $2Q_i$ and satisfying $\sum_{i\in I_\varepsilon}\varphi_i\equiv 1$ on $\bigcup_{i\in I_\varepsilon}Q_i\supset E$. Now we write
		\begin{equation*}
			|\langle T, \psi \rangle| \leq \sum_{i\in I_\varepsilon} |\langle T, \varphi_i\psi \rangle|.
		\end{equation*}
		Proceeding as in \cite[Lemma 6.2]{MaPrTo} it can be shown that
		\begin{equation*}
			\eta_i:=\frac{\varphi_i\psi}{\|\psi\|_\infty + \ell(Q_i)\|\nabla_x\psi\|_\infty+\ell(Q_i)^{2s}\|\partial_t\psi\|_\infty+\ell(Q_i)^2\|\Delta \psi\|_\infty}
		\end{equation*}
		is an admissible function for $2Q_i$ (up to a dimensional constant), with $\ell(2Q_i)\leq \ell_0/2$. Therefore, by the growth assumptions on $T$,
		\begin{align*}
			|\langle T, \psi \rangle| &\lesssim  \sum_{i\in I_\varepsilon} \ell(Q_i)^{d}\big( \|\psi\|_\infty + \ell(Q_i)\|\nabla_x\psi\|+\ell(Q_i)^2\|\partial_t\psi\|_\infty+\ell(Q_i)^2\|\Delta \psi\|_\infty \big)\\
			&\lesssim (\pazocal{H}_{p_s}^{d}(E)+\varepsilon)\big( \|\psi\|_\infty + \varepsilon\|\nabla_x\psi\|+\varepsilon^2\|\partial_t\psi\|_\infty+\varepsilon^2\|\Delta \psi\|_\infty \big),
		\end{align*}
		and making $\varepsilon$ tend to 0, we deduce the result.
	\end{proof}
	
	\subsection{The capacity \mathinhead{\Gamma_{\Theta^s,\ast}}{}}
	\label{subsec2.4.1}
	The first capacity we introduce is the $\text{BMO}_{p_s}$ variant of the caloric capacity first defined in \cite{MaPrTo} for the usual heat equation. 
	\begin{defn}
		Given $s\in(1/2,1]$ and $E\subset \mathbb{R}^{n+1}$ compact set, define its \textit{$\text{\normalfont{BMO}}_{p_s}$-caloric capacity} as
		\begin{equation*}
			\Gamma_{\Theta^s,\ast}(E):=\sup  |\langle T, 1 \rangle| ,
		\end{equation*}
		where the supremum is taken among all distributions $T$ with $\text{supp}(T)\subset E$ and satisfying
		\begin{equation}
			\label{eq1.4.1}
			\|\nabla_x P_s\ast T\|_{\ast, p_s} \leq 1, \hspace{0.75cm} \|\partial^{\frac{1}{2s}}_tP_s\ast T\|_{\ast, p_s}\leq 1.
		\end{equation}
		Such distributions will be called \textit{admissible for $\Gamma_{\Theta^s,\ast}(E)$}.
	\end{defn}
	Let us also introduce what we will understand as removable sets in this context:
	
	\begin{defn}
		A compact set $E\subset \mathbb{R}^{n+1}$ is said to be \textit{removable for $s$-caloric functions with $\text{\normalfont{BMO}}_{p_s}$-$(1,\frac{1}{2s})$-derivatives} if for any open subset $\Omega\subset \mathbb{R}^{n+1}$, any function $f:\mathbb{R}^{n+1}\to \mathbb{R}$ with
		\begin{equation*}
			\|\nabla_x f\|_{\ast,p_s} < \infty, \hspace{0.75cm} \|\partial_t^{\frac{1}{2s}}f\|_{\ast, p_s}<\infty,
		\end{equation*}
		satisfying the $\Theta^s$-equation in $\Omega\setminus{E}$, also satisfies the previous equation in the whole $\Omega$.
	\end{defn}
	
	First, we shall prove that if $T$ satisfies \eqref{eq1.4.1}, then $T$ has upper $s$-parabolic growth of degree $n+1$. In fact, we shall prove a stronger result:
	\begin{thm}
		\label{Cap_thm1}
		Let $s\in(1/2,1]$ and $T$ be a distribution in $\mathbb{R}^{n+1}$ with
		\begin{equation*}
			\|\nabla_x P_s\ast T\|_{\ast, p_s} \leq 1, \hspace{0.75cm} \|\partial^{\frac{1}{2s}}_tP_s\ast T\|_{\ast, p_s}\leq 1.
		\end{equation*}
		Let $Q$ be a fixed $s$-parabolic cube and $\varphi$ an admissible function for $Q$. Then, if $R$ is any $s$-parabolic cube with $\ell(R)\leq \ell(Q)$ and $\phi$ is admissible for $R$, we have $|\langle \varphi T, \phi \rangle|\lesssim \ell(R)^{n+1}$.
	\end{thm}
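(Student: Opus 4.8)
The strategy is to reduce the statement about the distribution $\varphi T$ to the growth-type estimates obtained in Section~\ref{sec2.2} (Theorems~\ref{Growth_thm1}, \ref{Growth_thm2}), using the fact that $P_s$ is the fundamental solution of $\Theta^s$. The starting identity is
\[
	T = \Theta^s(P_s\ast T) = (-\Delta)^s(P_s\ast T) + \partial_t(P_s\ast T),
\]
valid in the distributional sense, so that for any admissible $\phi$ for $R$,
\[
	\langle \varphi T, \phi\rangle = \langle T, \varphi\phi\rangle = \langle P_s\ast T, (-\Delta)^s(\varphi\phi)\rangle + \langle P_s\ast T, \partial_t(\varphi\phi)\rangle.
\]
The plan is to handle the two summands separately. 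For the temporal one, I would write $\partial_t$ in terms of $\partial_t^{1/2s}$ via the identity $\partial_t = c\,\partial_t^{1/2s}\circ(\partial_t^{1/2s})$ up to the correct normalization (more precisely, since $\partial_t^{\beta}$ has Fourier multiplier comparable to $|\tau|^{\beta}$ in the time variable, composing the half-order operator with itself recovers $\partial_t$); hence $\langle P_s\ast T,\partial_t(\varphi\phi)\rangle$ becomes $\langle \partial_t^{1/2s}(P_s\ast T), \partial_t^{1/2s}(\varphi\phi)\rangle$, and one further rewrites $\partial_t^{1/2s}$ as a convolution in $t$ against $|t|^{-(1-1/2s)} = |t|^{-\beta}$ with $\beta = 1-\tfrac{1}{2s}\in(0,1)$ (here is where $s>1/2$ enters, guaranteeing $\beta\in(0,1)$). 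Then Theorem~\ref{Growth_thm1}(1) applies with $f := \partial_t^{1/2s}(P_s\ast T)\in\mathrm{BMO}_{p_s}$ and this exponent $\beta$, giving a bound $\lesssim \|f\|_{\ast,p_s}\,\ell(R)^{n+2s(1-\beta)} = \ell(R)^{n+1}$ since $2s(1-\beta) = 2s\cdot\tfrac{1}{2s} = 1$.

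For the spatial summand $\langle P_s\ast T, (-\Delta)^s(\varphi\phi)\rangle$, I would express $(-\Delta)^s$ in terms of $\nabla_x$. Since $\|\nabla_x(P_s\ast T)\|_{\ast,p_s}\le 1$, the natural move is to put one derivative on $P_s\ast T$ and transfer the rest of the operator onto $\varphi\phi$. Concretely, for $\sigma=s\in(1/2,1]$ one can write the Fourier multiplier $|\xi|^{2s}$ as $\sum_i \xi_i \cdot \big(\xi_i |\xi|^{2s-2}\big)$, i.e.
\[
	(-\Delta)^s = \sum_{i=1}^n \partial_{x_i}\, R_i\, \pazocal{I}^n_{2-2s},
\]
where $R_i$ are Riesz transforms (multiplier $-i\xi_i/|\xi|$) and $\pazocal{I}^n_{2-2s}$ is the Riesz potential of order $2-2s\in[0,1)$ (when $s=1$ this degenerates to $(-\Delta) = -\sum_i\partial_{x_i}^2$, and one puts one $\partial_{x_i}$ on $P_s\ast T$ and the other on $\varphi\phi$, invoking Theorem~\ref{Growth_thm2} directly). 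Thus
\[
	\langle P_s\ast T, (-\Delta)^s(\varphi\phi)\rangle = \sum_i \big\langle \partial_{x_i}(P_s\ast T),\, R_i\,\pazocal{I}^n_{2-2s}(\varphi\phi)\big\rangle,
\]
and one is in position to invoke Theorem~\ref{Growth_thm4} with $\beta := 2-2s\in[0,1)$ and $f := \partial_{x_i}(P_s\ast T)\in\mathrm{BMO}_{p_s}$ (the Riesz transform $R_i$ being harmless as an $L^q$-bounded operator that can be absorbed in the same Hölder estimates of that proof, or alternatively one reorganizes the multiplier to land exactly on the $\partial_{x_i}[\pazocal{I}^n_\beta(\cdot)]$ form). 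Theorem~\ref{Growth_thm4}(1) then yields a bound $\lesssim \|f\|_{\ast,p_s}\,\ell(R)^{n+2s+\beta-1} = \ell(R)^{n+2s+(2-2s)-1} = \ell(R)^{n+1}$, as desired. Summing the two contributions finishes the proof.

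There are two routine but necessary technical checks along the way: first, that $\varphi\phi$ is, up to an absolute constant, an admissible function for $R$ (this follows from $\mathrm{supp}(\phi)\subset R$, $\ell(R)\le\ell(Q)$, the product rule, and the hypotheses $\|\varphi\|_\infty\le1$, $\|\nabla_x\varphi\|_\infty\le\ell(Q)^{-1}\le\ell(R)^{-1}$, etc., together with Remark~\ref{lem2.2.1} for the $(-\Delta)^s$-type bound if needed); second, that the formal manipulations with $\Theta^s$, $\partial_t^{1/2s}$ and the operator identity for $(-\Delta)^s$ are legitimate, which is where the commutativity statements and kernel integrability from Section~\ref{sec2.1} (Theorems~\ref{lem2.3}, \ref{C-Z_thm7}) are used to justify pairing against the test function $\varphi\phi\in\pazocal{C}^\infty_c$.

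The main obstacle I anticipate is the bookkeeping in the spatial term: decomposing $(-\Delta)^s$ into $\partial_{x_i}\circ(\text{Riesz transform})\circ(\text{Riesz potential})$ and then matching the resulting pairing exactly to the hypotheses of Theorem~\ref{Growth_thm4} (which is stated for $\partial_{x_i}[\pazocal{I}^n_\beta(\varphi\phi)]$, without the extra Riesz transform). One must either verify that the Riesz transforms can be slipped through the $L^q$–$L^{q'}$ duality argument inside the proof of that theorem without changing the final exponent (which they can, being Calderón–Zygmund operators bounded on every $L^q$, $1<q<\infty$), or re-derive a variant of Theorem~\ref{Growth_thm4} adapted to the operator $R_i\pazocal{I}^n_\beta$. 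Either way this is a standard harmonic-analysis manipulation; the exponent arithmetic, crucially relying on $2-2s\in[0,1)$ (hence $s>1/2$), is what makes everything land precisely at $\ell(R)^{n+1}$.
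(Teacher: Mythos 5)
Your strategy matches the paper's exactly: split $\langle\varphi T,\phi\rangle$ through $T = (-\Delta)^s(P_s\ast T) + \partial_t(P_s\ast T)$, handle the temporal piece with Theorem~\ref{Growth_thm1}(1) at $\beta = 1 - \tfrac{1}{2s}$ applied to $f = \partial_t^{1/(2s)}(P_s\ast T)$, and the spatial piece with Theorem~\ref{Growth_thm2} (for $s=1$) or Theorem~\ref{Growth_thm4} (for $s<1$) applied to $f = \partial_{x_i}(P_s\ast T)$. The exponent arithmetic $\ell(R)^{n+2s(1-\beta)} = \ell(R)^{n+1}$ and $\ell(R)^{n+2s+(2-2s)-1} = \ell(R)^{n+1}$ is right, and the parenthetical $s=1$ remark matches the paper. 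But both operator factorizations you write down are algebraically wrong, and correcting them is precisely what the paper does.

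The temporal factorization $\partial_t = c\,\partial_t^{1/(2s)}\circ\partial_t^{1/(2s)}$ is false for $s<1$ (the composite has symbol $|\tau|^{1/s}$, not $\tau$), and $\partial_t^{1/(2s)}$ is \emph{not} convolution against $|t|^{-\beta}$: that kernel has symbol $\sim|\tau|^{\beta-1}$, a negative power, i.e.\ a fractional integral rather than a derivative. The identity that actually works, and that the paper uses, is $\partial_t(\varphi\phi) = c\,\partial_t^{1/(2s)}\big(\partial_t(\varphi\phi)\ast_t|t|^{-\beta}\big)$ with $\beta=1-\tfrac{1}{2s}$, whose symbol check reads $c|\tau|^{1/(2s)}\cdot|\tau|^{\beta-1}\cdot\tau=c\tau$; moving $\partial_t^{1/(2s)}$ across the pairing by self-adjointness produces exactly the object $\langle f,\partial_t(\varphi\phi)\ast_t|t|^{-\beta}\rangle$ that Theorem~\ref{Growth_thm1} controls. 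Likewise the spatial split $(-\Delta)^s=\sum_i\partial_{x_i}R_i\pazocal{I}^n_{2-2s}$ is off by a power of $|\xi|$: its symbol is $\sum_i\xi_i\cdot\tfrac{\xi_i}{|\xi|}\cdot|\xi|^{2s-2}=|\xi|^{2s-1}$, not $|\xi|^{2s}$. Drop the Riesz transform entirely: $(-\Delta)^s\simeq\sum_i\partial_{x_i}\circ\big(\partial_{x_i}\pazocal{I}^n_{2-2s}\big)$, which on the kernel side is $c_{n,s}\sum_i\partial_{x_i}\big(|x|^{-(n+2s-2)}\big)\ast_n\partial_{x_i}(\cdot)$ as in the paper. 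One $\partial_{x_i}$ then lands on $P_s\ast T$ and $\partial_{x_i}\pazocal{I}^n_{2-2s}$ on $\varphi\phi$, which is exactly the form in which Theorem~\ref{Growth_thm4} is stated, with $\beta=2-2s\in(0,1)$. The ``extra Riesz transform'' difficulty you flag at the end is thus an artifact of the misfactorization; once the decomposition is corrected, the pairing drops directly into that theorem and no variant or $L^q$ detour is needed.
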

	\begin{proof}
		Let $T$, $Q$ and $\varphi$ be as above. Let $R$ be an $s$-parabolic cube with $\ell(R)\leq \ell(Q)$ and $R\cap Q\neq \varnothing$ (if not, the result is trivial) and $\phi$ admissible function for $R$. Since $P_s$ is the fundamental solution to the $\Theta^s$-equation,
		\begin{equation*}
			|\langle \varphi T, \phi\rangle|=|\langle \Theta^s P_s \ast  T,  \varphi\phi \rangle|\leq |\langle (-\Delta)^sP_s\ast T, \varphi\phi\rangle|+|\langle P_s\ast T, \partial_t(\varphi\phi)\rangle|=:I_1+I_2.
		\end{equation*}
		Regarding $I_2$, observe that defining $\beta:=1-\frac{1}{2s}\in(0,1/2]$ we get
		\begin{equation*}
			\partial_t(\varphi\phi) = c\, \partial_t^{1-\beta}\Big( \partial_t(\varphi\phi)\ast_t |t|^{-\beta} \Big),
		\end{equation*}
		for some constant $c$. The latter can be checked via the Fourier transform with respect to the $t$ variable. Therefore, applying Theorem \ref{Growth_thm1} we get
		\begin{equation*}
			I_2\simeq c|\langle \partial_t^{1-\beta} P_s\ast T,  \partial_t(\varphi\phi)\ast_t |t|^{-\beta} \rangle| \lesssim \ell(R)^{n+2s(1-\beta)}=\ell(R)^{n+1}.
		\end{equation*}
		To study $I_1$ we distinguish whether if $s=1$ or $s<1$. If $s=1$, Theorem \ref{Growth_thm2} yields
		\begin{align*}
			I_1=|\langle \Delta W\ast T, \varphi\phi\rangle|=|\langle \nabla_x W\ast T, \nabla_x(\varphi\phi)\rangle| \lesssim \ell(R)^{n+1}.
		\end{align*}
		Recall that the operator $(-\Delta)^s$ can be rewritten as
		\begin{equation*}
			(-\Delta)^s(\cdot)\simeq\sum_{i=1}^n \partial_{x_i}\bigg( \frac{1}{|x|^{n+2s-2}} \bigg) \ast_n \partial_{x_i}(\cdot),
		\end{equation*}
		where $\ast_n$ indicates that the convolution is taken with respect the first $n$ spatial variables. Therefore, by Theorem \ref{Growth_thm4}, since $s\in(1/2,1)$, we have
		\begin{align*}
			I_1 \lesssim \sum_{i=1}^n \bigg\rvert \bigg\langle \partial_{x_i}P_s\ast T, \partial_{x_i}&\bigg( \frac{1}{|x|^{n+2s-2}} \bigg) \ast_n (\varphi\phi) \bigg\rangle \bigg\rvert\\
			&=\sum_{i=1}^n \big\rvert \big\langle \partial_{x_i}P_s\ast T,  \partial_{x_i}[\pazocal{I}^n_{2-2s}(\varphi\phi)] \big\rangle \big\rvert \lesssim \ell(R)^{n+1},
		\end{align*}
		and we are done.
	\end{proof}

	\begin{rem}
		Let us observe that in the particular case in which $T$ is compactly supported, we may simply convey that $Q:=\mathbb{R}^{n+1}$ and $\varphi\equiv 1$ so that we deduce
		\begin{equation*}
			|\langle T, \phi \rangle|\lesssim \ell(R)^{n+1},
		\end{equation*}
		for any $R$ $s$-parabolic cube and $\phi$ admissible function for $R$. Therefore, bearing in mind Lemma \ref{Cap_thm2}, if $E\subset \mathbb{R}^{n+1}$ is a compact set with $\pazocal{H}_{p_s}^{n+1}(E)=0$ and $T$ is a distribution supported on $E$ and satisfying the $\text{\normalfont{BMO}}_{p_s}$ estimates of Theorem \ref{Cap_thm1}, choosing $\ell_0:=\infty$ we get $T\equiv 0$.
	\end{rem}
	
	\begin{thm}
		\label{thm3.3}
		For any $s\in(1/2,1]$ and $E\subset \mathbb{R}^{n+1}$ compact set,
		\begin{equation*}
			\Gamma_{\Theta^s,\ast}(E) \approx \pazocal{H}^{n+1}_{\infty,p_s}(E).
		\end{equation*}
	\end{thm}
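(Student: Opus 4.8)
The plan is to prove the equivalence $\Gamma_{\Theta^s,\ast}(E) \approx \pazocal{H}^{n+1}_{\infty,p_s}(E)$ by establishing the two inequalities separately.

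\textbf{Upper bound: $\Gamma_{\Theta^s,\ast}(E) \lesssim \pazocal{H}^{n+1}_{\infty,p_s}(E)$.} First I would take any distribution $T$ admissible for $\Gamma_{\Theta^s,\ast}(E)$, i.e. supported on $E$ with $\|\nabla_x P_s\ast T\|_{\ast,p_s}\leq 1$ and $\|\partial_t^{1/2s}P_s\ast T\|_{\ast,p_s}\leq 1$. By Theorem \ref{Cap_thm1} (applied with $Q=\mathbb{R}^{n+1}$, $\varphi\equiv 1$, as in the Remark following it), $T$ satisfies $|\langle T,\phi\rangle|\lesssim \ell(R)^{n+1}$ for every $s$-parabolic cube $R$ and every $\phi$ admissible for $R$. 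We cannot directly apply Lemma \ref{Cap_thm2} with $d=n+1$ unless $\pazocal{H}_{p_s}^{n+1}(E)<\infty$; to handle the general case one argues by a standard approximation/localization: if $\pazocal{H}^{n+1}_{\infty,p_s}(E)$ is finite we may, after passing to a slightly enlarged compact neighborhood of $E$ which still has comparable content and finite $\pazocal{H}_{p_s}^{n+1}$-measure (or simply noting the bound $|\langle T,1\rangle|$ depends only on coverings), cover $E$ by $s$-parabolic cubes $\{Q_i\}$ with $\sum_i \ell(Q_i)^{n+1}\lesssim \pazocal{H}^{n+1}_{\infty,p_s}(E)$ and use an $s$-parabolic Harvey--Polking partition of unity $\sum_i \varphi_i \equiv 1$ on a neighborhood of $E$ with $c\varphi_i$ admissible for $2Q_i$. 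Then $|\langle T,1\rangle| = |\sum_i \langle T,\varphi_i\rangle| \lesssim \sum_i \ell(Q_i)^{n+1}\lesssim \pazocal{H}^{n+1}_{\infty,p_s}(E)$, where we used the growth estimate of Theorem \ref{Cap_thm1} on each $2Q_i$. Taking the supremum over admissible $T$ gives the upper bound.

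\textbf{Lower bound: $\Gamma_{\Theta^s,\ast}(E) \gtrsim \pazocal{H}^{n+1}_{\infty,p_s}(E)$.} Here I would invoke Frostman's lemma for the $s$-parabolic metric: since $\pazocal{H}^{n+1}_{\infty,p_s}(E)>0$, there exists a nonzero positive Borel measure $\mu$ supported on $E$ with $s$-parabolic growth $\mu(B(\ox,r))\leq r^{n+1}$ and $\mu(E)\gtrsim \pazocal{H}^{n+1}_{\infty,p_s}(E)$. The candidate admissible distribution is $T := c\,\mu$ for a suitable small absolute constant $c$. To verify admissibility I must check $\|\nabla_x P_s\ast \mu\|_{\ast,p_s}\lesssim 1$ and $\|\partial_t^{1/2s}P_s\ast \mu\|_{\ast,p_s}\lesssim 1$. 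The second estimate is exactly Lemma \ref{Pos_lem2} applied with $\beta = \frac{1}{2s}$ (which lies in $(0,1)$ since $s>1/2$), because then $n+2s\beta = n+1$ matches the growth degree of $\mu$. For the first estimate, I would prove an analogous $\text{BMO}_{p_s}$-bound for $\nabla_x P_s\ast \mu$: the kernel bounds $|\nabla_x P_s(\ox)|\lesssim |xt|/|\ox|_{p_s}^{n+2s+2}\lesssim 1/|\ox|_{p_s}^{n+1}$ (Theorem \ref{C-Z_thm2}) and the smoothness estimate $|\nabla_x P_s(\ox)-\nabla_x P_s(\ox')|\lesssim |\ox-\ox'|_{p_s}/|\ox|_{p_s}^{n+2}$ (valid since $s\geq 1/2$) show that $\nabla_x P_s$ is an $s$-parabolic Calderón--Zygmund kernel of homogeneity $-(n+1)$, so that the standard argument (split into near/far parts relative to a ball $B(\ox_0,r)$, choose the subtracted constant to be $\nabla_x P_s\ast(\chi_{\text{far}}\mu)(\ox_0)$) yields $\|\nabla_x P_s\ast\mu\|_{\ast,p_s}\lesssim 1$; this is essentially a repeat of the proof of Lemma \ref{Pos_lem2} but simpler, since no temporal fractional derivative is involved. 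Once $T=c\mu$ is admissible, $\Gamma_{\Theta^s,\ast}(E)\geq |\langle c\mu,1\rangle| = c\,\mu(E)\gtrsim \pazocal{H}^{n+1}_{\infty,p_s}(E)$.

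\textbf{Main obstacle.} The upper bound step is routine given Theorem \ref{Cap_thm1} and the Harvey--Polking machinery. The real work is in the lower bound, specifically verifying that $\nabla_x P_s\ast\mu \in \text{BMO}_{p_s}$ with controlled norm — this requires carefully splitting the BMO average into the piece near the ball (handled by the $-(n+1)$ growth of the kernel together with $\mu(B(\ox,r))\leq r^{n+1}$, integrating in $s$-parabolic annuli) and the piece far from the ball (handled by the Hölder/smoothness estimate for $\nabla_x P_s$ together with a dyadic annular decomposition of the complement, exactly as the term $I_2$ in the proof of Lemma \ref{Pos_lem2}). One should also double-check the boundary behavior near $\{t=0\}$, where $\nabla_x P_s$ is not smooth; but since the smoothness estimate in Theorem \ref{C-Z_thm2} is stated for all $\ox'$ with $|\ox-\ox'|_{p_s}\leq |\ox|_{p_s}/2$ (its proof already splits off the case when the segment crosses $\{t=0\}$), this causes no additional difficulty. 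The constraint $s>1/2$ enters precisely through the Hölder continuity of $\nabla_x P_s$ and $\partial_t^{1/2s}P_s$, which is why the capacity is only defined in that range.
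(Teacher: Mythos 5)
Your proposal is correct and matches the paper's proof: the upper bound is obtained by covering $E$ with $s$-parabolic cubes of controlled total $(n+1)$-mass, passing to a Harvey--Polking partition of unity, and invoking the growth estimate of Theorem \ref{Cap_thm1}, while the lower bound runs exactly through an $s$-parabolic Frostman measure, using Lemma \ref{Pos_lem2} with $\beta=\tfrac{1}{2s}$ for the temporal condition and a near/far Calder\'on--Zygmund decomposition (with subtracted constant $\nabla_x P_s\ast(\chi_{2B^c}\mu)(\ox_0)$) for the spatial one, just as the paper does. The only cosmetic difference is your preliminary worry about applying Lemma \ref{Cap_thm2} in the upper bound — that lemma is not needed here, and the paper goes directly to the covering argument you ultimately describe.
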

	\begin{proof}
		Let us first prove
		\begin{equation}
			\label{eq1.4.2}
			\Gamma_{\Theta^s,\ast}(E) \lesssim \pazocal{H}^{n+1}_{\infty,p_s}(E).
		\end{equation}
		Proceed by fixing $\varepsilon>0$ and $\{A_k\}_k$ a collection of sets in $\mathbb{R}^{n+1}$ that cover $E$ such that
		\begin{equation*}
			\sum_{k=1}^\infty \text{diam}_{p_s}(A_k)^{n+1}\leq \pazocal{H}^{n+1}_{\infty,p_s}(E)+\varepsilon.
		\end{equation*}
		Now, for each $k$ let $Q_k$ an open $s$-parabolic cube centered at some point $a_k\in A_k$ with side length $\ell(Q_k)=\text{diam}_{p_s}(A_k)$, so that $E\subset \bigcup_kQ_k$. Apply the compactness of $E$ and \cite[Lemma 3.1]{HP} to consider $\{\varphi_k\}_{k=1}^N$ a collection of smooth functions satisfying, for each $k$: $0\leq \varphi_k\leq 1$, $\text{supp}(\varphi_k)\subset 2Q_k$, $\sum_{k=1}^N\varphi_k = 1$ in $\bigcup_{k=1}^N Q_k$ and also $\|\nabla_x\varphi_k\|_\infty\leq \ell(2Q_k)^{-1}$, $\|\partial_t\varphi_k\|\leq \ell(2Q_k)^{-2s}$. Hence, by Theorem \ref{Cap_thm1}, if $T$ is any distribution admissible for $\Gamma_{\Theta^s,\ast}(E)$,
		\begin{align*}
			|\langle T, 1 \rangle |=\bigg\rvert \sum_{k=1}^N \langle T, \varphi_k \rangle \bigg\rvert  \lesssim \sum_{k=1}^N \ell(2Q_k)^{n+1} \simeq \sum_{k=1}^N\text{diam}_{p_s}(A_k)^{n+1}\leq \pazocal{H}^{n+1}_{\infty,p_s}(E)+\varepsilon.
		\end{align*}
		Since this holds for any $T$ and $\varepsilon>0$ can be arbitrarily small, \eqref{eq1.4.2} follows.
        
		For the lower bound we will apply (an $s$-parabolic version of) Frostman's lemma \cite[Theorem 8.8]{Mat}, which can be proved using an $s$-parabolic dyadic lattice, as it is presented in the proof of \cite[Lemma 5.1]{MaPrTo}. Assume then $\pazocal{H}_{\infty,p_s}^{n+1}(E)>0$ and consider a non trivial positive Borel regular measure $\mu$ supported on $E$ with $\mu(E)\geq c\pazocal{H}_{\infty,p_s}^{n+1}(E)$ and $\mu(B(\ox,r))\leq r^{n+1}$ for all $\ox\in\mathbb{R}^{n+1}$, $r>0$. If we prove that
		\begin{equation*}
			\|\nabla_x P_s\ast \mu\|_{\ast,p_s}\lesssim 1\hspace{0.5cm} \text{and}\hspace{0.5cm} \|\partial^{\frac{1}{2s}}_t P_s\ast \mu\|_{\ast,p_s}\lesssim 1,
		\end{equation*}
		we will be done, since this will imply $\Gamma_{\Theta^s,\ast}(E)\gtrsim \langle \mu, 1 \rangle = \mu(E) \gtrsim \pazocal{H}_{\infty,p_s}^{n+1}$. But by Lemma \ref{Pos_lem2} we already have $\|\partial^{\frac{1}{2s}}_t P_s\ast \mu\|_{\ast,p_s}\lesssim 1$, so we are only left with the $\text{\normalfont{BMO}}_{p_s}$ norm of $\nabla_x P_s\ast \mu$. Thus, let us fix an $s$-parabolic ball $B(\ox_0,r)$ and consider the characteristic function $\chi_{2B}$ associated to $2B$. Denote also $\chi_{2B^c}=1-\chi_{2B}$. In this setting, we pick
		\begin{equation*}
			c_B:=\nabla_xP_s\ast (\chi_{2B^c}\mu)(\ox_0).  
		\end{equation*}
		Using Theorem \ref{C-Z_thm2} it easily follows that  this last expression is well-defined. Let us now estimate $\| \nabla_xP_s\ast \mu\|_{\ast,p_s}$,
		\begin{align*}
			\frac{1}{|B|}\int_B &|\nabla_xP_s\ast \mu(\oy)-c_B|\dd \oy\\
			&\leq \frac{1}{|B|}\int_B\bigg(\int_{2B}|\nabla_x P_s(\oy-\oz)|\text{d}\mu(\oz)\bigg)\dd \oy\\
			&\hspace{1.4cm}+\frac{1}{|B|}\int_B\bigg( \int_{\mathbb{R}^{n+1}\setminus{2B}}|\nabla_x P_s(\oy-\oz)-\nabla_x P_s(\ox_0-\oz)| \text{d}\mu(\oz)\bigg) \dd \oy =:I_1+I_2.
		\end{align*}
		To deal with $I_1$ we first notice that by Theorem \ref{C-Z_thm2} and Tonelli's theorem we have
		\begin{equation*}
			I_1\lesssim \frac{1}{|B|}\int_{2B}\bigg( \int_{B}\frac{1}{|\oy-\oz|_{p_s}^{n+1}}\dd \oy \bigg)\text{d}\mu(\oz).
		\end{equation*}
		Writing $B=B_0\times I_0\subset \mathbb{R}^n\times \mathbb{R}$, $\oy=(y,t)$, $\oz=(z,u)$ and choosing $0<\varepsilon<2s-1$, integration in polar coordinates yields
		\begin{align*}
			I_1\lesssim \frac{1}{|B|}\int_{2B}\bigg( \int_{B_0}\frac{\dd y}{|y-z|^{n-\varepsilon}}\int_{I_0}\frac{dt}{|t-u|^{\frac{1+\varepsilon}{2s}}} \bigg)\text{d}\mu(\oz) \lesssim \frac{1}{|B|}\big( r^{\varepsilon}(r^{2s})^{1-\frac{1+\varepsilon}{2s}} \big)\mu(2B)\lesssim 1.
		\end{align*}
		Regarding $I_2$, we name $\ox:=\ox_0-\oz$ and $\ox':=\oy-\oz$, and observe that $|\ox-\ox'|_{p_s}\leq |\ox|_{p_s}/2$. Hence, we apply the fourth estimate in Theorem \ref{C-Z_thm2} with $2\zeta=1$ since $s>1/2$, and obtain 
		\begin{align*}
			I_2 &\lesssim \frac{1}{|B|}\int_B\bigg( \int_{\mathbb{R}^{n+1}\setminus{2B}}\frac{|\oy-\ox_0|_{p_s}}{|\oz-\ox_0|_{p_s}^{n+2}} \text{d}\mu(\oz)\bigg) \dd \oy \leq  r \int_{\mathbb{R}^{n+1}\setminus{2B}}  \frac{\text{d}\mu(\oz)}{|\oz-\ox_0|_{p_s}^{n+2}}\\
			&= r^{2\zeta}\sum_{j=1}^{\infty}\int_{2^{j+1}B\setminus{2^jB}}\frac{\text{d}\mu(\oz)}{|\oz-\ox_0|_{p_s}^{n+2}}\lesssim r\sum_{j=1}^{\infty}\frac{(2^{j+1}r)^{n+1}}{(2^jr)^{n+2}}\lesssim\sum_{j=1}^\infty\frac{1}{2^{j}} \lesssim 1,
		\end{align*}
		and we are done. 
	\end{proof}
	
	\begin{thm}
		\label{thm3.4}
		Let $s\in(1/2,1]$. A compact set $E\subset \mathbb{R}^{n+1}$ is removable for $s$-caloric functions with $\text{\normalfont{BMO}}_{p_s}$-$(1,\frac{1}{2s})$-derivatives if and only if $\Gamma_{\Theta^s,*}(E)=0$.
	\end{thm}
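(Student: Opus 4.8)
The plan is to prove both implications by relating removability to the capacity $\Gamma_{\Theta^s,\ast}$, using Theorem \ref{thm3.3} to translate between the capacity and the Hausdorff content $\pazocal{H}^{n+1}_{\infty,p_s}$ only when convenient.

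\textbf{The direction $\Gamma_{\Theta^s,\ast}(E)=0\Rightarrow E$ removable.} Suppose $\Omega\subset\mathbb{R}^{n+1}$ is open and $f$ satisfies $\|\nabla_x f\|_{\ast,p_s}<\infty$, $\|\partial_t^{1/2s}f\|_{\ast,p_s}<\infty$ and $\Theta^s f=0$ in $\Omega\setminus E$. The distribution $\Theta^s f$ is supported on $E$. First I would localize: fix a test function $\varphi\in\pazocal{C}_c^\infty(\Omega)$ which is $\equiv 1$ on a neighbourhood of a piece of $E$, and consider $T:=\Theta^s(\varphi f)$, which is compactly supported on $E$ modulo a smooth error term coming from the commutator of $\Theta^s$ with $\varphi$ (this error is supported away from $E$ and is handled separately since $f$ is a $\Theta^s$-solution there; here one uses ellipticity/regularity of $\Theta^s$ away from $E$, exactly as in \cite{MaPrTo}). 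One checks $\nabla_x P_s\ast T$ and $\partial_t^{1/2s}P_s\ast T$ agree with $\nabla_x(\varphi f)$ and $\partial_t^{1/2s}(\varphi f)$ up to smooth bounded terms, so these potentials lie in $\text{BMO}_{p_s}$; hence (after normalizing) $T$ is admissible for $\Gamma_{\Theta^s,\ast}$ on the relevant compact piece. Since $\Gamma_{\Theta^s,\ast}(E)=0$, every such admissible $T$ has $\langle T,1\rangle=0$; by translating/dilating and testing against all admissible $\phi$, one concludes $T\equiv 0$ on the piece, so $\Theta^s(\varphi f)$ is smooth across $E$, and a partition-of-unity argument gives $\Theta^s f=0$ in all of $\Omega$.

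\textbf{The direction $E$ removable $\Rightarrow\Gamma_{\Theta^s,\ast}(E)=0$.} I would prove the contrapositive: if $\Gamma_{\Theta^s,\ast}(E)>0$ then $E$ is not removable. Take a nonzero admissible distribution $T$ for $\Gamma_{\Theta^s,\ast}(E)$ with $\langle T,1\rangle\neq0$; equivalently, by Theorem \ref{thm3.3}, $\pazocal{H}^{n+1}_{\infty,p_s}(E)>0$, and by the Frostman argument in the proof of Theorem \ref{thm3.3} we may even take $T=\mu$ a positive measure with $(n+1)$-growth and the potentials $\nabla_x P_s\ast\mu$, $\partial_t^{1/2s}P_s\ast\mu\in\text{BMO}_{p_s}$. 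Set $f:=P_s\ast\mu$. Then $\Theta^s f=\mu\neq0$ is supported on $E$, so $f$ solves $\Theta^s f=0$ in $\mathbb{R}^{n+1}\setminus E$ (taking $\Omega=\mathbb{R}^{n+1}$), while $\nabla_x f$ and $\partial_t^{1/2s}f$ have finite $\text{BMO}_{p_s}$ norm. Since $\Theta^s f=\mu\not\equiv0$ on $\Omega$, the function $f$ cannot be extended to a $\Theta^s$-solution on all of $\Omega$, so $E$ is not removable. (One should also note $f$ need not be compactly supported, but multiplying by a cutoff and arguing as above produces a genuinely non-extendable solution on a bounded open set.)

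\textbf{Main obstacle.} The essentially routine but technically delicate point is the \emph{localization}: showing that $\Theta^s f=0$ away from $E$ together with the $\text{BMO}_{p_s}$ control of $\nabla_x f$ and $\partial_t^{1/2s}f$ forces $\Theta^s(\varphi f)$ — which a priori is $\varphi\Theta^s f$ plus commutator terms — to be a distribution supported on $E$ whose fundamental-solution potentials still lie in $\text{BMO}_{p_s}$. Because $(-\Delta)^s$ is nonlocal for $s<1$, the commutator $[\,(-\Delta)^s,\varphi\,]f$ is not compactly supported near $E$; one must exploit the decay of the kernel estimates from Theorems \ref{C-Z_thm2} and \ref{lem2.3} together with the fact that $f$ is smooth on $\text{supp}(\nabla\varphi)\setminus E$ to absorb these terms into the admissibility bound. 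This is exactly the $s<1$ analogue of the localization lemma in \cite[\textsection 6]{MaPrTo} (and \cite{MaPr}), and this is where the hypothesis $s>1/2$ — needed for the Hölder-type kernel differences in Theorems \ref{C-Z_thm2} and \ref{C-Z_thm7} — enters. Once localization is in place, Theorem \ref{Cap_thm1}, Lemma \ref{Cap_thm2}, Theorem \ref{thm3.3} and Lemma \ref{Pos_lem2} supply everything else.
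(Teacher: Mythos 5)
Your proposal splits into two directions and both are logically valid in outline, but for the harder direction your method diverges substantially from the paper's, and the divergence introduces a real gap.

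For the direction \emph{removable} $\Rightarrow \Gamma_{\Theta^s,\ast}(E)=0$, you argue the contrapositive via Frostman. The paper instead argues directly: take any admissible $T$, set $f:=P_s\ast T$, use removability to deduce $\Theta^s f=0$ on all of $\mathbb{R}^{n+1}$, hence $T\equiv 0$ and $\langle T,1\rangle=0$. Your version works too (and the Frostman detour is optional, as you note), but the direct version is cleaner, and your final cutoff remark is unnecessary since removability is quantified over arbitrary open $\Omega$, including $\mathbb{R}^{n+1}$.

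For the direction $\Gamma_{\Theta^s,\ast}(E)=0\Rightarrow$ \emph{removable}, your localization strategy — set $T:=\Theta^s(\varphi f)$ and absorb the commutator $[\Theta^s,\varphi]f$ — is precisely what the paper is engineered to avoid. The distribution $\Theta^s(\varphi f)$ is \emph{not} supported on $E$ for $s<1$: the commutator $[(-\Delta)^s,\varphi]f$ lives on all of $\mathbb{R}^n$ in each time slice, so to obtain an admissible candidate one must subtract it off, and one then has to show the subtracted object still has $\text{BMO}_{p_s}$ potentials. That is a nontrivial commutator estimate for a nonlocal operator; you flag it but do not supply it, and it is not a formality. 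The paper sidesteps this entirely: one keeps $T:=\Theta^s f$ (normalized) unlocalized and applies Theorem~\ref{Cap_thm1} to the \emph{product} $\widetilde\varphi T$, not to $\Theta^s(\widetilde\varphi f)$. Theorem~\ref{Cap_thm1} is stated precisely for a distribution $T$ that need not be compactly supported but whose potentials have bounded $\text{BMO}_{p_s}$ norm, and it yields the $n{+}1$ growth of $\widetilde\varphi T$ by moving $\Theta^s$ onto the test function $\varphi\phi$, which is smooth and compactly supported — no commutator appears. Since $\widetilde\varphi T$ is then supported on $\overline Q\cap E$, which has vanishing $\pazocal{H}^{n+1}_{p_s}$ measure (via Theorem~\ref{thm3.3} and \cite[Lemma 4.6]{Mat}), Lemma~\ref{Cap_thm2} with $\ell_0:=\ell(Q)$ and $d:=n+1$ gives $\widetilde\varphi T\equiv 0$, contradicting $\langle T,\varphi\rangle>0$. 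So the gap in your route is the missing commutator/localization argument; the paper's route renders it unnecessary. I would redo this direction by applying Theorem~\ref{Cap_thm1} to $\widetilde\varphi T$ directly rather than trying to localize $f$.
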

	
	\begin{proof}
		Fix $E\subset \mathbb{R}^{n+1}$ compact set and begin by assuming that is removable. Now pick $T$ admissible for $\Gamma_{\Theta^s,*}(E)$ and observe that defining $f:=P_s\ast T$, we have $\|\nabla_xf\|_{\ast,p_s}<\infty$, $\|\partial_t^{\frac{1}{2s}}f\|_{\ast,p_s}<\infty$ and $\Theta^s f = 0$ on $\mathbb{R}^{n+1}\setminus{E}$. So by hypothesis $\Theta^s f = 0$ in $\mathbb{R}^{n+1}$ and therefore $T\equiv 0$. Since $T$ was an arbitrary admissible distribution for $\Gamma_{\Theta^s,*}(E)$, we deduce that $\Gamma_{\Theta^s,*}(E)=0$.
        
		Let us now assume $\Gamma_{\Theta^s,*}(E)=0$ and prove the removability of $E$. Notice that by Theorem \ref{thm3.3} we get $\pazocal{H}_{\infty,p_s}^{n+1}(E)=0$ and thus, by \cite[Lemma 4.6]{Mat}, we have $\pazocal{H}_{p_s}^{n+1}(E)=0$. With this in mind, fix $\Omega\supset E$ any open set and $f:\mathbb{R}^{n+1}\to \mathbb{R}$ any function with $\|\nabla_x f\|_{\ast,p_s}<\infty$, $\|\partial_t^{\frac{1}{2s}}f\|_{\ast,p_s}<\infty$ and $\Theta^s f = 0$ on $\Omega\setminus{E}$. We will assume $\Theta^s f \neq 0$ in $\Omega$ and reach a contradiction. Define the distribution
		\begin{equation*}
			T:=\frac{\Theta^s f}{\|\nabla_x f\|_{\ast,p_s}+\|\partial_t^{\frac{1}{2s}}f\|_{\ast,p_s}},
		\end{equation*}
		which is such that $\|\nabla_x P_s \ast T\|_{\ast,p_s}\leq 1$, $\|\partial_t^{\frac{1}{2s}} P_s \ast T\|_{\ast,p_s}\leq 1$ and $\text{supp}(T)\subset E \cup \Omega^c$. Since $T\neq 0$ in $\Omega$, there exists $Q$ $s$-parabolic cube with $4Q\subset \Omega$ so that $T\neq 0$ in $Q$. Observe that $Q\cap E \neq \varnothing$. Then, by definition, there is $\varphi$ test function supported on $Q$ with $\langle T, \varphi \rangle>0$. Consider
		\begin{equation*}
			\widetilde{\varphi}:=\frac{\varphi}{\|\varphi\|_\infty + \ell(Q)\|\nabla_x\varphi\|_\infty+\ell(Q)^{2s}\|\partial_t\varphi\|_\infty+\ell(Q)^2\|\Delta \varphi\|_\infty},
		\end{equation*}
		so that $\widetilde{\varphi}$ is admissible for $Q$. Apply Theorem \ref{Cap_thm1} to deduce that $\widetilde{\varphi} T$ has upper $s$-parabolic growth of degree $n+1$ for cubes $R$ with $\ell(R)\leq \ell(Q)$. Apply Lemma \ref{Cap_thm2} to $\widetilde{\varphi}T$ with the compact set $\overline{Q}\cap E$, $\ell_0:=\ell(Q)$ and $d:=n+1$. Then,
		\begin{equation*}
			|\langle \widetilde{\varphi}T, \psi \rangle| = 0, \qquad \forall \psi\in \pazocal{C}^\infty_c(\mathbb{R}^{n+1}),
		\end{equation*}
		since $\pazocal{H}_{p_s}^{n+1}(\overline{Q}\cap E)=0$. This would imply $\widetilde{\varphi}T\equiv 0$, which is impossible, since $\langle \varphi, T \rangle >0$. Therefore $\Theta^s f = 0$ in $\Omega$, and by the arbitrariness of $\Omega$ and $f$ we are done.
	\end{proof}
	
	\subsection{The capacity \mathinhead{\Gamma_{\Theta^s,\alpha}}{}}
	\label{subsec2.4.2}
	
	We shall now present an $s$-parabolic $\text{Lip}_\alpha$ variant of the caloric capacity presented above.
	
	\begin{defn}
		Given $s\in(1/2,1]$, $\alpha\in(0,1)$ and $E\subset \mathbb{R}^{n+1}$ compact set, define its \textit{$\text{Lip}_{\alpha, p_s}$-caloric capacity} as
		\begin{equation*}
			\Gamma_{\Theta^s,\alpha}(E):=\sup  |\langle T, 1 \rangle| ,
		\end{equation*}
		where the supremum is taken among all distributions $T$ with $\text{supp}(T)\subset E$ and satisfying
		\begin{equation*}
			\|\partial_{x_i} P_s\ast T\|_{\text{\normalfont{Lip}}_\alpha, p_s} \leq 1, \;\; \forall i=1,\ldots,n, \hspace{0.75cm} \|\partial^{\frac{1}{2s}}_tP_s\ast T\|_{\text{Lip}_{\alpha, p_s}}\leq 1.
		\end{equation*}
		Such distributions will be called \textit{admissible for $\Gamma_{\Theta^s,\alpha}(E)$}.
	\end{defn}
	
	\begin{defn}
		A compact set $E\subset \mathbb{R}^{n+1}$ is said to be \textit{removable for $s$-caloric functions with $\text{Lip}_{\alpha, p_s}$-$(1,\frac{1}{2s})$-derivatives} if for any open subset $\Omega\subset \mathbb{R}^{n+1}$, any function $f:\mathbb{R}^{n+1}\to \mathbb{R}$ with
		\begin{equation*}
			\|\nabla_x f\|_{\text{Lip}_{\alpha, p_s}} < \infty, \hspace{0.75cm} \|\partial_t^{\frac{1}{2s}}f\|_{\text{Lip}_{\alpha, p_s}}<\infty,
		\end{equation*}
		satisfying the $\Theta^s$-equation in $\Omega\setminus{E}$, also satisfies the previous equation in the whole $\Omega$.
	\end{defn}
	
	As in the $s$-parabolic BMO case, if $T$ is a compactly supported distribution satisfying the required normalization conditions, $T$ will present upper $s$-parabolic growth of degree $n+1+\alpha$. In fact, the following result holds:
	\begin{thm}
		\label{thm2.4.5}
		Let $s\in(1/2,1]$, $\alpha\in(0,2s-1)$ and $T$ be a distribution in $\mathbb{R}^{n+1}$ with
		\begin{equation*}
			\|\partial_{x_i} P_s\ast T\|_{\text{\normalfont{Lip}}_\alpha, p_s} \leq 1, \;\; \forall i=1,\ldots,n, \hspace{0.75cm} \|\partial^{\frac{1}{2s}}_tP_s\ast T\|_{\text{\normalfont{Lip}}_{\alpha, p_s}}\leq 1.
		\end{equation*}
		Let $Q$ be a fixed $s$-parabolic cube and $\varphi$ admissible for $Q$. Then, if $R$ is any $s$-parabolic cube with $\ell(R)\leq \ell(Q)$ and $\phi$ is admissible for $R$, we have $|\langle \varphi T, \phi \rangle|\lesssim_\alpha \ell(R)^{n+1+\alpha}$.
	\end{thm}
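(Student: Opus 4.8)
The plan is to mimic the proof of Theorem~\ref{Cap_thm1} essentially line by line, replacing every use of a $\text{BMO}_{p_s}$ growth estimate by its $\text{Lip}_{\alpha,p_s}$ counterpart, and to check that the sole hypothesis $\alpha\in(0,2s-1)$ is exactly what legitimizes all of these applications. We may assume $Q\cap R\neq\varnothing$, since otherwise the statement is trivial. Since $P_s$ is the fundamental solution of $\Theta^s=(-\Delta)^s+\partial_t$, I would start from
\[
	|\langle \varphi T,\phi\rangle|=|\langle \Theta^s P_s\ast T,\varphi\phi\rangle|\le |\langle (-\Delta)^s P_s\ast T,\varphi\phi\rangle|+|\langle P_s\ast T,\partial_t(\varphi\phi)\rangle|=:I_1+I_2.
\]

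For $I_2$, I would set $\beta:=1-\tfrac1{2s}\in(0,\tfrac12]$ (here $\beta>0$ uses $s>1/2$) and invoke, as in the proof of Theorem~\ref{Cap_thm1}, the identity $\partial_t(\varphi\phi)=c\,\partial_t^{1-\beta}\big(\partial_t(\varphi\phi)\ast_t|t|^{-\beta}\big)$, checked on the Fourier side in $t$. This gives $I_2\simeq|\langle \partial_t^{1/(2s)}P_s\ast T,\ \partial_t(\varphi\phi)\ast_t|t|^{-\beta}\rangle|$, and since $\partial_t^{1/(2s)}P_s\ast T\in\text{Lip}_{\alpha,p_s}$ with seminorm $\le1$, part~2 of Theorem~\ref{Growth_thm1} applies \emph{precisely when} $\alpha<2s\beta=2s-1$, yielding $I_2\lesssim \ell(R)^{n+2s(1-\beta)+\alpha}=\ell(R)^{n+1+\alpha}$.

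For $I_1$ I would separate $s=1$ and $s<1$. If $s=1$, then $(-\Delta)W\ast T=-\sum_i\partial_{x_i}(\partial_{x_i}W\ast T)$ and, each $\partial_{x_i}W\ast T$ lying in $\text{Lip}_{\alpha,p_s}$ with seminorm $\le1$, part~2 of Theorem~\ref{Growth_thm2} gives $I_1\lesssim\sum_i\ell(R)^{n+2s-1+\alpha}=\ell(R)^{n+1+\alpha}$ (with $s=1$). If $s<1$, I would rewrite, exactly as in Theorem~\ref{Cap_thm1}, $(-\Delta)^s(\cdot)=c_{n,s}\sum_i\partial_{x_i}\big(|x|^{-(n+2s-2)}\big)\ast_n\partial_{x_i}(\cdot)=c_{n,s}\sum_i\partial_{x_i}\big[\pazocal{I}^n_{2-2s}(\partial_{x_i}(\cdot))\big]$, which is meaningful since $2-2s\in(0,1)$ for $s\in(1/2,1)$. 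Moving one $\partial_{x_i}$ and the Riesz operator onto $\varphi\phi$ (using self-adjointness of $\pazocal{I}^n_{2-2s}$ and that it commutes with $\partial_{x_i}$) reduces $I_1$ to a sum of terms $|\langle\partial_{x_i}P_s\ast T,\ \partial_{x_i}[\pazocal{I}^n_{2-2s}(\varphi\phi)]\rangle|$; part~2 of Theorem~\ref{Growth_thm4}, with Riesz parameter $\beta=2-2s$, applies because $\alpha<1-\beta=2s-1$, giving $I_1\lesssim\ell(R)^{n+2s+\beta+\alpha-1}=\ell(R)^{n+1+\alpha}$. Combining the bounds for $I_1$ and $I_2$ yields $|\langle\varphi T,\phi\rangle|\lesssim\ell(R)^{n+1+\alpha}$.

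The argument is routine granted Theorems~\ref{Growth_thm1}, \ref{Growth_thm2} and \ref{Growth_thm4}; the only point requiring attention is the bookkeeping of the exponent restrictions. A constraint on $\alpha$ surfaces in three places: $\alpha<2s\beta$ with $\beta=1-\tfrac1{2s}$ (Theorem~\ref{Growth_thm1}), the requirement that $2-2s$ be a legitimate Riesz exponent, i.e. $2-2s\in(0,1)$, and $\alpha<1-(2-2s)$ (Theorem~\ref{Growth_thm4}). All three collapse to $\alpha<2s-1$, so this is the natural — and, it seems, unavoidable — range for the estimate, and it also forces $s>1/2$. I do not anticipate any genuine obstacle beyond this consistency check and the standard adjoint/commutation manipulations transferring the derivatives, the fractional temporal derivative and the Riesz potential from $P_s\ast T$ onto the admissible bump $\varphi\phi$.
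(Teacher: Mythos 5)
Your proof is correct and matches the paper's proof essentially step for step: the same decomposition $I_1+I_2$ via $\Theta^s P_s\ast T$, the same use of Theorem~\ref{Growth_thm1}(2) with $\beta=1-\tfrac1{2s}$ for $I_2$, and the same treatment of $I_1$ via Theorem~\ref{Growth_thm2}(2) when $s=1$ and via the Riesz-potential factorization of $(-\Delta)^s$ and Theorem~\ref{Growth_thm4}(2) when $s\in(1/2,1)$. Your bookkeeping that all three exponent constraints collapse to $\alpha<2s-1$ is exactly the point the paper is making.
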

	\begin{proof}
		Let $T$, $Q$ and $\varphi$ be as above. Let us also consider $R$ $s$-parabolic cube with $\ell(R)\leq \ell(Q)$ and $R\cap Q\neq \varnothing$ and $\phi$ admissible function for $R$. We proceed as in the proof of Theorem \ref{Cap_thm1} to obtain
		\begin{equation*}
			|\langle \varphi T, \phi\rangle|\leq |\langle (-\Delta)^sP_s\ast T, \varphi\phi\rangle|+|\langle P_s\ast T, \partial_t(\varphi\phi)\rangle|=:I_1+I_2.
		\end{equation*}
		Regarding $I_2$, we now define define $\beta:=1-\frac{1}{2s}$ and observe that $2s\beta=2s-1>\alpha$, so applying Theorem \ref{Growth_thm1} we get $I_2\lesssim_\alpha \ell(R)^{n+1+\alpha}$. The study of $I_1$ is also analogous to that done in Theorem \ref{Cap_thm1}. The case $s=1$ follows in exactly the same way by Theorem \ref{Growth_thm2}, and if $s\in(1/2,1)$ we also have
		\begin{align*}
			I_1 \lesssim \sum_{i=1}^n \big\rvert \big\langle \partial_{x_i}P_s\ast T,  \partial_{x_i}[\pazocal{I}^n_{2-2s}(\varphi\phi)] \big\rangle \big\rvert.
		\end{align*}
		So by Theorem \ref{Growth_thm4} and condition $\alpha<2s-1$ we deduce the desired result.
	\end{proof}
	
	\begin{thm}
		\label{thm2.4.6}
		For any $s\in(1/2,1]$, $\alpha\in(0,2s-1)$ and $E\subset \mathbb{R}^{n+1}$ compact set,
		\begin{equation*}
			\Gamma_{\Theta^s,\alpha}(E) \approx_{\alpha} \pazocal{H}^{n+1+\alpha}_{\infty,p_s}(E).
		\end{equation*}
	\end{thm}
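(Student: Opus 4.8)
The plan is to mirror the proof of Theorem \ref{thm3.3}, replacing the BMO estimates by their $\text{Lip}_{\alpha,p_s}$ counterparts and the Hausdorff content exponent $n+1$ by $n+1+\alpha$. The argument splits into the upper bound $\Gamma_{\Theta^s,\alpha}(E)\lesssim \pazocal{H}^{n+1+\alpha}_{\infty,p_s}(E)$ and the lower bound $\Gamma_{\Theta^s,\alpha}(E)\gtrsim \pazocal{H}^{n+1+\alpha}_{\infty,p_s}(E)$.

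For the upper bound I would fix $\varepsilon>0$ and a cover $\{A_k\}$ of $E$ with $\sum_k \text{diam}_{p_s}(A_k)^{n+1+\alpha}\leq \pazocal{H}^{n+1+\alpha}_{\infty,p_s}(E)+\varepsilon$, take open $s$-parabolic cubes $Q_k$ centered in $A_k$ with $\ell(Q_k)=\text{diam}_{p_s}(A_k)$, and use compactness together with the $s$-parabolic Harvey–Polking partition of unity \cite[Lemma 3.1]{HP} to get $\{\varphi_k\}_{k=1}^N$ admissible (up to a constant) for $2Q_k$ with $\sum_k\varphi_k\equiv 1$ on $\bigcup_k Q_k\supset E$. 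Then for any $T$ admissible for $\Gamma_{\Theta^s,\alpha}(E)$, Theorem \ref{thm2.4.5} (with $Q=\mathbb{R}^{n+1}$, $\varphi\equiv1$, $R=2Q_k$) gives $|\langle T,\varphi_k\rangle|\lesssim \ell(2Q_k)^{n+1+\alpha}$, whence
\begin{equation*}
|\langle T,1\rangle|\leq \sum_{k=1}^N |\langle T,\varphi_k\rangle|\lesssim \sum_{k=1}^N \ell(2Q_k)^{n+1+\alpha}\simeq \sum_{k=1}^N \text{diam}_{p_s}(A_k)^{n+1+\alpha}\leq \pazocal{H}^{n+1+\alpha}_{\infty,p_s}(E)+\varepsilon,
\end{equation*}
and letting $\varepsilon\to 0$ and taking the supremum over $T$ yields the right inequality. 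This step is essentially routine given Theorem \ref{thm2.4.5}.

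For the lower bound, assuming $\pazocal{H}^{n+1+\alpha}_{\infty,p_s}(E)>0$, I would invoke the $s$-parabolic Frostman lemma (as in the proof of \cite[Lemma 5.1]{MaPrTo}) to produce a nontrivial positive measure $\mu$ supported on $E$ with $\mu(E)\gtrsim \pazocal{H}^{n+1+\alpha}_{\infty,p_s}(E)$ and $\mu(B(\ox,r))\leq r^{n+1+\alpha}$ for all balls. It then suffices to show $\|\partial_{x_i}P_s\ast\mu\|_{\text{Lip}_\alpha,p_s}\lesssim 1$ and $\|\partial_t^{\frac{1}{2s}}P_s\ast\mu\|_{\text{Lip}_\alpha,p_s}\lesssim 1$, since this makes a fixed multiple of $\mu$ admissible for $\Gamma_{\Theta^s,\alpha}(E)$ and gives $\Gamma_{\Theta^s,\alpha}(E)\gtrsim \mu(E)\gtrsim \pazocal{H}^{n+1+\alpha}_{\infty,p_s}(E)$. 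The temporal estimate is immediate from Lemma \ref{lem2.3.3} with $\beta=\frac{1}{2s}$: here $2s\beta=1$, so the hypothesis $2s\beta+\alpha<2$ reads $\alpha<1$, satisfied, and $\mu$ has the required growth of degree $n+2s\beta+\alpha=n+1+\alpha$. For the spatial estimate, $\partial_{x_i}P_s\ast\mu=(\nabla_x P_s\ast\mu)_i$, and by Theorem \ref{C-Z_thm2} the kernel $\nabla_x P_s$ is an $s$-parabolically homogeneous kernel of degree $-(n+1)$ satisfying the smoothness bound $|\nabla_x P_s(\ox)-\nabla_x P_s(\ox')|\lesssim |\ox-\ox'|_{p_s}/|\ox|_{p_s}^{n+2}$ for $s\geq 1/2$; one then runs the standard annular splitting: for $\ox\neq\oy$, decompose $\mu=\chi_{cB}\mu+\chi_{(cB)^c}\mu$ with $B=B(\ox,|\ox-\oy|_{p_s})$, bound the near part using the $\rho$-growth with $\rho=n+1+\alpha>n+1$ (so $\int_{cB}|\ox-\oz|_{p_s}^{-(n+1)}\,d\mu(\oz)\lesssim |\ox-\oy|_{p_s}^{\alpha}$), and the far part using the gradient bound of the kernel together with $\sum_{j}2^{-j}2^{j(n+1+\alpha)}2^{-j(n+2)}$-type sums which converge precisely because $\alpha<1$.

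The main obstacle I anticipate is the spatial Lipschitz estimate for $\nabla_x P_s\ast\mu$: it is not directly covered by the lemmas of \textsection\ref{sec2.3} (those treat $\partial_t^\beta P_s$, not $\nabla_x P_s$), so it must be carried out by hand. However, the required kernel estimates — growth $|\nabla_x P_s(\ox)|\lesssim |\ox|_{p_s}^{-(n+1)}$ and Hölder-type smoothness with gain $|\ox-\ox'|_{p_s}$ — are exactly what Theorem \ref{C-Z_thm2} provides for $s\geq 1/2$, so the computation is a standard Calderón–Zygmund-type potential estimate analogous to (and simpler than) the one performed in Lemma \ref{Pos_lem2}; the only care needed is checking the convergence of the geometric series, which holds under $\alpha\in(0,2s-1)\subset(0,1)$. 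One may alternatively observe that $\nabla_x P_s\ast\mu$ coincides, up to constants, with a Riesz-type potential of $\mu$ composed with the bounded smooth profile from Lemma \ref{lem2.1}, reducing the estimate to the classical fact that $\pazocal{I}^n_{1}$ applied to a measure with $(n-1+\text{something})$-growth is Hölder; I would present whichever version keeps the exposition closest to Lemma \ref{Pos_lem2}.
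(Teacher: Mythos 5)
Your proposal is correct and follows the paper's proof essentially step by step: the upper bound via the Harvey--Polking partition of unity together with Theorem~\ref{thm2.4.5}, and the lower bound via Frostman's lemma, Lemma~\ref{lem2.3.3} (with $\beta=\tfrac{1}{2s}$) for the temporal estimate, and a direct annular Calder\'on--Zygmund estimate for $\nabla_x P_s\ast\mu$ using the size and smoothness bounds of Theorem~\ref{C-Z_thm2} — exactly what the paper does, save that the paper uses the symmetric decomposition $R_1,R_2$ where you use a single dilated ball $cB(\ox,|\ox-\oy|_{p_s})$, which amounts to the same thing. The only slip is in the displayed geometric series for the far part, whose correct general term is $2^{j(\alpha-1)}$ (not $2^{j(\alpha-2)}$); your stated convergence criterion $\alpha<1$ is the right one.
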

	\begin{proof}
		For the upper bound we proceed analogously as we have done in the proof of Theorem \ref{thm3.3}, using now the growth restriction given by Theorem \ref{thm2.4.5}. So we focus on the lower bound, which will also rely on Frostman's lemma. Assume then $\pazocal{H}_{\infty,p_s}^{n+1+\alpha}(E)>0$ and consider a non trivial positive Borel measure $\mu$ supported on $E$ with $\mu(E)\geq c\pazocal{H}_{\infty,p_s}^{n+1+\alpha}(E)$ and $\mu(B(\ox,r))\leq r^{n+1+\alpha}$ for all $\ox\in\mathbb{R}^{n+1}$, $r>0$. It is enough to check
		\begin{equation*}
			\|\partial_{x_i} P_s\ast \mu\|_{\text{Lip}_\alpha, p_s} \lesssim_\alpha 1, \;\; \forall i=1,\ldots,n \hspace{0.95cm} \text{and} \hspace{0.95cm} \|\partial^{\frac{1}{2s}}_tP_s\ast \mu\|_{\text{\normalfont{Lip}}_\alpha, p_s}\lesssim_\alpha 1.
		\end{equation*}
		Notice that the right inequality follows directly from Lemma \ref{lem2.3.3} with $\beta:=\frac{1}{2s}$, so we just focus on controlling the $s$-parabolic $\text{Lip}_\alpha$ seminorm of the spatial derivatives of $P_s\ast \mu$. Fix $i=1,\ldots, n$ and choose any $\ox,\oy\in\mathbb{R}^{n+1}$ with $\ox\neq \oy$. Consider the following partition
		\begin{align*}
			R_1:&= \big\{\oz \;:\;|\ox-\oy|_{p_s}\leq |\ox-\oz|_{p_s}/2\big\}\cup\big\{z \;:\;|\oy-\ox|_{p_s}\leq |\oy-\oz|_{p_s}/2\big\},\\
			R_2:= \mathbb{R}^{n+1}\setminus{R_1}&=\big\{z \;:\;|\ox-\oy|_{p_s}> |\ox-\oz|_{p_s}/2\big\}\cap\big\{z \;:\;|\oy-\ox|_{p_s}> |\oy-\oz|_{p_s}/2\big\},
		\end{align*}
		with their corresponding characteristic functions $\chi_1,\chi_2$ respectively. This way, we have
		\begin{align*}
			\frac{|\partial_{x_i} P_s\ast \mu(\ox)-\partial_{x_i} P_s\ast \mu(\oy)|}{|\ox-\oy|_{p_s}^\alpha}\\
			&\hspace{-2.5cm}\leq \frac{1}{|\ox-\oy|_{p_s}^\alpha}\int_{ |\ox-\oy|_{p_s}\leq |\ox-\oz|_{p_s}/2}|\partial_{x_i} P_s(\ox-\oz)-\partial_{x_i} P_s(\oy-\oz)|\text{d}\mu(\oz)\\
			&\hspace{-1.5cm} +\frac{1}{|\ox-\oy|_{p_s}^\alpha}\int_{ |\oy-\ox|_{p_s}\leq |\oy-\oz|_{p_s}/2}|\partial_{x_i} P_s(\ox-\oz)-\partial_{x_i} P_s(\oy-\oz)|\text{d}\mu(\oz)\\
			&\hspace{-1.5cm} +\frac{1}{|\ox-\oy|_{p_s}^\alpha}\int_{R_2}|\partial_{x_i} P_s(\ox-\oz)-\partial_{x_i} P_s(\oy-\oz)|\text{d}\mu(\oz)=:I_1+I_2+I_{3}.
		\end{align*}
		Regarding $I_1$, apply the fourth estimate of Theorem \ref{C-Z_thm2} to obtain
		\begin{align*}
			I_1&\lesssim\frac{1}{|\ox-\oy|_{p_s}^\alpha}\int_{ |\ox-\oy|_{p_s}\leq |\ox-\oz|_{p_s}/2 } \frac{|\ox-\oy|_{p_s}}{|\ox-\oz|_{p_s}^{n+2}}\text{d}\mu(\oz).
		\end{align*}
		Split the previous domain of integration into the $s$-parabolic annuli 
		\begin{equation*}
			A_j:=2^{j+1}B\big(\ox, |\ox-\oy|_{p_s}\big)\setminus{2^{j}B\big(\ox, |\ox-\oy|_{p_s}\big)}, \hspace{0.5cm} \text{for} \hspace{0.5cm} j\geq 1,    
		\end{equation*}
		and use that $\mu$ has upper parabolic growth of degree $n+1+\alpha$ to deduce
		\begin{align*}
			I_1&\lesssim \frac{1}{|\ox-\oy|_{p_s}^{\alpha-1}}\sum_{j=1}^\infty\int_{A_j} \frac{\text{d}\mu(\oz)}{|\ox-\oz|_{p_s}^{n+2}}\lesssim  \frac{1}{|\ox-\oy|_{p_s}^{\alpha-1}}\sum_{j=1}^\infty \frac{(2^{j+1}|\ox-\oy|_{p_s})^{n+1+\alpha}}{(2^{j}|\ox-\oy|_{p_s})^{n+2}}\\
			&\hspace{10cm}\lesssim \sum_{j=1}^\infty\frac{1}{2^{(1-\alpha)j}}\lesssim_\alpha 1.
		\end{align*}
		The study of $I_2$ is analogous interchanging the roles of $\ox$ and $\oy$. Finally, for $I_{3}$, we apply the first estimate of Theorem \ref{C-Z_thm2} so that
		\begin{align*}
			I_{3}&\lesssim \frac{1}{|\ox-\oy|_{p_s}^\alpha}\int_{R_2}\frac{\text{d}\mu(\oz)}{|\ox-\oz|_{p_s}^{n+1}}+\frac{1}{|\ox-\oy|_{p_s}^\alpha}\int_{R_2}\frac{\text{d}\mu(\oz)}{|\oy-\oz|_{p_s}^{n+1}}\\
			&\leq \frac{1}{|\ox-\oy|_{p_s}^\alpha}\int_{|\ox-\oy|_{p_s}>|\ox-\oz|_{p_s}/2}\frac{\text{d}\mu(\oz)}{|\ox-\oz|_{p_s}^{n+1}}+\frac{1}{|\ox-\oy|_{p_s}^\alpha}\int_{|\oy-\ox|_{p_s}>|\oy-\oz|_{p_s}/2}\frac{\text{d}\mu(\oz)}{|\oy-\oz|_{p_s}^{n+1}}\\
			&=:I_{31}+I_{32}.
		\end{align*}
		Concerning $I_{31}$, split the domain of integration into the (decreasing) $s$-parabolic annuli
		\begin{equation*}
			\widetilde{A}_j:=2^{-j}B\big(\ox, |\ox-\oy|_{p_s}\big)\setminus{2^{-j-1}B\big(\ox, |\ox-\oy|_{p_s}\big)}, \hspace{0.5cm} \text{for} \hspace{0.5cm} j\geq -1.    
		\end{equation*}
		Thus, in this case we have
		\begin{align*}
			I_{31}&\lesssim \frac{1}{|\ox-\oy|_{p_s}^{\alpha}}\sum_{j=-1}^\infty\int_{\widetilde{A}_j} \frac{\text{d}\mu(\oz)}{|\ox-\oz|_{p_s}^{n+1}}\lesssim \frac{1}{|\ox-\oy|_{p_s}^{\alpha}}\sum_{j=-1}^\infty \frac{(2^{-j}|\ox-\oy|_{p_s})^{n+1+\alpha}}{(2^{-j-1}|\ox-\oy|_{p_s})^{n+1}}\\
			&\hspace{10cm}\lesssim \sum_{j=-1}^\infty\frac{1}{2^{\alpha j}}\lesssim_\alpha 1.
		\end{align*}
		On the other hand, for $I_{32}$ we apply the same reasoning but using the partition given by
		\begin{equation*}
			\widetilde{A}_j':=2^{-j}B_p\big(\oy, |\oy-\ox|_{p_s}\big)\setminus{2^{-j-1}B_p\big(\oy, |\oy-\ox|_{p_s}\big)}, \hspace{0.5cm} \text{for} \hspace{0.5cm} j\geq -1,    
		\end{equation*}
		yielding also $I_{32}\lesssim 1$. Combining the estimates obtained for $I_1, I_2$ and $I_{3}$ we deduce 
		\begin{equation*}
			\frac{|\partial_{x_i} P_s\ast \mu(\ox)-\partial_{x_i} P_s\ast \mu(\oy)|}{|\ox-\oy|_{p_s}^{\alpha}}\lesssim_\alpha 1,
		\end{equation*}
		and since the (different) points $\ox$ and $\oy$ were arbitrarily chosen, we deduce the desired $s$-parabolic $\text{Lip}_\alpha$ condition.
	\end{proof}
	
	\begin{thm}
		Let $s\in(1/2,1]$ and $\alpha\in(0,2s-1)$. A compact set $E\subset \mathbb{R}^{n+1}$ is removable for $s$-caloric functions with $\text{\normalfont{Lip}}_{\alpha,p_s}$-$(1,\frac{1}{2s})$-derivatives if and only if $\Gamma_{\Theta^s,\alpha}(E)=0$.
	\end{thm}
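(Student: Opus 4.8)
The plan is to transcribe, essentially verbatim, the proof of Theorem \ref{thm3.4}, replacing the homogeneity exponent $n+1$ by $n+1+\alpha$, the $\text{BMO}_{p_s}$ normalization by the $\text{Lip}_{\alpha,p_s}$ one, and invoking Theorems \ref{thm2.4.5} and \ref{thm2.4.6} wherever Theorems \ref{Cap_thm1} and \ref{thm3.3} were used. All the analytic content is already packaged in those statements together with Lemma \ref{Cap_thm2}, so the argument is purely formal.

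For the direct implication, assume $E$ is removable and pick any $T$ admissible for $\Gamma_{\Theta^s,\alpha}(E)$. Applying Theorem \ref{thm2.4.5} with $Q=\mathbb{R}^{n+1}$, $\varphi\equiv 1$, followed by Lemma \ref{Cap_thm2} with $\ell_0=\infty$, one sees that $T$ is a finite signed measure, so $f:=P_s\ast T$ is a genuine function with $\Theta^s f=T$. Since $\text{supp}(T)\subset E$ we have $\Theta^s f=0$ on $\mathbb{R}^{n+1}\setminus E$, while $\|\nabla_x f\|_{\text{Lip}_{\alpha,p_s}}\le 1$ and $\|\partial_t^{1/(2s)}f\|_{\text{Lip}_{\alpha,p_s}}\le 1$. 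Removability with $\Omega=\mathbb{R}^{n+1}$ then forces $\Theta^s f\equiv 0$, i.e. $T\equiv 0$; taking the supremum over all such $T$ gives $\Gamma_{\Theta^s,\alpha}(E)=0$.

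For the converse, suppose $\Gamma_{\Theta^s,\alpha}(E)=0$. By Theorem \ref{thm2.4.6} this gives $\pazocal{H}^{n+1+\alpha}_{\infty,p_s}(E)=0$, and the $s$-parabolic analogue of \cite[Lemma 4.6]{Mat} upgrades this to $\pazocal{H}^{n+1+\alpha}_{p_s}(E)=0$. Fix an open $\Omega\supset E$ and $f$ with $\|\nabla_x f\|_{\text{Lip}_{\alpha,p_s}},\|\partial_t^{1/(2s)}f\|_{\text{Lip}_{\alpha,p_s}}<\infty$ and $\Theta^s f=0$ on $\Omega\setminus E$; assume for contradiction $\Theta^s f\not\equiv 0$ on $\Omega$ and set
\[
T:=\frac{\Theta^s f}{\|\nabla_x f\|_{\text{Lip}_{\alpha,p_s}}+\|\partial_t^{1/(2s)}f\|_{\text{Lip}_{\alpha,p_s}}},
\]
so $\|\partial_{x_i}P_s\ast T\|_{\text{Lip}_{\alpha,p_s}}\le 1$, $\|\partial_t^{1/(2s)}P_s\ast T\|_{\text{Lip}_{\alpha,p_s}}\le 1$, and $\text{supp}(T)\subset E\cup\Omega^c$. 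Choose an $s$-parabolic cube $Q$ with $4Q\subset\Omega$, $Q\cap E\neq\varnothing$, and a test function $\varphi$ with $\text{supp}(\varphi)\subset Q$, $\langle T,\varphi\rangle>0$; normalize $\varphi$ to an admissible $\widetilde{\varphi}$ for $Q$. By Theorem \ref{thm2.4.5}, $\widetilde{\varphi}T$ has $s$-parabolic $(n+1+\alpha)$-growth for all cubes $R$ with $\ell(R)\le\ell(Q)$, so Lemma \ref{Cap_thm2} applied to $\widetilde{\varphi}T$ with the compact set $\overline{Q}\cap E$, $\ell_0:=\ell(Q)$, $d:=n+1+\alpha$ and $\pazocal{H}^{n+1+\alpha}_{p_s}(\overline{Q}\cap E)=0$ yields $\widetilde{\varphi}T\equiv 0$; pairing with a bump equal to $1$ on $Q$ gives $\langle T,\widetilde{\varphi}\rangle=0$, hence $\langle T,\varphi\rangle=0$, a contradiction. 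Thus $\Theta^s f\equiv 0$ on $\Omega$, and by arbitrariness of $\Omega$ and $f$, $E$ is removable. The only place where $\alpha<2s-1$ (on top of $s>1/2$) is actually used is Theorem \ref{thm2.4.5}, which is where the exponent $n+1+\alpha$ is generated and where the $\beta=\tfrac{1}{2s}$ convolution term is controlled via Theorem \ref{Growth_thm1} ($2s\beta=2s-1>\alpha$) and the Riesz-transform term via Theorem \ref{Growth_thm4} ($\alpha<1-(2-2s)=2s-1$); so there is no genuinely new obstacle here, only bookkeeping.
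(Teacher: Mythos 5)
Your proof follows exactly the paper's intended route: the paper's own proof of this theorem is a one-line reference back to Theorem \ref{thm3.4}, swapping in Theorems \ref{thm2.4.5} and \ref{thm2.4.6} and setting $d=n+1+\alpha$ in Lemma \ref{Cap_thm2}, which is precisely what you have transcribed. The converse direction is carried out correctly, and your accounting of where $\alpha<2s-1$ enters is accurate.

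One small blemish in the forward direction: your invocation of Lemma \ref{Cap_thm2} (with $\ell_0=\infty$) to conclude that $T$ is a finite signed measure implicitly uses the hypothesis $\pazocal{H}_{p_s}^{n+1+\alpha}(E)<\infty$, which is not guaranteed for a general compact $E$. Fortunately this step is superfluous: as in the paper's proof of Theorem \ref{thm3.4}, once $f:=P_s\ast T$ satisfies $\Theta^s f=0$ on all of $\mathbb{R}^{n+1}$ by removability, the identity $\Theta^s(P_s\ast T)=T$ as distributions immediately gives $T\equiv 0$, without needing to know in advance that $T$ is a measure. Dropping that sentence makes the argument match the paper and removes the unjustified hypothesis.
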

	
	\begin{proof}
		The proof is completely analogous to that of Theorem \ref{thm3.4}, now using Theorems \ref{thm2.4.5} and \ref{thm2.4.6}, as well as Lemma \ref{Cap_thm2} with $d:=n+1+\alpha$.
	\end{proof}
	
	\subsection{The capacity \mathinhead{\gamma_{\Theta^s,\ast}^{\sigma}}{}}
	\label{subsec2.4.3}
	
	Now, we shall present the $\text{BMO}_{p_s}$ variant of the capacities presented in \cite[\textsection 4 \& \textsection 7]{MaPr}. To be precise, in the aforementioned reference, Mateu and Prat work with the normalization conditions
	\begin{equation*}
		\|(-\Delta)^{s-\frac{1}{2}}P_s\ast T\|_\infty\leq 1, \qquad \|\partial_t^{1-\frac{1}{2s}}P_s\ast T\|_{\ast,p_s}\leq 1,
	\end{equation*}
	allowing $s\in[1/2,1)$. In our case we will deal with its $s$-parabolic BMO variant and we define it more generally as follows:
	
	\begin{defn}
		Given $s\in(0,1]$, $\sigma\in[0,s)$ and $E\subset \mathbb{R}^{n+1}$ compact set, define its \textit{$\Delta^{\sigma}$-$\text{\normalfont{BMO}}_{p_s}$-caloric capacity} as
		\begin{equation*}
			\gamma_{\Theta^s,\ast}^{\sigma}(E):=\sup  |\langle T, 1 \rangle| ,
		\end{equation*}
		where the supremum is taken among all distributions $T$ with $\text{supp}(T)\subset E$ and satisfying
		\begin{equation*}
			\|(-\Delta)^{\sigma} P_s\ast T\|_{\ast, p_s} \leq 1, \hspace{0.75cm} \|\partial^{\sigma/s}_tP_s\ast T\|_{\ast, p_s}\leq 1.
		\end{equation*}
		Such distributions will be called \textit{admissible for $\gamma_{\Theta^s,\ast}^{\sigma}(E)$}.
	\end{defn}
	
	\begin{defn}
		Let $s\in(0,1]$ and $\sigma\in[0,s)$. A compact set $E\subset \mathbb{R}^{n+1}$ is said to be \textit{removable for $s$-caloric functions with $\text{\normalfont{BMO}}_{p_s}$-$(\sigma,\sigma/s)$-Laplacian} if for any open subset $\Omega\subset \mathbb{R}^{n+1}$, any function $f:\mathbb{R}^{n+1}\to \mathbb{R}$ with
		\begin{equation*}
			\|(-\Delta)^{\sigma} f\|_{\ast,p_s} < \infty, \hspace{0.75cm} \|\partial_t^{\sigma/s}f\|_{\ast, p_s}<\infty,
		\end{equation*}
		satisfying the $\Theta^s$-equation in $\Omega\setminus{E}$, also satisfies the previous equation in the whole $\Omega$. If $\sigma=0$, we will also say that $E$ is \textit{removable for} $\text{\normalfont{BMO}}_{p_s}$ $s$\textit{-caloric functions}.
	\end{defn}
	
	Firstly, we shall prove that if $T$ is a compactly supported distribution satisfying the expected normalization conditions, then $T$ has upper $s$-parabolic growth of degree $n+2s-2\sigma$. In fact, we prove a stronger result:
	\begin{thm}
		\label{thm2.4.8}
		Let $s\in(0,1]$, $\sigma\in[0,s)$ and $T$ be a distribution in $\mathbb{R}^{n+1}$ with
		\begin{equation*}
			\|(-\Delta)^{\sigma} P_s\ast T\|_{\ast, p_s} \leq 1, \hspace{0.75cm} \|\partial^{\sigma/s}_tP_s\ast T\|_{\ast, p_s}\leq 1.
		\end{equation*}
		Let $Q$ be a fixed $s$-parabolic cube and $\varphi$ an admissible function for $Q$. Then, if $R$ is any $s$-parabolic cube with $\ell(R)\leq \ell(Q)$ and $\phi$ is admissible for $R$, we have $|\langle \varphi T, \phi \rangle|\lesssim_\sigma \ell(R)^{n+2\sigma}$.
	\end{thm}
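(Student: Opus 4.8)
The plan is to mimic the proof of Theorem \ref{Cap_thm1}, replacing the splitting of the heat operator by one tuned to the exponent $\sigma$. As there, I would first dispose of the trivial case $Q\cap R=\varnothing$ (then $\langle\varphi T,\phi\rangle=0$), and then, using that $P_s$ is the fundamental solution of $\Theta^s$ (so $\Theta^s(P_s\ast T)=T$) together with the self-adjointness of $(-\Delta)^s$, write
\begin{equation*}
	|\langle\varphi T,\phi\rangle| = |\langle\Theta^s(P_s\ast T),\varphi\phi\rangle| \leq |\langle(-\Delta)^sP_s\ast T,\varphi\phi\rangle| + |\langle P_s\ast T,\partial_t(\varphi\phi)\rangle| =: I_1+I_2,
\end{equation*}
exactly as in the opening lines of Theorem \ref{Cap_thm1}. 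The whole point is that the ``excess'' differentiation can be transferred onto the test function $\varphi\phi$, so that both $I_1$ and $I_2$ are controlled by $\ell(R)^{n+2\sigma}$.

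For the main case $\sigma\in(0,s)$, I would treat $I_2$ as in Theorem \ref{Cap_thm1}: with $\beta:=1-\sigma/s\in(0,1)$ one has $\partial_t(\varphi\phi)=c\,\partial_t^{\sigma/s}\big(\partial_t(\varphi\phi)\ast_t|t|^{-\beta}\big)$ for a suitable constant $c$ (checked on the Fourier side in $t$), so that, by symmetry of $\partial_t^{\sigma/s}$ and the fact that it commutes with convolution in $t$, $I_2=c\,|\langle\partial_t^{\sigma/s}P_s\ast T,\ \partial_t(\varphi\phi)\ast_t|t|^{-\beta}\rangle|$. Since $\varphi,\phi$ are admissible (hence $\|\partial_t\varphi\|_\infty\leq\ell(Q)^{-2s}$, $\|\partial_t\phi\|_\infty\leq\ell(R)^{-2s}$) and $\partial_t^{\sigma/s}P_s\ast T\in\text{\normalfont{BMO}}_{p_s}$ by hypothesis, Theorem \ref{Growth_thm1} gives $I_2\lesssim\ell(R)^{n+2s(1-\beta)}=\ell(R)^{n+2\sigma}$. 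For $I_1$ I would use the semigroup identity $(-\Delta)^s=(-\Delta)^{s-\sigma}(-\Delta)^\sigma$ and self-adjointness to obtain $I_1=|\langle(-\Delta)^\sigma P_s\ast T,\ (-\Delta)^{s-\sigma}(\varphi\phi)\rangle|$; since $0<s-\sigma<1$ and $\varphi,\phi$ are admissible, Theorem \ref{thm0.3} applied with exponent $s-\sigma$ to $f:=(-\Delta)^\sigma P_s\ast T\in\text{\normalfont{BMO}}_{p_s}$ yields $I_1\lesssim\ell(R)^{n+2(s-(s-\sigma))}=\ell(R)^{n+2\sigma}$. Adding the two bounds proves the claim when $\sigma>0$.

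The endpoint $\sigma=0$ I would treat by hand, since then $\beta=1-\sigma/s$ and $s-\sigma$ may hit $1$ and the factorizations above degenerate; but there the hypotheses reduce to $\|P_s\ast T\|_{\ast,p_s}\leq1$, and both terms are bounded directly. Indeed $(-\Delta)^s(\varphi\phi)$ and $\partial_t(\varphi\phi)$ have vanishing integral, so $P_s\ast T$ may be replaced by $P_s\ast T-(P_s\ast T)_R$ in $I_1$ and $I_2$; by Remark \ref{lem2.2.1} (applicable since $\varphi\phi$ is $\pazocal{C}^2$, supported on $R$, with $\|\varphi\phi\|_\infty\leq1$, $\|\nabla_x(\varphi\phi)\|_\infty\lesssim\ell(R)^{-1}$, $\|\Delta(\varphi\phi)\|_\infty\lesssim\ell(R)^{-2}$) one has $\|(-\Delta)^s(\varphi\phi)\|_\infty\lesssim\ell(R)^{-2s}$, and $\|\partial_t(\varphi\phi)\|_\infty\lesssim\ell(R)^{-2s}$ by admissibility and $\ell(R)\leq\ell(Q)$, so by the very definition of the $\text{\normalfont{BMO}}_{p_s}$ norm
\begin{equation*}
	I_1+I_2 \lesssim \ell(R)^{-2s}\int_R|P_s\ast T-(P_s\ast T)_R| \lesssim \ell(R)^{-2s}\,\|P_s\ast T\|_{\ast,p_s}\,|R| = \ell(R)^{-2s}\ell(R)^{n+2s} = \ell(R)^{n}=\ell(R)^{n+2\sigma}.
\end{equation*}

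I do not expect a genuine obstacle: the argument is essentially bookkeeping on top of Theorems \ref{thm0.3} and \ref{Growth_thm1}. The one point that really needs care — and it is already settled in those proofs and in that of Theorem \ref{Cap_thm1} — is the justification that $(-\Delta)^\sigma$ and $\partial_t^{\sigma/s}$ are self-adjoint and commute with convolution against $P_s\ast T$, so that moving the operators onto $\varphi\phi$ is legitimate; this rests on $P_s\ast T$ being an honest locally integrable function with controlled growth. The remaining numerology is only that $1-\sigma/s\in(0,1)$ and $s-\sigma\in(0,1)$, which hold precisely because $0<\sigma<s\leq1$, and which is exactly what makes Theorems \ref{Growth_thm1} and \ref{thm0.3} applicable.
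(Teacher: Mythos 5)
Your approach is essentially the paper's: the split of $T=\Theta^s(P_s\ast T)$ into the $(-\Delta)^s$ and $\partial_t$ pieces, the transfer $(-\Delta)^s=(-\Delta)^{s-\sigma}(-\Delta)^\sigma$ feeding into Theorem~\ref{thm0.3} with $\beta=s-\sigma$, and the factorization $\partial_t\simeq\partial_t^{\sigma/s}\circ\big(\partial_t(\cdot)\ast_t|t|^{\sigma/s-1}\big)$ feeding into Theorem~\ref{Growth_thm1} with $\beta=1-\sigma/s$, are exactly the moves in the paper.

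There is, however, a gap in your handling of the endpoint $\sigma=0$, which the paper avoids by continuing to apply Theorem~\ref{thm0.3} with $\beta=s-\sigma=s$ to $I_1$ and treating only $I_2$ by hand. When $s<1$ the operator $(-\Delta)^s$ is nonlocal, so $(-\Delta)^s(\varphi\phi)$ is \emph{not} supported in $R$: it has spatial tails of size $\ell(R)^n|x-x_0|^{-n-2s}$ (cf.\ relation~\eqref{eq2.2.2} in the proof of Theorem~\ref{thm0.3}). Your claimed estimate
\[
I_1 \lesssim \ell(R)^{-2s}\int_R\big|P_s\ast T-(P_s\ast T)_R\big|
\]
implicitly restricts the pairing to $R$; that is legitimate for the compactly supported $\partial_t(\varphi\phi)$ but not for $(-\Delta)^s(\varphi\phi)$. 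The tails do contribute, and since $P_s\ast T-(P_s\ast T)_R$ is merely BMO and may grow logarithmically with distance from $R$, they must be controlled by a dyadic decomposition exactly as in the $I_2$ term of Theorem~\ref{thm0.3}. The repair is immediate: for $\sigma=0$ and $s<1$ one has $\beta=s\in(0,1)$, so Theorem~\ref{thm0.3} applies verbatim to $I_1$ and no special endpoint argument is needed. The direct localization for $I_1$ is both necessary and correct only in the remaining sub-case $\sigma=0$, $s=1$, where $(-\Delta)^s(\varphi\phi)=-\Delta(\varphi\phi)$ is genuinely compactly supported (and, incidentally, $\beta=1$ falls just outside the range of Theorem~\ref{thm0.3}, a point the paper glosses over).
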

	\begin{proof}
		Let $T$, $Q$ and $\varphi$ be as above, as well as $R$ $s$-parabolic cube with $\ell(R)\leq \ell(Q)$ and $R\cap Q\neq \varnothing$, and $\phi$ admissible function for $R$. We already know, in light of the proof of Theorem \ref{Cap_thm1},
		\begin{equation*}
			|\langle \varphi T, \phi\rangle|\leq |\langle (-\Delta)^sP_s\ast T, \varphi\phi\rangle|+|\langle P_s\ast T, \partial_t(\varphi\phi)\rangle|=:I_1+I_2.
		\end{equation*}
		For $I_1$, simply apply Theorem \ref{thm0.3} with $\beta:=s-\sigma$ so that
		\begin{align*}
			I_1 = |\langle (-\Delta)^{\sigma}P_s\ast T, (-\Delta)^{s-\sigma}(\varphi\phi)\rangle|\lesssim_\sigma \ell(R)^{n+2\sigma}
		\end{align*}
		Regarding $I_2$, if $\sigma>0$, observe that defining $\beta:=1-\sigma/s\in(0,1)$ we get
		\begin{equation*}
			\partial_t(\varphi\phi) \simeq_\sigma \, \partial_t^{1-\beta}\Big( \partial_t(\varphi\phi)\ast_t |t|^{-\beta} \Big),
		\end{equation*}
		so by Theorem \ref{Growth_thm1} we are done. If $\sigma = 0$, we simply have
		\begin{align*}
			I_2 &= |\langle P_s\ast T - (P_s\ast T)_R, \partial_t(\varphi\phi)\rangle| \leq \int_{Q\cap R}\big\rvert P_s\ast T(\ox)-(P\ast T)_{R}\big\rvert \big\rvert\partial_t(\varphi\phi)(\ox)\big\rvert \dd \ox\\
			&\leq \ell(R)^{-2s}\int_{R}\big\rvert P\ast T(\ox)-(P\ast T)_{R}\big\rvert \dd \ox \leq \ell(R)^{-2s} \ell(R)^{n+2s} \|P_s\ast T\|_{\ast,p_s}\leq \ell(R)^n.
		\end{align*}
	\end{proof}
	\begin{thm}
		\label{thm2.4.9}
		For any $s\in(0,1]$, $\sigma\in[0,s)$ and $E\subset \mathbb{R}^{n+1}$ compact set,
		\begin{equation*}
			\gamma_{\Theta^s,\ast}^{\sigma}(E) \approx_{\sigma} \pazocal{H}^{n+2\sigma}_{\infty,p_s}(E).
		\end{equation*}
	\end{thm}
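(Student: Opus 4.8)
The plan is to transcribe the proof of Theorem~\ref{thm3.3}, replacing the Calder\'on--Zygmund kernel $\nabla_xP_s$ (which is $s$-parabolically homogeneous of degree $-n-1$) by $K_\sigma:=(-\Delta)^\sigma P_s$ (homogeneous of degree $-n-2\sigma$), and the homogeneity $n+1$ by $n+2\sigma$ throughout. For the \emph{upper bound} (valid for every $s\in(0,1]$) I would fix $\varepsilon>0$, take a countable cover $\{A_k\}$ of $E$ with $\sum_k\text{diam}_{p_s}(A_k)^{n+2\sigma}\le\pazocal{H}^{n+2\sigma}_{\infty,p_s}(E)+\varepsilon$, enclose each $A_k$ in an open $s$-parabolic cube $Q_k$ of side length $\text{diam}_{p_s}(A_k)$, extract a finite subcover by compactness, and apply the $s$-parabolic Harvey--Polking lemma (\cite[Lemma~3.1]{HP}) to obtain functions $\{\varphi_k\}_{k=1}^N$ with each $c\varphi_k$ admissible for $2Q_k$ and $\sum_k\varphi_k\equiv1$ on $E$. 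Since any admissible $T$ has $\text{supp}(T)\subset E$, Theorem~\ref{thm2.4.8} (taken with $Q:=\mathbb{R}^{n+1}$, $\varphi\equiv1$, $R:=2Q_k$) gives $|\langle T,1\rangle|\le\sum_k|\langle T,\varphi_k\rangle|\lesssim\sum_k\ell(2Q_k)^{n+2\sigma}\lesssim\pazocal{H}^{n+2\sigma}_{\infty,p_s}(E)+\varepsilon$, and letting $\varepsilon\to0$ yields $\gamma^{\sigma}_{\Theta^s,\ast}(E)\lesssim\pazocal{H}^{n+2\sigma}_{\infty,p_s}(E)$.

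For the \emph{lower bound} (where the hypothesis $s\ge1/2$ enters) I would assume $\pazocal{H}^{n+2\sigma}_{\infty,p_s}(E)>0$ and invoke the $s$-parabolic Frostman lemma, argued via an $s$-parabolic dyadic lattice as in the proof of \cite[Lemma~5.1]{MaPrTo} (cf. \cite[Theorem~8.8]{Mat}), to produce a nontrivial positive measure $\mu$ supported on $E$ with $\mu(E)\gtrsim\pazocal{H}^{n+2\sigma}_{\infty,p_s}(E)$ and $\mu(B(\ox,r))\le r^{n+2\sigma}$ for all $\ox,r$. It then suffices to check $\|(-\Delta)^\sigma P_s\ast\mu\|_{\ast,p_s}\lesssim1$ and $\|\partial_t^{\sigma/s}P_s\ast\mu\|_{\ast,p_s}\lesssim1$, since this forces $\gamma^{\sigma}_{\Theta^s,\ast}(E)\gtrsim\mu(E)$. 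Writing $\beta:=\sigma/s\in[0,1)$, the measure $\mu$ has $s$-parabolic growth of degree $n+2s\beta$, so the temporal bound is exactly Lemma~\ref{Pos_lem2} when $\sigma>0$; when $\sigma=0$ it reduces to showing $P_s\ast\mu\in\text{BMO}_{p_s}$, which follows by the very same computation used below for the spatial part, now with the kernel $P_s$, using its size bound $|P_s(\ox)|\lesssim|\ox|_{p_s}^{-n}$ and H\"older estimate from \cite[Lemma~2.2]{MaPr}.

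The spatial bound is the heart of the matter. By Theorem~\ref{lem2.3} (the case $\sigma=0$ being \cite[Lemma~2.2]{MaPr}) the kernel $K_\sigma:=(-\Delta)^\sigma P_s$ satisfies $|K_\sigma(\ox)|\lesssim|\ox|_{p_s}^{-n-2\sigma}$ and, crucially because $s\ge1/2$, $|K_\sigma(\ox)-K_\sigma(\ox')|\lesssim|\ox-\ox'|_{p_s}\,|\ox|_{p_s}^{-n-2\sigma-1}$ whenever $|\ox-\ox'|_{p_s}\le|\ox|_{p_s}/2$. Fixing an $s$-parabolic ball $B=B(\ox_0,r)$ and choosing $c_B:=K_\sigma\ast(\chi_{(2B)^c}\mu)(\ox_0)$ (finite by the decay of $K_\sigma$), I would split $\tfrac1{|B|}\int_B|K_\sigma\ast\mu-c_B|$ as in Theorem~\ref{thm3.3}: the local part involving $\chi_{2B}\mu$ is controlled by integrating $|K_\sigma(\oy-\oz)|$ in $s$-parabolic polar coordinates -- the condition $\sigma<s$ guarantees convergence and produces a factor $r^{2s-2\sigma}$ -- against $\mu(2B)\lesssim r^{n+2\sigma}$, giving a contribution $\lesssim1$; the far part is estimated with the H\"older bound of $K_\sigma$ and a dyadic annular decomposition of $\mathbb{R}^{n+1}\setminus 2B$, where $\mu(2^jB)\lesssim(2^jr)^{n+2\sigma}$ makes the resulting series geometric and again $\lesssim1$. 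This gives $\|K_\sigma\ast\mu\|_{\ast,p_s}\lesssim1$ and completes the lower bound.

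I expect no deep obstacle; the only point worth flagging is exactly the hypothesis $s\ge1/2$, which is forced by the need for the oscillation (H\"older) estimate of $K_\sigma$ in Theorem~\ref{lem2.3} and by Lemma~\ref{Pos_lem2} -- without it the far-field control of the BMO mean oscillation breaks down -- and this is why comparability is stated only for $s\in[1/2,1]$, even though the capacity itself and the growth Theorem~\ref{thm2.4.8} make sense for every $s\in(0,1]$.
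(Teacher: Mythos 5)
Your proposal is correct and follows essentially the same route as the paper: the upper bound is the Harvey--Polking covering argument from Theorem \ref{thm3.3} applied with Theorem \ref{thm2.4.8}, and the lower bound invokes the $s$-parabolic Frostman lemma, handles the temporal condition via Lemma \ref{Pos_lem2} (with $\beta=\sigma/s$ for $\sigma>0$), and controls $\|(-\Delta)^\sigma P_s\ast\mu\|_{\ast,p_s}$ by splitting around each ball with the constant $c_B=(-\Delta)^\sigma P_s\ast(\chi_{(2B)^c}\mu)(\ox_0)$, using the size and H\"older estimates of Theorem \ref{lem2.3}. Your observation about why $s\ge1/2$ is required — the H\"older estimate for $(-\Delta)^\sigma P_s$ and Lemma \ref{Pos_lem2} — is exactly right, as is the remark that the $\sigma=0$ case collapses both normalizations to $\|P_s\ast T\|_{\ast,p_s}\le1$ and is subsumed by the spatial computation.
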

	\begin{proof}
		Again, for the upper bound we proceed analogously as in the proof of Theorem \ref{thm3.3}, using now Theorem \ref{thm2.4.8}. For the lower bound, we apply Frostman's lemma. Assume then $\pazocal{H}_{\infty,p_s}^{n+2\sigma}(E)>0$ and consider a non trivial positive Borel measure $\mu$ supported on $E$ with $\mu(E)\geq c\pazocal{H}_{\infty,p_s}^{n+2\sigma}(E)$ and $\mu(B(\ox,r))\leq r^{n+2\sigma}$ for all $\ox\in\mathbb{R}^{n+1}$, $r>0$. We have to prove
		\begin{equation*}
			\|(-\Delta)^{\sigma} P_s\ast T\|_{\ast, p_s} \leq 1, \hspace{0.75cm} \|\partial^{\sigma/s}_tP_s\ast T\|_{\ast, p_s}\leq 1,
		\end{equation*}
		If $\sigma>0$, by Lemma \ref{Pos_lem2} with $\beta:=\sigma/s$ we already have $\|\partial^{\sigma/s}_t P_s\ast \mu\|_{\ast,p_s}\lesssim_\sigma 1$. So we are left to control the $\text{\normalfont{BMO}}_{p_s}$ norm of $(-\Delta)^{\sigma} P_s\ast \mu$ for $\sigma\in[0,s)$. Thus, let us fix an $s$-parabolic ball $B(\ox_0,r)$ and consider the characteristic function $\chi_{2B}$ associated to $2B$. Set also $\chi_{2B^c}=1-\chi_{2B}$. In this setting, we pick
		\begin{equation*}
			c_B:=(-\Delta)^{\sigma}P_s\ast (\chi_{2B^c}\mu)(\ox_0).  
		\end{equation*}
		Using Theorem \ref{lem2.3} it easily follows that  this last expression is well-defined. We estimate $\| (-\Delta)^{\sigma}P_s\ast \mu\|_{\ast,p_s}$ using the previous constant:
		\begin{align*}
			\frac{1}{|B|}&\int_B |(-\Delta)^{\sigma}P_s\ast \mu(\oy)-c_B|\dd \oy\\
			&\leq \frac{1}{|B|}\int_B\bigg(\int_{2B}|(-\Delta)^{\sigma} P_s(\oy-\oz)|\text{d}\mu(\oz)\bigg)\dd \oy\\
			&\hspace{1cm}+\frac{1}{|B|}\int_B\bigg( \int_{\mathbb{R}^{n+1}\setminus{2B}}|(-\Delta)^{\sigma} P_s(\oy-\oz)-(-\Delta)^{\sigma} P_s(\ox_0-\oz)| \text{d}\mu(\oz)\bigg) \dd \oy=:I_1+I_2.
		\end{align*}
		To deal with $I_1$, notice that by Theorem \ref{lem2.3}, choosing $0<\varepsilon<2(s-\sigma)$ and arguing as in Theorem \ref{thm3.3}  we have
		\begin{equation*}
			I_1\lesssim_\sigma \frac{1}{|B|}\int_{2B}\bigg( \int_{B}\frac{\dd \oy}{|\oy-\oz|_{p_s}^{n+2\sigma}} \bigg)\text{d}\mu(\oz)\lesssim \frac{1}{|B|}\big( r^{\varepsilon}(r^{2s})^{1-\frac{\varepsilon+2\sigma}{2s}} \big)\mu(2B)\lesssim 1
		\end{equation*}
		by the $n+2\sigma$ growth of $\mu$. Regarding $I_2$, notice that naming $\ox:=\ox_0-\oz$ and $\ox':=\oy-\oz$, we have $|\ox-\ox'|_{p_s}\leq |\ox|_{p_s}/2$ so we can apply the fifth estimate of Theorem \ref{lem2.3}, that implies 
		\begin{align*}
			I_2 &\lesssim_\sigma \frac{1}{|B|}\int_B\bigg( \int_{\mathbb{R}^{n+1}\setminus{2B}}\frac{|\oy-\ox_0|_{p_s}^{2\zeta}}{|\oz-\ox_0|_{p_s}^{n+2\sigma+2\zeta}} \text{d}\mu(\oz)\bigg) \dd \oy \leq  r^{2\zeta} \int_{\mathbb{R}^{n+1}\setminus{2B}}  \frac{\text{d}\mu(\oz)}{|\oz-\ox_0|_{p_s}^{n+2\sigma+2\zeta}}\lesssim_\sigma 1,
		\end{align*}
		again by the by the $n+2\sigma$ growth of $\mu$.
	\end{proof}
	\begin{thm}
		Let $s\in(0,1]$ and $\sigma\in[0,s)$. A compact set $E\subset \mathbb{R}^{n+1}$ is removable for $s$-caloric functions with $\text{\normalfont{BMO}}_{p_s}$-$(\sigma,\sigma/s)$-Laplacian if and only if $\gamma_{\Theta^s,\ast}^{\sigma}(E)=0$.
	\end{thm}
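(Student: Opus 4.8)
The plan is to mirror the proof of Theorem~\ref{thm3.4} essentially verbatim, replacing Theorems~\ref{Cap_thm1} and~\ref{thm3.3} by Theorems~\ref{thm2.4.8} and~\ref{thm2.4.9}, and applying Lemma~\ref{Cap_thm2} with $d:=n+2\sigma$.

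For the ``only if'' direction I would argue as follows. Assume $E$ is removable for $s$-caloric functions with $\text{BMO}_{p_s}$-$(\sigma,\sigma/s)$-Laplacian and let $T$ be admissible for $\gamma_{\Theta^s,\ast}^{\sigma}(E)$. Put $f:=P_s\ast T$; then $\|(-\Delta)^{\sigma}f\|_{\ast,p_s}\leq 1$ and $\|\partial_t^{\sigma/s}f\|_{\ast,p_s}\leq 1$ are finite, while $\Theta^s f=\Theta^s P_s\ast T=T=0$ on $\mathbb{R}^{n+1}\setminus E$ because $\text{supp}(T)\subset E$. By removability $\Theta^s f=0$ on all of $\mathbb{R}^{n+1}$, hence $T\equiv 0$; since $T$ was an arbitrary admissible distribution, $\gamma_{\Theta^s,\ast}^{\sigma}(E)=0$.

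For the ``if'' direction, suppose $\gamma_{\Theta^s,\ast}^{\sigma}(E)=0$. By Theorem~\ref{thm2.4.9} this forces $\pazocal{H}^{n+2\sigma}_{\infty,p_s}(E)=0$, and therefore $\pazocal{H}^{n+2\sigma}_{p_s}(E)=0$ by \cite[Lemma 4.6]{Mat}. Fix an open $\Omega\supset E$ and a function $f$ with $\|(-\Delta)^{\sigma}f\|_{\ast,p_s}<\infty$, $\|\partial_t^{\sigma/s}f\|_{\ast,p_s}<\infty$ and $\Theta^s f=0$ on $\Omega\setminus E$, and assume towards a contradiction that $\Theta^s f\neq 0$ in $\Omega$. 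After normalizing, $T:=\Theta^s f/(\|(-\Delta)^{\sigma}f\|_{\ast,p_s}+\|\partial_t^{\sigma/s}f\|_{\ast,p_s})$ satisfies the two defining $\text{BMO}_{p_s}$ estimates and has $\text{supp}(T)\subset E\cup\Omega^c$. I would then pick an $s$-parabolic cube $Q$ with $4Q\subset\Omega$ such that $T\neq 0$ on $Q$ (so necessarily $Q\cap E\neq\varnothing$), choose a test function $\varphi$ supported on $Q$ with $\langle T,\varphi\rangle>0$, and rescale it to a function $\widetilde{\varphi}$ admissible for $Q$. By Theorem~\ref{thm2.4.8} the distribution $\widetilde{\varphi}T$ has upper $s$-parabolic growth of degree $n+2\sigma$ for every $s$-parabolic cube $R$ with $\ell(R)\leq\ell(Q)$, so Lemma~\ref{Cap_thm2} applied to $\widetilde{\varphi}T$ with the compact set $\overline{Q}\cap E$, $\ell_0:=\ell(Q)$ and $d:=n+2\sigma$, together with $\pazocal{H}^{n+2\sigma}_{p_s}(\overline{Q}\cap E)=0$, gives $\widetilde{\varphi}T\equiv 0$ — contradicting $\langle T,\varphi\rangle>0$. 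Hence $\Theta^s f=0$ in $\Omega$, and the arbitrariness of $\Omega$ and $f$ yields removability of $E$.

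Since this final deduction is purely formal, I do not expect any genuine obstacle here: the substantive work has already been carried out in Theorem~\ref{thm2.4.8} (the growth estimate, whose $\sigma=0$ instance is handled separately in its proof via the plain $\text{BMO}_{p_s}$ bound on $P_s\ast T$) and in Theorem~\ref{thm2.4.9} (the comparison with the $s$-parabolic Hausdorff content). The only minor point to keep in mind is the borderline case $\sigma=0$, where $(-\Delta)^{\sigma}$ is the identity and both normalization conditions collapse to $\|P_s\ast T\|_{\ast,p_s}\leq 1$; no modification of the above scheme is required.
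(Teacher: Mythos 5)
Your proposal is correct and takes exactly the same approach as the paper, which itself simply states that the proof is analogous to that of Theorem~\ref{thm3.4} upon substituting Theorems~\ref{thm2.4.8} and~\ref{thm2.4.9} and invoking Lemma~\ref{Cap_thm2} with $d:=n+2\sigma$. You have correctly spelled out both directions, including the passage from $\pazocal{H}^{n+2\sigma}_{\infty,p_s}(E)=0$ to $\pazocal{H}^{n+2\sigma}_{p_s}(E)=0$ and the localization via $\widetilde{\varphi}T$ on a cube $Q$ with $4Q\subset\Omega$.
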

	\begin{proof}
		The proof is analogous to that of Theorem \ref{thm3.4}, applying Theorems \ref{thm2.4.8}, \ref{thm2.4.9} and Lemma \ref{Cap_thm2} with $d:=n+2\sigma$.
	\end{proof}
	
	\subsection{The capacity \mathinhead{\gamma_{\Theta^s,\alpha}^{\sigma}}{}}
	
	We define now a capacity with an $s$-parabolic $\text{Lip}_\alpha$ normalization condition.
	
	\begin{defn}
		Given $\alpha\in(0,1)$, $s\in(0,1]$, $\sigma\in[0,s)$ and $E\subset \mathbb{R}^{n+1}$ compact set, define its \textit{$\Delta^{\sigma}$-$\text{\normalfont{Lip}}_{\alpha,p_s}$-caloric capacity} as
		\begin{equation*}
			\gamma_{\Theta^s,\alpha}^{\sigma}(E):=\sup  |\langle T, 1 \rangle| ,
		\end{equation*}
		where the supremum is taken among all distributions $T$ with $\text{supp}(T)\subset E$ and satisfying
		\begin{equation*}
			\|(-\Delta)^{\sigma} P_s\ast T\|_{\text{Lip}_{\alpha}, p_s} \leq 1, \hspace{0.75cm} \|\partial^{\sigma/s}_tP_s\ast T\|_{\text{Lip}_{\alpha}, p_s}\leq 1.
		\end{equation*}
		Such distributions will be called \textit{admissible for $\gamma_{\Theta^s,\alpha}^{\sigma}(E)$}.
	\end{defn}
	
	\begin{defn}
		Let $\alpha\in(0,1)$, $s\in(0,1]$ and $\sigma\in[0,s)$. A compact set $E\subset \mathbb{R}^{n+1}$ is said to be \textit{removable for $s$-caloric functions with $\text{\normalfont{Lip}}_{\alpha,p_s}$-$(\sigma,\sigma/s)$-Laplacian} if for any open subset $\Omega\subset \mathbb{R}^{n+1}$, any function $f:\mathbb{R}^{n+1}\to \mathbb{R}$ with
		\begin{equation*}
			\|(-\Delta)^{\sigma} f\|_{\text{\normalfont{Lip}}_{\alpha},p_s} < \infty, \hspace{0.75cm} \|\partial_t^{\sigma/s}f\|_{\text{\normalfont{Lip}}_{\alpha}, p_s}<\infty,
		\end{equation*}
		satisfying the $\Theta^s$-equation in $\Omega\setminus{E}$, also satisfies the previous equation in the whole $\Omega$. If $\sigma=0$, we will also say that $E$ is \textit{removable for} $\text{\normalfont{Lip}}_{\alpha,p_s}$ $s$\textit{-caloric functions}.
	\end{defn}
	
	If $T$ is a compactly supported distribution satisfying the above properties, then $T$ presents upper $s$-parabolic growth of degree $n+2\sigma+\alpha$. As in \textsection\ref{subsec2.4.2}, the following result will only be valid for a certain range of values of $\alpha$, dependent on $s$ and $\sigma$.
	\begin{thm}
		\label{thm2.4.11}
		Let $s\in(0,1]$, $\sigma\in[0,s)$ and $\alpha\in(0,1)$ with $\alpha<2s-2\sigma$. Let $T$ be a distribution in $\mathbb{R}^{n+1}$ with
		\begin{equation*}
			\|(-\Delta)^{\sigma} P_s\ast T\|_{\text{\normalfont{Lip}}_\alpha, p_s} \leq 1, \hspace{0.75cm} \|\partial^{\sigma/s}_tP_s\ast T\|_{\text{\normalfont{Lip}}_\alpha, p_s}\leq 1.
		\end{equation*}
		Let $Q$ be a fixed $s$-parabolic cube and $\varphi$ an admissible function for $Q$. Then, if $R$ is any $s$-parabolic cube with $\ell(R)\leq \ell(Q)$ and $\phi$ is admissible for $R$, we have $|\langle \varphi T, \phi \rangle|\lesssim \ell(R)^{n+2\sigma+\alpha}$.
	\end{thm}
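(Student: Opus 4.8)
The plan is to follow the proof of Theorem~\ref{thm2.4.8} --- the $\text{BMO}_{p_s}$ sibling of this statement --- replacing the $\text{BMO}_{p_s}$ estimates by the corresponding $\text{Lip}_{\alpha,p_s}$ ones. We may assume $R\cap Q\neq\varnothing$, since otherwise the claim is trivial. Because $P_s$ is the fundamental solution of $\Theta^s=(-\Delta)^s+\partial_t$, the distribution $P_s\ast T$ satisfies $\Theta^s(P_s\ast T)=T$, so that, using $\langle\varphi T,\phi\rangle=\langle T,\varphi\phi\rangle$, the self-adjointness and composition rule $(-\Delta)^s=(-\Delta)^{s-\sigma}(-\Delta)^{\sigma}$ (checked on the Fourier side), and an integration by parts in $t$,
\begin{equation*}
	|\langle \varphi T,\phi\rangle| = |\langle \Theta^s(P_s\ast T),\varphi\phi\rangle| \le |\langle (-\Delta)^{\sigma}P_s\ast T,(-\Delta)^{s-\sigma}(\varphi\phi)\rangle| + |\langle P_s\ast T,\partial_t(\varphi\phi)\rangle| =: I_1+I_2.
\end{equation*}
Since $\varphi$ and $\phi$ are admissible they are $\pazocal{C}^\infty$ and satisfy exactly the hypotheses $\|\nabla_x\varphi\|_\infty\le\ell(Q)^{-1}$, $\|\Delta\varphi\|_\infty\le\ell(Q)^{-2}$, $\|\partial_t\varphi\|_\infty\le\ell(Q)^{-2s}$ (and the analogues for $\phi$ with $\ell(R)$) required by the growth results of \textsection\ref{sec2.2}.

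For $I_1$, put $\beta:=s-\sigma$ and note that $(-\Delta)^{\sigma}P_s\ast T\in\text{Lip}_{\alpha,p_s}$ with seminorm $\le 1$ by hypothesis. If $s<1$ then $\beta\in(0,1)$, and the $\text{Lip}_\alpha$ part of Theorem~\ref{thm0.3} applies: its constraint $\alpha<2\beta$ reads precisely $\alpha<2(s-\sigma)$, which is our assumption, and it yields $I_1\lesssim\ell(R)^{n+2(s-\beta)+\alpha}=\ell(R)^{n+2\sigma+\alpha}$. The same works when $s=1$ and $\sigma>0$, since then $\beta=1-\sigma\in(0,1)$. When $s=1$ and $\sigma=0$ the operator $(-\Delta)^{s-\sigma}=-\Delta$ is the integer Laplacian and one estimates $I_1$ directly: subtracting the value $P_1\ast T(\ox_R)$ at the center $\ox_R$ of $R$ (annihilated by $\Delta(\varphi\phi)$ under the integral), $I_1\le\|\Delta(\varphi\phi)\|_\infty\int_R|P_1\ast T(\ox)-P_1\ast T(\ox_R)|\,d\ox\lesssim\ell(R)^{-2}\,\ell(R)^{\alpha}|R|\lesssim\ell(R)^{n+\alpha}$, using $\|P_1\ast T\|_{\text{Lip}_{\alpha,p_s}}\le1$ and $|R|\approx\ell(R)^{n+2}$.

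For $I_2$, when $\sigma>0$ set $\beta:=1-\sigma/s\in(0,1)$; a Fourier computation in $t$ gives $\partial_t(\varphi\phi)=c\,\partial_t^{1-\beta}\big(\partial_t(\varphi\phi)\ast_t|t|^{-\beta}\big)$, so $I_2\simeq|\langle\partial_t^{\sigma/s}P_s\ast T,\ \partial_t(\varphi\phi)\ast_t|t|^{-\beta}\rangle|$, and since $\partial_t^{\sigma/s}P_s\ast T\in\text{Lip}_{\alpha,p_s}$ with seminorm $\le1$ and $2s\beta=2s-2\sigma>\alpha$, the $\text{Lip}_\alpha$ part of Theorem~\ref{Growth_thm1} gives $I_2\lesssim\ell(R)^{n+2s(1-\beta)+\alpha}=\ell(R)^{n+2\sigma+\alpha}$; when $\sigma=0$ this identity degenerates and one argues directly as for $I_1$ above, $I_2\le\|\partial_t(\varphi\phi)\|_\infty\int_R|P_s\ast T(\ox)-P_s\ast T(\ox_R)|\,d\ox\lesssim\ell(R)^{-2s}\,\ell(R)^{\alpha}|R|\lesssim\ell(R)^{n+\alpha}$. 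Adding the bounds for $I_1$ and $I_2$ finishes the proof. I expect no conceptual obstacle here, the whole difficulty having been absorbed into Theorems~\ref{thm0.3} and~\ref{Growth_thm1}; the points to be careful about are the bookkeeping --- verifying that the hypothesis $\alpha\in(0,2s-2\sigma)$ is exactly what is consumed in both places (as $\alpha<2(s-\sigma)$ for $I_1$ and $\alpha<2s\beta=2s-2\sigma$ for $I_2$) --- and the endpoint cases $\sigma=0$ and $s=1$, where the fractional operators collapse to the identity or to $-\Delta$ and the convolution/fractional-integral identities must be replaced by the elementary $\text{Lip}_\alpha$ estimate described above.
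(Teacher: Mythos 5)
Your proposal reproduces the paper's proof essentially verbatim: the same decomposition $|\langle\varphi T,\phi\rangle|\leq I_1+I_2$ via $\Theta^s P_s\ast T = T$, the same invocation of Theorem~\ref{thm0.3} with $\beta=s-\sigma$ for $I_1$, and for $I_2$ the same split between $\sigma>0$ (identity $\partial_t(\varphi\phi)\simeq\partial_t^{1-\beta}(\partial_t(\varphi\phi)\ast_t|t|^{-\beta})$ with $\beta=1-\sigma/s$, then Theorem~\ref{Growth_thm1}) and $\sigma=0$ (the elementary $\text{Lip}_\alpha$ estimate against $\ox_R$). The one place where you go beyond the paper is the boundary case $s=1$, $\sigma=0$ for $I_1$: there $\beta=s-\sigma=1$ falls outside the range $\beta\in(0,1)$ stated in Theorem~\ref{thm0.3}, and the paper applies that theorem without comment, whereas you correctly supply a direct estimate $I_1\leq\|\Delta(\varphi\phi)\|_\infty\int_R|P_1\ast T-P_1\ast T(\ox_R)|\lesssim\ell(R)^{n+\alpha}$, which closes the gap cleanly.
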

	\begin{proof}
		Let $T$, $Q$ and $\varphi$ be as above, as well as $R$ $s$-parabolic cube with $\ell(R)\leq \ell(Q)$ and $R\cap Q\neq \varnothing$, and $\phi$ admissible function for $R$. Again,
		\begin{equation*}
			|\langle \varphi T, \phi\rangle|\leq |\langle (-\Delta)^sP_s\ast T, \varphi\phi\rangle|+|\langle P_s\ast T, \partial_t(\varphi\phi)\rangle|=:I_1+I_2.
		\end{equation*}
		For $I_1$, simply apply Theorem \ref{thm0.3} with $\beta:=s-\sigma$ so that
		\begin{align*}
			I_1 = |\langle (-\Delta)^{\sigma}P_s\ast T, (-\Delta)^{s-\sigma}(\varphi\phi)\rangle|\lesssim_{\sigma,\alpha} \ell(R)^{n+2\sigma+\alpha}.
		\end{align*}
		Regarding $I_2$, if $\sigma>0$, we define $\beta:=1-\sigma/s\in(0,1)$ and apply Theorem \ref{Growth_thm1}. If $\sigma = 0$, let $\ox_R$ be the center of $R$ so that 
		\begin{align*}
			I_2 &= |\langle P_s\ast T - P_s\ast T(\ox_R), \partial_t(\varphi\phi)\rangle| \leq \ell(R)^{-2s}\int_{R}|\ox-\ox_R|^{\alpha}_{p_s} \dd \ox \lesssim \ell(R)^{n+\alpha}.
		\end{align*}
	\end{proof}
	\begin{thm}
		\label{thm2.4.12}
		Let $s\in(0,1]$, $\sigma\in[0,s)$ and $\alpha\in(0,1)$ with $\alpha<2s-2\sigma$. Then, for $E\subset \mathbb{R}^{n+1}$ compact set,
		\begin{equation*}
			\gamma_{\Theta^s,\alpha}^{\sigma}(E) \approx_{\sigma,\alpha} \pazocal{H}^{n+2\sigma+\alpha}_{\infty,p_s}(E).
		\end{equation*}
	\end{thm}
	\begin{proof}
		For the upper bound we argue again as in Theorem \ref{thm3.3}, using now Theorem \ref{thm2.4.11}. For the lower bound, assume $\pazocal{H}_{\infty,p_s}^{n+2\sigma+\alpha}(E)>0$ and apply Frostman's lemma to consider a non trivial positive Borel measure $\mu$ supported on $E$ with $\mu(E)\geq c\pazocal{H}_{\infty,p_s}^{n+2\sigma+\alpha}(E)$ and $\mu(B(\ox,r))\leq r^{n+2\sigma+\alpha}$ for all $\ox\in\mathbb{R}^{n+1}$, $r>0$. It suffices to verify
		\begin{equation*}
			\|(-\Delta)^{\sigma} P_s\ast T\|_{\text{\normalfont{Lip}}_\alpha, p_s} \leq 1, \hspace{0.75cm} \|\partial^{\sigma/s}_tP_s\ast T\|_{\text{\normalfont{Lip}}_\alpha, p_s}\leq 1.
		\end{equation*}
		If $\sigma>0$, by Lemma \ref{lem2.3.3} with $\beta:=\sigma/s$ we already have $\|\partial^{\sigma/s}_t P_s\ast \mu\|_{\text{\normalfont{Lip}}_\alpha,p_s}\lesssim_{\sigma,\alpha} 1$. So we are left to estimate $\|(-\Delta)^{\sigma} P_s\ast \mu\|_{\text{\normalfont{Lip}}_\alpha,p_s}$ for $\sigma\in[0,s)$, and we do it as in Theorem \ref{thm2.4.6}. Choose any $\ox,\oy\in\mathbb{R}^{n+1}$ with $\ox\neq \oy$ and consider the following partition of $\mathbb{R}^{n+1}$,
		\begin{align*}
			R_1:&= \big\{\oz \;:\;|\ox-\oy|_{p_s}\leq |\ox-\oz|_{p_s}/2\big\}\cup\big\{\oz \;:\;|\oy-\ox|_{p_s}\leq |\oy-\oz|_{p_s}/2\big\},\\
			R_2:= \mathbb{R}^{n+1}\setminus{R_1}&=\big\{\oz \;:\;|\ox-\oy|_{p_s}> |\ox-\oz|_{p_s}/2\big\}\cap\big\{\oz \;:\;|\oy-\ox|_{p_s}> |\oy-\oz|_{p_s}/2\big\},
		\end{align*}
		with their corresponding characteristic functions $\chi_1,\chi_2$ respectively.
		This way, we have
		\begin{align*}
			\frac{|(-\Delta)^{\sigma}P_s\ast \mu(\ox)-(-\Delta)^{\sigma}P_s\ast \mu(\oy)|}{|\ox-\oy|_{p_s}^\alpha} \\
			&\hspace{-4.75cm}\leq \frac{1}{|\ox-\oy|_{p_s}^\alpha}\int_{ |\ox-\oy|_{p_s}\leq |\ox-\oz|_{p_s}/2}|(-\Delta)^{\sigma}P_s(\ox-\oz)-(-\Delta)^{\sigma}P_s(\oy-\oz)|\text{d}\mu(\oz)\\
			&\hspace{-3.75cm}+\frac{1}{|\ox-\oy|_{p_s}^\alpha}\int_{ |\oy-\ox|_{p_s}\leq |\oy-\oz|_{p_s}/2}|(-\Delta)^{\sigma}P_s(\ox-\oz)-(-\Delta)^{\sigma}P_s(\oy-\oz)|\text{d}\mu(\oz)\\
			&\hspace{-3.75cm}+\frac{1}{|\ox-\oy|_{p_s}^\alpha}\int_{R_2}|(-\Delta)^{\sigma}P_s(\ox-\oz)-(-\Delta)^{\sigma}P_s(\oy-\oz)|\text{d}\mu(\oz) =:I_1+I_2+I_{3}.
		\end{align*}
		Regarding $I_1$, the fifth estimate of Lemma \ref{lem2.3} yields
		\begin{align*}
			I_1&\lesssim_\sigma\frac{1}{|\ox-\oy|_{p_s}^\alpha}\int_{ |\ox-\oy|_{p_s}\leq |\ox-\oz|_{p_s}/2 } \frac{|\ox-\oy|_{p_s}^{2\zeta}}{|\ox-\oz|_{p_s}^{n+2\sigma+2\zeta}}\text{d}\mu(\oz).
		\end{align*}
		Split the previous domain of integration into $s$-parabolic annuli centered at $\ox$ with exponentially increasing radii proportional to $|\ox-\oy|_{p_s}$, and deduce as in Theorem \ref{thm2.4.6} that $I_1\lesssim_{\sigma,\alpha} 1$, using now that $\mu$ has $n+2\sigma+\alpha$ growth. For $I_2$, we argue as in $I_1$ just interchanging the roles of $\ox$ and $\oy$. Finally, for $I_{3}$, the first estimate of Lemma \ref{lem2.3} yields
		\begin{align*}
			I_{3}&\leq \frac{1}{|\ox-\oy|_{p_s}^\alpha}\int_{R_2}\frac{\text{d}\mu(\oz)}{|\ox-\oz|_{p_s}^{n+2\sigma}}+\frac{1}{|\ox-\oy|_{p_s}^\alpha}\int_{R_2}\frac{\text{d}\mu(\oz)}{|\oy-\oz|_{p_s}^{n+2\sigma}}\\
			&\leq \frac{1}{|\ox-\oy|_{p_s}^\alpha}\bigg(\int_{|\ox-\oy|_{p_s}>|\ox-\oz|_{p_s}/2}\frac{\text{d}\mu(\oz)}{|\ox-\oz|_{p_s}^{n+2\sigma}}+\int_{|\oy-\ox|_{p_s}>|\oy-\oz|_{p_s}/2}\frac{\text{d}\mu(\oz)}{|\oy-\oz|_{p_s}^{n+2\sigma}}\bigg).
		\end{align*}
		Both of the above integrals can be dealt with by splitting the domain of integration into exponentially decreasing annuli, centered at $\ox$ and $\oy$ respectively, and using that $\mu$ has growth of degree strictly bigger than $n+2\sigma$. Thus, we obtain $I_{3}\lesssim_{\sigma,\alpha} 1$ and we are done
	\end{proof}
	\begin{thm}
		Let $s\in(0,1]$, $\sigma\in[0,s)$ and $\alpha\in(0,1)$ with $\alpha <2s-2\sigma$. A compact set $E\subset \mathbb{R}^{n+1}$ is removable for $s$-caloric functions with $\text{\normalfont{Lip}}_{\alpha,p_s}$-$(\sigma,\sigma/s)$-Laplacian if and only if $\gamma_{\Theta^s,\alpha}^{\sigma}(E)=0$.
	\end{thm}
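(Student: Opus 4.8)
The plan is to mirror exactly the argument given for Theorem~\ref{thm3.4}, replacing at each step the $\text{BMO}_{p_s}$-objects with their $\text{Lip}_{\alpha,p_s}$-counterparts and the relevant input theorems with those of the present subsection. First I would fix a compact set $E\subset\mathbb{R}^{n+1}$ and prove the easy implication: assuming $E$ is removable for $s$-caloric functions with $\text{Lip}_{\alpha,p_s}$-$(\sigma,\sigma/s)$-Laplacian, take any $T$ admissible for $\gamma_{\Theta^s,\alpha}^\sigma(E)$ and set $f:=P_s\ast T$. Then $\|(-\Delta)^\sigma f\|_{\text{Lip}_\alpha,p_s}\le 1$, $\|\partial_t^{\sigma/s}f\|_{\text{Lip}_\alpha,p_s}\le 1$ and $\Theta^s f=0$ on $\mathbb{R}^{n+1}\setminus E$; removability forces $\Theta^s f=0$ everywhere, hence $T\equiv 0$, and taking the supremum over admissible $T$ gives $\gamma_{\Theta^s,\alpha}^\sigma(E)=0$.

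For the converse, assume $\gamma_{\Theta^s,\alpha}^\sigma(E)=0$. By Theorem~\ref{thm2.4.12} this gives $\pazocal{H}_{\infty,p_s}^{n+2\sigma+\alpha}(E)=0$, and by \cite[Lemma 4.6]{Mat} also $\pazocal{H}_{p_s}^{n+2\sigma+\alpha}(E)=0$. Now fix an open $\Omega\supset E$ and a function $f$ with $\|(-\Delta)^\sigma f\|_{\text{Lip}_\alpha,p_s}<\infty$, $\|\partial_t^{\sigma/s}f\|_{\text{Lip}_\alpha,p_s}<\infty$ and $\Theta^s f=0$ on $\Omega\setminus E$. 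Suppose for contradiction that $\Theta^s f\neq 0$ on $\Omega$, and normalize by setting
\begin{equation*}
	T:=\frac{\Theta^s f}{\|(-\Delta)^\sigma f\|_{\text{Lip}_\alpha,p_s}+\|\partial_t^{\sigma/s}f\|_{\text{Lip}_\alpha,p_s}},
\end{equation*}
so that $\|(-\Delta)^\sigma P_s\ast T\|_{\text{Lip}_\alpha,p_s}\le 1$, $\|\partial_t^{\sigma/s}P_s\ast T\|_{\text{Lip}_\alpha,p_s}\le 1$ and $\text{supp}(T)\subset E\cup\Omega^c$. Since $T\neq 0$ in $\Omega$ there is an $s$-parabolic cube $Q$ with $4Q\subset\Omega$ and $T\neq 0$ on $Q$, necessarily with $Q\cap E\neq\varnothing$; pick $\varphi\in\pazocal{C}^\infty_c(Q)$ with $\langle T,\varphi\rangle>0$ and normalize it to an admissible $\widetilde{\varphi}$ for $Q$ exactly as in the proof of Theorem~\ref{thm3.4}.

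The heart of the argument is then to apply Theorem~\ref{thm2.4.11} to conclude that $\widetilde{\varphi}T$ has upper $s$-parabolic growth of degree $n+2\sigma+\alpha$ for every $s$-parabolic cube $R$ with $\ell(R)\le\ell(Q)$, and then feed this into Lemma~\ref{Cap_thm2} with the compact set $\overline{Q}\cap E$, the parameter $\ell_0:=\ell(Q)$ and $d:=n+2\sigma+\alpha$. Since $\pazocal{H}_{p_s}^{n+2\sigma+\alpha}(\overline{Q}\cap E)=0$, the lemma yields $\langle\widetilde{\varphi}T,\psi\rangle=0$ for all $\psi\in\pazocal{C}^\infty_c(\mathbb{R}^{n+1})$, i.e. $\widetilde{\varphi}T\equiv 0$, contradicting $\langle T,\varphi\rangle>0$. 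Hence $\Theta^s f=0$ on $\Omega$, and by arbitrariness of $\Omega$ and $f$ the set $E$ is removable. The one point requiring attention is that Theorem~\ref{thm2.4.11} is only stated for $\alpha\in(0,2s-2\sigma)$, which is precisely the standing hypothesis here, so there is no genuine obstacle; the proof is a verbatim transcription of Theorem~\ref{thm3.4} with $(\Gamma_{\Theta^s,\ast},\text{Cap\_thm1},\text{thm3.3},\text{Cap\_thm2}\text{ with }d=n+1)$ replaced by $(\gamma_{\Theta^s,\alpha}^\sigma,\ref{thm2.4.11},\ref{thm2.4.12},\text{Cap\_thm2}\text{ with }d=n+2\sigma+\alpha)$.
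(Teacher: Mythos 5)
Your proposal is correct and coincides with the paper's intended argument: the paper states only that the proof is analogous to that of Theorem~\ref{thm3.4}, invoking Theorems~\ref{thm2.4.11}, \ref{thm2.4.12} and Lemma~\ref{Cap_thm2} with $d:=n+2\sigma+\alpha$, which is precisely the substitution you carry out.
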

	\begin{proof}
		The proof is analogous to that of Theorem \ref{thm3.4}, applying Theorems \ref{thm2.4.11}, \ref{thm2.4.12} and Lemma \ref{Cap_thm2} with $d:=n+2\sigma+\alpha$.
	\end{proof}

	\vspace{0.5cm}
	{\small
		\begin{tabular}{@{}l}
			\textsc{Joan\ Hernández,} \\ \textsc{Departament de Matem\`{a}tiques, Universitat Aut\`{o}noma de Barcelona,}\\
			\textsc{08193, Bellaterra (Barcelona), Catalonia.}\\
			{\it E-mail address}\,: \href{mailto:joan.hernandez@uab.cat}{\tt{joan.hernandez@uab.cat}}
		\end{tabular}
	}
	
	{\small
		\begin{tabular}{@{}l}
			\textsc{Joan\ Mateu,} \\ \textsc{Departament de Matem\`{a}tiques, Universitat Aut\`{o}noma de Barcelona,}\\
			\textsc{08193, Bellaterra (Barcelona), Catalonia.}\\
			{\it E-mail address}\,: \href{mailto:joan.mateu@uab.cat}{\tt{joan.mateu@uab.cat}}
		\end{tabular}
	}

	{\small
		\begin{tabular}{@{}l}
			\textsc{Laura\ Prat,} \\ \textsc{Departament de Matem\`{a}tiques, Universitat Aut\`{o}noma de Barcelona,}\\
			\textsc{08193, Bellaterra (Barcelona), Catalonia.}\\
			{\it E-mail address}\,: \href{mailto:laura.prat@uab.cat}{\tt{laura.prat@uab.cat}}
		\end{tabular}
	}  

\begin{thebibliography}{CMM+2}
		
		\bibitem[AS]{AS} Abramowitz, M. \& Stegun, I. A. (1983). \textit{Handbook of Mathematical Functions with Formulas, Graphs, and Mathematical Tables}. Applied Mathematics Series. Vol. 55. New York: United States Department of Commerce, National Bureau of Standards; Dover Publications.
		
		\bibitem[AsIM]{AsIM} Astala, K., Iwaniec, T. \& Martin, G. (2009). \textit{Elliptic partial differential equations and quasiconformal mappings in the plane}. Princeton University Press.
		
		\bibitem[BG]{BG} Blumenthal, R.M. \& Getoor, R. K. (1960). Some theorems on stable processes. \textit{Transactions of the Amer. Math. Soc.}, \textit{95}(\textit{2}), pp. 263--273.
		
		\bibitem[DPV]{DPV} Di Nezza, E., Palatucci, G. \& Valdinoci, E. (2012). Hitchhiker's guide to the fractional Sobolev spaces. \textit{Bulletin of Mathematical Sciences}, \textit{5}(\textit{136}), pp. 521--573.
		
		\bibitem[Ga]{Ga} Garnett, J. (2007). \textit{Bounded analytic functions}. New York: Springer New York.
		
		\bibitem[Gr]{Gr} Grafakos, L. (2008). \textit{Classical Fourier Analysis}, Second Edition, Graduate Texts in Mathematics, n. 249. New York: Springer New York.
		
		\bibitem[GrT]{GrT} Grafakos, L. \& Teschl, G. (2013). On Fourier transforms of radial functions and distributions. \textit{Journal of Fourier Analysis and Applications}, \textit{19}(\textit{1}), pp. 167--179.
		
		\bibitem[HP]{HP} Harvey, R. \& Polking, J. (1970). Removable singularities of solutions of linear partial differential equations. \textit{Acta Mathematica}, \textit{125}, pp. 39--56.
		
		\bibitem[He]{He} Hernández, J. (2024). On the $(1/2,+)$-caloric capacity of Cantor sets. \textit{Annales Fennici Mathematici}, \textit{49}(\textit{1}), pp. 211–-239. 
		
		\bibitem[Ho]{Ho} Hofmann, S. (1995). A characterization of commutators of parabolic singular integrals. In J. García-Cuerva (Ed.), \textit{Fourier Analysis and Partial Differential Equations} (pp. 195--210). Boca Raton, Florida: CRC Press.
		
		\bibitem[HoL]{HoL} Hofmann, S. \& Lewis, J. L. (1996). $L^2$ solvability and representation by caloric layer potentials in time-varying domains. \textit{Annals of Mathematics}, \textit{144}(\textit{2}), pp. 349--420.
		
		\bibitem[K]{K} Kaufman, R. (1982). Hausdorff measure, BMO, and analytic functions.	\textit{Pacific Journal of Mathematics}, \textit{102}(\textit{2}), pp. 369--371.
		
		\bibitem[MaPr]{MaPr} Mateu, J. \& Prat, L. (2024). Removable singularities for solutions of the fractional heat equation in time varying domains. \textit{Potential Analysis}, \textit{60}, pp. 833--873.
		
		\bibitem[MaPrTo]{MaPrTo} Mateu, J., Prat, L. \& Tolsa, X. (2022). Removable singularities for Lipschitz caloric functions in time varying domains. \textit{Revista Matemática Iberoamericana}, \textit{38} (\textit{2}), pp. 547--588.
		
		\bibitem[Mat]{Mat} Mattila, P. (1995). \textit{Geometry of Sets and Measures in Euclidean Spaces: Fractals and Rectifiability.} Cambridge: Cambridge University Press.
		
		\bibitem[Me]{Me} Mel'nikov, M. S. (1969). Metric properties of analytic $\alpha$-capacity and approximation of analytic functions with a Hölder condition by rational functions. \textit{Mat. Sbornik}, \textit{121}(\textit{8}), pp. 115--124.
		
		\bibitem[O]{O} O'Farrell, A. G. (1988). Rational Approximation and Weak Analyticity. II. \textit{Mathematische Annalen}, \textit{281}, pp. 169--176.
		
		\bibitem[PruTa]{PruTa} Pruitt, W. E. \& Taylor, S. J. (1969). The potential kernel and hitting probabilities for the general stable process in $\mathbb{R}^n$. \textit{Transactions of the Amer. Math. Soc.}, \textit{146}, pp. 299--321.
		
		\bibitem[St]{St} Stein, E. M. (1970). \textit{Singular integrals and differentiability properties of functions}. Princeton: Princeton University Press.
		
		\bibitem[StW]{StW} Stein, E. M. \& Weiss, G. (1971). \textit{Fourier Analysis on Euclidean Spaces (PMS-32), Volume 32}. Princeton: Princeton University Press.
		
		\bibitem[Va]{Va} Vázquez, J. L. (2018). Asymptotic behavior for the fractional heat equation in the Euclidean space. \textit{Complex Variables and Elliptic Equations}, \textit{63}(\textit{7-8}), pp. 1216--1231.
		
		\bibitem[Ve]{Ve} Verdera, J. (1986). BMO Rational Approximation and One-Dimensional Hausdorff Content. \textit{Transactions of the Amer. Math. Soc.}, \textit{297}(\textit{1}), pp. 283--304.
		
	\end{thebibliography}
\end{document}